\documentclass[12pt]{article} 
\usepackage[colorlinks]{hyperref}
\usepackage{graphicx}%
\usepackage{multirow}%
\usepackage{amsmath,amssymb,amsfonts}%
\usepackage{mathabx}
\usepackage{stmaryrd}

\usepackage{amsthm}%
\usepackage{mathrsfs}%
\usepackage[title]{appendix}%
\usepackage{xcolor}%
\usepackage{textcomp}%
\usepackage{manyfoot}%
\usepackage{booktabs}%
\usepackage{algorithm}%
\usepackage{algorithmicx}%
\usepackage{algpseudocode}%
\usepackage{listings}%
\usepackage[mathscr]{eucal}
\usepackage{bussproofs}
\usepackage{lineno}
\usepackage{quiver}
\usepackage{float}
\usepackage{tikz}
\usetikzlibrary{calc}
\usepackage{array}
\usepackage{multicol}
\usetikzlibrary{
    positioning, 
    shapes, 
    arrows.meta, 
    calc,        
    backgrounds, 
    fit          
} 

\definecolor{myLightRed}{RGB}{255, 204, 204} 
\definecolor{myLightBlue}{RGB}{204, 229, 255} 

\usepackage{authblk}

\theoremstyle{plain} 
\newtheorem{theorem}{Theorem}[section]

\newtheorem{lemma}[theorem]{Lemma}

\newtheorem{corollary}[theorem]{Corollary}
\theoremstyle{definition}
\newtheorem{definition}[theorem]{Definition}
\newtheorem{example}[theorem]{Example}
\newtheorem{remark}[theorem]{Remark}

\begin{document}

\title{Proof Complexity and Feasible Interpolation}

\author[]{Amirhossein Akbar Tabatabai}
\affil[]{Bernoulli Institute, University of Groningen}

\date{ }

\maketitle

\section{Introduction}

At first glance, propositional language might seem too simple to capture the interest of a working mathematician, as its expressive power, compared to first-order languages, is quite limited, mainly due to the lack of quantifiers. However, this initial impression is misleading. As we will see later in this chapter, propositional language is fully capable of describing any \emph{finite} structure, much like first-order languages are used to describe \emph{arbitrary} structures.

With this hidden expressive power in mind, propositional proofs should be regarded as more significant than they are often perceived, since they can be used to reason indirectly about any finite structure. But what do we mean by a propositional proof? In its most abstract form, a propositional proof is defined by the proof system in which it resides. For a given propositional (or sometimes modal) logic $L$, a \emph{proof system for $L$} is a polynomial-time predicate $P(\pi, \phi)$ such that $\phi \in L$ if and only if there exists $\pi$ satisfying $P(\pi, \phi)$, which we read as ``$\pi$ is a \emph{$P$-proof} of $\phi$''. $P$ is assumed to be polynomial-time computable to ensure that verifying whether a given object is a valid proof of a proposition is \emph{computationally easy}, while the equivalence condition encodes the standard requirements of \emph{soundness} and \emph{completeness} of $P$ with respect to $L$.  

\subsection{Propositional Incompleteness}

In its usual sense, proof systems cannot be incomplete, as they are sound and complete by definition. However, by shifting the focus from the ``non-existence of a proof'' to the ``non-existence of a \emph{short} proof,'' where short means polynomial in the size of the formula we want to prove, a new form of incompleteness emerges. Formally, given a logic $L$, we call a proof system for $L$ \emph{polynomially bounded} (or \emph{p-bounded}) if there exists a polynomial $p$ such that any $\phi \in L$ has a $P$-proof $\pi$ whose size is bounded by $p(|\phi|)$. In this way, a p-bounded proof system can be interpreted as the analogue of a \emph{complete first-order theory}, where the condition of the ``existence of a proof for any true statement'' is replaced by the ``existence of a short proof for any formula in $L$.''

Similar to the quest for first-order completeness, we may ask whether a p-bounded proof system exists for a given logic $L$. It is easy to see that a logic $L$ has a p-bounded proof system if and only if $L \in \mathbf{NP}$. Therefore, classical (resp.\ intuitionistic) propositional logic, $\mathsf{CPC}$ (resp.\ $\mathsf{IPC}$), has a p-bounded proof system if and only if $\mathbf{CoNP} = \mathbf{NP}$ (resp.\ $\mathbf{PSPACE} = \mathbf{NP}$). Since these equalities are widely believed to be false, it is similarly believed that no p-bounded proof system exists for $\mathsf{CPC}$ or $\mathsf{IPC}$. One may view this conjectural non-existence as the propositional analogue of \emph{Gödel's incompleteness theorem}, which asserts that any sufficiently strong and consistent theory is necessarily incomplete. As these forms of \emph{propositional incompleteness} are equivalent to long-standing open problems in computational complexity, their full proofs remain beyond our current reach. Nevertheless, this does not prevent us from demonstrating that some specific proof systems, such as the sequent calculi $\mathbf{LK}$ for $\mathsf{CPC}$ and $\mathbf{LJ}$ for $\mathsf{IPC}$, are not p-bounded. Fortunately, a general technique exists for proving such results, known as \emph{feasible interpolation}. This chapter is dedicated to presenting this method and its applications.

\subsection{Feasible Interpolation}

Roughly speaking, a proof system $P$ for $\mathsf{CPC}$ has feasible interpolation if we can \emph{easily} compute a Craig interpolant for an implicational tautology from any of its $P$-proofs. Now, subject to certain conjectures in computational complexity, we claim that if $P$ has feasible interpolation, then it cannot be p-bounded. To prove that, first, by invoking some conjectures (or occasionally results) from computational complexity, we construct an implicational tautology with hard-to-compute Craig interpolants. Next, since $P$ has feasible interpolation, if we have access to a $P$-proof of the implication, obtaining the hard-to-compute interpolant becomes easy. Therefore, the $P$-proofs must be too long, implying that $P$ is not p-bounded. Note that feasible interpolation effectively reduces a question in proof complexity to one in computational complexity, where known hardness assumptions or results on \emph{computation} can be leveraged to establish the difficulty of \emph{proving} certain theorems.

For classical logic, feasible interpolation as a general method was introduced by Krajíček \cite{krajivcek1994lower,krajivcek1997interpolation} and has since been successfully applied to several proof systems for $\mathsf{CPC}$, including the cut-free sequent calculus, the resolution refutation system \cite{krajivcek1997interpolation}, and cutting planes \cite{pudlak1997lower}. However, if a proof system is too strong, it may yield short proofs for implications whose interpolants are believed (usually by cryptographic assumptions) to be hard-to-compute. Thus, by reversing the earlier argument, we can conclude that such systems cannot have feasible interpolation. 
For instance, $\mathbf{LK}$ fails to have feasible interpolation unless the Diffie-Hellman key exchange protocol is insecure against polynomial-size circuits \cite{krajivcek1995someII,krajiivcek1998some,bonet1997no,bonet2004non}. Since feasible interpolation is the main technique we rely on, this limitation might initially seem like a setback. However, the failure of feasible interpolation leads to an interesting consequence: under a natural technical condition, it implies that the proof system is \emph{non-automatable}, meaning that it is impossible to find a proof of a tautology in time polynomially bounded by the sum of the size of its shortest proof and the size of the formula itself. As a result, we can show that $\mathbf{LK}$ is not automatable, unless the Diffie-Hellman key exchange protocol is insecure against polynomial-size circuits.
For a comprehensive discussion on feasible interpolation and its connections to computational complexity and cryptography, see \cite{krajivcek2019proof}.

For non-classical logics, a similar role to Craig interpolants is played by a generalization of the disjunction property, which we refer to as \emph{disjunctive interpolants}. This notion was first developed by Buss and Pudlák \cite{buss2001computational}, following the feasible version of the disjunction property proved by Buss and Mints \cite{bussMints}. The concept of feasible disjunctive interpolation was later generalized by Hrubeš \cite{hrubevs2007lower, hrubevs2007lowerII, hrubevs2009lengths}, who used it to show that $\mathbf{LJ}$ and any Hilbert-style proof system for intuitionistic logic, or for modal logics below $\mathsf{S4}$ or $\mathsf{GL}$, are not p-bounded. These results were subsequently extended to superintuitionistic logics and modal logics of infinite branching by Jeřábek \cite{jevrabek2009substitution} and by Jalali \cite{jalali2021proof} to any logic between Full Lambek logic $\mathsf{FL}$ and a superintuitionistic logic of infinite branching. For additional results on non-classical proof complexity, see \cite{jevrabek2017proof, jevrabek2023proof, jevrabek2025simplified}, and for an insightful survey, refer to \cite{beyersdorff2010proof}.

\subsection{Outline of the Chapter}

The structure of this chapter is as follows. After introducing the preliminaries in Section \ref{Sec: Preliminaries}, Section \ref{Sec: Propositional World} presents tools from descriptive complexity that enable the translation of first-order formulas over finite structures into propositional formulas. We also utilize the propositional language to describe computations over finite domains through circuits and discuss relevant topics in cryptography and disjoint $\mathbf{NP}$ pairs. In Section \ref{Sec: Proof Systems}, we introduce the fundamentals of proof systems, provide examples, and compare them in terms of their relative strength. Section \ref{Section: Interpolation} uses hard disjoint $\mathbf{NP}$ pairs to construct implicational classical tautologies with hard-to-compute Craig interpolants. It also defines disjunctive interpolants and constructs hard-to-compute instances for them. Section \ref{Sec: FI, Classical} introduces the concept of feasible interpolation in the classical setting, including its monotone version and automatability. We show that the cut-free sequent calculus and the resolution system have (monotone) feasible interpolation and are therefore not p-bounded. Additionally, assuming certain cryptographic hardness conjectures, we demonstrate that some strong proof systems lack feasible interpolation and are thus not automatable. Finally, in Section \ref{Sec: FI, Non-Classical}, we extend these ideas to superintuitionistic and modal sequent-style and Hilbert-style proof systems.\\

\noindent \textbf{A Word to the Reader:}
The literature on proof complexity often assumes fluency in computational complexity and bounded theories of arithmetic, which can be challenging for newcomers. To make this material more accessible, we present it with the assumption of only a basic familiarity with propositional, modal, and first-order logic, as well as a basic understanding of key concepts in computational complexity, such as the definitions of the classes $\mathbf{NP}$ and $\mathbf{PSPACE}$. Any additional concepts will be introduced and explained as needed.\\

\noindent \textbf{Acknowledgment:}
This work was supported by the Austrian Science Fund (FWF) project P33548, and the Dutch Research Council (NWO) project OCENW.M.22.258.

\section{Preliminaries} \label{Sec: Preliminaries}

By $\mathcal{L}_b$ we mean the propositional language $\{\top, \bot, \wedge, \vee, \neg\}$. The language $\mathcal{L}^u_b = \{\top, \bot, \bigwedge, \bigvee, \neg\}$ is defined similarly to $\mathcal{L}_b$, with the difference that it allows conjunctions and disjunctions of arbitrary finite arity rather than just binary ones. The language $\mathcal{L}_p$ is defined as $\{\top, \bot, \wedge, \vee, \to\}$, and $\mathcal{L}_{\Box} = \mathcal{L}_p \cup \{\Box\}$. 
For any $\mathfrak{L} \in \{\mathcal{L}_b, \mathcal{L}^u_b, \mathcal{L}_p, \mathcal{L}_{\Box}\}$, we define \emph{$\mathfrak{L}$-formulas}, or simply \emph{formulas in $\mathfrak{L}$}, in the usual way. When the context is clear, we may omit specifying the language and simply refer to them as formulas. In the languages $\mathcal{L}_p$ and $\mathcal{L}_{\Box}$, by $\neg \phi$ and $\boxdot \phi$ we mean $\phi \to \bot$ and $(\Box \phi \wedge \phi)$, respectively. For any formula $\phi$, we denote by $V(\phi)$ the set of atomic formulas occurring in $\phi$. 
Over $\mathcal{L}_b$ and $\mathcal{L}_b^u$, a \emph{literal} is either a variable $p$ or its negation $\neg p$, and a \emph{clause} is a finite sequence of literals, interpreted as their disjunction. More generally, a formula in $\mathcal{L}_b$ is said to be in \emph{negation normal form} if the only negations used in the formula appear on atomic formulas, $\top$, or $\bot$. It is clear that any formula in the languages $\mathcal{L}_b$, $\mathcal{L}_b^u$, and $\mathcal{L}_p$ is classically equivalent to one in negation normal form, using De Morgan’s laws. For a set $P$ of atomic formulas, a formula is called \emph{monotone} in $P$ if $\neg p$ does not occur in its negation normal form, for any $p \in P$. A formula in any of the four languages is called \emph{monotone} if it contains no negation or implication.
For any $\mathcal{L}^u_b$-formula $\phi$, define the depth $dp(\phi)$ as follows: $dp(p)=dp(\top)=dp(\bot)=0$, $dp(\neg \phi) = dp(\phi)$, $dp(\bigwedge_i \phi_i)=1+\max_i dp(\phi_i)$ and $dp(\bigvee_i \phi_i)=1+\max_i dp(\phi_i)$.

Let $\mathfrak{L} \in \{\mathcal{L}_b, \mathcal{L}_b^u, \mathcal{L}_p\}$ and let $P = \{p_1, \ldots, p_m\}$ be a set of atomic formulas. By a \emph{Boolean assignment} for $P$, we mean a tuple $\bar{a} \in \{0, 1\}^m$ that assigns $a_i$ to $p_i$ for each $1 \leq i \leq m$. When $m$ is implicit, we simply write $\bar{a} \in \{0, 1\}$. An $\mathfrak{L}$-formula is called \emph{valid} if it evaluates to $1$ under all Boolean assignments to its atomic formulas. A formula (resp. a set of clauses) is called \emph{satisfiable} if there exists a Boolean assignment for all involved atomic formulas such that the formula (resp. the disjunction of any of the clauses) evaluates to $1$.
We define $\mathsf{CPC}$ as the set of all valid $\mathcal{L}_p$-formulas. By a slight abuse of notation, we also use $\mathsf{CPC}$ to refer to the set of valid $\mathfrak{L}$-formulas for $\mathfrak{L} \in \{\mathcal{L}_b, \mathcal{L}_b^u\}$. 
The logic $\mathsf{CPC}$ has the \emph{Craig Interpolation Property}, meaning that for any valid implication $\phi \to \psi$, there exists a formula $I$ such that $V(I) \subseteq V(\phi) \cap V(\psi)$ and both $(\phi \to I)$ and $(I \to \psi)$ are valid.
Sometimes, the Craig interpolation property is stated in terms of unsatisfiable clauses: that is, for any unsatisfiable set $\{C_i\}_i \cup \{D_j\}_j$ of clauses, there exists a formula $I$ using only the variables common to $\{C_i\}_i$ and $\{D_j\}_j$ such that both $\{C_i\}_i \cup \{I\}$ and $\{D_j\}_j \cup \{\neg I\}$ are unsatisfiable.

For any $\mathfrak{L} \in \{\mathcal{L}_b, \mathcal{L}_b^u, \mathcal{L}_p, \mathcal{L}_{\Box}\}$, a \emph{sequent} over $\mathfrak{L}$ is an expression of the form $\Gamma \Rightarrow \Delta$, where $\Gamma$ and $\Delta$ are finite sequences of $\mathfrak{L}$-formulas. A sequent is called \emph{single-conclusion} if $\Delta$ contains at most one $\mathfrak{L}$-formula. We say that a sequent $\Gamma \Rightarrow \Delta$ over $\mathfrak{L} \in \{\mathcal{L}_b, \mathcal{L}_b^u, \mathcal{L}_p\}$ is \emph{valid} if the formula $\bigwedge \Gamma \to \bigvee \Delta$ is valid.
The sequent calculus $\mathbf{LK}$ over $\mathcal{L}_p$ is defined as in Figure~\ref{GSTN}.
\begin{figure}[t]
\begin{center}
 \begin{tabular}{c c c}
 \AxiomC{}
 \UnaryInfC{$\phi \Rightarrow \phi$}
 \DisplayProof 
 &
 \small \AxiomC{}
\small \UnaryInfC{$ \bot \Rightarrow $}
 \DisplayProof 
&
\small  \AxiomC{}
\small \UnaryInfC{$ \Rightarrow \top$}
 \DisplayProof\\[3ex]
\end{tabular}

 \begin{tabular}{cccccc}
\small \AxiomC{$\Gamma, \phi, \psi, \Pi \Rightarrow \Delta$}
\small \RightLabel{\footnotesize $Le$}
\small \UnaryInfC{$\Gamma, \psi, \phi, \Pi \Rightarrow \Delta$}
 \DisplayProof 
 &
\small \AxiomC{$\Gamma \Rightarrow \Delta, \phi, \psi, \Lambda$}
\small \RightLabel{\footnotesize $Re$}
\small \UnaryInfC{$\Gamma \Rightarrow \Delta, \psi, \phi, \Lambda $}
 \DisplayProof 
&
\small \AxiomC{$\Gamma \Rightarrow \phi, \Delta$}
\small \AxiomC{$\Gamma, \phi \Rightarrow \Delta$}
\small \RightLabel{\footnotesize $cut$}
 \BinaryInfC{$\Gamma \Rightarrow \Delta$}
 \DisplayProof
 \\[3ex]
 \end{tabular}
 
 \begin{tabular}{cccccc}
\small \AxiomC{$\Gamma \Rightarrow \Delta$}
\small \RightLabel{\footnotesize $Lw$}
\small \UnaryInfC{$\Gamma, \phi \Rightarrow \Delta$}
 \DisplayProof 
 &
\small \AxiomC{$\Gamma \Rightarrow \Delta$}
\small \RightLabel{\footnotesize $Rw$}
\small \UnaryInfC{$\Gamma \Rightarrow \phi, \Delta $}
 \DisplayProof 
 &
 \small \AxiomC{$\Gamma, \phi, \phi \Rightarrow \Delta$}
 \small \RightLabel{\footnotesize $Lc$}
\small \UnaryInfC{$\Gamma, \phi \Rightarrow \Delta$}
 \DisplayProof 
 &
\small \AxiomC{$\Gamma \Rightarrow, \phi, \phi, \Delta$}
\small \RightLabel{\footnotesize $Rc$}
\small \UnaryInfC{$\Gamma \Rightarrow \phi, \Delta $}
 \DisplayProof 
\\[3ex]
 \end{tabular}
 \begin{tabular}{ccc}
\small \AxiomC{$\Gamma, \phi \Rightarrow \Delta$}
\small \RightLabel{\footnotesize $L\wedge_1$}
\small \UnaryInfC{$\Gamma, \phi \wedge \psi \Rightarrow \Delta$}
 \DisplayProof 
&
\small \AxiomC{$\Gamma, \psi \Rightarrow \Delta$}
\small \RightLabel{\footnotesize $L\wedge_2$}
\small \UnaryInfC{$\Gamma, \phi \wedge \psi \Rightarrow \Delta$}
 \DisplayProof
&
\small \AxiomC{$\Gamma \Rightarrow \phi, \Delta$}
\small \AxiomC{$\Gamma \Rightarrow \psi, \Delta$}
\small \RightLabel{\footnotesize $R\wedge$}
 \BinaryInfC{$\Gamma \Rightarrow \phi \wedge \psi, \Delta$}
 \DisplayProof
  \\[3ex]
   \end{tabular}
 \begin{tabular}{ccc}
\small \AxiomC{$\Gamma, \phi \Rightarrow \Delta$}
\small \AxiomC{$\Gamma, \psi \Rightarrow \Delta$}
\small \RightLabel{\footnotesize $L\vee$}
 \BinaryInfC{$\Gamma, \phi \vee \psi \Rightarrow \Delta$}
 \DisplayProof
 &
\small \AxiomC{$\Gamma \Rightarrow \phi, \Delta$}
\small \RightLabel{\footnotesize $R\vee_1$}
\small \UnaryInfC{$\Gamma \Rightarrow \phi \vee \psi, \Delta$}
 \DisplayProof
&
\small  \AxiomC{$\Gamma \Rightarrow \psi, \Delta$}
\small \RightLabel{\footnotesize $R\vee_2$}
\small \UnaryInfC{$\Gamma \Rightarrow \phi \vee \psi, \Delta$}
 \DisplayProof
 \\[3ex]
\end{tabular}

 \begin{tabular}{ccc}
\small \AxiomC{$\Gamma \Rightarrow \phi, \Delta$}
\small \AxiomC{$\Gamma, \psi \Rightarrow \Delta$}
\small \RightLabel{\footnotesize $L\!\to$}
 \BinaryInfC{$\Gamma, \phi \to \psi \Rightarrow \Delta$}
 \DisplayProof
 &
\small \AxiomC{$\Gamma, \phi \Rightarrow \psi, \Delta$}
\small \RightLabel{\footnotesize $R\!\to$}
\small \UnaryInfC{$\Gamma \Rightarrow \phi \to \psi, \Delta$}
 \DisplayProof
\end{tabular}
\caption{The sequent calculus $\mathbf{LK}$}
\label{GSTN}
\end{center}
\end{figure}
Define $\mathbf{LJ}$ over $\mathcal{L}_p$ similarly to $\mathbf{LK}$, restricting it to operate only on single-conclusion sequents. Over $\mathcal{L}_{\Box}$, consider the following list of modal rules:
\begin{center}
\begin{tabular}{cccccc}
\small \AxiomC{$\Gamma \Rightarrow \phi$}
\small \RightLabel{\footnotesize $K$}
 \UnaryInfC{$\Box \Gamma \Rightarrow \Box \phi$}
 \DisplayProof
 &
\small \AxiomC{$\Gamma \Rightarrow \Delta$}
\small \RightLabel{\footnotesize $D$}
\small \UnaryInfC{$\Box \Gamma \Rightarrow \Box \Delta$}
 \DisplayProof
&
\small  \AxiomC{$\Box \Gamma, \Gamma \Rightarrow \phi$}
\small \RightLabel{\footnotesize $K4$}
\small \UnaryInfC{$\Box \Gamma \Rightarrow \Box \phi$}
 \DisplayProof
 &
\small  \AxiomC{$\Box \Gamma, \Gamma \Rightarrow \Delta$}
\small \RightLabel{\footnotesize $KD4$}
\small \UnaryInfC{$\Box \Gamma \Rightarrow \Box \Delta$}
 \DisplayProof
 \\[3ex]
\end{tabular}
\begin{tabular}{cccccc}
\small  \AxiomC{$\Gamma, \phi \Rightarrow \Delta$}
\small \RightLabel{\footnotesize $LS4$}
\small \UnaryInfC{$\Gamma, \Box \phi \Rightarrow \Delta$}
 \DisplayProof
 &
\small  \AxiomC{$\Box \Gamma \Rightarrow \phi$}
\small \RightLabel{\footnotesize $RS4$}
\small \UnaryInfC{$\Box \Gamma \Rightarrow \Box \phi$}
 \DisplayProof
&
\small  \AxiomC{$\Box \Gamma, \Gamma, \Box \phi \Rightarrow \phi$}
\small \RightLabel{\footnotesize $GL$}
\small \UnaryInfC{$\Box \Gamma \Rightarrow \Box \phi$}
 \DisplayProof
 \\[3ex]
\end{tabular}
\end{center}
where in rules $(D)$ and $(KD4)$, the sequence $\Delta$ contains at most one formula. Now, consider the sequent calculi listed in Table~\ref{tab:sequent_calculi}.
\begin{table}[htpb] 
  \centering 
  \begin{tabular}{|c|c|c|} 
    \hline 
    \textbf{Calculus Name} & \textbf{Definition} & \textbf{Logic Name} \\
    \hline 
    $\mathbf{K}$  & $\mathbf{LK}+\mathrm{K}$        & $\mathsf{K}$   \\
    \hline 
    $\mathbf{D}$  & $\mathbf{LK}+\mathrm{D}$        & $\mathsf{D}$   \\
    \hline 
    $\mathbf{KT}$ & $\mathbf{LK}+\mathrm{K}+\mathrm{LS4}$ & $\mathsf{KT}$  \\
    \hline 
    $\mathbf{K4}$ & $\mathbf{LK}+\mathrm{K4}$       & $\mathsf{K4}$  \\
    \hline 
    $\mathbf{KD4}$ & $\mathbf{LK}+\mathrm{KD4}$      & $\mathsf{KD4}$ \\
    \hline 
    $\mathbf{S4}$ & $\mathbf{LK}+\mathrm{LS4}+\mathrm{RS4}$ & $\mathsf{S4}$  \\
    \hline 
    $\mathbf{GL}$ & $\mathbf{LK}+\mathrm{GL}$       & $\mathsf{GL}$ \\
    \hline 
  \end{tabular}
  \vspace{10pt}
  \caption{Some sequent calculi and their logics.}
  \label{tab:sequent_calculi}
\end{table}
In each of the sequent calculi introduced above, the cut rule can be eliminated without affecting the set of provable sequents. We denote the cut-free version of a calculus by appending a superscript minus sign to its name. For example, the system $\mathbf{LK}$ without the cut rule is denoted by $\mathbf{LK}^-$.
For any of the sequent calculi $G$ defined above, by its \emph{logic} we mean the set of all formulas $\phi$ such that $G \vdash \, \Rightarrow \phi$. We usually denote sequent calculi with boldface letters and their corresponding logics with the same letters in sans-serif font. For instance, the logic $\mathsf{S4}$ is the logic of the calculus $\mathbf{S4}$. The only exceptions are the logics $\mathsf{IPC}$ and $\mathsf{CPC}$, which are the logics of $\mathbf{LJ}$ and $\mathbf{LK}$, respectively.

There is a connection between $\mathsf{IPC}$ and $\mathsf{CPC}$ that we will use in this chapter, called \emph{Glivenko's theorem}. It states that for any sequence $\Gamma \cup \{\phi\}$ of formulas, if $\Gamma \vdash_{\mathsf{CPC}} \phi$, then $\neg \neg \Gamma \vdash_{\mathsf{IPC}} \neg \neg \phi$.

A \emph{superintuitionistic} or \emph{si logic} is a set $L$ of $\mathcal{L}_p$-formulas that is closed under substitution and modus ponens, such that $\mathsf{IPC} \subseteq L \subseteq \mathsf{CPC}$. For any set $\Phi$ of $\mathcal{L}_p$-formulas, the logic $\mathsf{IPC} + \Phi$ refers to the smallest si logic containing $\Phi$. For any si logic $L$ and any set $\Gamma \cup \{\phi\}$ of $\mathcal{L}_p$-formulas, we write $\Gamma \vdash_L \phi$ if $(\, \Rightarrow \phi)$ is provable in $\mathbf{LJ}$ from the sequents $\{\, \Rightarrow \gamma\}_{\gamma \in \Gamma} \cup \{\,\Rightarrow \psi \mid \psi \in L \}$.
A \emph{normal modal logic}, or simply a \emph{modal logic}, is defined as a set $L \supseteq \mathsf{K}$ of $\mathcal{L}_{\Box}$-formulas closed under substitution, modus ponens, and the necessitation rule $\phi/\Box \phi$. It is clear that any logic defined in Table \ref{tab:sequent_calculi} is a modal logic. For any set $\Phi$ of $\mathcal{L}_{\Box}$-formulas, the logic $\mathsf{K} + \Phi$ refers to the smallest modal logic containing $L \cup \Phi$. For any modal logic $L$ and any set $\Gamma \cup \{\phi\}$ of $\mathcal{L}_{\Box}$-formulas, we write $\Gamma \vdash_L \phi$ if $(\, \Rightarrow \phi)$ is provable in $\mathbf{K}$ from the sequents $\{\, \Rightarrow \gamma\}_{\gamma \in \Gamma} \cup \{\, \Rightarrow \psi \mid \psi \in L\}$.
Consider the modal logics $\mathsf{K} + \{\Box p \leftrightarrow p\}$ and $\mathsf{K} + \{\Box p\}$. Both of these modal logics are conservative over classical logic. Moreover, by Makinson's theorem \cite{makinson1971some}, any consistent modal logic is a subset of one of these two logics. There are also two translation functions, $f$ and $c$, from $\mathcal{L}_{\Box}$ to $\mathcal{L}_p$ that preserve every constant, atom, and propositional connectives and defined on the modality by: $(\Box \phi)^f = \phi^f$ and $(\Box \phi)^c = \top$. The function $f$ \emph{forgets} boxes, while $c$ \emph{collapses} them into $\top$, corresponding to the use of the axioms $\Box p \leftrightarrow p$ and $\Box p$, respectively.

Define $\mathsf{T}_k$ as the si logic
\[
\mathsf{IPC} + 
\{\bigwedge_{i=0}^{k} 
\left( 
(p_i \rightarrow \bigvee_{j \neq i} p_j)
\rightarrow 
\bigvee_{j} p_j
\right)
\rightarrow \bigvee_{i} p_i\}.
\]
For the reader familiar with Kripke frames, the logic $\mathsf{T}_k$ is the set of $\mathcal{L}_p$-formulas valid in all finite Kripke frames with branching at most $k$.
Similarly, in the modal case, define $\mathsf{K4BB}_k$ as
\[
\mathsf{K} + \{\Box p \to \Box \Box p, \Box \left( \bigvee_{i \leq k} \Box \left( \boxdot p_i \rightarrow \bigvee_{j \neq i} p_j \right) \rightarrow \bigvee_{i \leq k} \boxdot p_i \right) \rightarrow \bigvee_{i \leq k} \Box \bigvee_{j \neq i} p_j\}.
\]
Again, $\mathsf{K4BB}_k$ is the set of $\mathcal{L}_{\Box}$-formulas that are valid in all finite transitive Kripke frames with branching at most $k$.
We say that a si (resp. modal) logic $L$ has \emph{branching $k$} if $L \supseteq \mathsf{T}_k$ (resp. $L \supseteq \mathsf{K4BB}_k$). A logic is called of \emph{infinite branching} if it does not have branching $k$ for any $k \in \mathbb{N}$.
In the case of si logics, Jeřábek \cite{jevrabek2009substitution} provided an interesting characterization of the logics of infinite branching. First, recall that the $\mathcal{L}_p$-formulas $\mathrm{BD}_n$ are defined by: $\mathrm{BD}_0 := \bot$ and $\mathrm{BD}_{n+1} := p_n \vee (p_n \to \mathrm{BD}_n)$. Then, consider the logics $\mathsf{IPC}+\{\mathrm{BD_2}\}$ and $\mathsf{IPC}+\{\mathrm{BD_3}, \neg p \vee \neg \neg p\}$ and denote them by $\mathsf{BD_2}$ and $\mathsf{BD_3}+\mathsf{KC}$, respectively. 
Jeřábek's characterization states that a si logic $L$ is of infinite branching if and only if $\mathsf{L} \subseteq \mathsf{BD_2}$ or $\mathsf{L} \subseteq \mathsf{BD_3}+\mathsf{KC}$.

Finally, some points about binary strings and computation. By $\{0, 1\}^*$, we mean the set of all binary strings. For any $w \in \{0, 1\}^*$, let $|w|$ denote the length of $w$, and for $i \leq |w|$, let $w_i$ represent the $i$-th bit of $w$. By a \emph{sequence of short binary strings} $\{w_n\}_{n=n_0}^\infty$, we mean a sequence of binary strings such that there exists a polynomial $p$ with $|w_n| \leq p(n)$ for any $n \geq n_0$. We can encode any type of finitary object, such as numbers, formulas, and proofs, by binary strings. Therefore, we can apply similar definitions to these objects via their representations.
By a \emph{language} $L$, we mean any subset of $\{0, 1\}^*$, and $L_n$ is defined as the set $\{w \in L \mid |w| = n\}$.
The class $\mathbf{FP}$ consists of all functions $f: \{0, 1\}^* \to \{0, 1\}^*$ that can be computed by a deterministic Turing machine in polynomial time. We define the classes $\mathbf{P}$ and $\mathbf{PSPACE}$ as the set of languages decidable by a deterministic Turing machine in polynomial time and space, respectively. Similarly, $\mathbf{NP}$ refers to the set of languages decidable by a non-deterministic Turing machine in polynomial time. The class $\mathbf{CoNP}$ is defined as the set of languages whose complement belongs to $\mathbf{NP}$.
It is known that a language $L$ is in $\mathbf{NP}$ if and only if there exists a polynomial time decidable predicate $P \subseteq \{0, 1\}^* \times \{0, 1\}^*$ and a polynomial $p$ such that $w \in L$ if and only if there exists $u \in \{0, 1\}^*$ with $|u| \leq p(|w|)$ and $P(u, w)$. 
It is widely believed that $\mathbf{NP} \neq \mathbf{CoNP}$, and hence $\mathbf{NP} \neq \mathbf{P}$. Moreover, it is believed that $\mathbf{NP} \subsetneq \mathbf{PSPACE}$.

\section{The Propositional World} \label{Sec: Propositional World}

As promised in the introduction, in this section we show that the propositional language is sufficiently powerful to address various matters related to \emph{finite} structures. We will accomplish this in two subsections. First, we explore the \emph{expressive} power of the propositional language. Then, in the second subsection, we use this language to define circuits and examine the \emph{computational} power of the language in computing functions between finite sets.

\subsection{Descriptive Power} \label{Subsection: descriptive power}

Fix a multi-sorted first-order language $\mathcal{L}$ with equality and without any function symbols and choose an interpretation $I$ that assigns to each sort $\sigma$ a finite, non-empty set $I(\sigma)$. We then expand the language $\mathcal{L}$ to a new language $\mathcal{L}(I)$ by adding a constant symbol for each element in these sets. Note that we leave all relational symbols (except for equality) unspecified.
Given an \emph{$\mathcal{L}(I)$-sentence} $\phi$, we define a propositional formula $\llbracket \phi \rrbracket$ that expresses the same content as $\phi$. The key idea is this: for each relational symbol (except equality) of the form $R(x_1^{\sigma_1}, \ldots, x_k^{\sigma_k})$, we introduce a propositional variable $p_{a_1a_2\ldots a_k}$ to represent the truth value of $R(a_1, \ldots, a_k)$ for every $a_i \in I(\sigma_i)$. Equality statements $a = b$ are interpreted by evaluating whether $a$ and $b$ denote the same element. Propositional connectives are mapped to themselves, and quantifiers are translated into finite conjunctions and disjunctions.
More formally, the translation $\llbracket{-}\rrbracket$ from $\mathcal{L}(I)$-sentences to propositional formulas is defined recursively as follows:
\vspace{3pt}
\begin{itemize}
  \item[$\bullet$] $\llbracket c \rrbracket = c$ for any $c \in \{\top, \bot\}$;
  \item[$\bullet$] $\llbracket a = b \rrbracket = \top$ if $a = b$, and $\llbracket a = b \rrbracket = \bot$ otherwise;
  \item[$\bullet$] $\llbracket R(a_1, \ldots, a_k) \rrbracket = p_{a_1a_2\ldots a_k}$ for any $a_i \in I(\sigma_i)$;
  \item[$\bullet$] $\llbracket \phi \circ \psi \rrbracket = \llbracket \phi \rrbracket \circ \llbracket \psi \rrbracket$ for any logical connective $\circ \in \{\wedge, \vee, \to\}$;
  \item[$\bullet$] $\llbracket \forall x^{\sigma} \, \phi(x) \rrbracket = \bigwedge_{a \in I(\sigma)} \llbracket \phi(a) \rrbracket$ and $\llbracket \exists x^{\sigma} \, \phi(x) \rrbracket = \bigvee_{a \in I(\sigma)} \llbracket \phi(a) \rrbracket$.
\end{itemize}
\vspace{4pt}
For any extension of $I$ to a full interpretation $I'$ (one that also interprets the relational symbols), we can define a Boolean assignment that maps each propositional variable $p_{a_1a_2\ldots a_k}$ to true if and only if $R(a_1, \ldots, a_k)$ holds under $I'$, for all $a_i \in I(\sigma_i)$. Conversely, given any Boolean assignment to the atoms $p_{a_1a_2\ldots a_k}$, we can construct a corresponding extension of $I$ to a full interpretation.
It is straightforward to verify that under this correspondence, the $\mathcal{L}(I)$-sentence $\phi$ holds under the interpretation $I'$ if and only if the propositional formula $\llbracket \phi \rrbracket$ evaluates to true under the associated Boolean assignment. In this sense, $\phi$ and $\llbracket \phi \rrbracket$ express the same content.

Now, we use the propositional translation $\llbracket - \rrbracket$ to introduce the propositional counterparts of certain first-order sentences that will play a role later in the chapter. These examples will also help illustrate how the translation operates in concrete cases.

\begin{example}\label{FunctionExample}
Let $\mathcal{L}$ be a two-sorted language with sorts $\sigma$ and $\tau$, containing no function symbols and a single binary relation symbol $R(x^{\sigma}, y^{\tau})$. Define $I(\sigma)=\{1, \ldots, m\}$ and $I(\tau)=\{1, \ldots, n\}$.
The formula  
\[
\phi=\forall x^{\sigma} \forall y_1^{\tau} y_2^{\tau} \left(R(x^{\sigma}, y_1^{\tau}) \wedge R(x^{\sigma}, y_2^{\tau}) \to y^{\tau}_1 = y^{\tau}_2\right)
\]
expresses that the relation $R$ is functional.
To transform this sentence into a propositional formula, we introduce propositional atoms $p_{ij}$ to represent the truth of $R(i, j)$, for all $i \leq m$ and $j \leq n$. Then, we have
\[
\llbracket \phi \rrbracket=\bigwedge_{i \leq m} \bigwedge_{j_1 \leq n} \bigwedge_{j_2 \leq n}((p_{ij_1} \wedge p_{ij_2}) \to \llbracket j_1=j_2\rrbracket).
\]
When $j_1=j_2$, the formula $\llbracket \phi \rrbracket$ is trivially true as $\llbracket j_1=j_2\rrbracket=\top$. Hence, it is enough to restrict to $j_1 \neq j_2$. As $\llbracket j_1=j_2\rrbracket=\bot$ in this case, $\llbracket \phi \rrbracket$ is equivalent to  
\[
\bigwedge_{i \leq m} \bigwedge_{j_1 \neq j_2 \leq n} (\neg p_{ij_1} \vee \neg p_{ij_2}).
\] 
For the second example, consider
\[
\psi=\forall x_1^{\sigma} x_2^{\sigma} \forall y^{\tau} \left(R(x_1^{\sigma}, y^{\tau}) \wedge R(x^{\sigma}_2, y^{\tau}) \to x^{\sigma}_1 = x^{\sigma}_2\right)
\]
stating that $R$ is an injective relation. Using a similar argument as above, we have:  
\[
\llbracket \psi \rrbracket=\bigwedge_{i_1 \neq i_2 \leq m} \bigwedge_{j \leq n} (\neg p_{i_1j} \vee \neg p_{i_2j}).
\]
As the final example, consider the sentence $\theta=\forall x^{\sigma} \exists y^{\tau} R(x^{\sigma}, y^{\tau})$ stating that $R$ is a total relation. Therefore, we have  
$
\llbracket \theta \rrbracket=
\bigwedge_{i \leq m} \bigvee_{j \leq n} p_{ij}.
$
\end{example}

\begin{example}\label{ExampleClique}
Let $\mathcal{L}$ be the first-order language with one sort $\sigma$, no function symbols, and the binary relation $E(x^{\sigma}, y^{\sigma})$. Define $I(\sigma)=\{1, \ldots, n\}$, and interpret any element of $I(\sigma)$ as a vertex of a graph with $n$ vertices. Now, if we interpret $E(x^{\sigma}, y^{\sigma})$ as ``there is an edge from $x$ to $y$,'' then any extension of $I$ to a full interpretation corresponds to a directed graph on $n$ vertices.
First, consider the sentence  
\[
\phi=\forall x^{\sigma} \neg E(x^{\sigma}, x^{\sigma}) \wedge \forall x_1^{\sigma} x_2^{\sigma} (E(x^{\sigma}, y^{\sigma}) \to E(y^{\sigma}, x^{\sigma})),
\]
which states that the graph is symmetric and has no loops; i.e., it is essentially a simple graph. 
To translate $\phi$ into a propositional formula, we introduce propositional atoms $s_{ij}$ to encode whether $E(i, j)$ holds, for all $i, j \leq n$. Then,  
\[
\llbracket \phi \rrbracket = (\bigwedge_{i \leq n} \neg s_{ii}) \wedge \bigwedge_{i_1, i_2 \leq n} (s_{i_1i_2} \to s_{i_2i_1}).
\]
To simplify the propositional translation, we use the following trick: Instead of $s_{ij}$'s, use the atoms $p_{ij}$ for any pair $\{i, j\} \subseteq \{1, \ldots, n\}$ that contains exactly two elements. It is clear that the assignments for these new atoms are in one-to-one correspondence with the assignments that make $\llbracket \phi \rrbracket$ true. Using this trick, we can indirectly refer to simple graphs without writing out any propositional formula.

Now, let us extend $\mathcal{L}$ by adding a new sort $\tau$ and a new relational symbol $R(z^{\tau}, x^{\sigma})$. Define $I(\tau) = \{1, \ldots, k\}$. From Example \ref{FunctionExample}, we know how to write a sentence $\psi$ that states that $R(z^{\tau}, x^{\sigma})$ is an injective function from $\{1, \ldots, k\}$ into $\{1, \ldots, n\}$. 
Then, consider the sentence  
\[
\theta = \psi \wedge \forall z_1^{\tau} z_2^{\tau} \forall x_1^{\sigma} x_2^{\sigma} \left[ (R(z_1^{\tau}, x_1^{\sigma}) \wedge R(z_2^{\tau}, x_2^{\sigma}) \wedge z_1^{\tau} \neq z_2^{\tau}) \to E(x_1^{\sigma}, x_2^{\sigma}) \right],
\]
which states that the function $R$ maps different elements to connected vertices. This simply says that $R$ describes a $k$-clique inside the graph.
Using $q_{ui}$ to encode whether $R(u, i)$ holds, for $u \leq k$ and $i \leq n$, the propositional formula $\llbracket \theta \rrbracket$ is the conjunction of the following:
\begin{itemize}
  \item[$\bullet$] 
  $\bigvee_{i \leq n} q_{ui}$, for all $u \leq k$,
  \item[$\bullet$] 
  $\neg q_{ui_1} \vee \neg q_{ui_2}$, for all $u \leq k$ and any $i_1 \neq i_2 \leq n$,
  \item[$\bullet$] 
  $\neg q_{u_1i} \vee \neg q_{u_2i}$, for all $u_1 \neq u_2 \leq k$ and any $i \leq n$,
  \item[$\bullet$] 
  $\neg q_{u_1i_1} \vee \neg q_{u_2i_2} \vee p_{i_1i_2}$, for all $u_1 \neq u_2 \leq k$ and any set $\{i_1, i_2\} \subseteq \{1, \ldots, n\}$ with two elements.
\end{itemize}
Note that the formula is monotone in $\bar{p}$. This reflects the fact that if additional edges are added to the graph, a $k$-clique selected by $\bar{q}$ remains a $k$-clique.
\end{example}

\begin{example}\label{ExampleColor}
We continue with a similar language as in Example \ref{ExampleClique}, consisting of two sorts, $\sigma$ and $\tau$, and binary relational symbols $E(x^{\sigma}, y^{\sigma})$ and $R(x^{\sigma}, z^{\tau})$. Define $I(\sigma)=\{1, \ldots, n\}$ and $I(\tau)=\{1, \ldots, l\}$. From Example \ref{FunctionExample}, we know how to write a sentence $\phi$ that states that $R(x^{\sigma}, z^{\tau})$ is a function from $\{1, \ldots, n\}$ into $\{1, \ldots, l\}$. 
Then, consider the sentence  
\[
\psi = \phi \wedge \forall z_1^{\tau} z_2^{\tau} \forall x_1^{\sigma} x_2^{\sigma} \left[(R(x^{\sigma}_1, z^{\tau}_1) \wedge R(x^{\sigma}_2, z^{\tau}_2) \wedge E(x^{\sigma}_1, x^{\sigma}_2)) \to z^{\tau}_1 \neq z^{\tau}_2 \right],
\]
which states that $R$ maps connected vertices into different elements. This simply says that $R$ describes an $l$-coloring of the graph.
Using $p_{ij}$'s as in Example \ref{ExampleClique} to encode the simple graph and $r_{ia}$ to encode $R(i, a)$, for any $i \leq n$ and $a \leq l$, the formula $\llbracket \psi \rrbracket$ is the conjunction of the following:
\begin{itemize}
  \item[$\bullet$] 
  $\bigvee_{a \leq l} r_{ia}$, for all $i \leq n$,
  \item[$\bullet$] 
  $\neg r_{ia_1} \vee \neg r_{ia_2}$, for all $a_1 \neq a_2 \leq l$ and any $i \leq n$,
  \item[$\bullet$] 
  $\neg r_{i_1a} \vee \neg r_{i_2a} \vee \neg p_{i_1i_2}$, for all $a \leq l$ and any set $\{i_1, i_2\} \subseteq \{1, \ldots, n\}$ with two elements.
\end{itemize}
\end{example}

So far, we have seen that any first-order formula with uninterpreted relational symbols, provided it refers exclusively to finite domains, can be translated into an equivalent propositional formula. By systematically ranging over all finite possibilities, one can express the finite content of a first-order formula as a sequence of propositional formulas.
More precisely, let $\mathcal{L}$ be a multi-sorted first-order language with equality and without any function symbols. For each sort $\sigma$ in $\mathcal{L}$, fix a polynomial $P_{\sigma}$ in one variable, and for each natural number $n \in \mathbb{N}$, define the interpretation $I_n$ by setting $I_n(\sigma) = \{1, \ldots, P_{\sigma}(n)\}$. Let $\llbracket \phi \rrbracket_n$ denote the propositional translation of the first-order $\mathcal{L}$-sentence $\phi$ with respect to $I_n$. The resulting sequence $\{\llbracket \phi \rrbracket_n\}_n$ can then be viewed as a propositional encoding of $\phi$ across all finite domains, parametrized polynomially by $n$. It is also customary to restrict to $n \geq n_0$, for some given $n_0 \in \mathbb{N}$ to encode $\phi$ evaluated over all \emph{large enough} finite domains.
Since the size of each domain $I_n(\sigma)$ grows polynomially with $n$, it follows that the size of $\llbracket \phi \rrbracket_n$ is also polynomially bounded in $n$. Consequently, this translation produces a sequence of short propositional formulas (see Section \ref{Sec: Preliminaries} for the definition). In cases where all extensions of $I_n$ to a full interpretation make $\phi$ true, the translation gives us a sequence of short classical \emph{tautologies}. A canonical example of such a situation is:

\begin{example}[Propositional Pigeonhole Principle] \label{Example: PHP}
Let $n_0 \in \mathbb{N}$ be a number and $m(n)$ be a polynomially bounded function satisfying $m(n) > n$, for any $n \geq n_0$. By the pigeonhole principle, there is no injective function from the set $\{1, \ldots, m(n)\}$ to the set $\{1, \ldots, n\}$, if $n \geq n_0$. We want to express this fact propositionally. As illustrated in Example~\ref{FunctionExample}, to propositionally express that a binary relation $R \subseteq \{1, \ldots, m(n)\} \times \{1, \ldots, n\}$ is an injective function, it suffices to use atoms $p_{ij}$ to indicate that $R(i, j)$, for any $i \leq m(n)$ and $j \leq n$. Then, the conjunction of the following clauses is the propositional translation we needed:
\begin{itemize}
    \item[$\bullet$] 
    $\neg p_{ij_1} \vee \neg p_{ij_2}$, for all $i \leq m(n)$ and $j_1, j_2 \leq n$ with $j_1 \neq j_2$,
    \item[$\bullet$]  
    $\neg p_{i_1j} \vee \neg p_{i_2j}$, for all $i_1, i_2 \leq m(n)$ and $j \leq n$ with $i_1 \neq i_2$,
    \item[$\bullet$] 
    $\bigvee_{j \leq n} p_{ij}$, for all $i \leq m(n)$.
\end{itemize}
Since every assignment for the above clauses corresponds one-to-one with an injective function $R \subseteq \{1, \ldots, m(n)\} \times \{1, \ldots, n\}$, and there is no such $R$, we conclude that the above conjunction is unsatisfiable. We define the \emph{propositional pigeonhole principle}, denoted by $\mathrm{PHP}^{m(n)}_n$, as the negation of this conjunction. Therefore, $\{\mathrm{PHP}^{m(n)}_n\}_{n \geq n_0}$ is a sequence of short tautologies. Notable choices of $m(n)$ discussed in the literature include $m(n) = n + 1$, $m(n) = 2n$, and $m(n) = n^2$ \cite{krajivcek2019proof}.
\end{example}

Using the explained translation, we transformed first-order formulas into sequences of short propositional formulas, establishing a connection between two types of \emph{description}. We can also shift the first-order side of this connection to something computational to transform any language in $\mathbf{NP}$ into a sequence of short propositional formulas. The key is to find an appropriate way to \emph{describe} the languages in $\mathbf{NP}$.
To this goal, interpret any binary string $w \in \{0, 1\}^*$ of length $n$ as a unary relation $R_w$ on the set $\{1, \ldots, n\}$, where $i \in R_w$ if and only if $w_i = 1$, for any $i \leq n$. Then, one direction of the celebrated Fagin's Theorem states:

\begin{theorem}[Fagin \cite{immerman1998descriptive}] \label{Fagin}
Let $L \in \mathbf{NP}$ be a language. Then, there exists a one-sorted first-order language $\mathcal{L}$ with no function symbols and a first-order formula $\phi(\bar{S}, R)$, where $R$ is a unary predicate such that
$(\{1, \ldots, n\}, R_w) \vDash \exists \bar{S} \, \phi(\bar{S}, R)$
iff $w \in L$, for every $w \in \{0, 1\}^*$.
\end{theorem}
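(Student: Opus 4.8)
The plan is to prove the stated (nontrivial) direction of Fagin's theorem by the classical route: encode an accepting run of a nondeterministic polynomial-time Turing machine as a polynomially large \emph{computation tableau}, name the cells of that tableau by tuples of universe elements, and express ``the tableau is a legal accepting run of the machine on the input coded by $R$'' as a single first-order formula $\phi$ whose second-order parameters $\bar S$ stand for the tableau. Note that $\bar S$ are second-order variables bound by the outer $\exists$, so the language $\mathcal{L}$ itself need only contain $R$ together with a binary relation symbol $<$, which I interpret on $(\{1,\dots,n\},R_w)$ as the natural order --- this is a relation, not a function symbol, and having the order available on a finite domain is standard in descriptive complexity (cf.\ Immerman~\cite{immerman1998descriptive}).

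First I would unwind the definition of $\mathbf{NP}$ from Section~\ref{Sec: Preliminaries} to fix a nondeterministic Turing machine $M$ with finite state set $Q$, a unique absorbing accepting state $q_{\mathrm{acc}}$, tape alphabet $\Gamma\supseteq\{0,1,\sqcup\}$ and transition relation $\delta$, together with a constant $k$, such that $M$ decides $L$ and, on every input of length $n$, every branch runs for exactly $n^k$ steps (staying in a sink state once it would halt) while visiting only tape cells $1,\dots,n^k$ --- the usual normal-form assumptions; the finitely many exceptional small $n$ are absorbed into a fixed finite disjunction of first-order sentences, each pinning down one specific small structure, which is possible over a finite relational language. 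An accepting run is then a tableau $T$ assigning to each pair $(t,c)\in\{0,\dots,n^k-1\}\times\{1,\dots,n^k\}$ a value in the finite set $S:=\Gamma\times(Q\cup\{\ast\})$, recording the symbol in cell $c$ at time $t$ and whether the head is there and in which state.

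The key translation step is that $n^k=|\{1,\dots,n\}^k|$: I would identify time stamps and cell indices with $k$-tuples of universe elements via the lexicographic order induced by $<$, so that a tableau cell is named by a $2k$-tuple $(\bar t,\bar c)$, and introduce one $2k$-ary second-order variable $T_\alpha$ for each $\alpha\in S$, collected into $\bar S=(T_\alpha)_{\alpha\in S}$. Over $<$ one first-order defines the arithmetic needed on such tuples --- ``$\bar x$ is the lexicographic successor of $\bar y$'', ``$\bar x$ is least / greatest'', and ``$\bar c$ codes a number $\le n$'', together with the last coordinate of such a $\bar c$. Then $\phi(\bar S,R)$ is the conjunction of four first-order conditions: (i) well-definedness, i.e.\ each $(\bar t,\bar c)$ carries exactly one label and every time row has exactly one head; (ii) initialization, i.e.\ the row $\bar t=\bar 0$ codes the start configuration, using $R$ to place $w_i$ in cell $i$ for $i\le n$ and $\sqcup$ elsewhere, with the head on cell $1$ in state $q_0$; (iii) legality of steps, i.e.\ for all lexicographically consecutive rows $\bar t,\bar t'$ and consecutive cells $\bar c_\ell,\bar c,\bar c_r$, the labels at $(\bar t,\bar c_\ell),(\bar t,\bar c),(\bar t,\bar c_r)$ and $(\bar t',\bar c)$ form one of the finitely many ``windows'' compatible with $\delta$ (including the rule that cells far from the head are copied unchanged), the two tape ends being handled by reading a missing neighbour as $\sqcup$; and (iv) acceptance, i.e.\ some $(\bar t,\bar c)$ has the head in state $q_{\mathrm{acc}}$. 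Each conjunct is a finite Boolean combination indexed by the finite data $Q,\Gamma,\delta,S$, with quantifiers only over the fixed-length tuples $\bar t,\bar c,\dots$, so $\phi$ is genuinely first-order and does not depend on $n$.

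Correctness then splits as usual. If $w\in L$, an accepting branch of $M$, padded to $n^k$ steps, defines a tableau; interpreting each $T_\alpha$ accordingly satisfies every conjunct, so $(\{1,\dots,n\},R_w)\vDash\exists\bar S\,\phi$. Conversely, from any choice of relations $\bar S$ satisfying $\phi$ one shows, by induction on time rows (base case from (ii), inductive step from (iii), with (i) guaranteeing that the rows really are configurations), that the tableau encodes a genuine computation branch of $M$ on $w$; then (iv) forces that branch to be accepting, so $w\in L$. I expect the main obstacle to be exactly item (iii): designing the local window conditions so that \emph{models of $\phi$ correspond precisely to legal branches} --- in particular so that heads can neither appear nor vanish spuriously and the head movement prescribed by $\delta$ is captured faithfully --- which is the first-order rendering of the Cook--Levin tableau argument, together with the bookkeeping for the tape boundaries and the ``exactly one head'' invariant. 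Setting up the $k$-tuple arithmetic over $<$ is routine but also needs to be done with care.
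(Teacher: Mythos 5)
The paper does not prove this theorem at all: it is quoted as a known result from descriptive complexity and attributed to \cite{immerman1998descriptive}, so there is no in-paper argument to compare yours against. Your proposal is the standard tableau proof of the $\mathbf{NP}\subseteq\exists\mathrm{SO}$ direction of Fagin's theorem (normalize the machine, index tableau cells by $2k$-tuples via the lexicographic order, existentially quantify the tableau relations, and express initialization, local window legality, and acceptance in first order), and as a sketch it is sound; the small-$n$ patch and the explicit ``exactly one head per row'' conjunct are both legitimate.

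The one point you should be more careful about is the order predicate, since it is not a cosmetic convenience but the crux of matching the theorem as stated. Without an order, $(\{1,\ldots,n\},R_w)$ and $(\{1,\ldots,n\},R_{w'})$ are isomorphic whenever $w'$ permutes $w$, so no sentence $\exists\bar S\,\phi(\bar S,R)$ can distinguish them, yet a general $L\in\mathbf{NP}$ need not be permutation-invariant; existentially quantifying the order inside $\bar S$ (the textbook route for unordered Fagin) therefore does \emph{not} work here. Your choice to put $<$ into $\mathcal{L}$ with its canonical interpretation is the right fix, but it means the structure must really be read as $(\{1,\ldots,n\},<,R_w)$, and you should say explicitly how this interacts with the paper's use of the theorem: in the translation $\llbracket-\rrbracket$ of Section~\ref{Subsection: descriptive power} and in Theorem~\ref{NPToPropositional}, the symbol $<$ has to be treated as an \emph{interpreted} relation, with $\llbracket a<b\rrbracket$ evaluating to $\top$ or $\bot$ exactly as equality does, rather than generating propositional atoms $p_{ab}$ that would then be spuriously quantified in the satisfiability statement. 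With that clarification the argument goes through; the remaining work is, as you note, the routine but fiddly verification that models of the window conditions correspond exactly to legal runs.
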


Using this descriptive characterization of $\mathbf{NP}$ and the translation process we explained, we obtain the following:

\begin{theorem}\label{NPToPropositional}
For any language $L \in \mathbf{NP}$ (resp. $L \in \mathbf{CoNP}$), there exists a sequence $\{\phi_n(p_1, \ldots, p_n, \bar{q})\}_n$ of short propositional formulas such that for every $n \in \mathbb{N}$ and every $w \in \{0, 1\}^n$, the formula $\phi_n(w_1, \ldots, w_n, \bar{q})$ is satisfiable (resp. a tautology) iff $w \in L$.   
\end{theorem}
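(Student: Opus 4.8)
The plan is to combine Fagin's theorem (Theorem \ref{Fagin}) with the propositional translation $\llbracket - \rrbracket$ developed in Subsection \ref{Subsection: descriptive power}.

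First I would handle the $\mathbf{NP}$ case. Given $L \in \mathbf{NP}$, apply Fagin's theorem to obtain a one-sorted first-order language $\mathcal{L}$ with no function symbols and a formula $\phi(\bar S, R)$, with $R$ unary, such that $(\{1,\ldots,n\}, R_w) \vDash \exists \bar S\, \phi(\bar S, R)$ iff $w \in L$. Here $\bar S$ is a tuple of relation symbols, say of arities $k_1, \ldots, k_r$. Now fix $n$ and instantiate the translation machinery with the single sort $\sigma$ interpreted as $I_n(\sigma) = \{1, \ldots, n\}$ (this is the polynomial-domain setup with $P_\sigma(n) = n$). The unary predicate $R$ is not left uninterpreted: instead, for each $i \leq n$ I treat the atom representing $R(i)$ as the input variable $p_i$. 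For each $S_t$ in $\bar S$ I introduce the propositional atoms $q^{(t)}_{a_1 \ldots a_{k_t}}$ for all $a_j \leq n$; collect all of these into $\bar q$. Then set
\[
\phi_n(p_1, \ldots, p_n, \bar q) \; := \; \llbracket \phi(\bar S, R) \rrbracket_n,
\]
where the translation is applied exactly as in the recursive clauses of Subsection \ref{Subsection: descriptive power}, except that the atom $\llbracket R(i) \rrbracket$ is taken to be $p_i$ rather than a fresh variable. The existential quantifier $\exists \bar S$ is deliberately \emph{not} translated into a finite disjunction; rather, it is absorbed into the free second-sort atoms $\bar q$, so that ranging over assignments to $\bar q$ corresponds precisely to ranging over full interpretations of $\bar S$ on $\{1,\ldots,n\}$, by the correspondence noted after the definition of $\llbracket - \rrbracket$. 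Thus for a fixed $w \in \{0,1\}^n$, the instance $\phi_n(w_1, \ldots, w_n, \bar q)$ is satisfiable (in the variables $\bar q$) iff some full interpretation of $\bar S$ over $\{1,\ldots,n\}$ together with $R_w$ satisfies $\phi$, i.e.\ iff $(\{1,\ldots,n\}, R_w) \vDash \exists \bar S\, \phi(\bar S, R)$, i.e.\ iff $w \in L$. The size bound is immediate: since the domain has size $n$ and $\phi$ is a fixed formula, $\llbracket \phi \rrbracket_n$ has size polynomial in $n$, so $\{\phi_n\}_n$ is a sequence of short formulas. For the $\mathbf{CoNP}$ case, apply the $\mathbf{NP}$ construction to the complement $\bar L \in \mathbf{NP}$, obtaining $\{\phi_n\}_n$ with $\phi_n(w_1,\ldots,w_n,\bar q)$ satisfiable iff $w \notin L$, and then take the negation together with a universal closure over $\bar q$: replacing $\exists \bar S$ by $\forall \bar S$ amounts, under the translation, to taking $\neg \llbracket \phi \rrbracket_n$ but now reading $\bar q$ via a conjunction; concretely one sets the $\mathbf{CoNP}$ formula to be $\neg \psi_n$ where $\psi_n$ witnesses $\bar L$, and checks that $\neg \psi_n(w_1,\ldots,w_n,\bar q)$ — read with $\bar q$ now playing a universally-quantified role, or equivalently stated as the formula obtained from $\forall \bar S\, \neg\phi$ — is a tautology in $\bar q$ iff every extension fails $\psi$, i.e.\ iff $w \in L$.

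The main obstacle I expect is purely bookkeeping: making precise the claim that the free atoms $\bar q$ can stand in for the second-order quantifier $\exists \bar S$ without any translation step, and dually that the $\mathbf{CoNP}$ version must quantify universally over $\bar q$ (so the right object is ``$\neg(\text{satisfiable instance of the complement})$'', which one must verify is genuinely a \emph{tautology} rather than merely unsatisfiable-when-negated). One has to be careful that the statement of the theorem allows \emph{different} sequences $\{\phi_n\}$ for $L$ and for $\bar L$ — there is no claim that a single sequence handles both cases — so the $\mathbf{CoNP}$ clause is not obtained by literally negating the $\mathbf{NP}$ clause for the same $L$, and I would spell this out. Everything else — the correspondence between Boolean assignments to the relational atoms and full interpretations, and the semantic equivalence of $\phi$ and $\llbracket \phi \rrbracket$ — is exactly what was established in Subsection \ref{Subsection: descriptive power}, so it can be invoked directly.
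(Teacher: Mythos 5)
Your proposal is correct and follows essentially the same route as the paper: apply Fagin's theorem, translate the first-order matrix via $\llbracket - \rrbracket_n$ over the domain $\{1,\ldots,n\}$ while leaving the atoms encoding $\bar{S}$ as the free variables $\bar{q}$ (so that satisfiability in $\bar{q}$ realizes the second-order existential), and derive the $\mathbf{CoNP}$ case by negating the formula obtained for the complement language. The paper compresses the $\mathbf{CoNP}$ step to ``follows immediately as a consequence,'' which is exactly the argument you spell out.
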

\begin{proof}
We will prove the case for $\mathbf{NP}$ only; the other case follows immediately as a consequence. By Theorem \ref{Fagin}, there exists a one-sorted first-order language $\mathcal{L}$ and a first-order formula $\phi(R, \bar{S})$ in $\mathcal{L}$ such that for any string $w \in \{0,1\}^n$, we have $w \in L$ iff $(\{1, \ldots, n\}, R_w) \vDash \exists \bar{S} \, \phi(R, \bar{S})$.
Fix $n \in \mathbb{N}$, use the interpretation $I_n$ that maps the only sort to $\{1, \ldots, n\}$, and translate the formula $\phi(R, \bar{S})$ into its propositional counterpart $\phi_n = \llbracket \phi \rrbracket_n$. Let $p_1, \ldots, p_n$ and $\bar{q}$ denote the propositional variables in $\phi_n$ that encode the relations $R$ and $\bar{S}$, respectively.
Since there is a one-to-one correspondence between interpretations of the relations in $\bar{S}$ (resp.\ $R$) and Boolean assignments to $\bar{q}$ (resp.\ $\bar{p}$), it follows that $(\{1, \ldots, n\}, R_w) \vDash \exists \bar{S} \, \phi(R, \bar{S})$ iff the Boolean assignment $w_1, \ldots, w_n$ to the atoms $p_1, \ldots, p_n$ makes $\phi_n(p_1, \ldots, p_n, \bar{q})$ satisfiable.
\end{proof}

\subsection{Computational Power} \label{Subsec: Computational Power}

In the previous subsection, we showed that the propositional language is expressive enough to represent first-order formulas over finite domains. Moreover, by focusing on satisfiability, we saw that any language in $\mathbf{NP}$ can be encoded as a sequence of short propositional formulas. Similarly, the propositional language can be used to \emph{compute} any function between finite domains. To formalize such computations, we introduce \emph{circuits} as our model of computation:

\begin{definition}[$\mathfrak{L}$-Circuit]
Let $\mathfrak{L} \in \{\mathcal{L}_b, \mathcal{L}_p, \mathcal{L}_{\Box}\}$. A \emph{multi $\mathfrak{L}$-circuit} $C$ with $n$ inputs, $m$ outputs, and size $s$ is a directed acyclic graph $C = (V, E, t, O)$ where:
\begin{itemize}
  \item[$\bullet$] $V$ is the set of \emph{gates} with $s$ elements, and $E \subseteq V \times V$ is the set of directed \emph{edges},
  \item[$\bullet$] $t: V \to \{x_1, \dots, x_n\} \cup \mathcal{L}$ assigns a type to each gate, where the arity of the type matches the gate's indegree (the arity of $x_i$'s is zero),
  \item[$\bullet$] $O$ is a sequence of $m$ gates that serve as the \emph{output} gates.
\end{itemize}
A multi $\mathfrak{L}$-circuit is called \emph{monotone} if it contains no negation or implication gates. It is called an \emph{$\mathfrak{L}$-circuit} if it has exactly one output.
\end{definition}

Observe that any $\mathfrak{L}$-formula can be viewed as an $\mathfrak{L}$-circuit in which every gate other than the output gate has outdegree one. Moreover, given any $\mathfrak{L}$-circuit, there is a canonical way to convert it into an $\mathfrak{L}$-formula by duplicating shared sub-circuits as needed. We denote this formula by $[C]$.
We call a (multi) $\mathcal{L}_b$-circuit simply a \emph{(multi) circuit}. Any (multi) $\mathcal{L}_p$-circuit can be transformed into a (multi) circuit by replacing $\to$ gates with combinations of $\neg$ and $\vee$ gates. Conversely, any (multi) circuit can be transformed into a (multi) $\mathcal{L}_p$-circuit by rewriting $\neg$ using $\to$ and $\bot$. These transformations incur only a linear increase in (multi) circuit size. Therefore, we treat (multi) $\mathcal{L}_p$-circuits and (multi) circuits as essentially equivalent.

Given any (multi) $\mathcal{L}_{\Box}$-circuit $C$, we can erase all $\Box$ gates and continue the edge coming into the gate to all edges going out of it to obtain a (multi) $\mathcal{L}_p$-circuit, which we denote by $C^f$. Alternatively, we may eliminate all incoming edges to each $\Box$ gate and replace each such gate with a constant $\top$ gate, yielding another (multi) $\mathcal{L}_p$-circuit denoted by $C^c$. Note that the size of $C^f$ and $C^c$ are bounded by the size of $C$. These two operations correspond to the forgetful and collapse translations introduced in Section~\ref{Sec: Preliminaries}.

If $C$ is a multi circuit with $n$ inputs and $m$ outputs and $w \in \{0, 1\}^n$, we write $C(w)$ to denote the string output on the sequence of output gates when $C$ runs on the input $x_i = w_i$, for any $1 \leq i \leq n$. A function $f : \{0,1\}^n \to \{0,1\}^m$ is said to be \emph{computed} by $C$ if $C(w) = f(w)$ for every $w \in \{0,1\}^n$.
It is easy to verify that any (monotone) function can be computed by a (monotone) multi circuit, although the size of the multi circuit may be exponential in $n$ and $m$. Since any finite set can be embedded into $\{0,1\}^n$ for some $n \in \mathbb{N}$, we conclude, just as we did in the context of first-order descriptions, that the propositional language is expressive enough to compute any function between finite domains.

Similar to the previous subsection, we can use \emph{sequences} of multi circuits to compute functions defined over all binary strings. Before formalizing this idea, we introduce a technical device. For any string $w \in \{0, 1\}^l$ and any fixed $m \geq l$, we define the padded version of $w$ of length $2m$ by $pad_m(w) = 1w_1 1w_2 \ldots 1w_l 00\ldots 0$, where the number of zeros is $2m - 2l$.

\begin{definition}
A sequence $\{C_n\}_n$ of multi circuits is said to compute a function $f : \{0, 1\}^* \to \{0, 1\}^*$ if each $C_n$ has $n$ inputs and $2out_f(n)$ outputs, and for every $n \in \mathbb{N}$ and every $w \in \{0, 1\}^n$, we have $|f(w)| \leq out_f(n)$ and $C_n(w) = \text{pad}_{out_f(n)}(f(w))$.
We say that the function $f : \{0, 1\}^* \to \{0, 1\}^*$ is \emph{computable by poly-size circuits} if it is computed by such a sequence $\{C_n\}_n$ where the size of each $C_n$ is bounded by a polynomial in $n$. Otherwise, we say that $f$ is \emph{hard for poly-size circuits}.
\end{definition}

\begin{remark}
The padding is necessary because, in general, the length of $f(w)$ may vary across different inputs of the same length, while the number of output gates in $C_n$ must be fixed for each $n$. Padding ensures that the values $f(w)$ for $w \in \{0, 1\}^n$ have the same length. However, if they already have the same length, then without loss of generality, we may define the computation without padding by requiring $C_n(w) = f(w)$ for every $n \in \mathbb{N}$ and $w \in \{0, 1\}^n$. The reason is that when we use padding and the multi circuit $C_n$ has additional output gates to add constant zeros and ones to the actual output, these additional outputs can be safely ignored. This simplification does not affect the multi circuit's size, which is the primary parameter of interest for us.
\end{remark}

\begin{definition}[Class $\mathbf{P/poly}$]
A language $L$ is in $\mathbf{P/poly}$ if its characteristic function $\chi_L: \{0, 1\}^* \to \{0, 1\}$ is computable by poly-size circuits, i.e., there exists a sequence of circuits $\{C_n\}_n$ such that the size of each $C_n$ is polynomially bounded in $n$, and for every $w \in \{0, 1\}^n$, we have $w \in L$ if and only if $C_n(w) = 1$. 
\end{definition}

Computation with poly-size circuits is the \emph{non-uniform} analogue of polynomial time computation, as it allows for an arbitrary choice of the multi circuit $C_n$ for each input length $n$, as long as the size of these multi circuits remains polynomial in $n$. As expected, non-uniform computation must subsume uniform computation. In fact, if a function $f: \{0, 1\}^* \to \{0, 1\}^*$ is in $\mathbf{FP}$, it is computable by poly-size circuits, which implies $\mathbf{P} \subseteq \mathbf{P/poly}$. It is widely believed that $\mathbf{NP} \nsubseteq \mathbf{P/poly}$. Since this would imply $\mathbf{NP} \neq \mathbf{P}$, this belief motivates the combinatorial approach to attack the conjecture $\mathbf{NP} \neq \mathbf{P}$ by identifying languages in $\mathbf{NP}$ that require super-polynomial-size circuits. Despite decades of effort, this remains one of the most prominent open problems in theoretical computer science. However, for \emph{monotone} circuits, such lower bounds have been successfully established. For instance, following Example \ref{ExampleClique}, any binary string of length $\binom{n}{2}$ can be interpreted as the adjacency matrix of a simple graph on $n$ vertices. Let $L^n$ be the set of all such binary strings that, when read as graphs, contain a $\lfloor n^{1/4} \rfloor$-clique, and define $L = \bigcup_n L^n$. It is clear that $L \in \mathbf{NP}$. However:
\begin{theorem}[Razborov \cite{razborov1985lower}, Alon-Boppana \cite{alon1987monotone}]
For any sequence $\{C_n\}_n$ of monotone circuits computing $L$, the size of $C_{\binom{n}{2}}$ is at least $2^{\Omega(n^{1/8}\log n)}$.
\end{theorem}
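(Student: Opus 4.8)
The plan is to prove the monotone circuit lower bound for the $\lfloor n^{1/4}\rfloor$-clique problem via the \emph{approximation method} (the method of approximations), originally due to Razborov and sharpened by Alon and Boppana. The target is a sequence $\{C_N\}_N$ of monotone circuits, with $N = \binom{n}{2}$ inputs (one per potential edge of a simple graph on $n$ vertices), such that $C_N$ outputs $1$ precisely on graphs containing a clique of size $k := \lfloor n^{1/4}\rfloor$. We want to show $\mathrm{size}(C_N) \ge 2^{\Omega(n^{1/8}\log n)}$.

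\medskip

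\noindent\textbf{Setup and the two test distributions.} First I would fix the standard pair of "hard" inputs. The \emph{positive} instances are the $k$-cliques: for each $k$-subset $S \subseteq \{1,\dots,n\}$, let $\langle S\rangle$ be the graph whose only edges are those inside $S$; by construction $C_N(\langle S\rangle) = 1$. The \emph{negative} instances are random $(k-1)$-colorings: pick a uniformly random map $c:\{1,\dots,n\}\to\{1,\dots,k-1\}$ and take the complete $(k-1)$-partite graph $G_c$ with edges between vertices of different colors; since $G_c$ has no $k$-clique, $C_N(G_c) = 0$. The proof is a Karchmer–Wigderson-style bookkeeping argument on these two distributions, but carried out by replacing $\wedge$ and $\vee$ gates with \emph{approximators}.

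\medskip

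\noindent\textbf{Approximators and the main lemma.} Fix parameters $\ell$ (a size bound on "index sets", roughly $\ell \approx n^{1/8}$ up to log factors), $p$ (a bound on the number of clique-indicators, $p \approx (\ell-1)^\ell \cdot \mathrm{polylog}$), and let an \emph{approximator} be a formal monotone disjunction $\bigvee_{i=1}^{m} \lceil R_i\rceil$ of at most $p$ \emph{clique indicators} $\lceil R_i\rceil$, where each $R_i$ is a vertex set of size $\le \ell$ and $\lceil R_i\rceil$ is the monotone function "the graph contains all edges inside $R_i$". Inputs (edge variables $x_{uv}$) are approximated by $\lceil\{u,v\}\rceil$. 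The heart of the argument is to define, for each binary monotone operation, an approximate version: given approximators $A_1, A_2$, form $A_1 \sqcup A_2$ (for $\vee$) and $A_1 \sqcap A_2$ (for $\wedge$) by taking the ordinary disjunction / the disjunction of pairwise unions $R_i \cup R_j$, then applying two "cleanup" operations — \emph{plucking} (the sunflower lemma: whenever $p+1$ sets appear, a sunflower of size $\ell+1$ exists among them, and we replace it by its core, reducing the count) and \emph{truncation} (discard any indicator whose set has grown beyond size $\ell$). One then proves the key counting lemma: each gate replacement introduces only a bounded number of \emph{errors}, where a positive error at a gate is a clique $\langle S\rangle$ that the true gate accepts but the approximate gate rejects, and a negative error is a coloring $G_c$ that the true gate rejects but the approximate gate accepts. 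Plucking creates no positive errors (the core is contained in each petal, so it only accepts \emph{more} cliques) and few negative errors; truncation creates no negative errors and few positive errors. Quantitatively, one shows each gate contributes at most (number of negative errors) $\le \binom{n}{2}\cdot 2^{-\Omega(\ell)}\cdot(\text{\# colorings})$ and at most (number of positive errors) $\le \binom{n-\ell-1}{k-\ell-1}$ fraction of all $k$-cliques, with the plucking count controlled by the sunflower bound $\ell!(p)^\ell$ forcing the choice of $p$.

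\medskip

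\noindent\textbf{Deriving the bound.} Suppose $C_N$ has $t$ gates. Replacing every gate bottom-up by its approximate version yields a single approximator $A$ which differs from $C_N$ on the test inputs only through accumulated errors: the number of $k$-cliques $\langle S\rangle$ on which $A$ outputs $0$ is at most $t$ times the per-gate positive-error bound, and the number of colorings $G_c$ on which $A$ outputs $1$ is at most $t$ times the per-gate negative-error bound. But any single approximator $A = \bigvee_{i=1}^m \lceil R_i\rceil$ with $m \le p$ and all $|R_i| \le \ell$ is itself weak: if $A \equiv 0$ (the empty disjunction) then it rejects \emph{all} $k$-cliques, giving a contradiction with the positive-error accounting unless $t$ is large; and if $A \not\equiv 0$, then each indicator $\lceil R_i\rceil$ with $R_i \neq \emptyset$ is accepted by a random $(k-1)$-coloring with probability at least $(1 - \binom{\ell}{2}/(k-1))\ge$ a constant (no two of the $\le\ell$ vertices of $R_i$ get the same color), so $A$ accepts a constant fraction of colorings — again contradicting the negative-error accounting unless $t$ is large. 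Optimizing the parameters — the right choice is $\ell \asymp \sqrt{n}$ or $\ell\asymp n^{1/4}\log n$ depending on the clique size; for $k = \lfloor n^{1/4}\rfloor$ one takes $\ell$ of order $n^{1/8}\log n$ — the two lower bounds on $t$ combine to give $t \ge 2^{\Omega(n^{1/8}\log n)}$, which is the claim (with $N = \binom{n}{2}$, so $C_{\binom{n}{2}}$ is the relevant circuit).

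\medskip

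\noindent\textbf{Main obstacle.} The delicate part is not the overall strategy but the precise error accounting at a single $\wedge$-gate after \emph{both} cleanup steps, and choosing $\ell$ and $p$ so that the sunflower lemma applies ($m > p$ forces a sunflower with $\ell+1$ petals, needing $p \ge \ell!\,(\ell)^{\ell}$-type bounds) while simultaneously the negative-error probability $2^{-\Omega(\ell)}$ and the positive-error fraction stay small enough that $t$ times them is below the trivial thresholds. Balancing these three competing constraints — and verifying that plucking genuinely introduces no positive errors and truncation no negative ones, which is what makes the bookkeeping one-sided and hence usable — is where essentially all the work lies; everything else is the standard monotone-circuit-to-approximator induction.
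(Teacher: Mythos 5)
The paper offers no proof of this theorem --- it is stated with citations to Razborov and to Alon and Boppana --- so there is no internal argument to compare your sketch against. Your outline is the standard method of approximations and is correct in structure: the two test families (canonical $k$-cliques $\langle S\rangle$ and complete $(k-1)$-partite graphs $G_c$ arising from random colorings), approximators as bounded disjunctions of small clique indicators, the two cleanup moves after each approximate $\sqcup$ and $\sqcap$ (sunflower plucking and truncation), the crucial one-sidedness of the error accounting (plucking introduces no positive errors since the sunflower core sits inside every petal, truncation introduces no negative errors since dropping a disjunct only shrinks the accepted set), and the final dichotomy showing any single approximator is too weak.

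The one imprecision is your parameter for the indicator-set size. For $k = \lfloor n^{1/4}\rfloor$ the standard choice is $\ell = \Theta(\sqrt{k}) = \Theta(n^{1/8})$, not $\Theta(n^{1/8}\log n)$; the $\log n$ factor in the exponent of $2^{\Omega(n^{1/8}\log n)}$ does not come from inflating $\ell$, but from the fact that the resulting lower bound on the gate count has the shape $\left(n/\mathrm{poly}(\ell,k)\right)^{\Omega(\ell)} = n^{\Omega(\ell)} = 2^{\Omega(\ell \log n)}$. Taking $\ell$ larger by a $\log n$ factor would also push the sunflower threshold $p$ (roughly $\ell!\,\ell^\ell$) up, which worsens the negative-error side of the accounting, so the competing constraints do not actually favor the larger $\ell$. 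Apart from that slip, the sketch reflects the Alon--Boppana argument faithfully and is where the lower bound in the theorem comes from.
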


\subsubsection{A Bit of Cryptography}

In this subsubsection, we introduce some basic concepts and examples from cryptography, which interestingly employs the seemingly negative phenomenon of \emph{computational hardness} to achieve the positive goal of \emph{secure communication}. For our purposes, we simplify everything to the level needed in this chapter.

We begin by developing the intuition behind a notion we refer to as a \emph{one-way protocol}, starting with its simplest setting. Let $P \subseteq \{0, 1\}^*$ be a polynomial-time decidable predicate, and let $h: P \to \{0, 1\}^*$ be a polynomial-time computable function. Intuitively, an element $u \in P$ represents some \emph{source data} that we wish to keep secret, yet must still communicate about through public channels. To overcome this seemingly contradictory situation, we apply $h$ to $u$, producing $h(u)$, and transmit $h(u)$ publicly instead of $u$ itself. Naturally, we prefer $h$ to be injective, since otherwise the transformation may obscure or distort the original data $u$. Crucially, the utility of $h$ lies in its assumed \emph{one-wayness}, namely that it is computationally hard to invert, making it infeasible for a potential eavesdropper to recover $u$ from $h(u)$.

We now generalize this setup. Retain $P$ and $h$ as defined, but relax the requirement that $h$ be injective. Instead, introduce a polynomial-time computable function $k: P \to \{0, 1\}^*$ such that
$
h(u_1) = h(u_2)$ implies $k(u_1) = k(u_2)$,
for all $u_1, u_2 \in P$. We refer to this condition as \emph{semi-injectivity}, a weakening of injectivity. In this scenario, the goal is not necessarily to communicate the source data $u$, but rather some \emph{derived information} $k(u)$. This allows for certain distortions in $u$, provided that the derived output $k(u)$ remains intact. Semi-injectivity guarantees precisely this. Note that when $k$ is the identity function, this notion collapses to ordinary injectivity of $h$.
In this generalized framework, it is no longer sufficient to merely assume that $h$ is hard to invert. Instead, we must assume that computing $k(u)$ from $h(u)$ is computationally hard. Since $k$ is efficiently computable, this assumption automatically implies the one-wayness of $h$.
We are now ready to formally define a one-way protocol:

\begin{definition}[One-Way Protocol]
We call a triple $(P, h, k)$ a \emph{one-way protocol}\footnote{This is a non-standard adaptation of \emph{one-way permutations} tailored for the purposes of this chapter. It differs from the conventional definition in three key ways. First, we introduce an auxiliary function $k$ to generalize the setting and accommodate examples such as the Diffie–Hellman protocol. Second, while standard cryptographic hardness is defined against probabilistic polynomial-time adversaries, we adopt a circuit-based notion of hardness more appropriate for our context. Third, in one-way permutations, we have $|h(u)| = |u|$. We have relaxed this condition to boundedness, which helps to present protocols like RSA and Diffie-Hellman as one-way protocols with only minor adjustments.} if $P$ is a polynomial-time decidable predicate and $h, k: \{0, 1\}^* \to \{0, 1\}^*$ are polynomial-time computable functions satisfying:
\begin{itemize}
    \item[$\bullet$] \emph{semi-injectivity}, i.e., for all $u_1, u_2 \in P$, if $h(u_1) = h(u_2)$, then $k(u_1) = k(u_2)$,
    \item[$\bullet$] \emph{boundedness}, i.e., there exists a polynomial-time computable function $l: \{0, 1\}^* \to \{0, 1\}^*$ and a polynomial $p$ such that $|k(u)| = l(h(u))$ and $|u| \leq p(|h(u)|)$ for any $u \in \{0, 1\}^*$,
    \item[$\bullet$] \emph{security against $\mathbf{P/poly}$ adversaries}, i.e., computing $k(u)$ from $h(u)$ is hard for poly-size circuits. Formally, this means that there is no function $f: \{0, 1\}^* \to \{0, 1\}^*$ computable by poly-size circuits such that $k(u)=f(h(u))$ for all $u \in \{0, 1\}^*$.
\end{itemize}
\end{definition}

The existence of a one-way protocol implies that $\mathbf{NP} \neq \mathbf{P}$. To prove that, assume $\mathbf{NP} = \mathbf{P}$ and let $(P, h, k)$ be a one-way protocol. We reach a contradiction. Since $h$ and $k$ are polynomial-time computable, the language
\[
\{(v, i) \mid \exists u (|u| \leq p(|v|) \wedge i \leq |k(u)| \wedge k(u)_i = 1)\}
\]
is in $\mathbf{NP} \subseteq \mathbf{P}$. Hence, we can effectively compute the $i$-th bit of $k(u)$ by reading $h(u)$ and $i \leq |k(u)|$. Given that $|k(u)| \leq |u|^{O(1)} \leq |h(u)|^{O(1)}$, we can compute all bits of $k(u)$ in time $|h(u)|^{O(1)}$.
This implies the existence of a polynomial-time computable function $f: \{0, 1\}^* \to \{0, 1\}^*$ such that $f(h(u)) = k(u)$ for any $u \in \{0, 1\}^*$. Since $f$ is in $\mathbf{FP}$, it is also computable by poly-size circuits which contradicts the security of the one-way protocol. Therefore, we conclude that $\mathbf{NP} \neq \mathbf{P}$.

As the existence of a one-way protocol implies $\mathbf{NP} \neq \mathbf{P}$, we currently do not know whether any one-way protocol exists. Nevertheless, there are concrete examples of triples $(P, h, k)$ that are widely believed to form one-way protocols based on standard cryptographic assumptions.
In the remainder of this subsubsection, we introduce two central examples from classical cryptography that we will refer to later: the RSA protocol and the Diffie–Hellman protocol.

\begin{example}[RSA Protocol]\label{RSA}
First, let us explain RSA in its original form as a way for a user to communicate a number to us securely.
Let $N$ be a number and $1 < e < N$ such that $(e, \phi(N)) = 1$, where $\phi(N)$ is Euler’s function, which returns the number of integers less than $N$ that are coprime to $N$. The pair $(N, e)$ is called the \emph{public key} and is known to everyone. However, the factorization of $N$ is kept secret and is known only to us.
Since $(e, \phi(N)) = 1$, there exists $d < N$ such that $ed \equiv 1 \pmod{\phi(N)}$. This $d$ is called the \emph{secret key}. It is widely believed that one cannot efficiently compute $d$ from $N$ and $e$ alone. However, if the factorization of $N$ is known, then $\phi(N)$ can be computed easily, and hence so can $d$. Therefore, we are able to compute $d$, while the public cannot.
Now, let $u < N$ be a number such that $(u, N) = 1$. To communicate $u$ to us securely, the user computes the remainder $v$ of $u^e$ modulo $N$ and sends $v$ through public channels. Since $(u, N) = 1$, Euler’s theorem gives $u^{\phi(N)} \equiv 1 \pmod{N}$. Therefore,
$v^d \equiv (u^e)^d = u^{ed} \equiv u \pmod{N}$.
This allows us to recover the original message $u$ from the ciphertext $v$.

Now, with a slight technical adjustment, one can view RSA as a one-way protocol. After suitable encoding of the tuples of binary strings as a binary string, define $P$ as the set of tuples $(N, M, e, u)$, where $(e, M) = 1$, $u < N$, $1 < e < N$, and $u^M \equiv 1 \pmod{N}$. Note that $M$ plays the role of $\phi(N)$ to make the process simpler. As we have the polynomial-time Euclidean algorithm to compute the greatest common divisor and modular exponentiation is polynomial-time computable, it is clear that $P$ is a polynomial-time predicate. Then, define the function $h: P \to \{0, 1\}^*$ by
$
h(N, M, e, u) = (N, e, v),
$
where $v$ is the remainder of $u^e \bmod N$. Also, define $k(N, M, e, u) = \overline{0u}$, where $\overline{0u}$ is a sequence of zeros added to the binary expansion of $u$ such that $|\overline{0u}|=|N|$. Padding $u$ with zeros is a technicality to make $|k(u)|$ easily computable from $h(u)$.
Both $h$ and $k$ are polynomial-time computable. For semi-injectivity, if $h(N_1, M_1, e_1, u_1) = h(N_2, M_2, e_2, u_2)$, then $N_1 = N_2 = N$ and $e_1 = e_2 = e$. Moreover, $u_1^e \equiv u_2^e \pmod{N}$. Since $(e, M) = 1$, there is $d < M$ such that $ed \equiv 1 \pmod{M}$. Therefore, as $u_1^M \equiv u_2^M \equiv 1 \pmod{N}$, we have 
\[
u_1 = u_1^1 \equiv u_1^{ed} \equiv u_2^{ed} \equiv u_2^1 = u_2 \pmod{N}.
\]
As $u_1, u_2 < N$, we reach $u_1=u_2$. Moreover, as $N_1=N_2=N$, the number of zeros needed to pad $u_1=u_2$ into a string with the length $|N|$ are equal. Therefore, $k(N_1, M_1, e_1, u_1) = \overline{0u_1}=\overline{0u_2}=k(N_2, M_2, e_2, u_2)$.
For boundedness, as $M, u \leq N$, we have 
\[
|(N, M, e, u)| \leq O(|(N, e, v)|)=O(|h(N, M, e, u)|).
\]
Moreover, note that $|k(N, M, e, u)|=|\overline{0u}|=|N|$ is computable from the value $h(N, M, e, u)=(N, e, v)$ in polynomial time, as we have a direct access to $N$. For the hardness, it is widely believed that RSA is secure against $\mathbf{P/poly}$ adversaries \cite{krajivcek2019proof}. Under this assumption, RSA qualifies as a one-way protocol.
\end{example}

\begin{example}[Diffie–Hellman Key Exchange Protocol]\label{Diffie-Hellman}
First, let us explain the Diffie–Hellman key exchange protocol in its original form as a way to set a secret key between Alice and Bob over public channels.
Let $N$ and $g$ be publicly known natural numbers. To establish a shared secret key, Alice chooses a secret number $a \leq N$ and sends the remainder $X$ of $g^a$ modulo $N$ to Bob over public channels. Similarly, Bob chooses a secret number $b \leq N$ and sends the remainder $Y$ of $g^b$ modulo $N$ to Alice over public channels. Note that the values $X$ and $Y$ are also publicly known, but $a$ is known only to Alice and $b$ only to Bob.
Since Bob knows $b$, he can compute $X^b \equiv g^{ab} \pmod{N}$, and similarly, Alice can compute $Y^a \equiv g^{ab} \pmod{N}$. The resulting number $g^{ab}$ is the shared secret key known to both parties. The central premise of the Diffie–Hellman protocol is that computing the remainder of $g^{ab}$ modulo $N$ from the public data $N$, $g$, $X$, and $Y$ is computationally infeasible. In fact, when $N = pq$ for primes $p$ and $q$, and $(g, N) = 1$, it is known that breaking the Diffie-Hellman protocol is harder than factoring $N$~\cite{bonet1997no}.
Again, this protocol can be seen as a one-way protocol. After encoding tuples of binary strings as a binary string, define $P$ as the set of all tuples $(N, g, a, b)$, where $a, b \leq N$, and note that $P$ is a polynomial-time predicate. Then, define a function $h: P \to \{0, 1\}^*$ by
$
h(N, g, a, b) = (N, g, X, Y),
$
where $X$ and $Y$ are the remainders of $g^a$ and $g^b$ modulo $N$, respectively. Also define 
$
k(N, g, a, b) = \bar{0}Z,
$
where $Z$ is the binary expansion of the remainder of $g^{ab}$ modulo $N$ and $\bar{0}$ is a sequence of zeros such that $|\bar{0}Z|=|N|$. Both $h$ and $k$ are clearly computable in polynomial time as modular exponentiation is in polynomial time.
For semi-injectivity, if $h(N_1, g_1, a_1, b_1) = h(N_2, g_2, a_2, b_2)$, then $N_1 = N_2 = N$, $g_1 = g_2 = g$, $g^{a_1} \equiv g^{a_2} \pmod{N}$, and $g^{b_1} \equiv g^{b_2} \pmod{N}$. Therefore,
\[
g^{a_1 b_1} \equiv (g^{a_1})^{b_1} \equiv (g^{a_2})^{b_1} \equiv (g^{b_1})^{a_2} \equiv (g^{b_2})^{a_2} \equiv g^{a_2 b_2} \pmod{N},
\]
which implies that $k(N_1, g_1, a_1, b_1) = k(N_2, g_2, a_2, b_2)$.
For boundedness, as $a, b \leq N$, we have 
\[
|(N, g, a, b)| \leq O(|(N, g, X, Y)|) = O(|h(N, g, a, b)|).
\]
Moreover, note that $|k(N, g, a, b)| = |\bar{0}Z| = |N|$ is computable from the value $h(N, g, a, b) = (N, g, X, Y)$ in polynomial time, as we have direct access to $N$.
For the hardness condition, it is widely believed that the Diffie–Hellman protocol is secure against $\mathbf{P/poly}$ adversaries. Under this assumption, the Diffie–Hellman key exchange protocol qualifies as a one-way protocol.
\end{example}

\subsubsection{Disjoint $\mathbf{NP}$ Pairs} \label{subsubsec: DNP}

In this subsubsection, we introduce and explain the concept of a disjoint $\mathbf{NP}$ pair. Such pairs arise naturally in computational complexity and cryptography. Through the translation of languages in $\mathbf{NP}$ into propositional formulas, they are also closely connected to Craig interpolation for $\mathsf{CPC}$, as will be explained in Section~\ref{Section: Interpolation}. We begin with the formal definition:

\begin{definition}[Disjoint $\mathbf{NP}$ Pair]
A \emph{disjoint $\mathbf{NP}$ pair} is a pair of languages $(L, R)$ such that $L, R \in \mathbf{NP}$ and $L \cap R = \emptyset$. A \emph{separator} for such a pair is a language $S$ satisfying
$L \subseteq S$ and $S \cap R = \emptyset$:
\vspace{-8pt}
\begin{center}
\scalebox{1}{%
  \begin{tikzpicture}
      \fill[blue!30] (-2,0) circle (1.2cm) node[black] {$L$};
      \fill[red!30] (2,0) circle (1.2cm) node[black] {$R$};

      \draw[black, thick, dashed] (-2,0) ellipse (2cm and 1.5cm);

      \node[black] at (-2,2.0) {$S$};
  \end{tikzpicture}
} 
\end{center}
A disjoint $\mathbf{NP}$ pair is called \emph{hard} if it has no separators in $\mathbf{P/poly}$.
\end{definition}

A natural source of disjoint $\mathbf{NP}$ pairs comes from languages in the class $\mathbf{NP} \cap \mathbf{CoNP}$. Specifically, if $L \in \mathbf{NP} \cap \mathbf{CoNP}$, then the pair $(L, L^c)$ forms a disjoint $\mathbf{NP}$ pair. Note that $L$ is the only separator for this pair. Therefore, if $\mathbf{NP} \cap \mathbf{CoNP} \nsubseteq \mathbf{P/poly}$, then there exists a disjoint $\mathbf{NP}$ pair that has no separator in $\mathbf{P/poly}$. Since it is believed that $\mathbf{NP} \cap \mathbf{CoNP} \nsubseteq \mathbf{P/poly}$, we conjecture that a hard disjoint $\mathbf{NP}$ pair must exist.
However, the existence of such pairs remains unproven, as it would imply $\mathbf{NP} \neq \mathbf{P}$. Indeed, if $\mathbf{NP} = \mathbf{P}$, then for any disjoint $\mathbf{NP}$ pair $(L, R)$, the language $L \in \mathbf{NP} = \mathbf{P} \subseteq \mathbf{P/poly}$ provides an obvious separator in $\mathbf{P/poly}$.

There are some concrete settings that lead to the existence of hard disjoint $\mathbf{NP}$ pairs, where we rely on stronger complexity-theoretic conjectures. In return, we gain concrete instances of such pairs, which can be exploited later. A rich source of hard disjoint $\mathbf{NP}$ pairs arises in cryptography.
Let $(P, h, k)$ be a one-way protocol and let $p$ be the polynomial that $|u| \leq p(|h(u)|)$, for any $u \in \{0, 1\}^*$. We want to construct a hard disjoint $\mathbf{NP}$ pair.
Consider the following languages:
\begin{align*}
L &= \{(v, i) \mid \exists u \in P\, [(|u| \leq p(|v|)) \wedge (i \leq |k(u)|) \wedge (h(u) = v) \wedge (k(u))_i = 1]\}, \\
R &= \{(v, i) \mid \exists u \in P\, [(|u| \leq p(|v|)) \wedge (i \leq |k(u)|) \wedge (h(u) = v) \wedge (k(u))_i = 0]\}.
\end{align*}
Since $h$ and $k$ are polynomial-time computable, it is clear that $L, R \in \mathbf{NP}$. Moreover, $L \cap R = \varnothing$. To see this, suppose for contradiction that $(v, i) \in L \cap R$. Then there exist $u_1, u_2 \in P$ such that $h(u_1) = v = h(u_2)$. By semi-injectivity, we have $k(u_1) = k(u_2)$, which implies $k(u_1)_i = k(u_2)_i$. However, $k(u_1)_i = 1$ and $k(u_2)_i = 0$, yielding a contradiction. Thus, $(L, R)$ is indeed a disjoint $\mathbf{NP}$ pair.

Now, using the assumed security of $(P, h, k)$ against $\mathbf{P/poly}$ adversaries, we show that $(L, R)$ has no separator in $\mathbf{P/poly}$. Suppose, for contradiction, that $S \in \mathbf{P/poly}$ is a separator for $(L, R)$. Then, there exists a sequence of poly-size circuits $\{C_n\}_n$ that computes $S$. First, we show that for any $u \in \{0, 1\}^*$ with $|h(u)| = n$ and any $i \leq |k(u)|$, we have $C_{n+|i|}(h(u), i) = k(u)_i$.
The reasoning is as follows: if $C_{n+|i|}(h(u), i) = 1$, then $(h(u), i) \in S$, hence $(h(u), i) \notin R$, as $S \cap R = \emptyset$. Given that $i \leq |k(u)|$ and $|u| \leq p(|h(u)|)$, it must be that $k(u)_i \neq 0$, so $k(u)_i = 1$. Similarly, if $C_{n+|i|}(h(u), i) = 0$, then $k(u)_i = 0$. 

Now, as $|u| \leq p(|h(u)|)$ and $|k(u)| \leq |u|^{O(1)}$, there exists a polynomial $q(n)$ such that $|k(u)| \leq q(|h(u)|)$ for any $u \in \{0, 1\}^*$. Moreover, let $l: \{0, 1\}^* \to \{0, 1\}^*$ be the polynomial-time computable function satisfying $|k(u)|=l(h(u))$, for any $u \in \{0, 1\}^*$. 
As $l$ is polynomial-time computable, the predicate $i \leq l(w)$ is decidable in polynomial time. Therefore, it is in $\mathbf{P/poly}$. Let $\{D_n\}_n$ be a sequence of polynomial-size circuits that decide $i \leq l(w)$. For any $i \leq q(n)$, consider the circuit $D_{n+|i|}(x_1, \ldots, x_n, i)$.
Now, define the circuit $E_n(x_1, \ldots, x_n)$ in the following way: First, make the disjoint union of the circuit $C_{n+|i|}(x_1, \ldots, x_n, i)$ with the output $y_i$ and $D_n^i(x_1, \ldots, x_n)$ with the output $z_i$, for any $i \leq q(n)$. Then, add some $\wedge$ gates to compute $w_i=y_i \wedge z_i$, for any $i \leq q(n)$, and finally, for the output gates of $E_n$, pick the sequence $z_1w_1z_2w_2\ldots z_nw_n$. Since $q$ is a polynomial, the size of $E_n$ is polynomially bounded in $n$. Moreover, if $|h(u)| = n$, then for any $i \leq q(n)$, the output of $E_n(h(u))$ on $z_i$ decides if $i \leq l(h(u))=|k(u)|$, and its output on $y_i$ is $k(u)_i$ when $i \leq |k(u)|$. Therefore, if $i \leq |k(u)|$, we get the bit $k(u)_i$ on $w_i$, and when $i > |k(u)|$, we get zero on $w_i$. Therefore, the output of $E_n(h(u))$ is $pad_{2q(n)}(k(u))$. This means that $E_n$ computes $k(u)$ from $h(u)$ using poly-size multi circuits and considering the required padding. This contradicts the security of $(P, h, k)$. Therefore, $(L, R)$ has no separator in $\mathbf{P/poly}$.

\begin{corollary}\label{OneWayToDNP}
If one-way protocols exist, then hard disjoint $\mathbf{NP}$ pairs also exist.
\end{corollary}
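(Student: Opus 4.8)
The plan is to read the result off directly from a one-way protocol $(P,h,k)$, using the triple to manufacture a disjoint $\mathbf{NP}$ pair whose two halves record that a designated bit of the derived datum $k(u)$ is, respectively, $1$ or $0$. Fixing the boundedness polynomial $p$ with $|u| \le p(|h(u)|)$, I would set
\[
L = \{(v,i) \mid \exists u \in P\,[\,|u|\le p(|v|) \wedge i \le |k(u)| \wedge h(u)=v \wedge k(u)_i = 1\,]\}
\]
and define $R$ by the same formula with $k(u)_i = 0$ in place of $k(u)_i = 1$. The witness for membership is the string $u$, whose length is polynomially bounded in $|v|$ by boundedness, and every conjunct ($u \in P$, the two length comparisons, $h(u)=v$, and reading the $i$-th bit of $k(u)$) is checkable in polynomial time because $P$ is polynomial-time decidable and $h,k \in \mathbf{FP}$; hence $L,R \in \mathbf{NP}$. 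Disjointness is where semi-injectivity enters: if $(v,i) \in L \cap R$ then there are $u_1,u_2 \in P$ with $h(u_1)=v=h(u_2)$, so $k(u_1)=k(u_2)$ and in particular $k(u_1)_i = k(u_2)_i$, contradicting $k(u_1)_i=1$ and $k(u_2)_i=0$.

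It then remains to show $(L,R)$ has no separator in $\mathbf{P/poly}$, and this is the heart of the argument. Suppose $S \in \mathbf{P/poly}$ separates $(L,R)$, computed by poly-size circuits $\{C_n\}_n$. The first observation is that for every $u$ and every $i \le |k(u)|$, evaluating the appropriate $C_n$ on $(h(u),i)$ returns $k(u)_i$: if it outputs $1$ then $(h(u),i)\in S$, hence $(h(u),i)\notin R$ since $S\cap R=\emptyset$, and since the length conditions are met this forces $k(u)_i=1$; the case of output $0$ is symmetric using $L\subseteq S$. To upgrade this into one poly-size multi circuit computing $k$ from $h$, I would invoke boundedness twice more: once to extract a polynomial $q$ with $|k(u)| \le q(|h(u)|)$, which bounds the range of bit positions that need to be queried, and once to use the polynomial-time computable length function $l$ with $|k(u)| = l(h(u))$ — since $l \in \mathbf{FP}$, the predicate ``$i \le l(w)$'' lies in $\mathbf{P}\subseteq\mathbf{P/poly}$, so poly-size circuits $\{D_n\}_n$ decide it. Wiring together, for each $i \le q(n)$, the bit-output of $C_{n+|i|}(\bar x, i)$ with the validity-output of $D_{n+|i|}(\bar x,i)$ through an $\wedge$-gate, and collecting these into the padded output format $pad_{q(n)}(k(u))$, yields a poly-size multi circuit that on input $h(u)$ outputs $k(u)$ in the required padded form. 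This contradicts security of $(P,h,k)$ against $\mathbf{P/poly}$ adversaries, so no separator $S$ exists and $(L,R)$ is a hard disjoint $\mathbf{NP}$ pair.

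I expect the only real friction to be bookkeeping: making the padding conventions, the output-gate ordering, and the index ranges line up so that the assembled circuit genuinely produces $pad_{q(n)}(k(u))$ for every $u$ with $|h(u)|=n$, and correctly gating the ``spurious'' positions $i > |k(u)|$ to zero via the $D_n$ circuits. The conceptual core — that a $\mathbf{P/poly}$ separator reconstructs $k$ from $h$ bit by bit — is immediate once the pair is set up, so there is no genuine obstacle beyond this careful wiring.
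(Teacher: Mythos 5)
Your proposal reproduces the paper's construction exactly: the same pair $(L,R)$ defined by the sign of the $i$-th bit of $k(u)$, disjointness via semi-injectivity, and hardness by converting a $\mathbf{P/poly}$ separator into a poly-size multi circuit that reconstructs $k$ from $h$ bit by bit, gating valid bit positions via the length function $l$ from the boundedness condition. This matches the paper's argument in both structure and detail.
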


\begin{example}\label{ExampleOfDNPOfRSA}
Apply the above construction to the RSA and Diffie–Hellman (believed to be) one-way protocols, as presented in Example~\ref{RSA} and Example~\ref{Diffie-Hellman}. Denote the corresponding disjoint $\mathbf{NP}$ pairs by $(\mathrm{RSA}_1, \mathrm{RSA}_0)$ and $(\mathrm{DH}_1, \mathrm{DH}_0)$, respectively. If these protocols are secure against $\mathbf{P/poly}$-adversaries, then $(\mathrm{RSA}_1, \mathrm{RSA}_0)$ and $(\mathrm{DH}_1, \mathrm{DH}_0)$ must be hard.
\end{example}

We have seen that the existence of a hard disjoint $\mathbf{NP}$ pair is an open problem. However, if we restrict the computational devices to monotone circuits, some known hardness theorems exist. First, we introduce the most important disjoint $\mathbf{NP}$ pair in this paper:

\begin{example}\label{Example: Clique-Color DNP}
Let $n_0 \in \mathbb{N}$, and let $K(n)$ and $L(n)$ be functions computable in time $n^{O(1)}$ such that $L(n) < K(n) \leq n$ for all $n \geq n_0$. 
Following Example~\ref{ExampleClique}, any binary string of length $\binom{n}{2}$ can be interpreted as the adjacency matrix of a simple graph on $n$ vertices. For any $k, l \in \mathbb{N}$, define $\mathrm{Clique}_{k,n}$ as the set of all such binary strings that, when interpreted as graphs, contain a $k$-clique, and define $\mathrm{Clique}_K = \bigcup_{n \geq n_0} \mathrm{Clique}_{K(n),n}$. Similarly, define $\mathrm{Color}_{l,n}$ as the set of graphs that are $l$-colorable, and set $\mathrm{Color}_L = \bigcup_{n \geq n_0} \mathrm{Color}_{L(n),n}$. It is clear that both $\mathrm{Clique}_K$ and $\mathrm{Color}_L$ are in $\mathbf{NP}$. Since $L(n) < K(n)$, it is impossible for a graph on $n$ vertices to simultaneously contain a $K(n)$-clique and be $L(n)$-colorable. Hence, $\mathrm{Clique}_K \cap \mathrm{Color}_L = \emptyset$, and the pair $(\mathrm{Clique}_K, \mathrm{Color}_L)$ forms a disjoint $\mathbf{NP}$ pair.
As a concrete special case, fix a real number $0 < \epsilon < 1/3$ and define $K(n) = \lfloor n^{2/3} \rfloor$ and $L(n) = \lfloor n^{2/3 - 2\epsilon} \rfloor$. These functions are computable in time $n^{O(1)}$ and satisfy $L(n) < K(n)$ for sufficiently large $n$. We denote $\mathrm{Clique}_K$ and $\mathrm{Color}_L$ in this setting by $\mathrm{Clique}_\epsilon$ and $\mathrm{Color}_\epsilon$, respectively.
\end{example}

Here is a well-known exponential lower bound on the size of monotone circuits separating Clique-Color pairs:

\begin{theorem}[Razborov \cite{razborov1985lower}, Alon-Boppana \cite{alon1987monotone}] \label{Alon-Boppana}
Let $l, k, n \in \mathbb{N}$ be numbers such that $3 \leq l < k$ and $\sqrt{l}\,k \leq \frac{n}{8 \log n}$. Any monotone circuit $C$ on $\binom{n}{2}$ variables computing a separator of $(\mathrm{Clique}_{k,n}, \mathrm{Color}_{l,n})$ has size at least $\frac{1}{8} 2^{\frac{\sqrt{l}+1}{2}}$.
\end{theorem}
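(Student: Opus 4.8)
The plan is to follow the classical Razborov–Alon–Boppana argument via the approximation method for monotone circuits, instantiated so as to separate $K$-clique graphs from $L$-colorable graphs. Fix $l,k,n$ with $3\le l<k$ and $\sqrt{l}\,k\le \frac{n}{8\log n}$, and suppose $C$ is a monotone circuit on the $\binom{n}{2}$ edge-variables $p_{ij}$ computing some separator $S$ of $(\mathrm{Clique}_{k,n},\mathrm{Color}_{l,n})$. Being a separator means $C$ outputs $1$ on (the adjacency matrix of) every graph containing a $k$-clique and outputs $0$ on every $l$-colorable graph. The key point is that these two constraints already force $C$ to behave correctly on enough ``test'' inputs that the approximation machinery applies: the positive test inputs will be the $\binom{n}{k}$ complete graphs supported on a $k$-subset of vertices (each contains a $k$-clique, so $C$ must accept), and the negative test inputs will be the ``coloring graphs'' associated to maps $c:\{1,\dots,n\}\to\{1,\dots,l\}$, i.e.\ the complete $l$-partite graphs whose color classes are the fibers of $c$ (each is $l$-colorable, so $C$ must reject).

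Next I would set up the approximator. Choose parameters $p=\lceil\sqrt{l}\,\rceil$ roughly (the threshold on clique-size used in the approximation) and $m$ of order $(\log n)\cdot$(something), and work with the lattice of ``$m$-closed'' sets of ``cliques of size $\le p$'' in the usual way: each gate of $C$ computes a monotone Boolean function of the edges, which we replace, bottom-up, by an approximator drawn from this restricted lattice, using the sunflower lemma to perform the closure operations at $\wedge$- and $\vee$-gates. The standard bookkeeping then yields, for each gate, a bound on the number of positive test inputs on which the approximator disagrees with the true gate, and on the number of negative test inputs on which it disagrees — each single gate introduces at most (sunflower-lemma bound)$\times$(probability bounds on random colorings / random clique supports) many new errors. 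Summing over the at most $|C|$ gates of the circuit, the output approximator $\widetilde{C}$ differs from the correct separator behavior on at most $|C|\cdot E^{+}$ of the clique-inputs and at most $|C|\cdot E^{-}$ of the coloring-inputs, for explicit error rates $E^{\pm}$.

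The contradiction then comes from the dichotomy for the single approximator $\widetilde{C}$: any element of the approximator lattice either rejects almost all of the $\binom{n}{k}$ clique-inputs, or accepts a large fraction of the random $l$-colorings — quantitatively, it is wrong on either at least $\frac{1}{2}\binom{n}{k}$ clique graphs or on at least a $\big(1-\binom{p}{2}\frac{l}{n}\big)$-ish fraction (which under $\sqrt l k\le n/(8\log n)$ is a constant fraction) of coloring graphs. Comparing this unconditional error of $\widetilde C$ with the ``accumulated'' bound $|C|\cdot E^{\pm}$ above, and plugging in the chosen values of $p$ and $m$ together with the hypothesis $\sqrt l\,k\le \frac{n}{8\log n}$ to make the per-gate error rates $E^{\pm}$ small enough, forces
\[
|C|\ \ge\ \frac{1}{8}\,2^{\frac{\sqrt{l}+1}{2}}.
\]
The main obstacle — and the part that needs genuine care rather than citation — is the calibration of the sunflower-lemma parameters against the two probability estimates (over random $k$-subsets for the positive side and over random $l$-colorings for the negative side) so that the final arithmetic delivers exactly the stated bound $\tfrac18 2^{(\sqrt l+1)/2}$ under exactly the stated hypothesis $\sqrt l\,k\le n/(8\log n)$; the structural part of the argument is entirely the textbook approximation method, and I would simply cite \cite{razborov1985lower,alon1987monotone} for the detailed computation, noting only that the translation of ``computes a separator'' into ``correct on all clique graphs and all coloring graphs'' is immediate from the definitions of $\mathrm{Clique}_{k,n}$ and $\mathrm{Color}_{l,n}$ in Example~\ref{Example: Clique-Color DNP}.
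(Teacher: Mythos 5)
The paper itself gives no proof of this theorem: it is stated as a black box and attributed to Razborov and Alon--Boppana, exactly as you ultimately do. Your sketch of the approximation method (positive tests = complete graphs on $k$-subsets, negative tests = complete $l$-partite graphs from $l$-colorings, closure lattice over small cliques via the sunflower lemma, per-gate error accounting, dichotomy for a single approximator) is a correct outline of the standard argument, and your explicit acknowledgment that the parameter calibration producing the exact constants $\tfrac18$ and $\tfrac{\sqrt l+1}{2}$ under the exact hypothesis $\sqrt l\,k\le n/(8\log n)$ requires the detailed computation from the cited sources matches the paper's treatment.
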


\begin{corollary}\label{AlonBoppanaForEpsilon}
Let $0 < \epsilon < 1/3$ be a real number, and let $\{C_n\}_{n}$ be a sequence of monotone circuits computing a separator for the disjoint $\mathbf{NP}$ pair $(\mathrm{Clique}_\epsilon, \mathrm{Color}_\epsilon)$. Then, the size of $C_{\binom{n}{2}}$ is at least $2^{\Omega(n^{\frac{1}{3} - \epsilon})}$.  
\end{corollary}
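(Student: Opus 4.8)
The plan is to reduce Corollary~\ref{AlonBoppanaForEpsilon} directly to Theorem~\ref{Alon-Boppana} via the parameter substitution $k = K(n) = \lfloor n^{2/3}\rfloor$ and $l = L(n) = \lfloor n^{2/3-2\epsilon}\rfloor$ from Example~\ref{Example: Clique-Color DNP}. First I would note that $\binom{n}{2}$ is strictly increasing in $n$, so every binary string of length $\binom{n}{2}$ encodes a simple graph on exactly $n$ vertices; consequently $\mathrm{Clique}_\epsilon \cap \{0,1\}^{\binom{n}{2}} = \mathrm{Clique}_{K(n),n}$ and $\mathrm{Color}_\epsilon \cap \{0,1\}^{\binom{n}{2}} = \mathrm{Color}_{L(n),n}$. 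If $\{C_n\}_n$ computes the separator $S$, then by definition $C_{\binom{n}{2}}$ is a monotone circuit on $\binom{n}{2}$ variables whose accepted set contains $\mathrm{Clique}_{K(n),n}$ and is disjoint from $\mathrm{Color}_{L(n),n}$; that is, $C_{\binom{n}{2}}$ computes a separator of the $n$-vertex Clique-Color pair, which is exactly the object Theorem~\ref{Alon-Boppana} bounds.

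Next I would check that the hypotheses $3 \le l < k$ and $\sqrt{l}\,k \le \frac{n}{8\log n}$ of Theorem~\ref{Alon-Boppana} hold for all sufficiently large $n$. Since $0 < \epsilon < 1/3$, the exponent $2/3-2\epsilon$ is strictly positive, so $L(n) \to \infty$ and in particular $L(n) \ge 3$ eventually; and $L(n) < K(n)$ because $2/3-2\epsilon < 2/3$. For the last inequality, $\sqrt{L(n)}\,K(n) \le n^{1/3-\epsilon}\cdot n^{2/3} = n^{1-\epsilon}$, and $n^{1-\epsilon} \le \frac{n}{8\log n}$ is equivalent to $8\log n \le n^{\epsilon}$, which holds for all large $n$ since $\epsilon > 0$. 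Applying Theorem~\ref{Alon-Boppana} then gives $|C_{\binom{n}{2}}| \ge \frac{1}{8}\,2^{(\sqrt{L(n)}+1)/2}$.

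Finally I would unwind the asymptotics: $\sqrt{L(n)} = \sqrt{\lfloor n^{2/3-2\epsilon}\rfloor} = \Omega(n^{1/3-\epsilon})$, hence $(\sqrt{L(n)}+1)/2 = \Omega(n^{1/3-\epsilon})$, and the multiplicative constant $1/8$ is absorbed into the hidden constant of the exponent, yielding $|C_{\binom{n}{2}}| \ge 2^{\Omega(n^{1/3-\epsilon})}$, as claimed. There is no real obstacle in this argument; the only point deserving a line of care is the bookkeeping in the first step — that a separator over all input lengths restricts, on inputs of length $\binom{n}{2}$, to a separator of the $n$-vertex Clique-Color pair — after which Theorem~\ref{Alon-Boppana} applies verbatim and the rest is arithmetic with floor functions.
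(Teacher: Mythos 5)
Your argument is correct and is essentially the paper's own proof: both substitute $k=K(n)=\lfloor n^{2/3}\rfloor$ and $l=L(n)=\lfloor n^{2/3-2\epsilon}\rfloor$, verify the hypotheses of Theorem~\ref{Alon-Boppana} via $\sqrt{L(n)}\,K(n)\le n^{1-\epsilon}\le n/(8\log n)$ for large $n$, and then read off the bound. You merely spell out a bit more bookkeeping (the length-$\binom{n}{2}$ restriction and the final asymptotic simplification), which the paper leaves implicit.
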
 

\begin{proof}
Recall that to define $(\mathrm{Clique}_\epsilon, \mathrm{Color}_\epsilon)$, we used $K(n) = \lfloor n^{2/3} \rfloor$ and $L(n) = \lfloor n^{2/3 - 2\epsilon} \rfloor$. 
As $\sqrt{L(n)}\,K(n) \leq n^{1/3 - \epsilon} n^{2/3} = n^{1 - \epsilon}$ and $0 < \epsilon < 1/3$, we eventually have $\sqrt{L(n)}\,K(n) \leq \frac{n}{8 \log n}$ and $\lfloor n^{2/3 - 2\epsilon} \rfloor \geq 3$. Therefore, we can apply Theorem~\ref{Alon-Boppana}.
\end{proof}

\section{Proof Systems} \label{Sec: Proof Systems}

In the previous section, we saw that the propositional language is sufficiently powerful to describe finite domains and to compute over them. This encompasses both the \emph{descriptive} and \emph{computational} aspects of the propositional language. We now turn to its third fundamental aspect: \emph{proofs}.

We use the term ``proof'' in the broadest possible sense. To explain this usage, consider two types of \emph{finitary objects}, denoted by $\mathbf{F}$ and $\mathbf{Pr}$ (for example, formulas, circuits, inequalities, polynomials, or sequences of such objects). Then, for any set $L \subseteq \mathbf{F}$, we interpret some of the elements of $\mathbf{Pr}$ as proofs or witnesses certifying that an object from $\mathbf{F}$ belongs to $L$.
The key insight, due to Cook and Reckhow~\cite{cook1979relative}, is that a meaningful notion of proof requires only two essential properties: \emph{soundness and completeness} with respect to $L$, and \emph{efficient verifiability}. The latter means that one can efficiently check whether a given object in $\mathbf{Pr}$ constitutes a valid proof for a given object in $\mathbf{F}$. Since any finitary object can be encoded as a binary string, this idea can be formalized as follows:

\begin{definition}
Let $L \subseteq \{0,1\}^*$. A \emph{proof system for $L$} is a polynomial-time decidable relation $P \subseteq \{0,1\}^* \times \{0,1\}^*$ such that, for every $w \in \{0, 1\}^*$, we have $w \in L$ if and only if there exists $u \in \{0, 1\}^*$ with $P(u, w)$. We call $u$ a $P$-proof of $w$.
\end{definition}

\begin{remark}
Usually, $L$ is the set of binary representations of formulas in a logic, such as $\mathsf{CPC}$ or $\mathsf{IPC}$. However, in some interesting cases, $L$ may simply be a set of finitary objects. For example, $L$ could consist of a sequence of inconsistent sets of clauses or formulas of a specific form within a certain fragment of a language. In such cases, and only in these cases, we place emphasis on the nature of the finitary objects used. Additionally, when working with inconsistent sets of clauses, ``proofs'' are usually referred to as \emph{refutations}.
\end{remark}

\begin{example}
The usual Hilbert-style systems, natural deduction systems, and sequent calculi (with or without the cut rule) for classical and intuitionistic logics, as well as the modal logics in Table \ref{tab:sequent_calculi}, are all proof systems for their respective logics. Since these systems are designed to be sound and complete, it suffices to observe that verifying whether a given string $u$ is a proof of another string $w$ involves checking if the sequence adheres to the local rules for constructing proofs and ultimately concludes with $w$. This verification process can be completed in polynomial time relative to the lengths of $u$ and $w$.
\end{example}

In the following, we will introduce some proof systems for si or modal logics, and many for classical logic $\mathsf{CPC}$. In each case, it is easy to verify that they are indeed proof systems for the corresponding logics.

\begin{example}[Frege Systems]
Let $L$ be a si or modal logic. An \emph{inference system} $P$ for $L$ is a finite set of inference rules, each of the form:
\begin{center}
 \AxiomC{$\phi_1$}
  \AxiomC{$\phi_2$}
  \AxiomC{$\ldots$}
  \AxiomC{$\phi_k$}
  \QuaternaryInfC{$\phi$}
 \DisplayProof
\end{center}
An inference rule with no assumptions is called an \emph{axiom}.
A $P$-proof $\pi$ of a formula $\phi$ from the assumptions in $\Gamma$ is defined as a sequence $\pi = \theta_1, \theta_2, \ldots, \theta_n$ of formulas, such that $\theta_n=\phi$ and for each $i \leq n$, either $\theta_i \in \Gamma$, or $\theta_i$ is the result of an instance of one of the rules in $P$ applied to the formulas $\theta_j$ for $j < i$. We write $\Gamma \vdash_P \phi$, when there is a $P$-proof of $\phi$ from $\Gamma$.
A \emph{Frege system} for the logic $L$ is an inference system that is both \emph{sound} and \emph{strongly complete}; that is, $\vdash_P \phi$ implies $\phi \in L$, and $\phi_1, \ldots, \phi_m \vdash_L \phi$ implies $\phi_1, \ldots, \phi_m \vdash_P \phi$.
A Frege proof system is called \emph{standard} if it is \emph{strongly sound}, meaning that if $\phi_1, \ldots, \phi_m \vdash_P \phi$, then $\phi_1, \ldots, \phi_m \vdash_L \phi$.
Frege systems are formalizations of what is typically referred to as a Hilbert-style system.
\end{example}

\begin{example}[Substitution Frege Systems]
Let $L$ be a si or modal logic. By a (standard) \emph{substitution Frege} system $P$ for $L$, we mean an inference system extending a (standard) Frege system with the following \emph{substitution rule}:
\begin{center}
  \AxiomC{$\phi(p_1, \ldots, p_n)$}
  \UnaryInfC{$\phi(\psi_1, \ldots, \psi_n)$}
 \DisplayProof 
\end{center}
Note that the substitution rule is inherently different from the rules in a Frege system.
\end{example}

\begin{example}[Bounded depth $\mathbf{LK}$]
Define $\mathbf{LK}^u$ as a sequent calculus over $\mathcal{L}^u_b$, similar to $\mathbf{LK}$, but with the conjunction, disjunction and implication rules replaced by the following:
\begin{center}
\renewcommand{\arraystretch}{1.5}
\begin{tabular}{cc}
\AxiomC{$\Gamma, \phi_a \Rightarrow \Delta$}
\RightLabel{\footnotesize{$L\bigwedge_a$}}
\UnaryInfC{$\Gamma, \bigwedge_{i \in I} \phi_i \Rightarrow \Delta$}
\DisplayProof \qquad \qquad 
&
\AxiomC{$\{ \Gamma \Rightarrow \phi_i, \Delta \}_{i \in I}$}
\RightLabel{\footnotesize{$R\bigwedge$}}
\UnaryInfC{$\Gamma \Rightarrow \bigwedge_{i \in I} \phi_i, \Delta$}
\DisplayProof
\\[3ex]
\AxiomC{$\{ \Gamma, \phi_i \Rightarrow \Delta \}_{i \in I}$}
\RightLabel{\footnotesize{$L\bigvee$}}
\UnaryInfC{$\Gamma, \bigvee_{i \in I} \phi_i \Rightarrow \Delta$}
\DisplayProof \qquad \qquad 
&
\AxiomC{$\Gamma \Rightarrow \phi_a, \Delta$}
\RightLabel{\footnotesize{$R\bigvee_a$}}
\UnaryInfC{$\Gamma \Rightarrow \bigvee_{i \in I} \phi_i, \Delta$}
\DisplayProof
\\[3ex]
\AxiomC{$\Gamma \Rightarrow \phi, \Delta$}
\RightLabel{\footnotesize{$L\neg$}}
\UnaryInfC{$\Gamma, \neg \phi \Rightarrow \Delta$}
\DisplayProof \qquad \qquad 
&
\AxiomC{$\Gamma, \phi \Rightarrow \Delta$}
\RightLabel{\footnotesize{$R\neg$}}
\UnaryInfC{$\Gamma \Rightarrow \neg \phi, \Delta$}
\DisplayProof
\end{tabular}
\end{center}
where $I$ is any finite set and $a \in I$ is arbitrary. It is straightforward to observe that $\mathbf{LK}^u$ is a proof system for $\mathsf{CPC}$ when defined over $\mathcal{L}^u_b$. Now, let $d \in \mathbb{N}$. By an $\mathbf{LK}_d$-proof, we refer to an $\mathbf{LK}^u$-proof in which the depth of every formula occurring in the proof is at most $d$. It is well known that $\mathbf{LK}_d$ is sound and complete with respect to valid sequents in which every formula has depth at most $d$. Consequently, we can consider $\mathbf{LK}_d$ as a proof system for $\mathsf{CPC}_d$, the set of all valid $\mathcal{L}^u_b$-formulas of depth bounded by $d$.
\end{example}

\begin{example}[Negation Normal $\mathbf{LK}$]
Define $\mathbf{LK}_n$ as the sequent calculus over $\mathcal{L}_b$ consisting of the following axioms:
\begin{center}
\begin{tabular}{c c c c c c c}
 \AxiomC{}
 \UnaryInfC{$p \Rightarrow p$}
 \DisplayProof 
 &
  \AxiomC{}
 \UnaryInfC{$p, \neg p \Rightarrow $}
 \DisplayProof 
 &
  \AxiomC{}
 \UnaryInfC{$ \Rightarrow p, \neg p$}
 \DisplayProof 
 &
 \small \AxiomC{}
\small \UnaryInfC{$ \bot \Rightarrow $}
 \DisplayProof 
&
\small  \AxiomC{}
\small \UnaryInfC{$ \Rightarrow \top$}
 \DisplayProof
 &
\small \AxiomC{}
\small \UnaryInfC{$ \neg \top \Rightarrow $}
 \DisplayProof 
&
\small  \AxiomC{}
\small \UnaryInfC{$ \Rightarrow \neg \bot$}
 \DisplayProof
\end{tabular}
\end{center}
along with the structural rules (including the cut), as well as the conjunction and disjunction rules of $\mathbf{LK}$. Note that implication rules are not included. We define $\mathbf{LK}_n^-$ analogously to $\mathbf{LK}_n$, but without the cut rule. Both systems are sound and complete with respect to the valid sequents consisting of formulas in negation normal form. Therefore, we can consider $\mathbf{LK}_n$ and $\mathbf{LK}_n^-$ as proof systems for $\mathsf{CPC}_n$, the set of all valid $\mathcal{L}_b$-formulas in negation normal form.
\end{example}

\begin{example}[Resolution $\mathbf{R}$]
A \emph{resolution refutation} $\pi$ for a sequence of clauses $\mathcal{C}$ is a sequence $\pi = C_1, \dots, C_n$ of clauses, where $C_n = \varnothing$, and each $C_i$ is either a member of $\mathcal{C}$ or is derived from earlier clauses $C_j$ and $C_k$ (with $j, k < i$) via the resolution inference rule:
\begin{center}
\AxiomC{$C \cup \{p\}$}
\AxiomC{$D \cup \{\neg p\}$}
\BinaryInfC{$C \cup D$}
\DisplayProof
\end{center}
For example, the following refutation:
\begin{center}
\AxiomC{$q$}
\AxiomC{$p, \neg q$}
\AxiomC{$\neg p$}
\BinaryInfC{$\neg q$}
\BinaryInfC{$\varnothing$}
\DisplayProof
\end{center}
is a refutation for the inconsistent sequence of clauses $\{p, \neg q\}$, $\{\neg p\}$, and $\{q\}$. Since resolution is complete for inconsistent sequences of clauses, we consider it a proof system for the set of inconsistent sequences of clauses. 
\end{example}

Not all proof systems are dynamic or purely logical entities, as is traditionally assumed. Some proof systems are \emph{static}, meaning they do not involve a sequence of inferential or transformational steps. Others are \emph{non-logical}, operating not on formulas or sequents but on other mathematical structures, such as polynomial equalities or linear inequalities. In the following, we present examples that illustrate both of these alternative approaches: static and non-logical, individually as well as in combination.

\begin{example}[Truth Table $\mathbf{TT}$]
The most straightforward example of a static proof system for $\mathsf{CPC}$ is the \emph{truth table}. Formally, define $\mathbf{TT}(u, w)$ to hold if the string $w \in \{0, 1\}^*$ is a code for a propositional formula $\phi$, and $u$ is the code for the truth table of $\phi$ that assigns the value $1$ to $\phi$ under all possible truth assignments to its atomic variables. It is evident that $\mathbf{TT}$ is a proof system for $\mathsf{CPC}$.
\end{example}

\begin{example} [Cutting Planes $\mathbf{CP}$] Interpret any clause 
\[
C = \{p_{i_1}, \ldots, p_{i_m}, \neg p_{j_1}, \ldots, \neg p_{j_n}\}
\]
as the linear inequality $I_C$ defined by
$\sum_{r=1}^m x_{i_r} - \sum_{s=1}^n x_{j_s} \geq 1 - n$.
It is straightforward to see that an assignment $\bar{a} \in \{0, 1\}$ satisfies $C$ if and only if $I_C$ holds after substituting $\bar{x}$ by $\bar{a}$.
A \emph{$\mathbf{CP}$-refutation} for the sequence $\mathcal{C}$ of clauses is a sequence of the following rules, starting from the inequalities in $\{I_C \mid C \in \mathcal{C}\}$ and ending with the inequality $0 \geq 1$:
{\small
\begin{center}
\begin{tabular}{ccc}
\AxiomC{$ $}
\UnaryInfC{$x_i \geq 0$}
\DisplayProof
&
\AxiomC{$ $}
\RightLabel{\quad(Axioms)}
\UnaryInfC{$-x_i \geq -1$}
\DisplayProof
&
\AxiomC{$\sum_i a_i x_i \geq b$}
\AxiomC{$\sum_i c_i x_i \geq d$}
\RightLabel{\quad(Addition)}
\BinaryInfC{$\sum_i (a_i + c_i) x_i \geq b + d$}
\DisplayProof\\[3ex]
\end{tabular}

\begin{tabular}{ccc}
\AxiomC{$\sum_i a_i x_i \geq b$}
\RightLabel{\quad(Multiplication)}
\UnaryInfC{$\sum_i c a_i x_i \geq c b$}
\DisplayProof
\hskip 2em
\AxiomC{$\sum_i a_i x_i \geq b$}
\RightLabel{\quad(Division)}
\UnaryInfC{$\sum_i \frac{a_i}{c} x_i \geq \left\lfloor \frac{b}{c} \right\rfloor$}
\DisplayProof
\end{tabular}
\end{center}
}
where $c \geq 0$ in the multiplication rule and $c > 0$ and $c$ divides all $a_i$'s in the division rule. For instance, the following is a $\mathbf{CP}$-refutation for the set of clauses $\{p_1, \neg p_2\}, \{\neg p_1\}, \{p_2\}$:
\begin{center}
\AxiomC{$x_2 \geq 1$}
  \AxiomC{$x_1 - x_2 \geq 0$}
 \AxiomC{$-x_1 \geq 0$}
  \BinaryInfC{$-x_2 \geq 0$}
  \BinaryInfC{$0 \geq 1$}
 \DisplayProof 
\end{center}
It is known that $\mathbf{CP}$ is sound and complete for unsatisfiable sequences of clauses. Soundness is easy by the connection between $C$ and $I_C$. For completeness, the easiest proof is by simulating resolution refutations in $\mathbf{CP}$ to get completeness of $\mathbf{CP}$ from the completeness of $\mathbf{R}$ \cite{krajivcek2019proof}.
\end{example}

\begin{example}[Nullstellensatz $\mathbf{NS}$]
Let $F$ be either $\mathbb{Q}$ or a finite field. We translate any $\mathcal{L}_b$-formula $\phi(p_1, \ldots, p_n)$ into a polynomial $P_\phi \in F[x_1, \ldots, x_n]$ as follows:
\begin{align*}
P_{\top} &= 0, \quad &P_{\bot} &= 1, \quad &P_{p_i} &= 1 - x_i, \\
P_{\neg \phi} &= 1 - P_\phi, \quad &P_{\phi \vee \psi} &= P_\phi \cdot P_\psi, \quad &P_{\phi \wedge \psi} &= P_\phi + P_\psi - P_\phi \cdot P_\psi.
\end{align*}
The key property of this translation is that for any assignment $\bar{a} \in \{0, 1\}$ to the atomic variables in $\phi$, $\phi$ holds if and only if $P_\phi(\bar{a}) = 0$. Therefore, given a finite sequence $\Phi$ of $\mathcal{L}_b$-formulas, the set of polynomials
$\{P_{\phi} \mid \phi \in \Phi\} \cup \{x_i^2 - x_i \mid i \in \mathbb{N}\}$
has a common root in $F$ if and only if $\Phi$ is satisfiable.
We now define an \emph{$\mathbf{NS}$-refutation} of a sequence $\mathcal{C} = C_1, \ldots, C_r$ of clauses over variables $\{p_1, \ldots, p_s\}$ as a sequence of polynomials $g_1, \ldots, g_r, h_1, \ldots, h_s$ over $F$ such that
$\sum_{i=1}^r P_{\phi_i} g_i + \sum_{i=1}^s (x_i^2 - x_i) h_i = 1$,
where $\phi_i = \bigvee C_i$.
We claim that $\mathbf{NS}$ is a proof system for the set of unsatisfiable sequences of clauses. To establish the soundness and completeness of this system, we rely on Hilbert's Nullstellensatz, which asserts that a set $\mathcal{F}$ of polynomials over $F$ has no common root in the algebraic closure of $F$ if and only if there exist polynomials $g_1, \ldots, g_k$ over $F$ such that
$\sum_i f_i g_i = 1$
for some $f_i \in \mathcal{F}$. Since we are only concerned with Boolean assignments, passing to the algebraic closure of $F$ does not affect the correctness of the system. 
For a concrete example, let $F = \mathbb{Q}$ and consider the unsatisfiable sequence of clauses $\{p_1, \neg p_2\}, \{\neg p_1\}, \{p_2\}$. This sequence translates into the following sequence of polynomials: $(1 - x_1)x_2, x_1, 1 - x_2$. Then, the sequence $(1, x_2, 1)$ is an $\mathbf{NS}$-refutation since 
$(1 - x_1)x_2 \cdot 1 + x_1 \cdot x_2 + (1 - x_2) \cdot 1 = 1$.
\end{example}

\begin{remark}
The previous example illustrates that the soundness and completeness of a proof system may rely on a mathematical theorem, in this case Hilbert's Nullstellensatz. Motivated by this observation, one can generalize the notion of a proof system by allowing any sufficiently strong, sound mathematical first-order theory $T$ with a polynomial-time axiomatization to serve as a proof system for $\mathsf{CPC}$. Specifically, we may define a proof of a propositional formula $\phi$ as a first-order proof in $T$ of the encoding of the sentence ``the formula $\phi$ is a tautology.'' For example, taking $T = \mathsf{ZFC}$ yields a powerful proof system that permits the use of all of classical mathematics in a propositional proof.
\end{remark}

\subsection{Simulations and Relative Efficiency}
For any language $L \subseteq \{0, 1\}^*$, all proof systems for $L$ are, by definition, sound and complete. That is, they prove exactly the same set of strings, i.e., the ones in $L$. This naturally raises the question: in what sense can we meaningfully distinguish between different proof systems for the same $L$? The key lies in their \emph{efficiency}, that is, how succinct the proofs they produce are. For example, our collective experience is that the presence of the cut rule in $\mathbf{LK}$ allows for significantly shorter proofs compared to its cut-free counterpart $\mathbf{LK}^-$. This makes $\mathbf{LK}$ \emph{stronger} than $\mathbf{LK}^-$. The following formalizes this notion of relative strength:

\begin{definition}
Let $L_1 \subseteq L_2$ and let $P_1$ and $P_2$ be proof systems for $L_1$ and $L_2$, respectively. We say that $P_2$ \emph{simulates} $P_1$ and write $P_2 \geq P_1$ (or $P_1 \leq P_2$) if there exists a polynomial $p$ such that whenever $P_1(u, w)$ holds, there exists $v \in \{0, 1\}^*$ with $|v| \leq p(|u|, |w|)$ such that $P_2(v, w)$ holds, for any strings $u, w \in \{0, 1\}^*$. We say that $P_1$ and $P_2$ are \emph{equivalent} and write $P_1 \equiv P_2$ if $L_1 = L_2$, $P_1 \geq P_2$, and $P_2 \geq P_1$.
\end{definition}

\begin{remark}
The concrete proof systems we have defined so far operate over different fragments of different languages. However, there are canonical ways to translate formulas from one language into another, which sometimes allow us to map one fragment into another. For instance, a clause can naturally be viewed as a depth-one formula in $\mathcal{L}^u_b$, and arbitrary conjunctions and disjunctions in an $\mathcal{L}^u_b$-formula can be simulated using only binary connectives of $\mathcal{L}_b$. When comparing proof systems via simulations, we take such translations into account and consider the proof systems up to these transformations.
\end{remark}

Some simulations are trivial. For instance, it is clear that $\mathbf{LK}_d \leq \mathbf{LK}_{d+1}$, as we can read a proof in $\mathbf{LK}_d$ as a proof in $\mathbf{LK}_{d+1}$. Similarly, using the above-mentioned transformations of formulas, we have $\mathbf{LK}_d \leq \mathbf{LK}$, and by transforming refutations into proofs, one can easily see that $\mathbf{R} \leq \mathbf{LK}_1$. In the following, we present less trivial simulations. For more, see Figure~\ref{fig:ProofSystemFig}.

\begin{theorem}[\cite{reckhow1975lengths,jevrabek2009substitution}]\label{Reckhow}
Let $L$ be a si or modal logic. Then, any two standard Frege systems for $L$ are equivalent. The same also holds for standard substitution Frege systems.
\end{theorem}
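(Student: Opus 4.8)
The plan is to prove the classical Reckhow-style result that any two standard Frege systems for a fixed si or modal logic $L$ p-simulate each other, and likewise for standard substitution Frege systems. I would argue by showing that any standard Frege system $P$ for $L$ is equivalent to one fixed reference system $P_0$; transitivity of simulation then gives the general statement. The core idea is mutual step-by-step translation: each single inference step of $P$ is a valid $L$-consequence (by strong soundness of $P$, since $P$ is \emph{standard}), hence by strong completeness of $P_0$ it has a fixed $P_0$-derivation from the instantiated premises; conversely for $P$ simulating $P_0$.

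\medskip

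\noindent\textbf{Step 1 (Finitely many rule schemes, hence bounded "translation cost").} Since $P$ has only finitely many inference rules $\rho_1,\dots,\rho_r$, and each rule $\rho_t$ with premises $\phi_1^{(t)},\dots,\phi_{k_t}^{(t)}$ and conclusion $\psi^{(t)}$ is strongly sound, we have $\phi_1^{(t)},\dots,\phi_{k_t}^{(t)}\vdash_L\psi^{(t)}$. By strong completeness of $P_0$, fix once and for all a $P_0$-derivation $\delta_t$ of $\psi^{(t)}$ from $\phi_1^{(t)},\dots,\phi_{k_t}^{(t)}$. Let $c$ be the maximum, over $t\le r$, of the number of lines of $\delta_t$ and of the structural complexity (number of connective occurrences) of each formula appearing in $\delta_t$, viewed as a bound on the "blow-up" when we substitute actual formulas for the schematic letters.

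\medskip

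\noindent\textbf{Step 2 (Line-by-line replacement).} Given a $P$-proof $\pi=\theta_1,\dots,\theta_n$ (say of $\phi$ from assumptions $\Gamma$), process each line $\theta_i$. If $\theta_i\in\Gamma$, keep it. If $\theta_i$ is obtained by an instance of $\rho_t$ from earlier lines, it is the result of substituting some formulas $\sigma$ for the schematic variables of $\rho_t$; apply the same substitution $\sigma$ to the stored derivation $\delta_t$ to obtain a $P_0$-derivation of $\theta_i$ from the relevant earlier $\theta_j$'s, and splice it in. Since each $\delta_t$ has at most $c$ lines and each of its formulas has at most $c$ connective occurrences, the substituted block has at most $c$ lines each of size at most $c\cdot\max_j|\theta_j|$; summing over $i\le n$ gives a $P_0$-proof of total size $O(c^2\cdot n\cdot\max_i|\theta_i|)=O(|\pi|^2)$ up to the fixed constant $c$. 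Hence $P_0\le P$ and, symmetrically (swapping the roles of $P$ and $P_0$, again using that both are standard), $P\le P_0$, so $P\equiv P_0$.

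\medskip

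\noindent\textbf{Step 3 (Substitution Frege case).} For standard substitution Frege systems the same argument applies verbatim once we check one extra point: each instance of the substitution rule $\phi(\bar p)/\phi(\bar\psi)$ is again a valid strongly-sound inference of $L$ (substitution-closure of $L$), so it too has a fixed reference derivation in $P_0$ — though here one must be slightly careful, since the "rule" is not a finite scheme but a uniform family; the clean way is to note that $\phi(\bar\psi)$ is obtained from $\phi(\bar p)$ by substitution, and $P_0$, being a substitution Frege system, has the substitution rule available directly, so the translating block is a single line. Everything else is identical. The main obstacle — really the only subtle point — is bookkeeping the size bound uniformly: one must make sure the constant $c$ coming from the fixed finite set of reference derivations does not secretly depend on the formulas being substituted, which is exactly why we count connective \emph{occurrences} in $\delta_t$ separately from the schematic letters and note that substitution multiplies sizes rather than iterating unboundedly. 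With that observed, transitivity of $\equiv$ over the finitely-mediated chain $P\equiv P_0\equiv P'$ yields the theorem.
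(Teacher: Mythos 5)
Your proposal is correct and follows essentially the same route as the paper's proof: use strong soundness of one system to see that each rule is a valid $L$-consequence, strong completeness of the other to fix a reference derivation for each of the finitely many rule schemes, and then splice in substitution instances of these reference derivations line by line, with the size analysis resting on the fact that substitution inflates a fixed derivation only polynomially. The only cosmetic difference is that you route through a fixed reference system $P_0$ and invoke transitivity, while the paper compares two arbitrary standard systems directly, and you spell out slightly more explicitly (and correctly) that the substitution rule is handled by mapping it to $P_0$'s own substitution rule rather than treating it as one of the finitely many schemes.
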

\begin{proof}
We prove only the Frege case; the other is similar. Let $\mathbf{F}_1$ and $\mathbf{F}_2$ be two standard Frege systems for $L$. We show that $\mathbf{F}_2 \geq \mathbf{F}_1$. 
Let $R$ be a rule of inference in $\mathbf{F}_1$ that derives $\phi$ from the assumptions $\phi_1, \ldots, \phi_k$. Since $\mathbf{F}_1$ is strongly sound, we have $\phi_1, \ldots, \phi_k \vdash_L \phi$. By the strong completeness of $\mathbf{F}_2$, there exists an $\mathbf{F}_2$-proof of $\phi$ from $\phi_1, \ldots, \phi_k$. Fix such a proof and denote it by $\pi_R$. 
Let $\pi = \psi_1, \ldots, \psi_m$ be an $\mathbf{F}_1$-proof of the formula $\psi$. To construct an $\mathbf{F}_2$-proof $\pi'$ of $\psi$, proceed step-by-step: for each line $\psi_i$ of the proof, find the rule $R \in \mathbf{F}_1$ and the substitution $\sigma$ such that $\psi_i$ is obtained from earlier formulas $\psi_j$ via $R$ and $\sigma$. Then, add the $\mathbf{F}_2$-proof $\sigma(\pi_R)$ and continue with the next line of $\pi$. 
Since $\mathbf{F}_1$ has only finitely many rules, there are only finitely many $\mathbf{F}_2$-proofs $\pi_R$ whose substitutions we need. As substitutions expand a proof only polynomially, the size of $\pi'$ is polynomially bounded in the size of $\pi$. Hence, $\mathbf{F}_2 \geq \mathbf{F}_1$. Similarly, $\mathbf{F}_1 \geq \mathbf{F}_2$, which implies $\mathbf{F}_1 \equiv \mathbf{F}_2$.
\end{proof}

\begin{remark}
Since any two standard Frege systems for $L$ are equivalent, we may refer to \emph{the} standard Frege system for $L$ and denote it by \emph{$L$-Frege} or \emph{$L$-$\mathbf{F}$}. The same applies to standard substitution Frege systems for $L$, which we denote by $L$-$\mathbf{SF}$.
\end{remark}

\begin{theorem}\label{LKEquivFrege}
$\mathbf{LK}$ (resp. $\mathbf{LJ}$) and $\mathsf{CPC}$-Frege (resp. $\mathsf{IPC}$-Frege) are equivalent. The same also holds for all modal calculi of Table \ref{tab:sequent_calculi} and their standard Frege systems.
\end{theorem}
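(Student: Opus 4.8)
The plan is to use Theorem~\ref{Reckhow} to reduce the statement to a single choice of Frege system: for each logic $L$ under consideration (here $L \in \{\mathsf{CPC}, \mathsf{IPC}, \mathsf{K}, \mathsf{D}, \mathsf{KT}, \mathsf{K4}, \mathsf{KD4}, \mathsf{S4}, \mathsf{GL}\}$, with $G$ its sequent calculus), it suffices to fix one conveniently presented standard Frege system $\mathbf{F}_L$ (a suitable Hilbert calculus: modus ponens plus propositional axiom schemes, plus necessitation and the defining modal axioms in the modal cases) and show $G \equiv \mathbf{F}_L$; Theorem~\ref{Reckhow} then identifies $\mathbf{F}_L$ with $L$-$\mathbf{F}$. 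Throughout I would identify a sequent $\Gamma \Rightarrow \Delta$ with the formula $\bigwedge \Gamma \to \bigvee \Delta$ (empty conjunction $\top$, empty disjunction $\bot$), so that an end-sequent $(\,\Rightarrow \phi)$ corresponds to $\phi$ up to the trivial equivalence with $\top \to \phi$, and use the translation conventions of the remark on simulations to compare the systems over their respective fragments.

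For $\mathbf{F}_L \geq G$ I would prove, by induction on a $G$-proof of a sequent $S$, that there is an $\mathbf{F}_L$-proof of the translation of $S$ whose size is polynomial in the size of the given $G$-proof. The key point is that every rule of $G$ is a \emph{derived rule} of $\mathbf{F}_L$ with overhead polynomial in the sizes of the formulas occurring in the sequents involved, so that concatenating the $\mathbf{F}_L$-derivations obtained line by line gives the result. For the initial sequents and the structural, $\wedge$- and $\vee$-rules this is a routine finite check; the $cut$ rule is simulated by a propositional case split on the cut formula, which is exactly where the presence of $cut$ in $\mathbf{LK}$ is essential; for $\mathbf{LJ}$ one checks in addition that $R\!\to$ and $L\!\to$ correspond to the (intuitionistically valid) currying and modus-ponens patterns. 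In the modal cases the extra ingredient is that each modal rule mirrors a defining axiom of $L$: the $K$-rule is handled via necessitation, the $\mathsf{K}$-axiom $\Box(p \to q) \to (\Box p \to \Box q)$, and the distribution $\Box \bigwedge \Gamma \leftrightarrow \bigwedge \Box \Gamma$ (derivable with size linear in $|\Gamma|$ by iterating the binary case), while the $D$-, $K4$-, $KD4$-, $LS4$-, $RS4$- and $GL$-rules additionally invoke the $\mathsf{D}$-, transitivity-, reflexivity- and Löb-axioms of the relevant $L$. Since $\mathbf{F}_L$ has finitely many axiom schemes and rules, the per-step blow-up is polynomial, and for the end-sequent $(\,\Rightarrow \phi)$ a final modus ponens turns the proof of $\top \to \phi$ into a proof of $\phi$.

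For $G \geq \mathbf{F}_L$ I would mimic the proof of Theorem~\ref{Reckhow}. For each of the finitely many inference rules $\psi_1, \ldots, \psi_k / \psi$ of $\mathbf{F}_L$ (stated in its own variables), strong soundness of $\mathbf{F}_L$ gives $\psi_1, \ldots, \psi_k \vdash_L \psi$, so by the definition of $\vdash_L$ there is a derivation of $(\,\Rightarrow \psi)$ from $\{\,\Rightarrow \psi_m\}_m$ together with finitely many $L$-axiom sequents $(\,\Rightarrow \chi)$, using only the base calculus; as $L$ is exactly the logic of $G$ and the base calculus embeds into $G$ with constant overhead per step, I can splice in fixed $G$-proofs of those finitely many $\chi$ and obtain a genuine $G$-derivation $\Delta_R$ of $(\,\Rightarrow \psi)$ from $\{\,\Rightarrow \psi_m\}_m$ alone. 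Fix one $\Delta_R$ per rule. Given an $\mathbf{F}_L$-proof $\pi = \theta_1, \ldots, \theta_n$ of $\phi$, process its lines in order: when $\theta_i$ is the $\sigma$-instance of a rule $R$ applied to earlier lines $\theta_{j_1}, \ldots, \theta_{j_k}$, append $\sigma\Delta_R$, which — because every $G$-rule, including the modal ones, is schematic and hence closed under substitution — is a valid $G$-derivation of $(\,\Rightarrow \theta_i)$ from the sequents $(\,\Rightarrow \theta_{j_m})$ already present earlier in the growing $G$-proof. Each $\sigma\Delta_R$ has size bounded by $|\Delta_R|$ times the size of the substituted formulas, hence polynomial in $|\pi|$, and there are $n$ of them, so the resulting $G$-proof of $(\,\Rightarrow \phi)$ is polynomial in $|\pi|$.

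The step I expect to be the main obstacle is the $\mathbf{GL}$ rule in the direction $\mathbf{F}_L \geq G$: simulating $\Box\Gamma, \Gamma, \Box\phi \Rightarrow \phi \,/\, \Box\Gamma \Rightarrow \Box\phi$ requires genuine Löb-style reasoning inside $\mathsf{GL}$-$\mathbf{F}$, combining necessitation, the $\mathsf{GL}$-provable transitivity axiom $\Box p \to \Box\Box p$, the distribution lemma, and the Löb axiom $\Box(\Box p \to p) \to \Box p$, and one must verify that assembling these pieces costs only polynomial (in fact linear in $|\Gamma|$) overhead. The rules $K4$, $KD4$ and the $\mathsf{S4}$-rules are milder variants of this same phenomenon, while everything else is finite bookkeeping over a fixed list of rules.
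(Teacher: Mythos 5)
Your proposal correctly fleshes out the argument that the paper compresses into a single sentence: the standard textbook interpretations between sequent calculi and Hilbert-style (Frege) systems, carried out line by line, already yield polynomial-size translations in both directions, with Theorem~\ref{Reckhow} letting you fix one convenient standard Frege presentation per logic. The paper's own proof simply asserts this polynomiality and defers the details to the cited monograph, so your write-up is essentially the intended argument made explicit; your identification of the $\mathbf{GL}$ rule (and, more mildly, the $K4$ and $S4$ rules) as the only place needing genuine modal reasoning beyond bookkeeping is also on target.
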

\begin{proof}
The standard proof of the equivalence between Hilbert-style systems and sequent calculi for these logics can be carried out in polynomial time, and therefore yields a polynomially bounded simulation. For more details, see \cite{krajivcek2019proof}.
\end{proof}

\begin{theorem}\label{LK=LKN}
$\mathbf{LK}_n$ and $\mathbf{LK}$ are equivalent. The same also holds for $\mathbf{LK}^-_n$ and $\mathbf{LK}^-$.
\end{theorem}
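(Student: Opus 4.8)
The plan is to produce polynomial simulations in both directions, using the canonical translations between $\mathcal{L}_p$ and $\mathcal{L}_b$ that the Remark following the definition of simulation licenses. For an $\mathcal{L}_p$-formula $\phi$ let $t(\phi)$ denote its negation normal form and $\bar t(\phi)$ the negation normal form of $\neg\phi$; these are defined by simultaneous recursion, with $t(p)=p$, $\bar t(p)=\neg p$, with $t,\bar t$ commuting with $\wedge,\vee$ up to the De Morgan swap, and with $t(\phi\to\psi)=\bar t(\phi)\vee t(\psi)$ and $\bar t(\phi\to\psi)=t(\phi)\wedge\bar t(\psi)$. A size induction gives $|t(\phi)|,|\bar t(\phi)|=O(|\phi|)$. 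Conversely, for an $\mathcal{L}_b$-formula $\alpha$ let $s(\alpha)$ be the $\mathcal{L}_p$-formula obtained by rewriting every $\neg\beta$ as $s(\beta)\to\bot$; again $|s(\alpha)|=O(|\alpha|)$.

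For $\mathbf{LK}_n\le\mathbf{LK}$ (and likewise $\mathbf{LK}_n^-\le\mathbf{LK}^-$) I would apply $s$ to a given proof line by line. The axiom $p\Rightarrow p$ remains an $\mathbf{LK}$-axiom, and each of the other $\mathbf{LK}_n$-axioms becomes $\mathbf{LK}^-$-derivable in boundedly many steps: e.g. $p,\neg p\Rightarrow$ translates to $p,p\to\bot\Rightarrow$, obtained by $L\!\to$ from $p\Rightarrow p$ and $\bot\Rightarrow$, and $\Rightarrow\neg\bot$ translates to $\Rightarrow\bot\to\bot$, obtained by $R\!\to$ from $\bot\Rightarrow\bot$. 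The structural rules and the $\wedge,\vee$-rules are common to both calculi and carry over verbatim. No cut is ever introduced, so this simultaneously witnesses $\mathbf{LK}_n^-\le\mathbf{LK}^-$, with linear blow-up.

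The substantial direction is $\mathbf{LK}\le\mathbf{LK}_n$ (and $\mathbf{LK}^-\le\mathbf{LK}_n^-$). Here the obvious idea of translating a sequent $\Gamma\Rightarrow\Delta$ componentwise to $t(\Gamma)\Rightarrow t(\Delta)$ fails, because $\mathbf{LK}_n$ has no negation rules and so cannot move a formula across $\Rightarrow$ while negating it — precisely what the $\to$-rules, and the identity axiom on implicational formulas, would require. The remedy is to translate $\Gamma\Rightarrow\Delta$ to the right-handed sequent $\;\Rightarrow\bar t(\Gamma),t(\Delta)$, which suits $\mathbf{LK}_n$, together with the following De Morgan lemma, proved by simultaneous induction on $\phi$ from the $\mathbf{LK}_n$-axioms and the structural and $\wedge,\vee$-rules: for every $\mathcal{L}_p$-formula $\phi$, $\mathbf{LK}_n^-$ has cut-free derivations of $\;\Rightarrow t(\phi),\bar t(\phi)$ and of $t(\phi),\bar t(\phi)\Rightarrow$, each of size $O(|\phi|)$. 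Granting this, every $\mathbf{LK}$-inference becomes derivable under the new translation in $O(1)$ steps, except the identity axiom $\phi\Rightarrow\phi$, which maps to the first lemma sequent $\;\Rightarrow\bar t(\phi),t(\phi)$ at cost $O(|\phi|)$: the axioms $\bot\Rightarrow$ and $\Rightarrow\top$ map to the $\mathbf{LK}_n$-axioms $\Rightarrow\neg\bot$ and $\Rightarrow\top$; structural rules map to structural rules, those acting on the left being replaced by their right-hand analogues; $L\wedge_i$ becomes $R\vee_i$ and $L\vee$ becomes $R\wedge$ since $\bar t$ swaps $\wedge$ and $\vee$, while $R\wedge$ and $R\vee_i$ are unchanged since $t$ commutes with $\wedge$ and $\vee$; $R\!\to$ becomes $R\vee_1$ followed by $R\vee_2$ and a right contraction since $t(\phi\to\psi)=\bar t(\phi)\vee t(\psi)$; and $L\!\to$ becomes a single $R\wedge$ since $\bar t(\phi\to\psi)=t(\phi)\wedge\bar t(\psi)$ and the two premises furnish $t(\phi)$ and $\bar t(\psi)$ over the common context. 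A cut of $\mathbf{LK}$ on a formula $\phi$ is simulated by cutting the translated left premise $\;\Rightarrow\bar t(\Gamma),t(\phi),t(\Delta)$ against the lemma sequent $t(\phi),\bar t(\phi)\Rightarrow$ on $t(\phi)$, then against the translated right premise on $\bar t(\phi)$, then contracting — two cuts plus the lemma. Since the endsequent of interest has the form $\;\Rightarrow\phi_0$ and translates to $\;\Rightarrow t(\phi_0)$, an $\mathbf{LK}$-proof of $\phi_0$ yields an $\mathbf{LK}_n$-proof of $t(\phi_0)$ with at most quadratic blow-up; the cut-free case is the same argument with the cut step and the second half of the lemma simply omitted.

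I expect the only genuine difficulty to be conceptual: recognizing that the componentwise translation cannot work (because $\mathbf{LK}_n$ lacks negation rules), hence committing to right-handed sequents, and then setting up the De Morgan lemma with exactly the right simultaneous induction so that the implication rules collapse into $\wedge,\vee$-rules. Once the translation is chosen correctly, verifying the simulation is a routine constant-size rule-by-rule check.
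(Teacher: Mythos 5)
Your proof is correct and follows essentially the same route as the paper: the easy direction reads $\neg\phi$ as $\phi\to\bot$ and re-derives the extra axioms, and the converse proceeds by induction on the $\mathbf{LK}$-proof of a sequent translated into negation normal form with formulas moved across the turnstile via negation. The paper states the induction target in the more flexible form $\Gamma_1^n,(\neg\Delta_1)^n\Rightarrow(\neg\Gamma_2)^n,\Delta_2^n$, of which your fully right-handed $\;\Rightarrow\bar t(\Gamma),t(\Delta)$ is the special case, and the details it dismisses as ``straightforward'' are precisely your De Morgan lemma and the rule-by-rule check.
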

\begin{proof}
We only prove the first case, as the second is analogous.
First, observe that any $\mathbf{LK}_n$-proof is, in fact, an $\mathbf{LK}$-proof, if we read $\neg \phi$ as $\phi \to \bot$. The only difference lies in the axioms: one needs to verify that the axioms 
$(p, \neg p \Rightarrow\,)$, $(\,\Rightarrow p, \neg p)$, $(\,\Rightarrow \neg \bot)$, and $(\neg \top \Rightarrow\,)$
are provable in $\mathbf{LK}$, which is clearly the case. Note that the canonical $\mathbf{LK}$-proofs of any instance of these axioms are of polynomial size in the end sequent. Hence, the entire transformation yields a polynomially bounded $\mathbf{LK}$-proof.
For the converse direction, it suffices to transform any $\mathbf{LK}$-proof of a sequent $\Gamma_1, \Gamma_2 \Rightarrow \Delta_1, \Delta_2$
into an $\mathbf{LK}_n$-proof of the sequent
$
\Gamma^n_1, (\neg \Delta_1)^n \Rightarrow (\neg \Gamma_2)^n, \Delta^n_2,
$
where $\phi^n$ denotes the negation normal form of $\phi$.
Constructing such a transformation with polynomially bounded size is straightforward.
\end{proof}


\begin{figure}[hptb] 
\centering 

\begin{tikzpicture}[
    scale=0.5, 
    every node/.append style={font=\small}, 
    node distance=.5cm and 2cm, 
    sys/.style={ 
        draw, rectangle, thick, align=center, 
        minimum height=0.4cm, 
        minimum width=1cm,
        fill=white, inner sep=4pt
    },
    sys_small/.style={ 
        draw, rectangle, thick, align=center, 
        minimum height=0.4cm, minimum width=1cm,    
        fill=white, inner sep=2pt
    },
    edge/.style={ 
        draw, thick
    }
]

\node (zfc)  [sys]                 {$\mathbf{ZFC}$};
\node (sf)   [sys, below=of zfc]   {$\mathsf{CPC}$-$\mathbf{SF}$};
\node (f-lk) [sys, below=of sf]    {$\mathsf{CPC}$-$\mathbf{F} \equiv \mathbf{LK}$};
\node (cp)   [sys, below=of f-lk]  {$\mathbf{CP}$};
\node (ac0f) [sys, left=of cp]     {$\{\mathbf{LK}_d \leq \mathbf{LK}_{d+1}\}_{d \geq 1}$}; 
\node (ns)   [sys, right=of cp]    {$\mathbf{NS}$};
\node (tt)   [sys, below=1.4cm of cp] {$\mathbf{TT}$}; 
\coordinate (r_pos) at ($(ac0f.south)!0.5!(tt.north west)$);

\node (bounding_box) [fit=(zfc) (sf) (f-lk) (ac0f) (cp) (ns) (tt) (r_pos), inner sep=0pt] {};

\coordinate (mid_level) at ($(f-lk.south)!0.5!(cp.north)$); 

\coordinate (line_start) at ([xshift=-0.7cm] bounding_box.west |- mid_level); 
\coordinate (line_end)   at ([xshift=0.7cm] bounding_box.east |- mid_level);   

\coordinate (bg_top_left)     at ([yshift=0.6cm, xshift=-0.7cm] bounding_box.north west); 
\coordinate (bg_bottom_right) at ([yshift=-0.6cm, xshift=0.7cm] bounding_box.south east); 

\coordinate (bg_mid_right)    at (line_end |- mid_level);
\coordinate (bg_mid_left)     at (line_start |- mid_level);

\begin{scope}[on background layer]
    \fill[myLightRed] (bg_top_left) rectangle (bg_mid_right); 
    \fill[myLightBlue] (bg_mid_left) rectangle (bg_bottom_right);
    \draw [dashed, thick] (line_start) -- (line_end);
\end{scope}

\draw [edge] (zfc.south) -- (sf.north);
\draw [edge] (sf.south)  -- (f-lk.north);
\draw [edge] (f-lk.south) -- (ac0f.north);
\draw [edge] (f-lk.south) -- (cp.north);
\draw [edge] (f-lk.south) -- (ns.north);

\draw [edge] (ac0f.south) -- node[sys_small, pos=0.5] (r) {$\mathbf{R}$} (tt.north west); 
\draw [edge] (cp.south)   -- (tt.north);
\draw [edge] (ns.south)   -- (tt.north east); 
\draw [edge] (r) -- (cp); 


\end{tikzpicture} 

\caption{Some proof systems for $\mathsf{CPC}$ \cite{krajivcek2019proof}. If there is a line between two proof systems, the higher one simulates the lower. For the proof systems in the blue region, an exponential lower bound is known. For those in the red region, it is not even known whether they are not p-bounded.} 
\label{fig:ProofSystemFig} 

\end{figure}
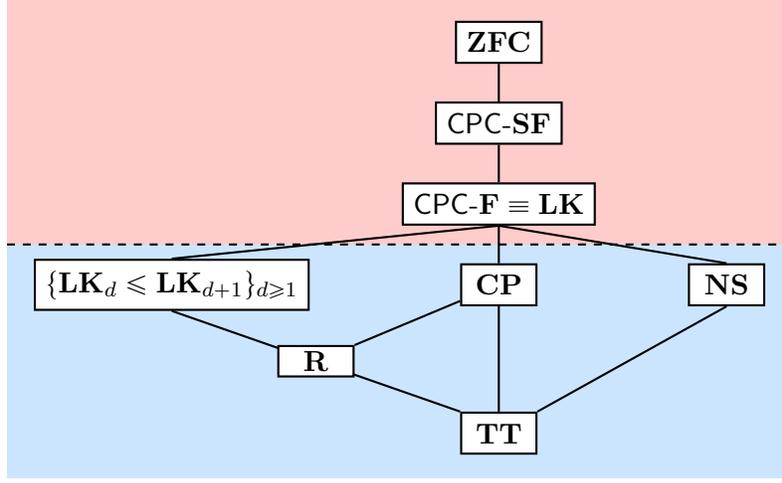

\subsection{Absolute Efficiency}
In the previous subsection, we introduced the relative power of proof systems in terms of their efficiency. A natural question is: given a language $L$, is there an \emph{absolutely efficient} proof system for $L$ in which every $w \in L$ has a \emph{short proof}? Of course, the term ``short'' must be understood relative to the length of $w$ itself, since proofs are typically at least as long as the statements they establish.

\begin{definition}
A proof system $P$ for a language $L$ is called \emph{polynomially bounded} (or \emph{p-bounded}, for short) if there exists a polynomial $p$ such that, for every $w \in L$, there exists $u \in \{0, 1\}^*$ satisfying $P(u, w)$ and $|u| \leq p(|w|)$. 
\end{definition}

\begin{theorem}
$L$ has a p-bounded proof system iff $L \in \mathbf{NP}$.   
\end{theorem}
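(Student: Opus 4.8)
The statement is an ``iff,'' so the plan is to prove the two implications separately, and in each direction the core observation is that a proof system for $L$ is exactly a polynomially-balanced, polynomial-time-checkable witness relation for $L$, which is the standard machine characterization of $\mathbf{NP}$ quoted in Section~\ref{Sec: Preliminaries}.

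For the forward direction, suppose $P$ is a p-bounded proof system for $L$, witnessed by a polynomial $p$ with the property that every $w \in L$ has a $P$-proof $u$ with $|u| \le p(|w|)$. I would define the relation $P'(u,w)$ to hold iff $P(u,w)$ holds and $|u| \le p(|w|)$; this is still polynomial-time decidable since $P$ is and the length check is trivial. Now $w \in L$ iff there exists $u$ with $P(u,w)$ iff (using p-boundedness for the nontrivial direction) there exists $u$ with $|u| \le p(|w|)$ and $P'(u,w)$. This is precisely the form of the $\mathbf{NP}$ characterization recalled in the preliminaries, so $L \in \mathbf{NP}$. (Strictly one should note $L$ is a language over $\{0,1\}^*$, which it is by hypothesis.)

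For the converse, suppose $L \in \mathbf{NP}$. By the machine characterization, fix a polynomial-time decidable predicate $Q \subseteq \{0,1\}^* \times \{0,1\}^*$ and a polynomial $q$ such that $w \in L$ iff there exists $u$ with $|u| \le q(|w|)$ and $Q(u,w)$. I would then simply take $Q$ itself as the proof system: define $P(u,w) := Q(u,w)$. It is polynomial-time decidable by choice of $Q$, and $w \in L$ iff there exists $u$ with $P(u,w)$ — for this biconditional one uses that if $Q(u,w)$ holds for \emph{some} $u$ then in particular one may take the short one guaranteed by the characterization (or, more simply, the equivalence $w \in L \iff \exists u\, Q(u,w)$ already holds once we know $\exists$ short $u$ with $Q(u,w)$, since any $u$ with $Q(u,w)$ forces $w \in L$). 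Moreover $P$ is p-bounded with the same polynomial $q$, since every $w \in L$ has a witness of length at most $q(|w|)$. Hence $L$ has a p-bounded proof system.

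\textbf{Main obstacle.} There is no real mathematical difficulty here — the theorem is essentially an unwinding of definitions, and the only thing to be careful about is the minor subtlety that the defining condition of a proof system is ``$w \in L \iff \exists u\, P(u,w)$'' with no length bound, so when building a proof system out of an $\mathbf{NP}$ predicate one must make sure soundness (``$Q(u,w)$ for some arbitrarily long $u$ implies $w\in L$'') still holds, and when extracting $L\in\mathbf{NP}$ from a p-bounded $P$ one must reinstate the length bound inside the predicate. Both are handled by the elementary moves above; the step most worth stating carefully is the forward one, where the length bound must be folded into the verification relation.
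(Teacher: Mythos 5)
Your forward direction is exactly the paper's: fold the length bound $|u|\le p(|w|)$ into the verification predicate and observe that the resulting bounded existential is the standard $\mathbf{NP}$ characterization. That part is fine.

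The converse direction has a genuine gap. You take $P(u,w):=Q(u,w)$ with no length restriction and assert that ``any $u$ with $Q(u,w)$ forces $w\in L$.'' This does not follow from the characterization you are quoting, which only says $w\in L$ iff there exists $u$ with $|u|\le q(|w|)$ \emph{and} $Q(u,w)$; it says nothing about the behaviour of $Q$ on long witnesses. For a concrete failure, let $Q'(u,w)$ hold iff either ($|u|\le q(|w|)$ and $Q(u,w)$) or $|u|>q(|w|)$. Then $Q'$ satisfies the very same characterization of $L$ with the same polynomial $q$, yet $\exists u\,Q'(u,w)$ holds for every $w$, so $P:=Q'$ is not sound and is not a proof system for $L$. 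You in fact flag precisely this soundness issue in your closing paragraph, but the ``elementary move'' you offer does not resolve it. The fix is the one the paper uses: define $P(u,w):=Q(u,w)\wedge(|u|\le q(|w|))$, which restores the equivalence $w\in L\iff\exists u\,P(u,w)$ and is p-bounded by $q$ by construction. With that one-line correction your argument coincides with the paper's proof.
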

\begin{proof}
Let $P$ be a p-bounded proof system for a language $L$. Then, there exists a polynomial $p$ such that for any $w \in \{0, 1\}^*$, we have $w \in L$ iff there exists $u \in \{0, 1\}^*$ such that $|u| \leq p(|w|)$ and $ P(u, w)$.
The right-hand side clearly defines an $\mathbf{NP}$ predicate, so we conclude that $L \in \mathbf{NP}$.
For the converse direction, suppose $L \in \mathbf{NP}$. Then there exists a polynomial-time predicate $R$ and a polynomial $p$ such that
$w \in L$ iff $\exists u \, [|u| \leq p(|w|)\wedge R(u, w)]$.
Set $P(u, w) := R(u, w) \wedge (|u| \leq p(|w|))$.
It is clear that $P$ is a p-bounded proof system for $L$.
\end{proof}

\begin{corollary}\label{CooksTheorem}
$\mathsf{CPC}$ (resp. $\mathsf{IPC}$ or any modal logic in Table \ref{tab:sequent_calculi}) has a p-bounded proof system iff $\mathbf{CoNP}=\mathbf{NP}$ (resp. $\mathbf{PSPACE}=\mathbf{NP}$).
\end{corollary}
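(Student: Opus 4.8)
The strategy is to read the corollary off the preceding theorem, which reduces the existence of a p-bounded proof system for a language $L$ to the single question of whether $L \in \mathbf{NP}$. Thus, identifying each logic $\mathsf{L}$ with the set of (codes of) its theorems, all we need to determine is exactly when $\mathsf{L} \in \mathbf{NP}$, and this is governed by the precise complexity of the corresponding validity/theoremhood problem.

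For $\mathsf{CPC}$, recall that by the Cook--Levin theorem the set of valid $\mathcal{L}_p$-formulas is $\mathbf{CoNP}$-complete under polynomial-time many-one reductions, since its complement --- propositional satisfiability --- is $\mathbf{NP}$-complete. Hence $\mathsf{CPC} \in \mathbf{NP}$ if and only if $\mathbf{CoNP} \subseteq \mathbf{NP}$, and taking complements shows that this inclusion forces $\mathbf{NP} \subseteq \mathbf{CoNP}$ as well, so it is equivalent to $\mathbf{NP} = \mathbf{CoNP}$. Combining this with the preceding theorem gives: $\mathsf{CPC}$ has a p-bounded proof system iff $\mathbf{NP} = \mathbf{CoNP}$.

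For $\mathsf{IPC}$ and each modal logic in Table~\ref{tab:sequent_calculi}, the plan is to invoke the known fact that the corresponding theoremhood problem is $\mathbf{PSPACE}$-complete: Statman's theorem for $\mathsf{IPC}$, and Ladner-style results --- together with the decidability and $\mathbf{PSPACE}$ bounds for provability logic in the case of $\mathsf{GL}$ --- for $\mathsf{K}$, $\mathsf{D}$, $\mathsf{KT}$, $\mathsf{K4}$, $\mathsf{KD4}$, $\mathsf{S4}$, and $\mathsf{GL}$. Membership in $\mathbf{PSPACE}$ comes from the standard decision procedures (finite model property plus bounded proof search, or backtracking over subformulas), and $\mathbf{PSPACE}$-hardness from the usual reduction from the validity of quantified Boolean formulas. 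Granting this, for each such $\mathsf{L}$ we have $\mathsf{L} \in \mathbf{NP}$ iff $\mathbf{PSPACE} \subseteq \mathbf{NP}$; since $\mathbf{NP} \subseteq \mathbf{PSPACE}$ always holds, this is the same as $\mathbf{PSPACE} = \mathbf{NP}$. Applying the preceding theorem once more yields the claimed equivalence.

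The only non-routine ingredients are the external completeness results: the $\mathbf{CoNP}$-completeness of classical tautologyhood and, more substantially, the $\mathbf{PSPACE}$-completeness of intuitionistic logic and of the listed modal logics. These are genuine theorems (Cook--Levin; Statman; Ladner; and the corresponding results for $\mathsf{GL}$) that one would cite rather than reprove; once they are available, everything else is the elementary observation that an inclusion $\mathbf{PSPACE} \subseteq \mathbf{NP}$ (or $\mathbf{CoNP} \subseteq \mathbf{NP}$) collapses to an equality. The step most worth flagging is precisely this reliance on the hardness results, since it is what pins the informal ``propositional incompleteness'' of the introduction to these specific complexity-class collapses.
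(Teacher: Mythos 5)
Your proposal is correct and follows the paper's own argument exactly: apply the preceding theorem to reduce the question to whether the logic lies in $\mathbf{NP}$, then invoke the $\mathbf{CoNP}$-completeness of $\mathsf{CPC}$ and the $\mathbf{PSPACE}$-completeness of $\mathsf{IPC}$ and the listed modal logics. The extra detail you give (naming Cook--Levin, Statman, Ladner, and spelling out why an inclusion of a larger class into $\mathbf{NP}$ collapses to equality) is a welcome elaboration but does not change the route.
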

\begin{proof}
As $\mathsf{CPC}$ is $\mathbf{CoNP}$-complete, we have $\mathsf{CPC} \in \mathbf{NP}$ if and only if $\mathbf{CoNP} = \mathbf{NP}$. For $\mathsf{IPC}$ or any modal logic in Table~\ref{tab:sequent_calculi}, the argument is similar, using the $\mathbf{PSPACE}$-completeness of these logics instead.
\end{proof}
It is widely believed that $\mathbf{NP} \neq \mathbf{CoNP}$ and $\mathbf{NP} \neq \mathbf{PSPACE}$. This implies that there can be no p-bounded proof system for $\mathsf{CPC}$, $\mathsf{IPC}$, or any modal logic in Table~\ref{tab:sequent_calculi}. However, since the non-existence of such systems is equivalent to these major complexity-theoretic conjectures, we currently lack the tools to prove it.
Of course, this does not prevent us from proving that specific, well-known proof systems for these logics (such as Resolution, Cutting Planes, or $L$-Frege) are not p-bounded. The general setting for proving that a proof system for $L \subseteq \{0, 1\}^*$ is not p-bounded is as follows. First, a definition:
\begin{definition}
Let $n_0 \in \mathbb{N}$ be a number, $L \subseteq \{0, 1\}^*$, $P$ be a proof system for $L$, and $\{w_n\}_{n \geq n_0}$ be a sequence of short binary strings in $L$. We say that the sequence $\{w_n\}_{n \geq n_0}$ has a \emph{short $P$-proof} if there exists a sequence $\{u_n\}_{n \geq n_0}$ of binary strings such that $P(u_n, w_n)$ and $|u_n| \leq p(n)$ for all $n \geq n_0$.
\end{definition}
To show that $P$ is not p-bounded, it suffices to find a sequence of short strings in $L$ with no short $P$-proofs, since in any p-bounded proof system, every sequence of short binary strings in $L$ must have a short $P$-proof. 
Arguably, the only sufficiently general technique for constructing such sequences is \emph{feasible interpolation}, as explained in the introduction. In the remainder of this chapter, we will explain this method and use it to show that some of the proof systems we have encountered so far are not p-bounded.

\section{Interpolants and their Computation} \label{Section: Interpolation}

In this section, we examine the complexity of computing a Craig interpolant for a given implication in classical logic. We then introduce alternative forms of Craig interpolation appropriate for intuitionistic logic and certain modal logics, and investigate the computational complexity of these interpolants.

\subsection{Computing Craig Interpolants in Classical Logic}

Let $\{\phi_n(\bar{p}, \bar{q}) \to \psi_n(\bar{p}, \bar{r})\}_{n \geq n_0}$ be a sequence of short classical tautologies. A sequence $\{C_n(\bar{p})\}_{n \geq n_0}$ of circuits computes\footnote{The most natural candidate for computing an interpolant is an algorithm that reads a valid implication and returns an interpolant of the implication as the output. However, this candidate is too universal and not fine-grained enough to distinguish between the implications with easy and hard interpolants to compute. Therefore, we designed the more local version presented here.} a Craig interpolant for $\{\phi_n(\bar{p}, \bar{q}) \to \psi_n(\bar{p}, \bar{r})\}_{n \geq n_0}$ when the formula $[C_n(\bar{p})]$ serves as a Craig interpolant for $\phi_n(\bar{p}, \bar{q}) \to \psi_n(\bar{p}, \bar{r})$, for all $n \geq n_0$. Spelling out, this means that for any Boolean assignment $\bar{a} \in \{0, 1\}$ to the atoms in $\bar{p}$, if $C_n(\bar{a}) = 0$, then $\neg \phi_n(\bar{a}, \bar{q})$ is a tautology, and if $C_n(\bar{a}) = 1$, then $\psi_n(\bar{a}, \bar{r})$ is a tautology. We say that $\{\phi_n(\bar{p}, \bar{q}) \to \psi_n(\bar{p}, \bar{r})\}_{n \geq n_0}$ has \emph{an easy Craig interpolant} if there is a sequence $\{C_n\}_{n \geq n_0}$ of circuits that computes an interpolant such that the size of $C_n$ is polynomially bounded by $n$, for any $n \geq n_0$. Otherwise, we say that it has \emph{hard Craig interpolants}. 

The most obvious way to compute a Craig interpolant for the sequence $\{\phi_n(\bar{p}, \bar{q}) \to \psi_n(\bar{p}, \bar{r})\}_{n \geq n_0}$ is to define the circuit $C_n(\bar{p})$ as either the formula $\bigvee_{\bar{b} \in \{0, 1\}} \phi_n(\bar{p}, \bar{b})$ or the formula $\bigwedge_{\bar{c} \in \{0, 1\}} \psi_n(\bar{p}, \bar{c})$. It is straightforward to verify that both of these formulas serve as interpolants. However, the size of these formulas grows exponentially with $n$. Naturally, we are keen to find more efficient methods to compute an interpolant, ideally using poly-size circuits. However, there are some indications that such computations may be inherently hard. Let us explain why, using disjoint $\mathbf{NP}$ pairs:

\begin{lemma}\label{DNPToInterpolant}
Let $(L, R)$ be a disjoint $\mathbf{NP}$ pair, and let $\{\phi_n(\bar{p}, \bar{q})\}_n$ and $\{\psi_n(\bar{p}, \bar{r})\}_n$ be sequences of short propositional formulas representing $L$ and $R$, respectively, as given by Theorem~\ref{NPToPropositional}. Then the sequence $\{\phi_n(\bar{p}, \bar{q}) \to \neg \psi_n(\bar{p}, \bar{r})\}_n$ consists of short tautologies. Moreover, any sequence $\{C_n\}_n$ of circuits computing Craig interpolants for this sequence also computes a separator for the pair $(L, R)$.
\end{lemma}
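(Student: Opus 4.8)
The plan is to establish the two assertions separately, each by unwinding the representation property of Theorem~\ref{NPToPropositional} together with the definition of a sequence of circuits computing a Craig interpolant. Throughout I would assume, without loss of generality, that the auxiliary variables $\bar q$ (coming from the representation of $L$) and $\bar r$ (from that of $R$) are disjoint, so that the variables common to $\phi_n(\bar p,\bar q)$ and $\neg\psi_n(\bar p,\bar r)$ are exactly those of $\bar p=(p_1,\dots,p_n)$; this is precisely what makes a circuit in $\bar p$ alone an admissible interpolant for $\phi_n\to\neg\psi_n$.

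For the short-tautology claim, the size bound is immediate: since $\{\phi_n\}_n$ and $\{\psi_n\}_n$ are already sequences of short formulas, $\phi_n\to\neg\psi_n$ has size polynomially bounded in $n$. For validity I would argue by contradiction. A falsifying Boolean assignment of $\phi_n\to\neg\psi_n$, restricted to $\bar p$, yields a string $w=a_1\dots a_n\in\{0,1\}^n$ together with assignments to $\bar q$ and to $\bar r$ witnessing that both $\phi_n(w_1,\dots,w_n,\bar q)$ and $\psi_n(w_1,\dots,w_n,\bar r)$ are satisfiable. By Theorem~\ref{NPToPropositional} this forces $w\in L$ and $w\in R$, contradicting $L\cap R=\varnothing$.

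For the separator claim, let $\{C_n\}_n$ compute Craig interpolants for $\{\phi_n\to\neg\psi_n\}_n$. The key step is to read off, from validity of the two halves $\phi_n\to[C_n]$ and $[C_n]\to\neg\psi_n$, the spelled-out form of the interpolant condition in this setting: for every $\bar a\in\{0,1\}^n$, if $C_n(\bar a)=0$ then $\phi_n(\bar a,\bar q)$ is unsatisfiable, and if $C_n(\bar a)=1$ then $\psi_n(\bar a,\bar r)$ is unsatisfiable. I would then set $S=\{w\in\{0,1\}^*\mid C_{|w|}(w)=1\}$, so that $\{C_n\}_n$ computes $S$ in the sense of the definition of $\mathbf{P/poly}$, and verify the two inclusions defining a separator. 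If $w\in L$ with $|w|=n$, then $\phi_n(w,\bar q)$ is satisfiable, so $C_n(w)=0$ is impossible and hence $w\in S$, giving $L\subseteq S$. If instead $w\in S\cap R$ with $|w|=n$, then $\psi_n(w,\bar r)$ is satisfiable, so $C_n(w)=1$ is impossible, contradicting $w\in S$; hence $S\cap R=\varnothing$. Thus $S$ is a separator for $(L,R)$.

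I do not anticipate a genuine obstacle; the argument is a direct translation between the combinatorial and propositional descriptions. The one point that requires care is the negation: because the implication is $\phi_n\to\neg\psi_n$ rather than $\phi_n\to\psi_n$, the half $[C_n]\to\neg\psi_n$ must be interpreted as ``$C_n(\bar a)=1$ forces $\psi_n(\bar a,\bar r)$ to have no satisfying assignment in $\bar r$,'' and one must consistently track that $\bar p$, and not $\bar q$ or $\bar r$, is the block of shared variables over which the interpolating circuit lives.
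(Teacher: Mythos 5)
Your argument is correct and follows essentially the same route as the paper: unsatisfiability of $\phi_n\wedge\psi_n$ via the representation property and $L\cap R=\varnothing$ for the first claim, and the definition $S=\{w\mid C_{|w|}(w)=1\}$ together with the spelled-out interpolant condition for the second. The only cosmetic difference is that you phrase the inclusions $L\subseteq S$ and $S\cap R=\varnothing$ contrapositively while the paper states the complement memberships directly.
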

\begin{proof}
By construction in Theorem \ref{NPToPropositional}, $w \in L$ (resp. $w \in R$) if and only if $\phi_n(w, \bar{q})$ (resp. $\psi_n(w, \bar{r})$) is satisfiable, where $n = |w|$. Since $L \cap R = \emptyset$, it follows that no $w$ can simultaneously belong to both $L$ and $R$, for any $w \in \{0, 1\}^*$. Therefore, the conjunction $\phi_n(\bar{p}, \bar{q}) \wedge \psi_n(\bar{p}, \bar{r})$ is unsatisfiable, because any assignment that maps $\bar{p}$ to $w \in \{0, 1\}^*$ would make both $\phi_n(w, \bar{q})$ and $\psi_n(w, \bar{r})$ satisfiable, which is a contradiction. Thus, we derive a sequence $\{\phi_n(\bar{p}, \bar{q}) \to \neg \psi_n(\bar{p}, \bar{r})\}_n$ of short tautologies.
Now, suppose that $\{C_n(\bar{p})\}_n$ is a family of circuits that computes an interpolant for the sequence $\{\phi_n(\bar{p}, \bar{q}) \to \neg \psi_n(\bar{p}, \bar{r})\}_n$. Therefore, for any $w \in \{0, 1\}^n$, if $C_n(w) = 0$, then $\neg \phi_n(w, \bar{q})$ is a tautology, implying that $w \in L^c$. Similarly, if $C_n(w) = 1$, then $w \in R^c$. Therefore, the set $S = \{w \in \{0, 1\}^* \mid C_{|w|}(w) = 1\}$ acts as a separator for $(L, R)$ and $\{C_n\}_n$ computes $S$.
\end{proof}

\begin{corollary}\label{DNPImpliesHardInt}
If hard disjoint $\mathbf{NP}$ pairs exist or, in particular, one-way protocols exist, then there is a sequences of short propositional tautologies that has hard Craig interpolants.
\end{corollary}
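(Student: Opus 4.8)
The plan is to observe that this corollary is simply the conjunction of two results already established: the reduction from hard disjoint $\mathbf{NP}$ pairs to hard Craig interpolants recorded in Lemma~\ref{DNPToInterpolant}, and the implication ``one-way protocols exist $\Rightarrow$ hard disjoint $\mathbf{NP}$ pairs exist'' recorded in Corollary~\ref{OneWayToDNP}. So it suffices to treat the more general hypothesis that a hard disjoint $\mathbf{NP}$ pair $(L,R)$ exists; if we are only given one-way protocols, Corollary~\ref{OneWayToDNP} produces such a pair.

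Given a hard disjoint $\mathbf{NP}$ pair $(L,R)$, I would apply Theorem~\ref{NPToPropositional} to $L$ and to $R$ separately, obtaining sequences $\{\phi_n(\bar p,\bar q)\}_n$ and $\{\psi_n(\bar p,\bar r)\}_n$ of short propositional formulas representing $L$ and $R$, where $\bar p = p_1,\dots,p_n$ are the shared input variables and $\bar q,\bar r$ are the disjoint blocks of witness variables. By Lemma~\ref{DNPToInterpolant}, the sequence $\{\phi_n(\bar p,\bar q)\to\neg\psi_n(\bar p,\bar r)\}_n$ consists of short tautologies, and any circuit family $\{C_n(\bar p)\}_n$ computing Craig interpolants for it also computes the separator $S=\{w\in\{0,1\}^*\mid C_{|w|}(w)=1\}$ of $(L,R)$.

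Now I would argue by contradiction: suppose $\{\phi_n\to\neg\psi_n\}_n$ had an easy Craig interpolant, i.e.\ a family $\{C_n\}_n$ of circuits computing an interpolant with $|C_n|$ polynomial in $n$. Then the induced separator $S$ is decided by the poly-size circuit family $\{C_n\}_n$, so $S\in\mathbf{P/poly}$, contradicting the hardness of $(L,R)$. Hence $\{\phi_n\to\neg\psi_n\}_n$ is a sequence of short tautologies with hard Craig interpolants, which is exactly the claim.

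There is no real obstacle here, since all the substance has already been isolated in Lemma~\ref{DNPToInterpolant} and Corollary~\ref{OneWayToDNP}. The only step that deserves a moment's care is the bookkeeping identification: a circuit family computing a $\{0,1\}$-valued interpolant $C_n(\bar p)$ with $n$ inputs, evaluated on inputs of length $n$, \emph{is} literally a $\mathbf{P/poly}$ decision procedure for the separator $S$, and the polynomial size bound transfers verbatim under this identification. Once that is noted, the contrapositive of Lemma~\ref{DNPToInterpolant} gives the result directly.
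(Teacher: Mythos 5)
Your proposal is correct and takes exactly the same route as the paper: invoke Lemma~\ref{DNPToInterpolant} for the disjoint $\mathbf{NP}$ pair case and Corollary~\ref{OneWayToDNP} to reduce the one-way protocol case to it. The extra bookkeeping you supply (the contrapositive step identifying a poly-size interpolating circuit family with a $\mathbf{P/poly}$ separator) is exactly what the paper leaves implicit when it says the claim ``follows immediately.''
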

\begin{proof}
Assuming that hard disjoint $\mathbf{NP}$ pairs exist, the claim follows immediately from Lemma~\ref{DNPToInterpolant}. For the one-way protocol variant, it suffices to apply Corollary~\ref{OneWayToDNP}.
\end{proof}

One can apply Lemma~\ref{DNPToInterpolant} to the disjoint $\mathbf{NP}$ pairs introduced earlier to obtain concrete sequences of tautologies that are believed to have hard Craig interpolants:

\begin{example}[RSA and Diffie-Hellman Tautologies] \label{RSASequenceHaveIsHardInt}
Let $\mathrm{RSA}^1_n(\bar{p}, \bar{q})$ and $\mathrm{RSA}^0_n(\bar{p}, \bar{r})$ be the propositional formulas that encode the disjoint $\mathbf{NP}$ pair $(\mathrm{RSA}_1, \mathrm{RSA}_0)$ defined in Example~\ref{ExampleOfDNPOfRSA}. Then, by Lemma~\ref{DNPToInterpolant}, we obtain the sequence $\{\mathrm{RSA}^1_n(\bar{p}, \bar{q}) \to \neg \mathrm{RSA}^0_n(\bar{p}, \bar{r})\}_n$ of short tautologies.
Similarly, if $\mathrm{DH}^1_n(\bar{p}, \bar{q})$ and $\mathrm{DH}^0_n(\bar{p}, \bar{r})$ are the propositional translations of the disjoint $\mathbf{NP}$ pair $(\mathrm{DH}_1, \mathrm{DH}_0)$, we obtain the sequence $\{\mathrm{DH}^1_n(\bar{p}, \bar{q}) \to \neg \mathrm{DH}^0_n(\bar{p}, \bar{r})\}_n$ of short tautologies.
If the RSA and Diffie-Hellman protocols are secure against $\mathbf{P/poly}$-adversaries, then the corresponding disjoint $\mathbf{NP}$ pairs must be hard, by Example~\ref{ExampleOfDNPOfRSA}. Therefore, both sequences would have hard Craig interpolants, by Lemma~\ref{DNPToInterpolant}.
\end{example}

\begin{example}[Clique-Coloring Tautologies]
Let $0 < \epsilon < 1/3$ be a real number. Using Examples \ref{ExampleClique} and \ref{ExampleColor}, define the families $\mathrm{Clique}_n^{\epsilon}(\bar{p}, \bar{q})$ and $\mathrm{Color}^{\epsilon}_n(\bar{p}, \bar{r})$ of clauses, where $\bar{p}$ has $\binom{n}{2}$ atoms and describes a simple graph $G$ on $n$ vertices, $\bar{q}$ represents a $\lfloor n^{2/3} \rfloor$-clique inside $G$, and $\bar{r}$ represents an $\lfloor n^{2/3-2\epsilon} \rfloor$-coloring of $G$. Then, for large enough $n$, $\mathrm{Clique}_n^{\epsilon}(\bar{p}, \bar{q}) \cup \mathrm{Color}^{\epsilon}_n(\bar{p}, \bar{r})$ is an unsatisfiable sequence of clauses, as $\lfloor n^{2/3} \rfloor > \lfloor n^{2/3-2\epsilon} \rfloor$. Similarly, by transforming the clauses into disjunctions and slightly abusing notation, we can write $\mathrm{Clique}_n^{\epsilon}(\bar{p}, \bar{q}) \to \neg \mathrm{Color}^{\epsilon}_n(\bar{p}, \bar{r})$ as the corresponding tautology.
\end{example}

For the Clique-Coloring tautologies, we can unconditionally show that computing a Craig interpolant is hard if we restrict our computational device to monotone circuits:

\begin{theorem}\label{LowerBoundClassicalInt}
Let $0 < \epsilon < 1/3$ be a real number and let $\{C_n\}_n$ be a sequence of monotone circuits computing a Craig interpolant for $\{\mathrm{Clique}^{\epsilon}_{n}(\bar{p}, \bar{q}) \to \neg \mathrm{Color}^{\epsilon}_{n}(\bar{p}, \bar{r})\}_n$. Then, the size of $C_n$ must be at least $2^{\Omega(n^{1/3-\epsilon})}$.
\end{theorem}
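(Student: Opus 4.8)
The plan is to reduce this statement directly to the monotone circuit lower bound for separators of Clique-Coloring pairs (Corollary~\ref{AlonBoppanaForEpsilon}), exploiting the fact that a Craig interpolant for these particular tautologies is nothing but a separator of the underlying disjoint $\mathbf{NP}$ pair. First I would record that the families of clauses $\mathrm{Clique}^{\epsilon}_n(\bar p,\bar q)$ and $\mathrm{Color}^{\epsilon}_n(\bar p,\bar r)$ from Examples~\ref{ExampleClique} and~\ref{ExampleColor} enjoy the defining property isolated in Theorem~\ref{NPToPropositional}: for a Boolean assignment $\bar p = w \in \{0,1\}^{\binom n 2}$, read as a simple graph on $n$ vertices, the formula $\mathrm{Clique}^{\epsilon}_n(w,\bar q)$ is satisfiable exactly when that graph contains a $\lfloor n^{2/3}\rfloor$-clique, and $\mathrm{Color}^{\epsilon}_n(w,\bar r)$ is satisfiable exactly when it is $\lfloor n^{2/3-2\epsilon}\rfloor$-colorable. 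This is immediate from the constructions, since assignments to $\bar q$ (resp.\ $\bar r$) are in bijection with candidate cliques (resp.\ colorings). In other words, these formulas represent the two components of the disjoint $\mathbf{NP}$ pair $(\mathrm{Clique}_\epsilon, \mathrm{Color}_\epsilon)$ of Example~\ref{Example: Clique-Color DNP}.

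With this in hand, the argument of Lemma~\ref{DNPToInterpolant} applies verbatim to the sequence $\{\mathrm{Clique}^{\epsilon}_n(\bar p,\bar q)\to \neg\mathrm{Color}^{\epsilon}_n(\bar p,\bar r)\}_n$: any family $\{C_n\}_n$ computing Craig interpolants for it computes a separator $S = \{w : C_{|w|}(w)=1\}$ of $(\mathrm{Clique}_\epsilon, \mathrm{Color}_\epsilon)$, and in particular, for each $n$, the circuit $C_n$ on $\binom n 2$ variables computes a separator of $(\mathrm{Clique}_{\lfloor n^{2/3}\rfloor,n},\mathrm{Color}_{\lfloor n^{2/3-2\epsilon}\rfloor,n})$. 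The direction of inclusion — which side is contained in $S$ — is irrelevant here, since Theorem~\ref{Alon-Boppana} is insensitive to complementation of the separator.

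Finally, since each $C_n$ is monotone by hypothesis, I would invoke Corollary~\ref{AlonBoppanaForEpsilon} directly — or, unpacking it, Theorem~\ref{Alon-Boppana} with $k = \lfloor n^{2/3}\rfloor$ and $l = \lfloor n^{2/3-2\epsilon}\rfloor$, verifying for large $n$ that $3\le l<k$ and $\sqrt l\, k \le n^{1-\epsilon} \le \tfrac{n}{8\log n}$ exactly as in the proof of that corollary — to obtain $|C_n| \ge \tfrac18\, 2^{(\sqrt l + 1)/2} = 2^{\Omega(n^{1/3-\epsilon})}$, which is the claim.

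There is no genuine obstacle here beyond bookkeeping; the theorem is essentially a corollary of Lemma~\ref{DNPToInterpolant} and the Razborov--Alon--Boppana bound. The two points deserving attention are: (i) reconciling the indexing conventions in play — the theorem indexes circuits by the number $n$ of graph vertices, whereas the disjoint-$\mathbf{NP}$-pair machinery of Section~\ref{subsubsec: DNP} and Lemma~\ref{DNPToInterpolant} indexes by input string length $\binom n 2$; and (ii) confirming that the Clique-Coloring clauses, which are written out by hand rather than produced via Fagin's theorem, really do satisfy the ``satisfiable iff in the language'' property that Lemma~\ref{DNPToInterpolant} relies on — both are routine. It is also worth emphasizing what is \emph{not} being claimed: the theorem does not assert that the interpolant can be chosen to be monotone (that is the separate content of monotone feasible interpolation, to be established later), only that any monotone circuit that happens to compute such an interpolant must be exponentially large — which is precisely what is needed, in tandem with monotone feasible interpolation, to deduce proof-size lower bounds.
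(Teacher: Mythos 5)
Your proof is correct and takes essentially the same route as the paper's (one-line) proof: it runs the argument of Lemma~\ref{DNPToInterpolant} to conclude that $C_n$ computes a separator of $(\mathrm{Clique}_{\lfloor n^{2/3}\rfloor, n}, \mathrm{Color}_{\lfloor n^{2/3-2\epsilon}\rfloor, n})$ and then invokes the Razborov--Alon--Boppana bound (Theorem~\ref{Alon-Boppana}, or equivalently Corollary~\ref{AlonBoppanaForEpsilon}). One small correction to a side remark: your claim that the direction of inclusion ``is irrelevant'' because Theorem~\ref{Alon-Boppana} is ``insensitive to complementation of the separator'' is not right in the monotone setting, since negating the output of a monotone circuit does not yield a monotone circuit (indeed a monotone set containing $\mathrm{Color}_{l,n}$ must contain the complete graph and so cannot miss $\mathrm{Clique}_{k,n}$, making the reverse direction impossible for monotone circuits); fortunately you never need this, because the interpolant for $\mathrm{Clique}^\epsilon_n(\bar p,\bar q)\to\neg\mathrm{Color}^\epsilon_n(\bar p,\bar r)$ produces the separator in exactly the orientation ($\mathrm{Clique}$ inside $S$, $\mathrm{Color}$ outside $S$) that the theorem requires.
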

\begin{proof}
Similar to the proof of Lemma \ref{DNPToInterpolant}, it is easy to see that $C_n$ separates the pair $(\mathrm{Clique}_{\lfloor n^{2/3} \rfloor,n}, \mathrm{Color}_{\lfloor n^{2/3-2\epsilon} \rfloor,n})$. The lower bound follows from Theorem \ref{Alon-Boppana}.
\end{proof}

\subsection{Alternatives for Craig Interpolation}

In classical logic, a Craig interpolant for $(\phi(\bar{p}, \bar{q}) \to \psi(\bar{p}, \bar{r})) \equiv (\neg \phi(\bar{p}, \bar{q}) \vee \psi(\bar{p}, \bar{r}))$ can be viewed as a mechanism that, given an assignment $\bar{a}$ to the shared variables $\bar{p}$, determines whether $\neg \phi(\bar{a}, \bar{q})$ or $\psi(\bar{a}, \bar{r})$ is a tautology. Thus, computing a Craig interpolant corresponds to deciding between these two disjuncts based on the assignment $\bar{a}$, a task that we have argued is computationally hard, by leveraging the hardness of disjoint $\mathbf{NP}$ pairs.

However, in many non-classical logics, there is often some form of the \emph{disjunction property}. That is, if $\phi \vee \psi$ is derivable in the logic, then either $\phi$ or $\psi$ must be derivable. In such settings, deciding between the disjuncts no longer depends on the assignment $\bar{a}$, making this task computationally trivial.
Consequently, in non-classical contexts, we need to replace Craig interpolants with more refined notions that preserve the same intuitive purpose but are better suited to the specific logic in question. In this subsection, we follow \cite{hrubevs2009lengths} and introduce two such alternatives: one for the propositional language and one for the modal language.

\begin{definition}[Propositional Disjunctive Interpolation]
Let $L$ be a si logic. An \emph{$L$-propositional disjunctive interpolant} ($L$-$\mathrm{PDI}$, for short) for $(\phi \to \psi \vee \theta) \in L$ is a pair $(I, J)$ of formulas such that $V(I), V(J) \subseteq V(\phi)$, and the formulas $\phi \to (I \vee J)$, $I \to \phi$, and $J \to \psi$ are all in $L$. 
An \emph{$L$-monotone propositional disjunctive interpolant} ($L$-$\mathrm{mPDI}$, for short) is defined similarly, requiring additionally that $\phi$, $I$, and $J$ are all monotone.
We say that $L$ has the \emph{(monotone) propositional disjunctive interpolation property} if any $(\phi \to \psi \vee \theta) \in L$ (resp. for monotone $\phi$) has an $L$-$\mathrm{PDI}$ (resp. $L$-$\mathrm{mPDI}$).
We denote the propositional disjunctive interpolation property by $\mathrm{PDIP}$ and its monotone version by $\mathrm{mPDIP}$.\footnote{This notion is frequently employed in the literature on the proof complexity of non-classical logics as a technical tool. We believe that, like Craig interpolation, both this notion and its modal counterpart are of independent interest and deserve further logical exploration. As a first step in this direction, we propose a name for them and briefly examine their existence.} 
\end{definition}

\begin{remark}
For si logics, PDIP generalizes the disjunction property. To see why, it is enough to set $\phi = \top$. Then, both $I$ and $J$ must be variable-free, and hence they are intuitionistically equivalent to either $\top$ or $\bot$. Since $I \vee J$ is provable, one of $I$ or $J$ must be equivalent to $\top$. Therefore, either $\psi$ or $\theta$ is provable.
Using this observation, one may argue that PDIP is actually a generalization of the disjunction property rather than of Craig interpolation. This is somewhat true. However, as we will see, there is a connection between this notion and Craig interpolation. 
Therefore, it is probably fair to say that PDIP is a generalization of the classically-equivalent disjunctive form of Craig interpolation, where we work with disjunctions rather than implications.
\end{remark}

\begin{theorem}\label{thm:PDIP}
$\mathsf{IPC}$ has both $\mathrm{PDIP}$ and $\mathrm{mPDIP}$.
\end{theorem}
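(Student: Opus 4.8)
The plan is to extract the pair $(I,J)$ from a cut-free proof by a Maehara-style interpolation argument in which, rather than a single Craig interpolant, one produces \emph{one partial interpolant per disjunct of the succedent}. The single-succedent calculus $\mathbf{LJ}$ of Section~\ref{Sec: Preliminaries} is awkward for splitting a disjunction on the right, so I would first pass to the standard cut-free \emph{multi-succedent} sequent calculus $\mathbf{mLJ}^-$ for $\mathsf{IPC}$ (Maehara's calculus: the rules of cut-free $\mathbf{LK}$ except that the right implication rule has the single-conclusion form $\Gamma,\phi\Rightarrow\psi \,/\, \Gamma\Rightarrow\Delta,\phi\to\psi$, discarding the side succedent $\Delta$ in the premise). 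It is standard that $\mathbf{mLJ}^-$ admits cut elimination and that $\Gamma\Rightarrow\Delta$ is provable in it iff $\mathsf{IPC}\vdash\bigwedge\Gamma\to\bigvee\Delta$; hence $(\phi\to\psi\vee\theta)\in\mathsf{IPC}$ yields a cut-free $\mathbf{mLJ}^-$-proof of $\phi\Rightarrow\psi,\theta$, and reading $(I,J)$ off this proof is the whole task.

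The engine is the following lemma, proved by induction on a cut-free $\mathbf{mLJ}^-$-proof. For a provable sequent, fix a partition of its antecedent into a \emph{parameter part} $\Gamma_1$ (taken nonempty) and a \emph{rest} $\Gamma_2$, and list its succedent as $D_1,\dots,D_k$. Then there are formulas $I_1,\dots,I_k$ with $V(I_j)\subseteq V(\Gamma_1)\cap V(\Gamma_2\cup\{D_j\})$ such that $\mathsf{IPC}$ proves $\bigwedge\Gamma_1\to I_1\vee\cdots\vee I_k$ and $I_j\wedge\bigwedge\Gamma_2\to D_j$ for each $j$; moreover, each $I_j$ is built from $\top,\bot$ and subformulas of the formulas in $\Gamma_1$ using only $\wedge$ and $\vee$. (The case $k=0$ is the ordinary Craig clause, and the degenerate situations in which a rule would empty $\Gamma_1$ — essentially a left weakening of the last parameter formula — are absorbed using the disjunction property of $\mathsf{IPC}$ for the discarded branch.) Granting the lemma, apply it to the cut-free proof of $\phi\Rightarrow\psi,\theta$ with $\Gamma_1=\{\phi\}$, $\Gamma_2=\varnothing$: this gives $(I,J):=(I_\psi,I_\theta)$ with $V(I)\subseteq V(\phi)\cap V(\psi)\subseteq V(\phi)$, $V(J)\subseteq V(\phi)$, and $\mathsf{IPC}\vdash\phi\to I\vee J$, $\mathsf{IPC}\vdash I\to\psi$, $\mathsf{IPC}\vdash J\to\theta$ — exactly an $\mathsf{IPC}$-$\mathrm{PDI}$ — so $\mathrm{PDIP}$ follows. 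For $\mathrm{mPDIP}$, note that a monotone $\phi$ has no implication or negation subformula, so along the proof of $\phi\Rightarrow\psi,\theta$ the parameter part $\Gamma_1$ only ever grows by $L\wedge$ and $L\vee$ on subformulas of $\phi$ (the implication rules never touch the parameter side); hence every formula ever placed in $\Gamma_1$ is a subformula of $\phi$, so monotone, and by the ``moreover'' clause all interpolants built along the way — in particular $I=I_\psi$ and $J=I_\theta$ — are monotone.

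It remains to carry out the induction. The axioms and the right and structural rules are immediate (for instance $R\vee$ merges the partial interpolants of the two merged succedent disjuncts by $\vee$, and right weakening assigns the trivial interpolant $\bot$ to the new succedent formula). The $L\wedge$, $L\vee$, and $L\to$ rules combine the partial interpolants supplied by their one or two premises, and the whole point is to choose the right propositional combination according to which side of the partition the principal formula lies on: for principal formulas in $\Gamma_1$ one mostly takes disjunctions, whereas for principal formulas in $\Gamma_2$ one is forced into conjunctions of disjunctions, using intuitionistically valid distributivities such as $(X_1\vee A)\wedge(X_2\vee B)\vdash(X_1\vee X_2)\vee(A\wedge B)$, and one checks at each step that the variable bound and the ``$\wedge,\vee$-over-subformulas-of-$\Gamma_1$'' shape survive.

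The step I expect to be the main obstacle is, as always in intuitionistic interpolation, the implication rules. In $R\to$ the single-conclusion restriction forces the side succedent $D_1,\dots,D_k$ to be thrown away in the premise: this is handled by assigning the trivial interpolant $\bot$ to each discarded $D_i$ and the premise's (Craig) interpolant to the newly formed implication, which is legitimate precisely because its body has moved into $\Gamma_2$, so a single application of $R\to$ recovers the required entailment; the requirement $\bigwedge\Gamma_1\to\bot\vee\cdots\vee\bot\vee K$ then collapses to the available $\bigwedge\Gamma_1\to K$. The genuinely delicate case is $L\to$ with the principal implication sitting in $\Gamma_1$, where the body of the implication appears in the left premise's succedent and must be accounted for on the parameter side of the partition; this is exactly the point where Maehara's classical argument for intuitionistic Craig interpolation needs its case split, and the disjunctive refinement requires the same bookkeeping — choosing the propositional combination of the two premises' interpolant families so that both the variable constraint and (in the monotone case) monotonicity are maintained — but no new idea. (For $\mathrm{mPDIP}$ this delicate case does not even arise, since a monotone $\phi$ never contributes an implication to $\Gamma_1$.)
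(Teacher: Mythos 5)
Your proposal takes a genuinely different and heavier route than the paper, and the extra generality is where the trouble hides.

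\textbf{The paper's approach is much lighter.} Since an $\mathsf{IPC}$-$\mathrm{PDI}$ only demands $V(I),V(J)\subseteq V(\phi)$ --- not the Craig-style intersection $V(\phi)\cap V(\psi)$ --- the paper proves by induction on a cut-free $\mathbf{LJ}$-proof that for any provable $\Gamma\Rightarrow\phi\vee\psi$ there exist $I,J$ with merely $V(I),V(J)\subseteq V(\Gamma)$ and $\mathbf{LJ}\vdash\Gamma\Rightarrow I\vee J$, $\mathbf{LJ}\vdash I\Rightarrow\phi$, $\mathbf{LJ}\vdash J\Rightarrow\psi$. This weak variable constraint is what makes the $R\vee_1$ case trivial: one simply sets $I=\bigwedge\Gamma$, $J=\bot$, which would be illegal under an intersection constraint. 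No multi-succedent calculus, no parameter/rest partition, no distributivity bookkeeping. The $L\vee$ case combines the two premises' pairs by disjunction; every other case is routine because the succedent is essentially untouched, and in the monotone case all the building blocks ($\bigwedge\Gamma$, disjunctions of monotone pieces) stay monotone. You went for the stronger intersection bound $V(I_j)\subseteq V(\Gamma_1)\cap V(\Gamma_2\cup\{D_j\})$, which is not needed and, as below, is what forces all the Maehara machinery.

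\textbf{The gap in your lemma.} You acknowledge that $L\!\to$ with principal $\alpha\to\beta\in\Gamma_1$ is ``the genuinely delicate case'' and assert it needs ``no new idea,'' mirroring Maehara's classical argument. But your lemma lacks the structural ingredient that makes Maehara's argument work. In Maehara's method, the \emph{succedent} is also partitioned, so when $\alpha$ lands in the succedent of the left premise it can be assigned to the $\Gamma_1$ side of the partition and need not be interpolated at all. Your lemma has no such option: every succedent formula $D$ is assigned its own partial interpolant $I_D$ satisfying $\bigwedge\Gamma_1\to\bigvee I_j$ and $I_D\wedge\bigwedge\Gamma_2\to D$. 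Concretely, for $L\!\to$ with conclusion $\Gamma_1'\cup\{\alpha\to\beta\},\Gamma_2\Rightarrow\Delta$, the IH on the left premise $\Gamma_1',\Gamma_2\Rightarrow\alpha,\Delta$ yields $I_\alpha$ with $\bigwedge\Gamma_1'\to I_\alpha\vee\bigvee_j I_j$ and $I_\alpha\wedge\bigwedge\Gamma_2\to\alpha$. To discharge the conclusion's requirement $\bigwedge\Gamma_1\to\bigvee_j K_j$ in the $I_\alpha$-branch you would need to derive $\alpha$ (and then $\beta$, and then invoke the right premise's IH), but $I_\alpha$ only yields $\alpha$ \emph{together with} $\bigwedge\Gamma_2$, which is precisely what is unavailable on that side. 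None of the obvious combinations $K_j=I_j\vee I_\alpha$, $K_j=I_j\vee(I_\alpha\wedge J_j)$, or $K_j=I_j\vee J_j$ close the gap. So the induction for $\mathrm{PDIP}$ (where $\phi$ may contain implications) does not go through as written. Your parenthetical remark is correct that for $\mathrm{mPDIP}$ this case never arises, so that half of the argument is salvageable, but the general $\mathrm{PDIP}$ claim is not established by the lemma you state. The fix is either to strengthen your lemma to a genuine two-sided Maehara partition (a substantially different statement) or, more simply, to drop the intersection constraint and follow the paper's lightweight induction.
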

\begin{proof}
For $\mathrm{PDIP}$, we prove the following apparently stronger claim: For any $\mathbf{LJ}$-provable sequent $\Gamma \Rightarrow \phi \vee \psi$, there are formulas $I$ and $J$ such that $V(I), V(J) \subseteq V(\Gamma)$, and the sequents $(\Gamma \Rightarrow I \vee J)$ and $(I \Rightarrow \phi)$ and $(J \Rightarrow \psi)$ are all provable in $\mathbf{LJ}$.
The proof is by induction on a cut-free $\mathbf{LJ}$-proof of the sequent $\Gamma \Rightarrow \phi \vee \psi$. The axiom case is easy. For the rules, we only explain the cases where the last rule is $(R\vee_1)$ or $(L\vee)$. The others are similar. For $(R\vee_1)$, let the last rule be in the form:
\begin{prooftree}
    \AxiomC{$\Gamma \Rightarrow \phi $}
    \RightLabel{$R\vee_1$}
    \UnaryInfC{$\Gamma \Rightarrow \phi \vee \psi$}
\end{prooftree}
Then, it is enough to set $I=\bigwedge \Gamma$ and $J=\bot$. If the last rule is $(L\vee)$:
\begin{prooftree}
    \AxiomC{$\Gamma, \theta_1 \Rightarrow \phi \vee \psi$}
    \AxiomC{$\Gamma, \theta_2 \Rightarrow \phi \vee \psi$}
    \RightLabel{$L\vee$}
    \BinaryInfC{$\Gamma, \theta_1 \vee \theta_2 \Rightarrow \phi \vee \psi$}
\end{prooftree}
using the induction hypothesis, there are $I_i$'s and $J_i$'s, for $i \in \{1, 2\}$ such that $V(I_i), V(J_i) \subseteq V(\Gamma \cup \{\theta_i\})$ and $(\Gamma, \theta_i \Rightarrow I_i \vee J_i)$, $(I_i \Rightarrow \phi)$, $(J_i \Rightarrow \psi)$ are all provable in $\mathbf{LJ}$. Define $I=I_1 \vee I_2$ and $J=J_1 \vee J_2$. It is clear that $V(I), V(J) \subseteq V(\Gamma \cup \{\theta_1 \vee \theta_2\})$, $\mathbf{LJ} \vdash I \Rightarrow \phi$ and $\mathbf{LJ} \vdash J \Rightarrow \psi$. Moreover, it is easy to see that $(\Gamma, \theta_i \Rightarrow I \vee J)$ and hence $(\Gamma, \theta_1 \vee \theta_2 \Rightarrow I \vee J)$ is provable in $\mathbf{LJ}$. This completes the proof of the claim and hence of PDIP. For the monotone case, it is easy to see that in the above construction, whenever $\Gamma$ is monotone, so are $I$ and $J$.  
\end{proof}

\begin{definition}[Modal Disjunctive Interpolation]
Let $L$ be a modal logic. An \emph{$L$-modal disjunctive interpolant} ($L$-$\mathrm{MDI}$, for short) for $(\phi \to \Box \psi \vee \Box \theta) \in L$ is a pair $(I, J)$ of formulas such that $V(I), V(J) \subseteq V(\phi)$, and the formulas $\phi \to (I \vee J)$, $I \to \Box \phi$, and $J \to \Box \psi$ are all in $L$.
An \emph{$L$-monotone modal disjunctive interpolant} ($L$-$\mathrm{mMDI}$, for short) is defined similarly, requiring additionally that $\phi$, $I$, and $J$ are all monotone.
We say that $L$ has the \emph{(monotone) modal disjunctive interpolation property} if any $(\phi \to \Box \psi \vee \Box \theta) \in L$ (resp. for monotone $\phi$) has an $L$-$\mathrm{MDI}$ (resp. $L$-$\mathrm{mMDI}$).
We denote the modal disjunctive interpolation property by $\mathrm{MDIP}$ and its monotone version by $\mathrm{mMDIP}$.
\end{definition}

\begin{theorem}
All modal logics in Table \ref{tab:sequent_calculi} have both $\mathrm{MDIP}$ and $\mathrm{mMDIP}$.
\end{theorem}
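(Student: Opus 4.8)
The plan is to follow the proof of Theorem~\ref{thm:PDIP} together with the standard proof of feasible Craig interpolation for $\mathbf{LK}$: fix one of the seven calculi $\mathbf{G}$ of Table~\ref{tab:sequent_calculi}, invoke cut elimination for $\mathbf{G}$, and argue by induction on a cut-free $\mathbf{G}$-proof. Because these calculi are multi-conclusion, the statement proved by induction must be strengthened in two directions. First, as usual for interpolation in sequent calculi, an arbitrary side block $\Delta_0$ is allowed in the succedent; second, since a single right rule introduces at most one box, the number of boxed formulas being split off has to be allowed to vary. Concretely, the claim is: for every cut-free $\mathbf{G}$-proof of a sequent $\Gamma \Rightarrow \Delta_0, \Box\rho_1, \dots, \Box\rho_k$ with $0 \le k \le 2$, there are formulas $I_1, \dots, I_k$ with $V(I_m) \subseteq V(\Gamma) \cup V(\Delta_0)$ such that $\mathbf{G}$ proves $\Gamma \Rightarrow \Delta_0, I_1 \vee \dots \vee I_k$ (understood as $\Gamma \Rightarrow \Delta_0, \bot$ when $k=0$) together with $I_m \Rightarrow \Box\rho_m$ for each $m \le k$. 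Applying this with $\Gamma = \phi$, $\Delta_0 = \varnothing$, $k = 2$ to a cut-free proof of $\phi \Rightarrow \Box\psi, \Box\theta$ (which exists since $(\phi\to\Box\psi\vee\Box\theta)$ lies in the logic of $\mathbf{G}$) produces exactly an $L$-$\mathrm{MDI}$ $(I_1, I_2)$ for $\phi\to\Box\psi\vee\Box\theta$, and the same construction restricted to monotone $\phi$ will yield an $L$-$\mathrm{mMDI}$.

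For the induction, the propositional and structural cases are routine and parallel both the $\mathrm{PDIP}$ argument and ordinary $\mathbf{LK}$-interpolation: left rules on $\Gamma$-formulas, right rules on $\Delta_0$-formulas, and the structural rules are all handled by recursing on the premises and recombining the returned interpolants — taking componentwise disjunctions for $L\vee$ and $R\wedge$, disjunctions of the left- and right-premise interpolants for $L\!\to$, and passing them through otherwise; when a right weakening introduces one of the designated $\Box\rho_m$, set that $I_m := \bot$; the axioms are immediate, the only mildly special one being $\Box\rho \Rightarrow \Box\rho$, where $\Gamma = \Box\rho$ and one takes $I := \Box\rho$. The genuinely new cases are the modal rules, and this is where the argument is actually short. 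Every right modal rule in Table~\ref{tab:sequent_calculi} — $\mathrm{K}$, $\mathrm{D}$, $\mathrm{K4}$, $\mathrm{KD4}$, $\mathrm{RS4}$, $\mathrm{GL}$ — has a conclusion of the form $\Box\Sigma \Rightarrow \Box\chi$ with a fully boxed antecedent and no side formulas, so such a rule can be the last rule only when $\Delta_0 = \varnothing$ and $k \le 1$; in that case one simply takes $I_1 := \bigwedge\Box\Sigma$. Indeed $V(\bigwedge\Box\Sigma) = V(\Box\Sigma) = V(\Gamma)$, the sequent $\Gamma \Rightarrow \bigwedge\Box\Sigma$ is trivial, and $\bigwedge\Box\Sigma \Rightarrow \Box\chi$ follows by a few $L\wedge$ steps from the endsequent $\Box\Sigma \Rightarrow \Box\chi$, which the modal rule under consideration produces outright. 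The left modal rule $\mathrm{LS4}$ (present in $\mathbf{KT}$ and $\mathbf{S4}$) is handled exactly like $L\wedge$: recurse on the premise and re-apply $\mathrm{LS4}$. Implicitly, taking the entire boxed antecedent as the interpolant works precisely because the logic proves $\Box\Sigma \Rightarrow \Box\chi$ outright — a feasible form of the modal disjunction property.

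The monotone version then comes almost for free, just as it did for $\mathrm{PDIP}$. If $\phi$ is monotone it contains no implication, so $L\!\to$ can never be applied to the antecedent anywhere along the branches the induction actually descends into; consequently $\Delta_0$ never acquires a new formula and stays empty, and every antecedent that occurs is a sequence of subformulas of $\phi$, hence monotone. All the interpolants constructed above are built out of subformulas of such monotone antecedents using only $\wedge$, $\vee$, $\bot$, $\top$, and $\Box$ — connectives all permitted in monotone formulas — so $I_1$ and $I_2$ are monotone whenever $\phi$ is. I expect the real effort to be purely organizational: enumerating the rule cases for each of the seven calculi and arranging the $L\!\to$ (and right-contraction) combinations so that the three required sequents hold simultaneously over the prescribed variable set. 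The modal cases, which one might fear to be the obstacle, turn out to be the easiest once the idea of using the boxed antecedent as the interpolant is in place; a reference for this style of argument is \cite{hrubevs2009lengths}.
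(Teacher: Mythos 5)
Your proposal follows essentially the same route as the paper: strengthen the statement to an interpolation claim for sequents with a side block and several designated boxed succedent formulas, induct on a cut-free proof, and exploit the fact that every modal rule in these calculi is single-conclusion with a fully boxed antecedent and no side formulas, so that the whole boxed antecedent $\bigwedge \Box\Sigma$ can serve as the interpolant at a modal inference. The treatment of the modal and monotone cases matches the paper's (unwritten) details, and the level of rigor exceeds the paper's one-line sketch.

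The one point you should repair is the cap $0 \le k \le 2$ in your induction hypothesis: it is not preserved by right contraction on a \emph{designated} boxed formula. If the last rule contracts two copies of $\Box\psi$ and the surviving copy is designated, the premise needs three designated boxes (putting the extra copy into $\Delta_0$ instead would let $V(\Box\psi)$ leak into the variable condition $V(I_m)\subseteq V(\Gamma)\cup V(\Delta_0)$ and ultimately violate $V(I)\subseteq V(\phi)$). The paper sidesteps this by proving the claim for an arbitrary partition of the multiset of boxed succedent formulas into $k$ blocks $\Delta_1,\ldots,\Delta_k$ with $I_i \Rightarrow \Box\Delta_i$, so that duplicated copies live in the same block and contraction is absorbed there; equivalently, you can simply allow arbitrary $k$ and merge the two interpolants of the contracted copies by disjunction, as you already do elsewhere. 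With that adjustment your argument goes through as written.
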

\begin{proof}
Let $G$ be a sequent calculus in Table \ref{tab:sequent_calculi}. Then, it is enough to use an induction on a cut-free $G$-proof of a sequent $\Gamma \Rightarrow \Box \Delta, \Lambda$, to show the apparently stronger claim that for any partition of $\Delta$ into $\Delta = \Delta_1, \ldots, \Delta_k$, there are formulas $I_1, \ldots, I_k$ such that $V(I_i) \subseteq V(\Gamma \cup \Lambda)$ and the sequents $(\Gamma \Rightarrow I_1, \ldots, I_k, \Lambda)$ and $(I_i \Rightarrow \Box \Delta_i)$, for $i \leq k$, are all provable in $G$. Moreover, if $\Gamma$ is monotone and $\Lambda = \emptyset$, all $I_i$'s are also monotone. The proof of this claim is easy and the crucial point that it uses is that the modal rules in $G$ are all single-conclusion.
\end{proof}

Let $L$ be a si logic and $\{\phi_n \to \psi_n \vee \theta_n\}_{n \geq n_0}$ be a sequence of short formulas in $L$. We say that a sequence $\{(C_n, D_n)\}_{n \geq n_0}$ of pairs of circuits computes an $L$-$\mathrm{PDI}$ of $\{\phi_n \to \psi_n \vee \theta_n\}_{n \geq n_0}$ if $([C_n], [D_n])$ is an $L$-$\mathrm{PDI}$ for $\phi_n \to \psi_n \vee \theta_n$, for every $n \geq n_0$.
Similarly, if $L$ is a modal logic, for a sequence $\{\phi_n \to \Box \psi_n \vee \Box \theta_n\}_{n \geq n_0}$ of short formulas in $L$, we say that a sequence $\{(C_n, D_n)\}_{n \geq n_0}$ of pairs of $\mathcal{L}_{\Box}$-circuits computes an $L$-$\mathrm{MDI}$ of $\{\phi_n \to \Box \psi_n \vee \Box \theta_n\}_{n \geq n_0}$ if $([C_n], [D_n])$ is an $L$-$\mathrm{MDI}$ of $\phi_n \to \Box \psi_n \vee \Box \theta_n$ for every $n \geq n_0$.
Similar to the classical case, if there is a sequence of poly-size circuits for the computation, we say that the sequence $\{\phi_n \to \psi_n \vee \theta_n\}_{n \geq n_0}$ (resp. $\{\phi_n \to \Box \psi_n \vee \Box \theta_n\}_{n \geq n_0}$) has an easy $L$-$\mathrm{PDI}$ (resp. $L$-$\mathrm{MDI}$). Otherwise, we say that the sequence has hard $L$-$\mathrm{PDI}$'s (resp. $L$-$\mathrm{MDI}$'s).

In the following, we show that, similar to the case of Craig interpolants in classical logic, the disjunctive interpolants can be also hard to compute. To prove this, we introduce a way to transform classical implications to theorems of $\mathsf{IPC}$ (resp. $\mathsf{K}$) such that any computation of a propositional (resp. modal) disjunctive interpolant of the latter gives us the computation of a Craig interpolant of the former. First, let us transform the formulas:

\begin{theorem}\label{ClassicalToIPC}
If $\mathsf{CPC} \vdash \phi(\bar{p}, \bar{r}) \to \psi(\bar{p}, \bar{s})$, then  
\begin{itemize}
    \item[$\bullet$] 
$\mathsf{IPC} \vdash \bigwedge_i (p_i \vee \neg p_i) \to (\neg \phi(\bar{p}, \bar{r}) \vee \neg \neg \psi(\bar{p}, \bar{s}))$, and
    \item[$\bullet$] 
$\mathsf{K}   \vdash \bigwedge_i (\Box p_i \vee \Box \neg p_i) \to (\Box \neg \phi(\bar{p}, \bar{r}) \vee \Box \psi(\bar{p}, \bar{s})).$   
\end{itemize}
If either $\phi(\bar{p}, \bar{r})$ or $\psi(\bar{p}, \bar{s})$ is monotone in $\bar{p}$, then  
\begin{itemize}
    \item[$\bullet$] 
$\mathsf{IPC} \vdash \bigwedge_i (p_i \vee q_i) \to (\neg \phi(\neg\bar{p}, \bar{r}) \vee \neg \neg \psi(\bar{q}, \bar{s}))$, and
     \item[$\bullet$] 
$\mathsf{K}   \vdash \bigwedge_i (\Box p_i \vee \Box q_i) \to (\Box \neg \phi(\neg \bar{p}, \bar{r}) \vee \Box \psi(\bar{q}, \bar{s}))$
\end{itemize}    
\end{theorem}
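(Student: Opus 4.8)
The plan is to prove all four implications by a single template that reduces each one to finitely many substitution instances of the given classical tautology. Crucially, the statement only asks for \emph{provability} (no polynomial size bound is claimed), so I am free to split on all $2^n$ truth assignments to the shared variables. I will use two elementary tools. First, a \emph{disjoint-variable dichotomy}: if $\mathsf{CPC}\vdash\alpha(\bar{x})\to\beta(\bar{y})$ where $\bar{x}$ and $\bar{y}$ are disjoint tuples, then either $\alpha$ is unsatisfiable, so $\mathsf{CPC}\vdash\neg\alpha$, or $\beta$ is a tautology; otherwise a satisfying assignment for $\alpha$ together with a falsifying one for $\beta$ would refute $\alpha\to\beta$. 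Second, a \emph{lifting} step: from $\mathsf{CPC}\vdash\neg\alpha$ one gets $\mathsf{IPC}\vdash\neg\alpha$ (by Glivenko's theorem and $\mathsf{IPC}\vdash\neg\neg\neg\alpha\leftrightarrow\neg\alpha$) and $\mathsf{K}\vdash\Box\neg\alpha$ (since $\mathbf{K}=\mathbf{LK}+\mathrm{K}$ gives $\mathsf{K}\supseteq\mathsf{CPC}$, and $\mathsf{K}$ is closed under necessitation); dually, from $\mathsf{CPC}\vdash\beta$ one gets $\mathsf{IPC}\vdash\neg\neg\beta$ and $\mathsf{K}\vdash\Box\beta$. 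I also use the $\mathsf{K}$-provable schemes $\Box(A\to B)\to(\Box A\to\Box B)$ and $\Box A\wedge\Box B\to\Box(A\wedge B)$, and replacement of provable equivalents inside subformulas (valid in $\mathsf{IPC}$, $\mathsf{CPC}$, $\mathsf{K}$).

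\emph{The non-monotone bullets.} Assume $\mathsf{CPC}\vdash\phi(\bar{p},\bar{r})\to\psi(\bar{p},\bar{s})$. For each $\bar{a}\in\{0,1\}^n$, substituting the constants $\bar{a}$ for $\bar{p}$ yields $\mathsf{CPC}\vdash\phi(\bar{a},\bar{r})\to\psi(\bar{a},\bar{s})$, an implication between formulas with disjoint variable sets $\bar{r}$ and $\bar{s}$. By the dichotomy, either $\phi(\bar{a},\bar{r})$ is unsatisfiable, whence $\mathsf{IPC}\vdash\neg\phi(\bar{a},\bar{r})$, or $\psi(\bar{a},\bar{s})$ is a tautology, whence $\mathsf{IPC}\vdash\neg\neg\psi(\bar{a},\bar{s})$. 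Writing $p_i^{a_i}$ for $p_i$ if $a_i=1$ and $\neg p_i$ if $a_i=0$, the conjunction $\bigwedge_i p_i^{a_i}$ proves $p_i\leftrightarrow a_i$, so by replacement it proves $\phi(\bar{p},\bar{r})\leftrightarrow\phi(\bar{a},\bar{r})$ and $\psi(\bar{p},\bar{s})\leftrightarrow\psi(\bar{a},\bar{s})$; hence $\bigwedge_i p_i^{a_i}\vdash_{\mathsf{IPC}}\neg\phi(\bar{p},\bar{r})\vee\neg\neg\psi(\bar{p},\bar{s})$ in either case. Since $\bigwedge_i(p_i\vee\neg p_i)\vdash_{\mathsf{IPC}}\bigvee_{\bar{a}}\bigwedge_i p_i^{a_i}$ by distributivity, disjunction elimination over all $\bar{a}$ gives the first bullet. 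The modal bullet is identical with $\neg\neg$ replaced by $\Box$: start from $\bigwedge_i(\Box p_i\vee\Box\neg p_i)\vdash_{\mathsf{K}}\bigvee_{\bar{a}}\Box\!\bigwedge_i p_i^{a_i}$ and, in each case, push $\Box$ through the $\mathsf{CPC}$-implications produced by the dichotomy using the K-axiom.

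\emph{The monotone bullets.} Suppose $\phi$ (resp.\ $\psi$) is monotone in $\bar{p}$. Substituting $\bar{p}:=\bar{q}$ (resp.\ $\bar{p}:=\neg\bar{p}$) in the hypothesis and then using monotonicity together with the $\mathsf{CPC}$-fact $p_i\vee q_i\vdash\neg p_i\to q_i$ to replace the $\bar{p}$-arguments $\neg p_i$ by $q_i$ inside $\phi$ (resp.\ inside $\psi$), I obtain in both cases $\mathsf{CPC}\vdash\bigwedge_i(p_i\vee q_i)\wedge\phi(\neg\bar{p},\bar{r})\to\psi(\bar{q},\bar{s})$. Now for each $S\subseteq\{1,\dots,n\}$, substitute $\top$ for $p_i$ when $i\in S$ and for $q_i$ when $i\notin S$; the premise $\bigwedge_i(p_i\vee q_i)$ collapses to $\top$ and what remains is $\mathsf{CPC}\vdash\phi(\bar{u}^S,\bar{r})\to\psi(\bar{v}^S,\bar{s})$, where the free variables of $\phi(\bar{u}^S,\bar{r})$ lie among $\{p_i:i\notin S\}\cup\bar{r}$ and those of $\psi(\bar{v}^S,\bar{s})$ among $\{q_i:i\in S\}\cup\bar{s}$, which are disjoint. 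Applying the dichotomy and lifting as before, and noting that $H_S:=\bigwedge_{i\in S}p_i\wedge\bigwedge_{i\notin S}q_i$ proves $\phi(\neg\bar{p},\bar{r})\leftrightarrow\phi(\bar{u}^S,\bar{r})$ and $\psi(\bar{q},\bar{s})\leftrightarrow\psi(\bar{v}^S,\bar{s})$, I get $H_S\vdash_{\mathsf{IPC}}\neg\phi(\neg\bar{p},\bar{r})\vee\neg\neg\psi(\bar{q},\bar{s})$ for every $S$; since $\bigwedge_i(p_i\vee q_i)\vdash_{\mathsf{IPC}}\bigvee_S H_S$, this finishes the $\mathsf{IPC}$ bullet, and the $\mathsf{K}$ bullet is the same computation with $\Box$ for $\neg\neg$ (using $\bigwedge_i(\Box p_i\vee\Box q_i)\vdash_{\mathsf{K}}\bigvee_S\Box H_S$).

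\emph{Main obstacle.} The one genuinely delicate point is the monotone preprocessing: the hypothesis $\bigwedge_i(p_i\vee q_i)$ only yields $\neg p_i\to q_i$, i.e.\ $\neg p_i\le q_i$, so it can be used to raise $\neg\bar{p}$ to $\bar{q}$ \emph{only} inside a context monotone in those positions — this is exactly why $\phi$ or $\psi$ must be monotone in $\bar{p}$, and why the substitution into the hypothesis has to be matched to which one is monotone ($\bar{p}:=\bar{q}$ when $\phi$ is, $\bar{p}:=\neg\bar{p}$ when $\psi$ is). One must also check that after the per-$S$ substitution the two residual formulas really share no variables, since that is what makes the dichotomy applicable, and keep straight the asymmetry in the $\mathsf{IPC}$ conclusion (left disjunct $\neg\phi$, right disjunct $\neg\neg\psi$), which is precisely what the two outcomes of the dichotomy produce through Glivenko's theorem. (For the non-monotone bullets alone there is a shorter route: under $\bigwedge_i(p_i\vee\neg p_i)$ every formula in the variables $\bar{p}$ is decidable, so one may split on the single interpolant $\bigvee_{\bar b}\phi(\bar{p},\bar{b})$ instead; but this shortcut breaks down in the monotone setting, so I prefer the uniform $2^n$-split above.)
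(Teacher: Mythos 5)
Your proposal is correct and takes essentially the same approach as the paper: a case-split over all $2^n$ subsets of the shared variables, a disjoint-variable dichotomy to decide each case classically (the paper applies the same dichotomy in the guise of ``either $\bigwedge_{i\in I}p_i\to\neg\phi$ or $\bigwedge_{i\notin I}q_i\to\psi$ is valid''), Glivenko to lift to $\mathsf{IPC}$, necessitation with the K-axiom to lift to $\mathsf{K}$, and the identical monotone preprocessing step producing $\bigwedge_i(p_i\vee q_i)\wedge\phi(\neg\bar p,\bar r)\to\psi(\bar q,\bar s)$. Your packaging via constant substitution and replacement of provable equivalents is a cosmetic variant of the same argument.
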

\begin{proof}
We only prove the intuitionistic case; the modal case is similar. Let $\bar{p} = p_1, \ldots, p_m$. Then, for any $I \subseteq \{1, \ldots, m\}$, it is easy to see that either
$
\bigwedge_{i \in I} p_i \wedge \bigwedge_{i \notin I} \neg p_i \to \neg \phi_n(\bar{p}, \bar{r})$ or $\bigwedge_{i \in I} p_i \wedge \bigwedge_{i \notin I} \neg p_i \to \psi_n(\bar{p}, \bar{s})$
is a classical tautology. By Glivenko's theorem, we can see that either
$
\bigwedge_{i \in I} p_i \wedge \bigwedge_{i \notin I} \neg p_i \to \neg \phi_n(\bar{p}, \bar{r})$ or $\bigwedge_{i \in I} p_i \wedge \bigwedge_{i \notin I} \neg p_i \to \neg\neg \psi_n(\bar{p}, \bar{s})$
is an intuitionistic tautology, which implies
\[
\mathsf{IPC} \vdash \bigwedge_{i=1}^m (p_i \vee \neg p_i) \to \neg \phi_n(\bar{p}, \bar{r}) \vee \neg\neg \psi_n(\bar{p}, \bar{s}).
\]
For the monotone case, we explain only the situation where $\psi$ is monotone in $\bar{p}$. By monotonicity, the formula
$
\bigwedge_{i=1}^m (\neg p_i \to q_i) \to (\psi(\neg\bar{p}, \bar{s}) \to \psi(\bar{q}, \bar{s}))
$
is valid. Therefore,
$
\bigwedge_{i=1}^m (p_i \vee q_i) \to (\psi(\neg\bar{p}, \bar{s}) \to \psi(\bar{q}, \bar{s}))
$
is also valid. As $\phi(\bar{p}, \bar{r}) \to \psi(\bar{p}, \bar{s})$ is valid, we obtain
$
\bigwedge_{i=1}^m (p_i \vee q_i) \to (\phi(\neg\bar{p}, \bar{r}) \to \psi(\bar{q}, \bar{s}))
$
as valid.
Now, for any $I \subseteq \{1, \ldots, m\}$, since $\bigwedge_{i \in I} p_i \wedge \bigwedge_{i \notin I} q_i \to \bigwedge_{i} (p_i \vee q_i)$ holds, we also have the validity of
$
\bigwedge_{i \in I} p_i \wedge \bigwedge_{i \notin I} q_i \to (\phi(\neg\bar{p}, \bar{r}) \to \psi(\bar{q}, \bar{s})).
$
Therefore, either
$
\bigwedge_{i \in I} p_i \to \neg \phi(\neg\bar{p}, \bar{r})
\quad \text{or} \quad
\bigwedge_{i \notin I} q_i \to \psi(\bar{q}, \bar{s})
$
is valid. By Glivenko's theorem, we can see that either
$
\bigwedge_{i \in I} p_i \to \neg \phi(\neg\bar{p}, \bar{r})$ or $\bigwedge_{i \notin I} q_i \to \neg\neg \psi(\bar{q}, \bar{s})
$
is provable in $\mathsf{IPC}$. Hence,
$
\mathsf{IPC} \vdash \bigwedge_{i=1}^m (p_i \vee q_i) \to (\neg \phi(\neg\bar{p}, \bar{r}) \vee \neg\neg \psi(\bar{q}, \bar{s})).
$
\end{proof}

\begin{corollary}
For any $0 < \epsilon < 1/3$, we have:
\begin{itemize}
    \item[$\bullet$] 
$\mathsf{IPC} \vdash \bigwedge_i (p_i \vee q_i) \to \neg \mathrm{Clique}^{\epsilon}_n(\neg \bar{p}, \bar{r}) \vee \neg \mathrm{Color}_n^{\epsilon}(\bar{q}, \bar{s})$, and
    \item[$\bullet$] 
$\mathsf{K} \vdash \bigwedge_i (\Box p_i \vee \Box q_i) \to \Box \neg \mathrm{Clique}^{\epsilon}_n(\neg \bar{p}, \bar{r}) \vee \Box \neg \mathrm{Color}_n^{\epsilon}(\bar{q}, \bar{s})$.
\end{itemize}
\end{corollary}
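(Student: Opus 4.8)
The plan is to read the corollary off Theorem~\ref{ClassicalToIPC} by instantiating its monotone case at $\phi := \mathrm{Clique}^{\epsilon}_n$ and $\psi := \neg\mathrm{Color}^{\epsilon}_n$. The first thing I would do is supply the hypothesis ``$\mathsf{CPC}\vdash\phi(\bar p,\bar r)\to\psi(\bar p,\bar s)$'': as observed when the Clique--Coloring tautologies were introduced, for all large enough $n$ we have $\lfloor n^{2/3}\rfloor>\lfloor n^{2/3-2\epsilon}\rfloor$, so $\mathrm{Clique}^{\epsilon}_n(\bar p,\bar q)\cup\mathrm{Color}^{\epsilon}_n(\bar p,\bar s)$ is an unsatisfiable set of clauses and hence $\mathsf{CPC}\vdash\mathrm{Clique}^{\epsilon}_n(\bar p,\bar q)\to\neg\mathrm{Color}^{\epsilon}_n(\bar p,\bar s)$. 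After relabelling the private blocks so that $\bar r$ is the clique's auxiliary variables and $\bar s$ the coloring's, this is exactly the input required by Theorem~\ref{ClassicalToIPC}.

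Next I would verify the side condition of the monotone half of Theorem~\ref{ClassicalToIPC}: it is enough that \emph{one} of $\phi,\psi$ be monotone in the shared variables $\bar p$, and Example~\ref{ExampleClique} records precisely that the clause encoding $\mathrm{Clique}^{\epsilon}_n(\bar p,\bar q)$ is monotone in the graph variables $\bar p$ (adding edges cannot destroy a clique). Feeding this into the monotone clauses of Theorem~\ref{ClassicalToIPC} gives
\[
\mathsf{IPC}\vdash\bigwedge_i(p_i\vee q_i)\to(\neg\mathrm{Clique}^{\epsilon}_n(\neg\bar p,\bar r)\vee\neg\neg\neg\mathrm{Color}^{\epsilon}_n(\bar q,\bar s))
\]
and
\[
\mathsf{K}\vdash\bigwedge_i(\Box p_i\vee\Box q_i)\to(\Box\neg\mathrm{Clique}^{\epsilon}_n(\neg\bar p,\bar r)\vee\Box\neg\mathrm{Color}^{\epsilon}_n(\bar q,\bar s)),
\]
where in the second line I have already written $\Box\psi(\bar q,\bar s)=\Box\neg\mathrm{Color}^{\epsilon}_n(\bar q,\bar s)$, so this is literally the modal statement of the corollary.

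It remains to simplify the intuitionistic line, where I would use that $\neg\neg\neg X$ and $\neg X$ are interderivable in $\mathsf{IPC}$; replacing $\neg\neg\neg\mathrm{Color}^{\epsilon}_n(\bar q,\bar s)$ by $\neg\mathrm{Color}^{\epsilon}_n(\bar q,\bar s)$ and composing under $\to$ yields the first statement of the corollary. There is essentially no obstacle here: the only two facts that need checking --- the classical validity of the Clique--Coloring implication for large $n$, and the monotonicity of $\mathrm{Clique}^{\epsilon}_n$ in $\bar p$ --- are both already established in the earlier examples, and everything else is a direct substitution into Theorem~\ref{ClassicalToIPC} together with a triple-negation collapse in $\mathsf{IPC}$.
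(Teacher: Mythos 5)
Your proposal is correct and follows essentially the same route as the paper's (very terse) proof: record that $\mathrm{Clique}^{\epsilon}_n$ is monotone in $\bar p$ (Example~\ref{ExampleClique}) and then apply the monotone case of Theorem~\ref{ClassicalToIPC} with $\phi := \mathrm{Clique}^{\epsilon}_n$, $\psi := \neg\mathrm{Color}^{\epsilon}_n$. You are right that the intuitionistic line literally yields $\neg\neg\neg\mathrm{Color}^{\epsilon}_n$ and that one needs the $\mathsf{IPC}$-equivalence $\neg\neg\neg X \leftrightarrow \neg X$ (plus replacement under provable equivalence) to land on the stated form; the paper silently absorbs that step.
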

\begin{proof}
The formula $\mathrm{Clique}^{\epsilon}_n(\bar{p}, \bar{r})$ is monotone in $\bar{p}$, by Example \ref{ExampleClique}. Now, it is enough to use Theorem \ref{ClassicalToIPC}.
\end{proof}

\begin{theorem}\label{CraigToDisjunctiveInt}
Let $L_1$ be a si and $L_2$ be a consistent modal logic and $\{\phi_n(\bar{p}, \bar{r}) \to \psi_n(\bar{p}, \bar{s})\}_n$ be a sequence of short classical tautologies that has hard Craig interpolants. Then, the sequence 
\[
\{\bigwedge_i (p_i \vee \neg p_i) \to (\neg \phi_n(\bar{p}, \bar{r}) \vee \neg \neg \psi_n(\bar{p}, \bar{s}))\}_n
\]
have hard $L_1$-$\mathrm{PDI}$'s and the sequence 
\[
\{\bigwedge_i (\Box p_i \vee \Box \neg p_i) \to (\Box \neg \phi_n(\bar{p}, \bar{r}) \vee \Box \psi_n(\bar{p}, \bar{s}))\}_n
\]
have hard $L_2$-$\mathrm{MDI}$'s. 
\end{theorem}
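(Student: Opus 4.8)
\emph{Overall strategy.} The plan is to prove the contrapositive in both cases: assuming the transformed sequence has an easy $L_1$-$\mathrm{PDI}$ (resp.\ $L_2$-$\mathrm{MDI}$), I will read off a poly-size family of circuits computing Craig interpolants for $\{\phi_n(\bar p,\bar r)\to\psi_n(\bar p,\bar s)\}_n$, contradicting the assumed hardness. First one should note the two transformed sequences really are sequences of short formulas \emph{in} $L_1$, resp.\ $L_2$: by Theorem~\ref{ClassicalToIPC} the $n$-th formula is a theorem of $\mathsf{IPC}\subseteq L_1$, resp.\ of $\mathsf{K}\subseteq L_2$, and the transformation only increases size linearly, so the notions $L_1$-$\mathrm{PDI}$ and $L_2$-$\mathrm{MDI}$ apply.

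\emph{The superintuitionistic case.} Suppose $\{(C_n,D_n)\}_n$ is a poly-size family computing $L_1$-$\mathrm{PDI}$'s; writing $I_n=[C_n]$, $J_n=[D_n]$, we have $V(I_n),V(J_n)\subseteq\{p_i\}$ and $L_1$ proves $\bigwedge_i(p_i\vee\neg p_i)\to I_n\vee J_n$, $\ I_n\to\neg\phi_n(\bar p,\bar r)$, and $J_n\to\neg\neg\psi_n(\bar p,\bar s)$. Since $L_1\subseteq\mathsf{CPC}$ these are classical tautologies, and since $\bigwedge_i(p_i\vee\neg p_i)$ is a classical tautology and $\neg\neg\psi_n$ is classically equivalent to $\psi_n$, we get classically that $I_n\vee J_n$ is a tautology, $I_n\wedge\phi_n(\bar p,\bar r)$ is unsatisfiable, and $J_n\to\psi_n(\bar p,\bar s)$ is a tautology. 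Hence $\neg I_n$ is a Craig interpolant for $\phi_n(\bar p,\bar r)\to\psi_n(\bar p,\bar s)$: it contains only $\bar p$; $\phi_n\to\neg I_n$ is valid because $I_n\wedge\phi_n$ is unsatisfiable; and $\neg I_n\to\psi_n$ is valid because $I_n\vee J_n$ and $J_n\to\psi_n$ are valid. As $\neg I_n$ is computed by the circuit obtained from $C_n$ by appending a negation gate at the output, this is a poly-size family computing a Craig interpolant for the sequence --- contradiction. The monotone hypothesis is irrelevant here, since neither ``hard Craig interpolants'' nor ``hard $L_1$-$\mathrm{PDI}$'s'' refers to monotone circuits.

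\emph{The modal case.} Let $\{(C_n,D_n)\}_n$ be a poly-size family computing $L_2$-$\mathrm{MDI}$'s, so $L_2$ proves $\bigwedge_i(\Box p_i\vee\Box\neg p_i)\to [C_n]\vee[D_n]$, $\ [C_n]\to\Box\neg\phi_n(\bar p,\bar r)$, and $[D_n]\to\Box\psi_n(\bar p,\bar s)$. Since $L_2$ is a consistent modal logic, Makinson's theorem places it below $\mathsf{K}+\{\Box p\leftrightarrow p\}$ or below $\mathsf{K}+\{\Box p\}$. If $L_2\subseteq\mathsf{K}+\{\Box p\leftrightarrow p\}$, I apply the forgetful translation $f$: on formulas it sends theorems of $\mathsf{K}+\{\Box p\leftrightarrow p\}$ to classical tautologies (each such formula is provably equivalent to its box-free $f$-image and this logic is conservative over $\mathsf{CPC}$), and on circuits it is realized by $C\mapsto C^f$, which is poly-size and satisfies $[C^f]=[C]^f$. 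As $\phi_n,\psi_n$ are box-free, $(\Box\neg\phi_n)^f=\neg\phi_n$, $(\Box\psi_n)^f=\psi_n$, and $\bigl(\bigwedge_i(\Box p_i\vee\Box\neg p_i)\bigr)^f$ is classically $\top$; hence the $f$-images of the three $\mathrm{MDI}$-conditions are exactly the classical facts obtained in the superintuitionistic case, now with $[C_n^f],[D_n^f]$ in place of $I_n,J_n$. Therefore $\neg[C_n^f]$ is a poly-size Craig interpolant for $\phi_n\to\psi_n$ --- contradiction again.

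\emph{Expected obstacle.} The remaining Makinson case, $L_2\subseteq\mathsf{K}+\{\Box p\}$, is where I expect the real difficulty. The natural tool there is the collapse translation $c$, but $c$ sends every $\Box$-formula to $\top$, so the two informative conditions $[C_n]\to\Box\neg\phi_n$ and $[D_n]\to\Box\psi_n$ become trivial and no information about $\phi_n,\psi_n$ survives --- only ``$[C_n]^c\vee[D_n]^c$ is a tautology'' remains, which is useless for building an interpolant. This is not merely an artefact of the argument: in $\mathsf{K}+\{\Box p\}$ every formula $\phi\to\Box\psi\vee\Box\theta$ has the trivial $\mathrm{MDI}$ $(\top,\top)$, so for such $L_2$ these sequences actually have easy $\mathrm{MDI}$'s. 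I therefore expect that the statement as intended needs $L_2$ to lie below $\mathsf{K}+\{\Box p\leftrightarrow p\}$ (which covers $\mathsf{K},\mathsf{D},\mathsf{KT},\mathsf{K4},\mathsf{KD4},\mathsf{S4}$ via the forgetful translation), or else a genuinely different argument tailored to the irreflexive logics such as $\mathsf{GL}$, for which both translations are unhelpful.
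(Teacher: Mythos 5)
Your two completed cases are correct and essentially match the paper's own argument; the only cosmetic difference is that in the superintuitionistic case you take $\neg[C_n]$ as the extracted Craig interpolant, whereas the paper takes $[D_n]$ directly (both work: $\phi_n\to[D_n]$ follows from $[C_n]\to\neg\phi_n$ together with the validity of $[C_n]\vee[D_n]$, and $[D_n]\to\psi_n$ from $[D_n]\to\neg\neg\psi_n$). For the modal case with $L_2\subseteq\mathsf{K}+\{\Box p\leftrightarrow p\}$, your use of the forgetful translation is exactly what the paper does.

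The obstacle you flag in the other Makinson branch is a genuine flaw in the paper's proof, not a gap in your reasoning. The paper claims that for $L_2\subseteq\mathsf{K}+\{\Box p\}$ one may ``do the same, but use the collapse translation to show that $D^c_n(\bar p)$ computes a classical Craig interpolant.'' But under $c$ every boxed subformula becomes $\top$, so the three $\mathrm{MDI}$ membership conditions reduce to the validity of $[C^c_n]\vee[D^c_n]$ together with the trivial $[C^c_n]\to\top$ and $[D^c_n]\to\top$; no constraint relating $[D^c_n]$ to $\phi_n$ or $\psi_n$ survives, so there is no reason for $[D^c_n]$ to interpolate. Your counterexample is also correct: for $L_2=\mathsf{K}+\{\Box p\}$ itself, $(\top,\top)$ is a constant-size $L_2$-$\mathrm{MDI}$ of every $\phi\to\Box\psi\vee\Box\theta$, since $\Box\psi,\Box\theta\in L_2$, so the displayed sequence has easy $\mathrm{MDI}$'s and the stated theorem is outright false for this $L_2$. (For proper sublogics $L_2\subsetneq\mathsf{K}+\{\Box p\}$ the trivial $\mathrm{MDI}$ need not be available, so the statement is not refuted there, but the paper supplies no correct argument in that regime either.) The bug is consequential: the identical collapse-translation step reappears in Theorem~\ref{Clique-ColorForIPC}, the ingredient that Theorem~\ref{LowerBoundForIntCliquevsColor} and its corollary use to derive the $\mathbf{GL}$ lower bound. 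Since $\mathsf{GL}\not\subseteq\mathsf{K}+\{\Box p\leftrightarrow p\}$ but $\mathsf{GL}\subseteq\mathsf{K}+\{\Box p\}$, the $\mathsf{GL}$ application falls precisely into the broken case and needs an argument specific to $\mathsf{GL}$ that goes beyond Makinson's dichotomy. Your identification of the gap is accurate.
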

\begin{proof}
For the si case, if $(C_n(\bar{p}), D_n(\bar{p}))$ computes an $L_1$-$\mathrm{PDI}$ for the formula
\[
\bigwedge_{i}(p_i \vee \neg p_i) \to \neg \phi_n(\bar{p}, \bar{r}) \vee \neg \neg \psi_n(\bar{p}, \bar{s}),
\]
then the formulas
$
\bigwedge_{i}(p_i \vee \neg p_i) \to [C_n(\bar{p})] \vee [D_n(\bar{p})]$, $ [C_n(\bar{p})] \to \neg \phi(\bar{p}, \bar{r})$ and $[D_n(\bar{p})] \to \neg \neg \psi(\bar{p}, \bar{s})$
are in $L_1 \subseteq \mathsf{CPC}$ and hence valid. We claim that $D_n(\bar{p})$ computes a classical Craig interpolant for $\phi_n(\bar{p}, \bar{r}) \to \psi_n(\bar{p}, \bar{s})$. The reason is that, as 
\[
\bigwedge_{i}(p_i \vee \neg p_i) \to [C_n(\bar{p})] \vee [D_n(\bar{p})]
\]
is valid, so is $[C_n(\bar{p})] \vee [D_n(\bar{p})]$. Therefore, $\neg [C_n(\bar{p})] \to [D_n(\bar{p})]$ is valid.
Moreover, we have the validity of $\phi_n(\bar{p}, \bar{r}) \to \neg [C_n(\bar{p})]$, which implies the validity of $\phi_n(\bar{p}, \bar{r}) \to [D_n(\bar{p})]$. As $[D_n(\bar{p})] \to \neg \neg \psi(\bar{p}, \bar{s})$ and hence $[D_n(\bar{p})] \to \psi(\bar{p}, \bar{s})$ is valid, the proof is complete. Finally, if $C_n$ and $D_n$ are of polynomial size in $n$, we can compute a classical Craig interpolant for $\{\phi_n(\bar{p}, \bar{r}) \to \psi_n(\bar{p}, \bar{s})\}_n$ by a sequence $\{D_n\}_n$ of poly-size circuits, which is a contradiction.

For the modal case, if $(C_n(\bar{p}), D_n(\bar{p}))$ computes an $L_2$-$\mathrm{MDI}$ for the formula
\[
\bigwedge_i (\Box p_i \vee \Box \neg p_i) \to (\Box \neg \phi_n(\bar{p}, \bar{r}) \vee \Box \psi_n(\bar{p}, \bar{s})),
\]
then the formulas
$
\bigwedge_i(\Box p_i \vee \Box \neg p_i) \to [C_n(\bar{p})] \vee [D_n(\bar{p})]$, $[C_n(\bar{p})] \to \Box \neg \phi_n(\bar{p}, \bar{r})$, and $[D_n(\bar{p})] \to \Box \psi_n(\bar{p}, \bar{s})$
are all in $L_2$.
As $L_2$ is consistent, it is a subset of either the logic $\mathsf{CPC}+\Box \theta \leftrightarrow \theta$ or the logic $\mathsf{CPC}+\Box \theta$. For the first case, by
applying the forgetful translation and the conservativity of $\mathsf{CPC}+\Box \theta \leftrightarrow \theta$ over $\mathsf{CPC}$, we get the validity of $[C^f_n(\bar{p})] \vee [D^f_n(\bar{p})]$, $[C^f_n(\bar{p})] \to \neg \phi_n(\bar{p}, \bar{r})$, and $[D_n^f(\bar{p})] \to \psi_n(\bar{p}, \bar{s})$. Similar to the previous case, it is clear that $D_n^f(\bar{p})$ computes a classical Craig interpolant for $\phi_n(\bar{p}, \bar{r}) \to \psi_n(\bar{p}, \bar{s})$. In the other case, we do the same, but use the collapse translation to show that $D_n^c(\bar{p})$ computes a classical Craig interpolant for $\phi_n(\bar{p}, \bar{r}) \to \psi_n(\bar{p}, \bar{s})$. As the sizes of both $D_n^f(\bar{p})$ and $D_n^c(\bar{p})$ are less than or equal to the size of $D_n$, if the size of $C_n$ and $D_n$ are polynomially bounded in $n$, then, we can compute a classical Craig interpolant for $\{\phi_n(\bar{p}, \bar{r}) \to \psi_n(\bar{p}, \bar{s})\}_n$ by a sequence of poly-size circuits which is a contradiction.
\end{proof}

\begin{corollary}
Let $L_1$ be an si logic, and $L_2$ be a consistent modal logic. If hard disjoint $\mathbf{NP}$ pairs exist or, in particular, one-way protocols exist, then there is a sequence of short $\mathsf{IPC}$-tautologies (resp. $\mathsf{K}$-tautologies) with hard $L_1$-$\mathrm{PDI}$'s (resp. $L_2$-$\mathrm{MDI}$'s). 
\end{corollary}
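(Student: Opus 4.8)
The plan is to obtain the desired sequences by composing three results already proved in this section. First I would apply Corollary~\ref{DNPImpliesHardInt}: under the hypothesis that hard disjoint $\mathbf{NP}$ pairs exist (the ``one-way protocols exist'' case reduces to this via Corollary~\ref{OneWayToDNP}), there is a sequence $\{\phi_n(\bar{p}, \bar{r}) \to \psi_n(\bar{p}, \bar{s})\}_n$ of short classical tautologies with hard Craig interpolants; concretely, one takes the tautologies produced by Lemma~\ref{DNPToInterpolant} from a hard disjoint $\mathbf{NP}$ pair.

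Next I would transform this sequence using Theorem~\ref{ClassicalToIPC}. Its first bullet yields
\[
\mathsf{IPC} \vdash \bigwedge_i (p_i \vee \neg p_i) \to (\neg \phi_n(\bar{p}, \bar{r}) \vee \neg \neg \psi_n(\bar{p}, \bar{s}))
\]
for every $n$, and its second bullet yields the analogous $\mathsf{K}$-provable modal formulas. Two small checks are in order. First, since an si logic satisfies $\mathsf{IPC} \subseteq L_1$ and a modal logic satisfies $\mathsf{K} \subseteq L_2$, each transformed formula lies in $L_1$ (resp. $L_2$), so it makes sense to speak of its $L_1$-$\mathrm{PDI}$ (resp. $L_2$-$\mathrm{MDI}$). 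Second, the transformed sequence is still a sequence of \emph{short} formulas, because $\phi_n$ and $\psi_n$ are short and prepending $\bigwedge_i(p_i \vee \neg p_i)$ (resp. $\bigwedge_i(\Box p_i \vee \Box \neg p_i)$) together with the two extra negations only inflates the size polynomially in $n$. Thus we already have a sequence of short $\mathsf{IPC}$-tautologies (resp. $\mathsf{K}$-tautologies), as required by the statement.

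Finally I would invoke Theorem~\ref{CraigToDisjunctiveInt}, whose hypotheses are now exactly met: $L_1$ is si, $L_2$ is a consistent modal logic, and $\{\phi_n(\bar{p}, \bar{r}) \to \psi_n(\bar{p}, \bar{s})\}_n$ is a sequence of short classical tautologies with hard Craig interpolants. Its conclusion is that the sequence $\{\bigwedge_i (p_i \vee \neg p_i) \to (\neg \phi_n \vee \neg\neg \psi_n)\}_n$ has hard $L_1$-$\mathrm{PDI}$'s and the sequence $\{\bigwedge_i (\Box p_i \vee \Box \neg p_i) \to (\Box \neg \phi_n \vee \Box \psi_n)\}_n$ has hard $L_2$-$\mathrm{MDI}$'s, which is precisely the claim. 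I do not anticipate any genuine obstacle: the argument is a bookkeeping composition of the three cited results, and the only things to watch are the two routine checks above, namely shortness of the transformed sequence and the trivial inclusions $\mathsf{IPC} \subseteq L_1$, $\mathsf{K} \subseteq L_2$ that place the transformed formulas in the target logics.
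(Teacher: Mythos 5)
Your proposal is correct and is exactly the argument the paper intends: the corollary is stated without proof precisely because it is the immediate composition of Corollary~\ref{DNPImpliesHardInt}, Theorem~\ref{ClassicalToIPC}, and Theorem~\ref{CraigToDisjunctiveInt} that you describe. The two routine checks you flag (shortness of the transformed sequence and the inclusions $\mathsf{IPC} \subseteq L_1$, $\mathsf{K} \subseteq L_2$) are handled correctly.
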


As with classical Craig interpolants, restricting to monotone circuits yields an unconditional hardness result:

\begin{theorem}\label{Clique-ColorForIPC}
Let $0 < \epsilon < 1/3$ be a real number, $L_1$ be an si logic, and $L_2$ be a consistent modal logic, and let $\{(C_n(\bar{p}, \bar{q}), D_n(\bar{p}, \bar{q}))\}_n$ be a sequence of pairs of monotone circuits (resp. $\mathcal{L}_{\Box}$-circuits) computing an $L_1$-$\mathrm{PDI}$ (resp. $L_2$-$\mathrm{MDI}$) of 
$
\bigwedge_i (p_i \vee q_i) \to \neg \mathrm{Clique}^{\epsilon}_n(\neg \bar{p}, \bar{r}) \vee \neg \mathrm{Color}_n^{\epsilon}(\bar{q}, \bar{s})
$
(resp. 
$
\bigwedge_i (\Box p_i \vee \Box q_i) \to \Box \neg \mathrm{Clique}^{\epsilon}_n(\neg \bar{p}, \bar{r}) \vee \Box \neg \mathrm{Color}_n^{\epsilon}(\bar{q}, \bar{s})
$).
Then, the size of $D_n$ is at least $2^{\Omega(n^{1/3-\epsilon})}$.
\end{theorem}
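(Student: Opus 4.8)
The plan is to mimic, in the monotone world, the reduction behind Theorem~\ref{CraigToDisjunctiveInt}: from the monotone pair $(C_n(\bar p,\bar q),D_n(\bar p,\bar q))$ I will extract a \emph{single} monotone circuit $E_n$ on the $\binom{n}{2}$ edge-variables of an $n$-vertex graph that separates $(\mathrm{Clique}_{\lfloor n^{2/3}\rfloor,n},\mathrm{Color}_{\lfloor n^{2/3-2\epsilon}\rfloor,n})$ and satisfies $|E_n|\le|D_n|$. The claimed lower bound on $|D_n|$ then follows from Corollary~\ref{AlonBoppanaForEpsilon} (equivalently, from Theorem~\ref{LowerBoundClassicalInt} together with Lemma~\ref{DNPToInterpolant}).

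I begin with the si case. Since $L_1\subseteq\mathsf{CPC}$, unwinding the definition of an $L_1$-$\mathrm{PDI}$ for $\bigwedge_i(p_i\vee q_i)\to\neg\mathrm{Clique}^{\epsilon}_n(\neg\bar p,\bar r)\vee\neg\mathrm{Color}^{\epsilon}_n(\bar q,\bar s)$ gives three classical validities: $\bigwedge_i(p_i\vee q_i)\to[C_n]\vee[D_n]$, then $[C_n]\to\neg\mathrm{Clique}^{\epsilon}_n(\neg\bar p,\bar r)$, and $[D_n]\to\neg\mathrm{Color}^{\epsilon}_n(\bar q,\bar s)$. I set $E_n(\bar x):=[D_n(\bar 1,\bar x)]$, i.e.\ I freeze every $\bar p$-input of $D_n$ to the constant $1$ and feed the graph $\bar x$ into $\bar q$; only constants are substituted into a monotone circuit, so $E_n$ is monotone and $|E_n|\le|D_n|$. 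If the graph $G_{\bar x}$ described by $\bar x$ is $\lfloor n^{2/3-2\epsilon}\rfloor$-colorable, a coloring witness satisfies $\mathrm{Color}^{\epsilon}_n(\bar x,\bar s)$, so the third validity (which holds for \emph{every} value of the frozen block) forces $E_n(\bar x)=0$. If $G_{\bar x}$ has a $\lfloor n^{2/3}\rfloor$-clique, I substitute $\bar p:=\neg\bar x$ and $\bar q:=\bar x$, so that every $p_i\vee q_i$ is true; the first validity yields $[C_n(\neg\bar x,\bar x)]\vee[D_n(\neg\bar x,\bar x)]$, while $\mathrm{Clique}^{\epsilon}_n(\neg\neg\bar x,\bar r)=\mathrm{Clique}^{\epsilon}_n(\bar x,\bar r)$ is satisfied by the clique witness, so the second validity gives $[C_n(\neg\bar x,\bar x)]=0$ and hence $[D_n(\neg\bar x,\bar x)]=1$; since $\neg\bar x\le\bar 1$ pointwise and $D_n$ is monotone in the $\bar p$-block, this lifts to $E_n(\bar x)=1$. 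Thus $E_n$ separates the pair.

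For the modal case I would run the recipe of Theorem~\ref{CraigToDisjunctiveInt}: as $L_2$ is consistent, Makinson's theorem places it inside $\mathsf{CPC}+\{\Box p\leftrightarrow p\}$ or inside $\mathsf{CPC}+\{\Box p\}$, and I apply the forgetful, resp.\ collapse, translation to $C_n$ and $D_n$. Erasing the $\Box$-gates, resp.\ replacing them by $\top$, introduces no negation, so the resulting $\mathcal{L}_p$-circuits remain monotone and no larger, with $[C_n^{f}]=[C_n]^{f}$, $[D_n^{f}]=[D_n]^{f}$, and analogously for the collapse translation. Using conservativity of the two ambient logics over $\mathsf{CPC}$, together with $(\Box\neg\mathrm{Clique}^{\epsilon}_n(\neg\bar p,\bar r))^{f}=\neg\mathrm{Clique}^{\epsilon}_n(\neg\bar p,\bar r)$ and the analogous identity for the $\mathrm{Color}$ disjunct, the three $L_2$-membership facts turn into the classical validities above for $D_n^{f}$, resp.\ $D_n^{c}$; applying the si argument to that circuit gives the bound, since $|D_n^{f}|,|D_n^{c}|\le|D_n|$.

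The step I expect to be the crux is precisely this extraction of a monotone \emph{single} circuit from the monotone \emph{pair}: the naive candidate $[D_n(\neg\bar x,\bar x)]$ reads the input graph both positively and negatively and so is not monotone, hence cannot be handed to Theorem~\ref{Alon-Boppana}. Freezing the $\bar p$-block to $\bar 1$ repairs monotonicity, and what has to be checked is the interaction with monotonicity of $D_n$: freezing to $\bar 1$ can only \emph{enlarge} the accepted set, which is harmless on the Clique side (so $\mathrm{Clique}\subseteq E_n$ survives) and still compatible with the Color-side constraint, since that constraint is a validity for all values of $\bar p$. A further technicality, specific to the modal case, is ensuring the forgetful/collapse translations do not trivialize the relevant implications; this is handled exactly as in the proof of Theorem~\ref{CraigToDisjunctiveInt}.
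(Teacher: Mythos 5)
Your proposal is correct and takes essentially the same route as the paper: the paper also substitutes $\bar{\top}$ (your $\bar{1}$) into the first block of $D_n$ to obtain a monotone circuit of the same size, checks that it computes a Craig interpolant (equivalently, a separator) for the Clique--Color pair, and invokes the Razborov--Alon--Boppana bound via Theorem~\ref{LowerBoundClassicalInt}, handling the modal case with the forgetful/collapse translations exactly as you do. Your write-up merely spells out the verification that the paper compresses into ``similar to the proof of Theorem~\ref{CraigToDisjunctiveInt}, it is easy to show\dots''.
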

\begin{proof}
For the si case, similar to the proof of Theorem \ref{CraigToDisjunctiveInt}, it is easy to show that $\{D_n(\bar{\top}, \bar{p})\}_n$ computes a Craig interpolant for $\{\mathrm{Clique}^{\epsilon}_n(\bar{p}, \bar{r}) \to \neg \mathrm{Color}^{\epsilon}_n(\bar{p}, \bar{s})\}_n$. As $D_n$ is monotone, so is $D_n(\bar{\top}, \bar{p})$. As the size of $D_n(\bar{\top}, \bar{p})$ is the same as the size of $D_n$, it is enough to use Theorem \ref{LowerBoundClassicalInt} to prove the lower bound.
For the modal case, we claim that either $\{D^f_n(\bar{\top}, \bar{p})\}_n$ or 
$\{D^c_n(\bar{\top}, \bar{p})\}_n$ computes a Craig interpolant for $\{\mathrm{Clique}^{\epsilon}_n(\bar{p}, \bar{r}) \to \neg \mathrm{Color}^{\epsilon}_n(\bar{p}, \bar{s})\}_n$. 
To prove the claim, as any consistent modal logic is either a subset of $\mathsf{CPC}+\Box \theta \leftrightarrow \theta$ or $\mathsf{CPC}+\Box \theta$, there are two cases to consider:
If $L_2 \subseteq \mathsf{CPC}+\Box \theta \leftrightarrow \theta$, then using an argument similar to that of the proof of Theorem \ref{CraigToDisjunctiveInt}, we can see that $\{D^f_n(\bar{\top}, \bar{p})\}_n$ computes a Craig interpolant for $\{\mathrm{Clique}^{\epsilon}_n(\bar{p}, \bar{r}) \to \neg \mathrm{Color}^{\epsilon}_n(\bar{p}, \bar{s})\}_n$. If $L_2 \subseteq \mathsf{CPC}+\Box \theta$, then $\{D^c_n(\bar{\top}, \bar{p})\}_n$ does the same task. As $D^f_n(\bar{\top}, \bar{p})$ or 
$D^c_n(\bar{\top}, \bar{p})$ are monotone and their size is smaller than or equal to that of $D_n$, the theorem follows from Theorem \ref{LowerBoundClassicalInt}.
\end{proof}

\section{Feasible Interpolation (Classical)} \label{Sec: FI, Classical}

In the previous section, we have seen that computing a Craig interpolant can be a hard task. In such computation, the only information available to us is that the implication in question is a classical tautology. However, the computation may get significantly easier if we also have access to a proof of the implication.
To get an idea, consider the cut-free proof system $\mathbf{LK}^-$. Given a proof in this system, it is relatively straightforward to \emph{extract} an interpolant from the proof using the standard Maehara method. This observation leads to the following definition:

\begin{definition}[Krajíček \cite{krajivcek1997interpolation}]
A proof system $P$ for $\mathsf{CPC}$ is said to have \emph{(monotone) feasible interpolation} if there is a polynomial $s$ such that for any $P$-proof $u \in \{0, 1\}^*$ of an implication $\phi(\bar{p}, \bar{q}) \to \psi(\bar{p}, \bar{r})$ with the code $w$, (where $\phi$ or $\psi$ is monotone in $\bar{p}$), there is a (monotone) circuit $C$ of size bounded by $s(|u|, |w|)$ such that $[C]$ is a Craig interpolant for $\phi(\bar{p}, \bar{q}) \to \psi(\bar{p}, \bar{r})$.
\end{definition}

As discussed in the introduction, feasible interpolation for a proof system $P$ can help to show that $P$ is not p-bounded. The main idea is that if we know that computing a Craig interpolant is hard (e.g., using the existence of a disjoint $\mathbf{NP}$ pair) and if it gets easier with access to $P$-proofs, the $P$-proofs must be too long! This way, feasible interpolation transforms the harder task of finding a lower bound for proof length into a relatively easier task of finding an upper bound for the circuit size for Craig interpolants extracted from the proofs. 

\begin{theorem}\label{FILowerbound}
Let $P$ be a proof system for $\mathsf{CPC}$ that has feasible interpolation. Then, if $\{\phi_n \to \psi_n\}_n$ is a sequence of short tautologies that has hard Craig interpolants, then $\{\phi_n \to \psi_n\}_n$ does not have short $P$-proofs. 
\end{theorem}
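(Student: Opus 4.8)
The plan is to argue by contraposition, essentially just unwinding definitions and composing polynomials — this is the clean ``lower bounds from hardness'' reduction advertised in the introduction. Suppose, for contradiction, that $\{\phi_n \to \psi_n\}_n$ \emph{does} have short $P$-proofs. By definition this means there is a sequence $\{u_n\}_n$ of binary strings and a polynomial $p$ with $P(u_n, w_n)$ and $|u_n| \leq p(n)$ for all $n$, where $w_n$ is the code of $\phi_n \to \psi_n$. The first step is to record that, since $\{\phi_n \to \psi_n\}_n$ is by hypothesis a sequence of \emph{short} tautologies, there is a further polynomial $r$ with $|w_n| \leq r(n)$ for all $n$.

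Next, I would invoke the feasible interpolation hypothesis for $P$: there is a polynomial $s$ such that from each $P$-proof $u_n$ of $\phi_n(\bar{p}, \bar{q}) \to \psi_n(\bar{p}, \bar{r})$ one obtains a circuit $C_n$ of size at most $s(|u_n|, |w_n|) \leq s(p(n), r(n))$ such that $[C_n]$ is a Craig interpolant of $\phi_n \to \psi_n$. Since $s(p(n), r(n))$ is a polynomial in $n$, the sequence $\{C_n\}_n$ consists of poly-size circuits. Moreover, because $[C_n]$ is a Craig interpolant we have $V([C_n]) \subseteq V(\phi_n) \cap V(\psi_n) = \bar{p}$, so $C_n$ has its inputs among $\bar{p}$; together with the two implications $\phi_n \to [C_n]$ and $[C_n] \to \psi_n$ being tautologies, this is exactly the statement that $\{C_n\}_n$ \emph{computes a Craig interpolant} for $\{\phi_n \to \psi_n\}_n$ in the sense of the definition at the start of Section~\ref{Section: Interpolation}. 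This contradicts the assumption that $\{\phi_n \to \psi_n\}_n$ has hard Craig interpolants, and the theorem follows.

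There is no genuine obstacle here; the entire content of the theorem lies in the hypotheses, and the ``hard part'' is purely bookkeeping — namely checking that the two phrasings of ``Craig interpolant'' (the one in the definition of feasible interpolation and the one in the definition of a circuit sequence computing an interpolant) match up, and that short proofs of short formulas, fed through the polynomial $s$, yield poly-size circuits. I would also note in passing that the monotone version runs identically: assuming in addition that $\phi_n$ or $\psi_n$ is monotone in $\bar{p}$ and that $P$ has monotone feasible interpolation, the extracted $C_n$ are monotone, and one derives a sequence of poly-size monotone circuits computing an interpolant, contradicting an assumed monotone hardness (as in Theorem~\ref{LowerBoundClassicalInt}).
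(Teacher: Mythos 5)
Your proposal is correct and is essentially the paper's own argument: assume short $P$-proofs exist, feed them through the feasible interpolation polynomial to obtain poly-size circuits computing a Craig interpolant, and contradict hardness. The paper's proof is a two-line version of exactly this reduction, so your extra bookkeeping (matching the two phrasings of ``interpolant'' and composing the polynomials) is just a more explicit rendering of the same steps.
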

\begin{proof}
Let $\{\pi_n\}_n$ be a sequence of short $P$-proofs of $\{\phi_n \to \psi_n\}_n$. Using feasible interpolation of $P$ on $\pi_n$'s, we get a sequence $\{C_n\}_n$ of circuits with size bounded by $(|\pi_n|+|\phi_n \to \psi_n|)^{O(1)} \leq n^{O(1)}$ to compute a Craig interpolant for $\{\phi_n \to \psi_n\}_n$, which is impossible.
\end{proof}

\begin{corollary}\label{MainCorollaryOfFI}
If hard disjoint $\mathbf{NP}$ pairs exist or in particular, if one-way protocols exist, any proof system for $\mathsf{CPC}$ with feasible interpolation is not p-bounded.
\end{corollary}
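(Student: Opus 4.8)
The plan is to chain together two results already established in the excerpt, with the cryptographic hypothesis entering only once. First I would invoke Corollary~\ref{DNPImpliesHardInt}: under the assumption that hard disjoint $\mathbf{NP}$ pairs exist --- which holds in particular whenever one-way protocols exist, by Corollary~\ref{OneWayToDNP} --- there is a sequence $\{\phi_n \to \psi_n\}_{n \geq n_0}$ of short propositional tautologies that has hard Craig interpolants. This is the only place in the argument where the complexity-theoretic assumption is used.

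Second, I would argue by contradiction: suppose $P$ is a p-bounded proof system for $\mathsf{CPC}$ that has feasible interpolation. Since $\{\phi_n \to \psi_n\}_{n \geq n_0}$ is a sequence of short binary strings belonging to $\mathsf{CPC}$, p-boundedness of $P$ yields $P$-proofs $\pi_n$ with $|\pi_n| \leq q(|\phi_n \to \psi_n|)$ for a fixed polynomial $q$; because $\{\phi_n \to \psi_n\}_{n \geq n_0}$ is a sequence of \emph{short} tautologies, $|\phi_n \to \psi_n| \leq n^{O(1)}$, so $|\pi_n| \leq n^{O(1)}$ as well. Thus the sequence has short $P$-proofs in the sense of the definition from Section~\ref{Sec: Proof Systems}.

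Third, I would apply Theorem~\ref{FILowerbound} to $P$ and the sequence $\{\phi_n \to \psi_n\}_{n \geq n_0}$: since $P$ has feasible interpolation and this sequence has hard Craig interpolants, the theorem says it does \emph{not} have short $P$-proofs. This contradicts the previous paragraph, so no p-bounded proof system for $\mathsf{CPC}$ can have feasible interpolation.

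I do not anticipate any genuine obstacle here, since every step is a direct citation of an earlier result; the only mild care needed is the bookkeeping observation that, for a sequence of short tautologies, a polynomial bound in $|\phi_n \to \psi_n|$ is automatically a polynomial bound in $n$. Alternatively, one could avoid the proof by contradiction altogether and simply combine Corollary~\ref{DNPImpliesHardInt} with Theorem~\ref{FILowerbound} to exhibit, for any $P$ with feasible interpolation, an explicit sequence of short $\mathsf{CPC}$-tautologies without short $P$-proofs, which is perhaps the cleaner presentation.
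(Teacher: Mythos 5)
Your proposal is correct and matches the paper's own argument, which cites exactly Corollary~\ref{DNPImpliesHardInt} and Theorem~\ref{FILowerbound}; you have simply unfolded that one-line citation into its constituent steps, including the routine bookkeeping that a polynomial bound in $|\phi_n \to \psi_n|$ is a polynomial bound in $n$ for a sequence of short tautologies.
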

\begin{proof}
It follows from Corollary \ref{DNPImpliesHardInt} and Theorem \ref{FILowerbound}.
\end{proof}

\begin{corollary}
If $\mathbf{NP} \nsubseteq \mathbf{P/Poly}$, then no proof system for $\mathsf{CPC}$ with feasible interpolation is p-bounded.
\end{corollary}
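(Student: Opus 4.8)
The plan is to argue by contradiction and reduce to Corollary~\ref{MainCorollaryOfFI}, the key observation being that p-boundedness of a proof system for $\mathsf{CPC}$ already forces $\mathbf{NP}=\mathbf{CoNP}$. So suppose, toward a contradiction, that $\mathbf{NP}\nsubseteq\mathbf{P/poly}$ and that $P$ is a p-bounded proof system for $\mathsf{CPC}$ with feasible interpolation. First I would invoke Corollary~\ref{CooksTheorem}: since $\mathsf{CPC}$ has a p-bounded proof system, $\mathbf{CoNP}=\mathbf{NP}$, and therefore $\mathbf{NP}\cap\mathbf{CoNP}=\mathbf{NP}$.

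Next I would manufacture a hard disjoint $\mathbf{NP}$ pair. Using the hypothesis, pick $L\in\mathbf{NP}$ with $L\notin\mathbf{P/poly}$; because $\mathbf{CoNP}=\mathbf{NP}$ we also have $L^c\in\mathbf{NP}$, so $(L,L^c)$ is a disjoint $\mathbf{NP}$ pair. As noted in Section~\ref{subsubsec: DNP}, the only separator of $(L,L^c)$ is $L$ itself, and $L\notin\mathbf{P/poly}$, so $(L,L^c)$ has no separator in $\mathbf{P/poly}$, i.e. it is hard. Now Corollary~\ref{MainCorollaryOfFI} applies directly: since a hard disjoint $\mathbf{NP}$ pair exists, no proof system for $\mathsf{CPC}$ with feasible interpolation is p-bounded, contradicting the choice of $P$. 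This completes the proof.

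If one prefers not to cite Corollary~\ref{MainCorollaryOfFI} as a black box, the last step can be unfolded: feed $(L,L^c)$ into Theorem~\ref{NPToPropositional} and Lemma~\ref{DNPToInterpolant} to obtain a sequence $\{\phi_n(\bar p,\bar q)\to\neg\psi_n(\bar p,\bar r)\}_n$ of short tautologies such that every circuit family computing Craig interpolants for it computes a separator of $(L,L^c)$, hence computes $L$; since $L\notin\mathbf{P/poly}$, this sequence has hard Craig interpolants, and Theorem~\ref{FILowerbound} then shows it has no short $P$-proofs --- impossible for a p-bounded $P$.

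The argument is routine given the earlier results; the only point needing care is that $\mathbf{NP}\nsubseteq\mathbf{P/poly}$ by itself does not obviously yield a hard disjoint $\mathbf{NP}$ pair, since the complement of an $\mathbf{NP}$-language need not lie in $\mathbf{NP}$. The substance of the proof is precisely the extraction of $\mathbf{CoNP}=\mathbf{NP}$ from the assumed p-boundedness of $P$, which closes that gap; beyond it there is no real obstacle.
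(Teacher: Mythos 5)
Your proof is correct and follows essentially the same route as the paper: invoke Corollary~\ref{CooksTheorem} to get $\mathbf{CoNP}=\mathbf{NP}$ from p-boundedness, conclude $\mathbf{NP}\cap\mathbf{CoNP}=\mathbf{NP}\nsubseteq\mathbf{P/poly}$, extract a hard disjoint $\mathbf{NP}$ pair of the form $(L,L^c)$, and apply Corollary~\ref{MainCorollaryOfFI}. Your explicit unfolding of why $(L,L^c)$ is hard is a slightly more detailed rendering of the same step the paper handles by reference to Section~\ref{subsubsec: DNP}.
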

\begin{proof}
Assume that $P$ is a p-bounded proof system for $\mathsf{CPC}$ that has feasible interpolation. By Corollary \ref{CooksTheorem}, we have $\mathbf{NP} = \mathbf{CoNP}$. Hence, $\mathbf{NP} \cap \mathbf{CoNP} = \mathbf{NP} \nsubseteq \mathbf{P/Poly}$. This implies the existence of a hard disjoint $\mathbf{NP}$ pair. Now, use Corollary \ref{MainCorollaryOfFI} to reach a contradiction.
\end{proof}

For the monotone case, as we have a lower bound for monotone circuits computing a Craig interpolant for Clique-Coloring tautologies by Theorem \ref{LowerBoundClassicalInt}, we can prove an unconditional lower bound for the proof length:

\begin{theorem}\label{MainLowerBoundTheorem}
Let $0 < \epsilon < 1/3$ be a real number. If a proof system $P$ for $\mathsf{CPC}$ has monotone feasible interpolation, then any $P$-proof of $\mathrm{Clique}^{\epsilon}_n(\bar{p}, \bar{q}) \to \neg \mathrm{Color}^{\epsilon}_n(\bar{p}, \bar{r})$ has size at least $2^{\Omega(n^{1/3-\epsilon})}$. Hence, no proof system for $\mathsf{CPC}$ with monotone feasible interpolation is p-bounded.
\end{theorem}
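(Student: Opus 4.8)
The plan is to run the argument of Theorem~\ref{FILowerbound}, but carrying the \emph{monotone} version throughout and feeding the resulting monotone circuits into the unconditional lower bound of Theorem~\ref{LowerBoundClassicalInt}.

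First I would check that the implication $\mathrm{Clique}^{\epsilon}_n(\bar{p}, \bar{q}) \to \neg \mathrm{Color}^{\epsilon}_n(\bar{p}, \bar{r})$ is of the right shape to invoke monotone feasible interpolation: its antecedent $\mathrm{Clique}^{\epsilon}_n(\bar{p}, \bar{q})$ is monotone in the shared variables $\bar{p}$ (the atoms encoding the graph), by Example~\ref{ExampleClique}, while $\bar{q}$ and $\bar{r}$ are the private variables of the two sides. Moreover, since $\{\mathrm{Clique}^{\epsilon}_n(\bar{p}, \bar{q}) \to \neg \mathrm{Color}^{\epsilon}_n(\bar{p}, \bar{r})\}_n$ is a sequence of short tautologies, the code $w_n$ of the $n$-th implication satisfies $|w_n| \le \mathrm{poly}(n)$.

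Next, suppose $\pi$ is a $P$-proof of $\mathrm{Clique}^{\epsilon}_n(\bar{p}, \bar{q}) \to \neg \mathrm{Color}^{\epsilon}_n(\bar{p}, \bar{r})$. By the definition of monotone feasible interpolation there is a polynomial $s$, not depending on $n$, and a \emph{monotone} circuit $C$ with $|C| \le s(|\pi|, |w_n|)$ such that $[C]$ is a Craig interpolant for the implication. Since $|w_n| \le \mathrm{poly}(n)$ and $s$ is a polynomial, this gives $|C| \le (|\pi| + n)^{O(1)}$. On the other hand, Theorem~\ref{LowerBoundClassicalInt} (which already follows from the single-circuit bound of Theorem~\ref{Alon-Boppana}) says that any monotone circuit computing a Craig interpolant for $\mathrm{Clique}^{\epsilon}_n(\bar{p}, \bar{q}) \to \neg \mathrm{Color}^{\epsilon}_n(\bar{p}, \bar{r})$ has size at least $2^{\Omega(n^{1/3-\epsilon})}$. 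Combining the two estimates yields $(|\pi| + n)^{O(1)} \ge 2^{\Omega(n^{1/3-\epsilon})}$, and absorbing the polynomial in $n$ forces $|\pi| \ge 2^{\Omega(n^{1/3-\epsilon})}$, which is the claimed bound.

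For the final assertion, if $P$ were p-bounded then every short tautology would have a polynomial-size $P$-proof; in particular $\mathrm{Clique}^{\epsilon}_n(\bar{p}, \bar{q}) \to \neg \mathrm{Color}^{\epsilon}_n(\bar{p}, \bar{r})$, whose size is $\le \mathrm{poly}(n)$, would have a $P$-proof of size $\le \mathrm{poly}(n)$, contradicting the $2^{\Omega(n^{1/3-\epsilon})}$ lower bound for all sufficiently large $n$. I do not expect a real obstacle here; the only points that need care are verifying that the monotonicity of $\mathrm{Clique}^{\epsilon}_n$ in $\bar{p}$ is exactly what licenses the extraction of a \emph{monotone} interpolating circuit, and checking that the fixed polynomial $s$ together with the polynomial size of the code $w_n$ can be absorbed into the $O(1)$ exponent without weakening the exponential conclusion.
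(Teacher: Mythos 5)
Your proposal is correct and matches the paper's intended argument: the paper's proof is literally the one-line remark that it is "similar to that of Theorem~\ref{FILowerbound}," and your writeup spells out exactly that — extract a monotone interpolating circuit of size $s(|\pi|,|w_n|)$ via monotone feasible interpolation (licensed because $\mathrm{Clique}^{\epsilon}_n$ is monotone in $\bar p$), feed it into the unconditional monotone lower bound of Theorem~\ref{LowerBoundClassicalInt}/Theorem~\ref{Alon-Boppana}, and absorb the polynomial in $n$. Your observation that the per-$n$ circuit bound from Theorem~\ref{Alon-Boppana} is what is really being used (rather than the sequence phrasing of Theorem~\ref{LowerBoundClassicalInt}) is a correct and helpful point of care.
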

\begin{proof}
The argument is similar to that of Theorem \ref{FILowerbound}.
\end{proof}

In the next subsection, we will use Theorem~\ref{MainLowerBoundTheorem} to show that some of the proof systems for~$\mathsf{CPC}$ introduced earlier are not p-bounded. Before proceeding in that direction, let us first highlight another interesting aspect related to feasible interpolation. As we will see later, there are certain proof systems for which we believe feasible interpolation does not hold. Since feasible interpolation is a key technique for proving lower bounds, its absence might initially seem like a setback. However, the lack of feasible interpolation actually reveals an interesting property of a proof system: its \emph{non-automatability}:

\begin{definition}[Automatability]
A proof system $P$ for $\mathsf{CPC}$ is said to be \emph{automatable} if there exists an algorithm $M$ and a polynomial $k$ such that, reading any classical tautology $\phi$, $M$ finds a $P$-proof of $\phi$ in time $k(|\pi_{\phi}|, |\phi|)$, where $\pi_{\phi}$ denotes the shortest $P$-proof of $\phi$.
\end{definition}

\begin{remark}
To automate a proof system $P$ for $\mathsf{CPC}$, we naturally seek an algorithm that, given a tautology~$\phi$, produces a $P$-proof of~$\phi$. Ideally, we would like this algorithm to run in polynomial time. However, any such algorithm must spend at least $|\pi_{\phi}|$ steps, which may be very large. Therefore, the best we can reasonably ask for is that the algorithm runs in time polynomial in both the lengths of~$\pi_{\phi}$ and~$\phi$, and not just in the length of~$\phi$.
\end{remark}

To state the connection between feasible interpolation and automatability, we need the following definition:

\begin{definition}
A proof system $P$ for $\mathsf{CPC}$ is called \emph{substitutable} if there is a polynomial $l$ such that for any $P$-proof $u \in \{0, 1\}^*$ of $\phi(\bar{p}, \bar{q})$ and any assignment $\bar{a} \in \{0, 1\}$, there is a $P$-proof for $\phi(\bar{p}, \bar{a})$ of size bounded by $l(|u|, |\phi|)$. The proof system $P$ is called \emph{closed under variable-free modus ponens}, if there is a polynomial $m$ such that for any variable-free tautology $\phi$ and any $P$-proof $u \in \{0, 1\}^*$ of $\phi \to \psi$, there is a $P$-proof for $\psi$ of size bounded by $m(|u|, |\phi \to \psi|)$.
\end{definition}

It is easy to see that the proof systems $\mathbf{LK}_d$, $\mathbf{LK}$ and $\mathsf{CPC}$-$\mathbf{SF}$ are substitutable and closed under variable-free modus ponens, for any $d \geq 0$.

\begin{theorem}\label{AutoToFI}
Let $P$ be a substitutable and closed under variable-free modus ponens proof system for $\mathsf{CPC}$. If $P$ does not have feasible interpolation, then it is not automatable.
\end{theorem}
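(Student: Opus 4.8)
The plan is to prove the contrapositive: assuming $P$ is automatable, via an algorithm $M$ and polynomial $k$, we produce feasible interpolation for $P$. So fix such $M$ and $k$, and let $u$ be a $P$-proof of an implication $\phi(\bar p,\bar q)\to\psi(\bar p,\bar r)$ with code $w$; we must build a circuit of size polynomial in $|u|$ and $|w|$ whose associated formula is a Craig interpolant. The starting observation is the elementary dichotomy behind classical interpolation: since $\phi(\bar p,\bar q)\to\psi(\bar p,\bar r)$ is a tautology, for every Boolean assignment $\bar a$ to $\bar p$ at least one of $\neg\phi(\bar a,\bar q)$ and $\psi(\bar a,\bar r)$ is a tautology. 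Indeed, if $\phi(\bar a,\bar b)$ holds for some $\bar b$, then every instance $\phi(\bar a,\bar b)\to\psi(\bar a,\bar c)$ of the tautology forces $\psi(\bar a,\bar c)$, so $\psi(\bar a,\bar r)$ is valid. Consequently, any circuit $C(\bar p)$ such that $C(\bar a)=1$ implies $\psi(\bar a,\bar r)$ is a tautology and $C(\bar a)=0$ implies $\neg\phi(\bar a,\bar q)$ is a tautology is automatically a Craig interpolant for $\phi(\bar p,\bar q)\to\psi(\bar p,\bar r)$.

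The key lemma is: if $\phi(\bar a,\bar q)$ is satisfiable (so, by the dichotomy, $\psi(\bar a,\bar r)$ is a tautology), then $\psi(\bar a,\bar r)$ has a $P$-proof of size bounded by a fixed polynomial $T_0(|u|,|w|)$. To see this, choose $\bar b$ with $\phi(\bar a,\bar b)$ true; applying substitutability to $u$ twice — first substituting $\bar a$ for $\bar p$, then $\bar b$ for $\bar q$ — yields a $P$-proof of $\phi(\bar a,\bar b)\to\psi(\bar a,\bar r)$ of polynomial size, and since the closed formula $\phi(\bar a,\bar b)$ (with $\top,\bot$ plugged in for the bits) is a tautology, closure under variable-free modus ponens converts this into a $P$-proof of $\psi(\bar a,\bar r)$ of polynomial size. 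Note that only the \emph{existence} of these proofs is used, so no effectiveness of substitution is needed.

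Now set $T:=k\big(T_0(|u|,|w|),\,|w|\big)$, still a polynomial in $|u|,|w|$, and define $C$ on input $\bar a$ as follows: read off from $w$ the code of the formula $\psi(\bar a,\bar r)$, simulate $M$ on it for at most $T$ steps, and if $M$ halts with output $v$, use the polynomial-time decidability of $P$ to test whether $v$ is a $P$-proof of $\psi(\bar a,\bar r)$; output $1$ exactly when this test succeeds. Soundness of $P$ gives $C(\bar a)=1\Rightarrow \psi(\bar a,\bar r)$ is a tautology. For the converse, suppose $C(\bar a)=0$ but $\phi(\bar a,\bar q)$ were satisfiable; then by the key lemma the shortest $P$-proof of $\psi(\bar a,\bar r)$ has size at most $T_0(|u|,|w|)$, so automatability forces $M$ to halt on $\psi(\bar a,\bar r)$ within $k(T_0(|u|,|w|),|\psi(\bar a,\bar r)|)\le T$ steps with a $P$-proof as output, hence $C(\bar a)=1$, a contradiction. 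Therefore $\phi(\bar a,\bar q)$ is unsatisfiable, i.e. $\neg\phi(\bar a,\bar q)$ is a tautology, as required, and $[C]$ is a Craig interpolant. Finally, $C$ merely simulates a deterministic computation of length polynomial in $|u|,|w|$ with $u$, $w$ and (the finite description of) $M$ hard-wired, so by the standard computation-to-circuit translation it is computable by a circuit of size polynomial in $|u|,|w|$; this is exactly feasible interpolation for $P$, completing the contrapositive.

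The main point — and the reason both hypotheses on $P$ are needed — is an asymmetry: automatability only helps us \emph{find} short proofs of genuine tautologies, never to recognize non-tautologies, so one cannot hope to directly certify ``$\neg\phi(\bar a,\bar q)$ is a tautology''. The trick is to route the entire decision through $\psi$ alone: the only scenario in which the simulation of $M$ must actually succeed within the budget is when $\phi(\bar a,\bar q)$ is satisfiable, and precisely there the hypothesis $\phi(\bar a,\bar b)$ becomes a variable-free tautology that can be discharged by substitution together with variable-free modus ponens. The ``overlap'' case, where $\phi(\bar a,\bar q)$ is unsatisfiable yet $\psi(\bar a,\bar r)$ happens to be a tautology, is harmless because there $C(\bar a)=0$ is already a correct output, so no short proof of $\psi(\bar a,\bar r)$ is required. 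I expect this one-sided routing, rather than the bookkeeping on polynomial time bounds, to be the conceptually delicate step.
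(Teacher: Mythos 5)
Your proof is correct and follows essentially the same route as the paper's: the same key lemma (satisfiability of $\phi(\bar a,\bar q)$ forces a short $P$-proof of $\psi(\bar a,\bar r)$ via substitutability plus variable-free modus ponens), the same time-budgeted simulation of the automating algorithm $M$, and the same conversion of the resulting polynomial-time procedure into a poly-size circuit with $u,w$ hard-wired. Your version is marginally more careful in that the circuit explicitly verifies $M$'s output is a valid $P$-proof before outputting $1$, whereas the paper tacitly assumes (w.l.o.g.) that $M$ only halts when it has found a genuine proof.
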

\begin{proof}
Let $P$ be automatable and $M$ and $k$ be the corresponding algorithm and polynomial. Moreover, let $l$ and $m$ be the polynomials witnessing that $P$ is substitutable and closed under variable-free modus ponens, respectively. W.l.o.g., we can assume that $k$, $l$, and $m$ are all monotone.
We first prove that for any $P$-proof $u$ of a tautology in the form $\phi(\bar{p}, \bar{q}) \to \psi(\bar{p}, \bar{r})$ with the code $w$ and any assignment $\bar{a} \in \{0, 1\}$ for $\bar{p}$, if $\phi(\bar{a}, \bar{q})$ is satisfiable, then $\psi(\bar{a}, \bar{r})$ has a $P$-proof of size bounded by $m(l(|u|, |w|), |w|)$. To prove that, let $\bar{b} \in \{0, 1\}$ be a satisfying assignment for $\phi(\bar{a}, \bar{q})$. Then, as $P$ is substitutable, there is a $P$-proof of $\phi(\bar{a}, \bar{b}) \to \psi(\bar{a}, \bar{r})$ of size bounded by $l(|u|, |w|)$. Then, as $\phi(\bar{a}, \bar{b})$ is a variable-free tautology and $P$ is closed under variable-free modus ponens, there is a $P$-proof for $\psi(\bar{a}, \bar{r})$ of size bounded by $m(l(|u|, |w|), |w|)$. This completes the proof of the claim.

Now, define the algorithm $N$ as follows. It reads $u$, $w$, and $v$ as three inputs. There are two cases. First, if $u$ is a $P$-proof of a formula in the form $\phi(\bar{p}, \bar{q}) \to \psi(\bar{p}, \bar{r})$ with the code $w$ and $v$ is an assignment $\bar{a} \in \{0, 1\}$ for $\bar{p}$, then $N$ runs $M$ on $\psi(\bar{a}, \bar{r})$ for $k(m(l(|u|, |w|), |w|), |w|)$ many steps. If $M$ halts before this bound, $N$ returns one; otherwise, zero. Second, if the inputs are not in the mentioned form, $N$ outputs zero. It is clear that $N$ runs in polynomial time. We claim that in the first case, $N$ outputs the value of a Craig interpolant for the implication $\phi(\bar{p}, \bar{q}) \to \psi(\bar{p}, \bar{r})$, i.e., that if $N$ outputs one, then $\psi(\bar{a}, \bar{r})$ is a tautology, and if it outputs zero, then $\neg \phi(\bar{a}, \bar{q})$ is a tautology.
To prove this, if $M$ halts in $k(m(l(|u|, |w|), |w|), |w|)$ many steps, it finds a $P$-proof for $\psi(\bar{a}, \bar{r})$, which implies that $\psi(\bar{a}, \bar{r})$ is a tautology. If it returns zero, we claim that $\psi(\bar{a}, \bar{r})$ does not have a $P$-proof of size bounded by $m(l(|u|, |w|), |w|)$. There are two cases to consider. If $\psi(\bar{a}, \bar{r})$ is not a tautology, it has no $P$-proof and we are done. If it is a tautology, then the time $M$ needs to halt on $\psi(\bar{a}, \bar{r})$ is bounded by $k(|\pi_{\psi(\bar{a}, \bar{r})}|, |w|)$, where $\pi_{\psi(\bar{a}, \bar{r})}$ is the shortest $P$-proof of $\psi(\bar{a}, \bar{r})$.
If a $P$-proof of $\psi(\bar{a}, \bar{r})$ with length bounded by $m(l(|u|, |w|), |w|)$ exists, we must have $|\pi_{\psi(\bar{a}, \bar{r})}| \leq m(l(|u|, |w|), |w|)$, which implies $k(|\pi_{\psi(\bar{a}, \bar{r})}|, |w|) \leq k(m(l(|u|, |w|), |w|), |w|)$. This means that $M$ would have halted before reaching $k(m(l(|u|, |w|), |w|), |w|)$ many steps, which is a contradiction. Now, as $\psi(\bar{a}, \bar{r})$ does not have a $P$-proof of size bounded by $m(l(|u|, |w|), |w|)$, by the above observation, we conclude that $\phi(\bar{a}, \bar{q})$ is unsatisfiable. Hence, $\neg \phi(\bar{a}, \bar{q})$ is a tautology. This completes the proof that $N$ computes a Craig interpolant of $\phi(\bar{p}, \bar{q}) \to \psi(\bar{p}, \bar{r})$.

Finally, as $N$ runs in polynomial time, there is a sequence $\{C_n\}_n$ of poly-size circuits such that $N(u, w, v) = C_{|u| + |w| + |v|}(u, w, v)$. For any $P$-proof $u$ of $\phi(\bar{p}, \bar{q}) \to \psi(\bar{p}, \bar{r})$ with the code $w$ and $n$ many atoms in $\bar{p}$, define the circuit $D_{u, w}$ by $D_{u, w}(x_1, \ldots, x_n) = C_{|u| + |w| + n}(u, w, x_1, \ldots, x_n)$. It is clear that $[D_{u, w}]$ is an interpolant of $\phi(\bar{p}, \bar{q}) \to \psi(\bar{p}, \bar{r})$ and as $n \leq |w|$, its size is bounded by $(|u| + |w|)^{O(1)}$. Hence, $P$ has feasible interpolation.
\end{proof}

To use Theorem \ref{AutoToFI}, we need a way to prove that a proof system $P$ for $\mathsf{CPC}$ fails to have feasible interpolation. For that purpose, it is enough to use Theorem \ref{FILowerbound} in a backward manner. Combining with Theorem \ref{AutoToFI}, we have:

\begin{corollary}\label{MainCorForNoFI}
Let $P$ be a proof system for $\mathsf{CPC}$ and $\{\phi_n \to \psi_n\}_n$ be a sequence of short tautologies that has hard Craig interpolants. Then, if $\{\phi_n \to \psi_n\}_n$ has a short $P$-proof, $P$ does not have feasible interpolation. If $P$ is also substitutable and closed under variable-free modus ponens, then it is not automotable.
\end{corollary}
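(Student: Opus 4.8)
The plan is to read off both assertions from the two lower-bound results about feasible interpolation that are already in hand, used in their contrapositive forms; no new construction is needed.

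For the first assertion — that $P$ does not have feasible interpolation — I would argue by contradiction. Suppose $P$ \emph{did} have feasible interpolation. The sequence $\{\phi_n \to \psi_n\}_n$ is, by hypothesis, a sequence of short tautologies with hard Craig interpolants, so Theorem~\ref{FILowerbound} applies verbatim and tells us that $\{\phi_n \to \psi_n\}_n$ has no short $P$-proof. This contradicts the standing hypothesis that it does have one. Hence $P$ cannot have feasible interpolation. In effect, this first half of the corollary is just the contrapositive of Theorem~\ref{FILowerbound}: keeping fixed the assumption that the sequence has hard Craig interpolants, we swap ``$P$ has feasible interpolation'' for ``the sequence has a short $P$-proof'' and negate both.

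For the second assertion, assume in addition that $P$ is substitutable and closed under variable-free modus ponens. We have just shown that $P$ lacks feasible interpolation, so the hypotheses of Theorem~\ref{AutoToFI} are met exactly, and its conclusion gives that $P$ is not automatable. I do not anticipate any genuine obstacle; the only care required is bookkeeping — checking that ``a sequence of short tautologies with a short $P$-proof'' coincides with the setup of Theorem~\ref{FILowerbound} (a sequence of short binary strings in $L$ together with a polynomially bounded sequence of $P$-proofs), and that the structural conditions on $P$ are precisely those Theorem~\ref{AutoToFI} requires. Once these are lined up, the corollary follows immediately.
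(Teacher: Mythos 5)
Your proposal is correct and is exactly the paper's intended argument: the text introducing the corollary says it follows by using Theorem~\ref{FILowerbound} ``in a backward manner'' (i.e., its contrapositive) combined with Theorem~\ref{AutoToFI}. Nothing further is needed.
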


Note that Corollary \ref{MainCorForNoFI} suggests that if proof systems become strong enough to provide short proofs for implications with hard Craig interpolants, then feasible interpolation starts to fail. In the next subsection, we will present some instances of such strong proof systems that are believed to lack feasible interpolation and, as they are substitutable and closed under variable-free modus ponens, also non-automatable.

\subsection{Applications}
In this subsection, we apply the concept of feasible interpolation to demonstrate that certain weak concrete proof systems are not p-bounded. We also use the failure of feasible interpolation to show that some strong concrete proof systems are non-automatable.

\begin{theorem}[Krajíček \cite{krajivcek1997interpolation}] \label{FIForLK}
$\mathbf{LK}_n^-$ has both feasible and monotone feasible interpolation. 
\end{theorem}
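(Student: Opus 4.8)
The plan is to prove feasible interpolation by the classical \emph{Maehara method}: a recursion on cut-free proofs that extracts an interpolant, accompanied by a size analysis and, for the monotone version, a sign analysis. By Theorem~\ref{LK=LKN} the systems $\mathbf{LK}^-_n$ and $\mathbf{LK}^-$ are equivalent, and (monotone) feasible interpolation transfers across equivalence of proof systems, since a simulation only blows proofs up polynomially and the circuit-size bound is composed with a polynomial. So it suffices to treat cut-free $\mathbf{LK}^-$-proofs. A cut-free $\mathbf{LK}^-$-proof of $\Rightarrow \phi(\bar{p},\bar{q}) \to \psi(\bar{p},\bar{r})$ converts, by inverting the $R\!\to$ rule (a standard size-preserving operation on cut-free proofs), into a cut-free proof of the sequent $\phi(\bar{p},\bar{q}) \Rightarrow \psi(\bar{p},\bar{r})$. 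Thus everything reduces to a lemma: for every cut-free $\mathbf{LK}^-$-proof $\pi$ of a sequent $S$ and every \emph{partitioned} presentation $(\Gamma_1 \Rightarrow \Delta_1 \mid \Gamma_2 \Rightarrow \Delta_2)$ of $S$ (i.e.\ $\Gamma = \Gamma_1\Gamma_2$, $\Delta = \Delta_1\Delta_2$ as sequences), there is a circuit $C$ with $V(C) \subseteq V(\Gamma_1,\Delta_1)\cap V(\Gamma_2,\Delta_2)$ and $\mathrm{size}(C) = O(|\pi|)$ such that both $\Gamma_1 \Rightarrow \Delta_1, [C]$ and $[C], \Gamma_2 \Rightarrow \Delta_2$ are valid. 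Applying this to the partition $(\phi \Rightarrow {} \mid {} \Rightarrow \psi)$ (so $V(C)\subseteq V(\phi)\cap V(\psi) = \bar{p}$) gives a linear-size circuit $C(\bar{p})$ whose associated formula $[C]$ is a Craig interpolant of $\phi \to \psi$, which is feasible interpolation with $s(|u|,|w|)=O(|u|)$.

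The induction on $\pi$ is routine in shape. At an axiom, each of $\Gamma_i,\Delta_i$ contains at most a literal and/or a constant, so one reads a constant-size interpolant directly off the (finitely many) admissible partitions; e.g.\ $p\Rightarrow p$ gives $p$, $\neg p$, $\bot$, or $\top$, the axioms $p,\neg p\Rightarrow{}$ and $\Rightarrow p,\neg p$ give $p$, $\neg p$, $\top$, or $\bot$, and when a variable occurs on only one side the interpolant is $\bot$ or $\top$. Structural rules and the one-premise logical rules $L\wedge_i$, $R\vee_i$, $R\!\to$ leave the interpolant unchanged (for weakening the new formula may be assigned to either side; for contraction both copies go to the side of the contracted formula). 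At a two-premise rule $R\wedge$, $L\vee$, or $L\!\to$ one induces the obvious partition on the premises (auxiliary formulas go to the side of the principal formula), obtains $I_1,I_2$ from the induction hypothesis, and combines them with a single gate: $I_1\vee I_2$ if the principal formula lies on side~1, and $I_1\wedge I_2$ if on side~2. The semantic side-conditions are checked via De Morgan — e.g.\ for $R\wedge$ with the principal formula on side~1, the two premise conditions amount to sandwiching $I_1$ and $I_2$ between $B_1$ and $A^{(1)}_1$ resp.\ $A^{(2)}_1$ with $A^{\mathrm{concl}}_1 = A^{(1)}_1\vee A^{(2)}_1$, so $I_1\vee I_2$ works. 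Each rule adds at most one gate, giving $\mathrm{size}(C)=O(|\pi|)$; the variable condition is preserved because the premises' partitions only involve subformulas of the conclusion's formulas.

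For monotone feasible interpolation, suppose first that $\phi$ is monotone in $\bar{p}$ and strengthen the recursion to produce a circuit that is monotone in $\bar{p}$. The key observation is that in the root partition $(\phi\Rightarrow{}\mid{}\Rightarrow\psi)$ the side-1 succedent and the side-2 antecedent are both empty, and both emptiness conditions are preserved on passing to premises; together with the subformula property of cut-free proofs this forces every side-1 formula occurring anywhere in $\pi$ to be a subformula of $\phi$, hence itself monotone in $\bar{p}$ (an NNF subformula of a formula without negative $\bar{p}$-literals has none). Re-examining the axiom cases under these three constraints, a literal $\neg p_i$ with $p_i\in\bar{p}$ can sit neither on side~1 (it is not monotone in $\bar{p}$) nor in the empty side-1 succedent nor in the empty side-2 antecedent; consequently every axiom instance that in the general construction would contribute the interpolant $\neg p_i$ is ruled out, and every axiom in fact contributes an interpolant in $\{\top,\bot\}\cup\{p_i : p_i\in\bar{p}\}$. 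Since the combining steps use only $\wedge$ and $\vee$, the circuit has no negation applied to a $\bar{p}$-variable, i.e.\ it is monotone in $\bar{p}$, and it can then be turned into a genuinely monotone circuit. The case where $\psi$ (rather than $\phi$) is monotone in $\bar{p}$ is dual: one uses the emptiness of the side-2 antecedent and the monotonicity of the side-2 formulas in the symmetric way.

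The step I expect to be the main obstacle is the monotone bookkeeping of the last paragraph — namely, checking that the emptiness of the side-1 succedent / side-2 antecedent, the subformula property, and the monotonicity hypothesis \emph{jointly} exclude every axiom instance (over all seven NNF axioms and all admissible partitions) that could inject a negative $\bar{p}$-literal into the interpolant. This is exactly where the hypothesis ``$\phi$ or $\psi$ is monotone in $\bar{p}$'' is used, and it has to be verified case by case. By contrast, the non-monotone induction, the linear size bound, and the tracking of the variable condition are entirely mechanical.
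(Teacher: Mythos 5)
Your Maehara-style recursion, the one-gate-per-rule size bound, and the bookkeeping of the variable condition are all the same ingredients as the paper's proof, and the non-monotone part is correct. The monotone part, however, has a genuine gap, and it is created by the detour through $\mathbf{LK}^-$. You rest the monotone case on the invariant that, starting from the root partition $(\phi\Rightarrow{}\mid{}\Rightarrow\psi)$, ``the side-1 succedent and the side-2 antecedent are both empty, and both emptiness conditions are preserved on passing to premises.'' In $\mathbf{LK}^-$ this invariant is simply false. Apply $R\!\to$ with principal $\chi\to\eta$ on side~2: the premise $\Gamma,\chi\Rightarrow\eta,\Delta$ has $\chi$ in its antecedent, yet $\chi$ must be routed to side~2 as an auxiliary of a side-2 principal, so the side-2 antecedent $\Gamma_2$ becomes nonempty. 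Dually, $L\!\to$ with principal on side~1 puts its left auxiliary into $\Delta_1$. You explicitly list $R\!\to$ among the one-premise and $L\!\to$ among the two-premise cases, so your induction must pass through exactly the rules that destroy the invariant you need in order to exclude the $\neg p_i$ axiom instances. You are also quietly running the induction in a hybrid calculus: the axioms you enumerate ($p\Rightarrow p$, $p,\neg p\Rightarrow{}$, ${}\Rightarrow p,\neg p$, and the constant axioms) are the axioms of $\mathbf{LK}^-_n$, while $L\!\to$ and $R\!\to$ are rules of $\mathbf{LK}^-$; neither system alone is the one the argument is carried out in.

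The fix is simply not to leave $\mathbf{LK}^-_n$, which is what the theorem statement asks for and what the paper's proof does. In $\mathbf{LK}^-_n$ there are no negation rules and no implication rules, so no rule ever moves a formula occurrence across the $\Rightarrow$; one can take the trivial partition (antecedent $=$ side~1, succedent $=$ side~2) throughout, and your emptiness invariant holds vacuously. Then your analysis of the axiom cases does the work: $l\Rightarrow l$ yields the literal $l$, which---by the subformula property together with the hypothesis that $\phi$ (or $\psi$) is monotone in $\bar p$---is never a negated $\bar p$-variable; the remaining axioms yield $\top$ or $\bot$; and the combination steps use only $\wedge$ and $\vee$. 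This is exactly the paper's one observation, ``since the system $\mathbf{LK}^-_n$ lacks negation and implication rules,'' which should be the starting point of the monotone argument rather than something to be recovered after passing to a system in which it fails.
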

\begin{proof}
By recursion on an $\mathbf{LK}^-_n$-proof $\pi$ of $\Gamma \Rightarrow \Delta$, one can easily construct a circuit $C$ on the variables in $V(\Gamma) \cap V(\Delta)$ such that both $\Gamma \Rightarrow [C]$ and $[C] \Rightarrow \Delta$ are valid, and the size of $C$ is bounded by $|\pi|^{O(1)}$. Since the system $\mathbf{LK}^-_n$ lacks negation and implication rules, it is clear that if $\Gamma(\bar{p}, \bar{q})$ or $\Delta(\bar{p}, \bar{r})$ is monotone in $\bar{p}$, then the constructed circuit will only use conjunctions and disjunctions over the atoms in $\bar{p}$. Hence, $C$ will be monotone.
\end{proof}

One can use the (monotone) feasible interpolation for $\mathbf{LK}^-_n$ to prove the same for resolution. It is enough to simulate $\mathbf{R}$ by $\mathbf{LK}_n^-$. For any clause $C$ in variables $\bar{p} \cup \bar{q}$, define $C^{\bar{p}}$ as the sub-clause of $C$ consisting of the literals constructed from $\bar{p}$, and let $d$ be the dualization operator, i.e., $d(p)=\neg p$, $d(\neg p)=p$, and $C^d=\{l^d \mid l \in C\}$ for any clause $C$. Then:

\begin{lemma}[Krajíček \cite{krajivcek1997interpolation}] \label{SimulationOfRByLK} 
There is a polynomial $P(n)$ such that for any resolution refutation $\pi$ of the clause $E$ from the clauses $\{C_i(\bar{p}, \bar{q})\}_i \cup \{D_j(\bar{p}, \bar{r})\}_j$, there is an $\mathbf{LK}_n^-$-proof of size at most $P(|\pi|)$ of a sequent in the form
$
\Gamma, d(E^{\bar{q}}) \Rightarrow \Delta, E^{\bar{p},\bar{r}},
$
where $\Gamma$ consists of the formulas $\bigvee C_i$ or unsatisfiable formulas in $\bar{q}$, and $\Delta$ consists of the formulas $\bigwedge \neg D_j$ or tautologies in $\bar{p}$ and $\bar{r}$.
\end{lemma}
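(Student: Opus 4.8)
The plan is to proceed by induction on the structure of the resolution refutation $\pi = C_1, \dots, C_n$ of $E$ from $\{C_i(\bar{p},\bar{q})\}_i \cup \{D_j(\bar{p},\bar{r})\}_j$, showing that each clause $C$ appearing in $\pi$ admits an $\mathbf{LK}_n^-$-proof, of size polynomial in the length of the sub-refutation deriving $C$, of a sequent of the claimed shape with $E$ replaced by $C$. Concretely, to the clause $C$ I associate its $\bar{q}$-part $C^{\bar{q}}$, its $\bar{p},\bar{r}$-part $C^{\bar{p},\bar{r}}$, and I aim to produce an $\mathbf{LK}_n^-$-derivation of a sequent $\Gamma_C,\, d(C^{\bar{q}}) \Rightarrow \Delta_C,\, C^{\bar{p},\bar{r}}$, where $\Gamma_C$ is built from the relevant $\bigvee C_i$ and $\Delta_C$ from the relevant $\bigwedge \neg D_j$. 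Here the literals of the clause appearing on the two sides of the sequent are to be read as a (comma-separated) context: $d(C^{\bar{q}})$ contributes the literals dual to those in the $\bar{q}$-part, and $C^{\bar{p},\bar{r}}$ contributes the $\bar{p},\bar{r}$-literals directly. When $C_n = \varnothing$ this reduces to $\Gamma \Rightarrow \Delta$, which is the desired end sequent.

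The base cases are the leaves of $\pi$, i.e.\ the input clauses. If the leaf is some $C_i(\bar{p},\bar{q})$, I want a small $\mathbf{LK}_n^-$-proof of $\bigvee C_i,\, d(C_i^{\bar{q}}) \Rightarrow C_i^{\bar{p}}$ (so $\Gamma = \{\bigvee C_i\}$, $\Delta$ empty). This is obtained by applying the $L\vee$ rule to $\bigvee C_i$, splitting it into its literals; each resulting axiom is either of the form $p \Rightarrow p$, $\neg p \Rightarrow \neg p$ (matched by $C_i^{\bar{p}}$ on the right), or $q,\neg q \Rightarrow$ / $\neg q, q \Rightarrow$ (matched by $d(C_i^{\bar{q}})$ on the left), together with the constant axioms for $\top,\bot$ when literals on constants appear — all of which are available in $\mathbf{LK}_n^-$. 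Weakening fills in the rest. Symmetrically, for a leaf $D_j(\bar{p},\bar{r})$ I produce a small proof of $d(D_j^{\bar{q}}) \Rightarrow \bigwedge \neg D_j,\, D_j^{\bar{p},\bar{r}}$: since $D_j$ has no $\bar{q}$-literals the left side is empty, I apply $R\wedge$ to decompose $\bigwedge \neg D_j$ into the negated literals $\neg l$ for $l \in D_j$, and each branch is again an axiom (now $l, \neg l$ on opposite sides) up to weakening. In both cases the size is linear in the size of the clause, hence polynomial in $|\pi|$.

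For the inductive step I handle the resolution rule: $C$ is derived from $A \cup \{x\}$ and $B \cup \{\neg x\}$ with $C = A \cup B$, where $x$ is a variable from either $\bar{p}$, $\bar{q}$, or $\bar{r}$. By the induction hypothesis I have $\mathbf{LK}_n^-$-proofs of the two sequents associated to $A \cup \{x\}$ and $B \cup \{\neg x\}$. The key move is to eliminate the pivot literal using the structural rules only — there is no cut in $\mathbf{LK}_n^-$, so I must instead merge the two derivations by tracking where $x$ sits. If $x \in \bar{p}$ or $x \in \bar{r}$, then $x$ appears (as the literal $x$ or its dual) in the $\bar{p},\bar{r}$-part, hence on the right of one sequent and on the left after dualization in the other; combining requires that I can splice the two proofs so that the two occurrences of $x$ cancel — this is exactly where one has to be careful, and the honest way is to observe that since the pivot never appears in the $\bigvee C_i$ / $\bigwedge \neg D_j$ blocks for the \emph{other} side, one can weaken the missing context into each proof and then the two literal-occurrences of $x$ can be dropped by contraction/weakening against a matching occurrence that the other proof supplies. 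If $x \in \bar{q}$, the pivot lives in $d(C^{\bar{q}})$, and an analogous bookkeeping applies. In all three cases the resulting $\Gamma_C$ is the union of the $\Gamma$'s and $\Delta_C$ the union of the $\Delta$'s (up to contraction of repeated formulas), and each step of the resolution proof contributes only a constant number of structural inferences plus context weakening, so the total $\mathbf{LK}_n^-$-proof has size bounded by $P(|\pi|)$ for a fixed polynomial $P$ depending on the number of variables.

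I expect the main obstacle to be precisely this pivot-elimination step: because $\mathbf{LK}_n^-$ is \emph{cut-free}, one cannot literally simulate resolution (which is essentially a cut on a variable) and must instead arrange the invariant so that the pivot is already present, with the correct polarity, as a side formula in \emph{both} inductive sequents before they are merged; getting the invariant exactly right so that this always works — and so that $\Gamma$ really does consist only of formulas $\bigvee C_i$ together with ``unsatisfiable formulas in $\bar{q}$'' (these arise when a $\bar{q}$-literal and its dual both end up on the left, collapsing to something like $q \wedge \neg q$ after the axioms are put back together) and dually $\Delta$ of formulas $\bigwedge \neg D_j$ together with ``tautologies in $\bar{p},\bar{r}$'' — is the delicate part of the bookkeeping. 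Once the invariant is pinned down, the size bound is routine, and the final sequent for $C_n = \varnothing$ is the claimed $\Gamma \Rightarrow \Delta$.
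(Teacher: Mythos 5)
Your overall architecture---induction on the structure of the refutation, with an invariant sequent shape attached to every intermediate clause, and the base cases handled by unfolding $\bigvee C_i$ via $L\vee$ or $\bigwedge\neg D_j$ via $R\wedge$---is exactly the paper's. The base cases are essentially correct. The problem is the inductive step for the resolution rule, which is precisely where you flagged an obstacle, and your guess about how to get past it is wrong. You want to ``eliminate'' the pivot literal by having ``the two literal-occurrences of $x$ cancel'' via ``contraction/weakening against a matching occurrence that the other proof supplies.'' No such move exists: contraction merges two copies of the \emph{same} formula on the \emph{same} side, weakening only adds formulas, and there is no structural rule that deletes an unwanted literal. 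Since the system is cut-free, the pivot genuinely cannot be cancelled.

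The paper's key trick is to \emph{not} cancel the pivot at all, but to absorb both of its polarities into a single compound formula via a logical rule, and to observe that this compound formula is harmless. Concretely: if the pivot $s$ lies in $\bar q$, then by the induction hypothesis $\neg s$ sits on the left of one premise sequent and $s$ on the left of the other (because of the dualization $d$); after weakening both sequents to a common context, a single application of $L\vee$ fuses them into one sequent with $s\vee\neg s$ on the \emph{left}. If $s\in\bar p\cup\bar r$, then $s$ and $\neg s$ sit on the \emph{right} of the two premise sequents, and $R\wedge$ fuses them into a single sequent with $s\wedge\neg s$ on the right. A tautology on the left and an unsatisfiable formula on the right are both harmless for the later interpolation argument, which is exactly what makes this simulation useful. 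This also means your parenthetical identification of the junk formulas is the wrong way around: the leftover $\bar q$-formulas on the left come out as \emph{tautologies} like $q\vee\neg q$ (not $q\wedge\neg q$), and the leftover $\bar p,\bar r$-formulas on the right come out as \emph{unsatisfiable} formulas like $s\wedge\neg s$. (The lemma as printed appears to have ``unsatisfiable'' and ``tautologies'' swapped relative to the construction in the proof and to the phrasing used in the subsequent corollary; if you follow the printed statement literally and place an unsatisfiable formula on the left, the resulting sequent is vacuously valid and yields no interpolant.) So the missing idea is: simulate the cut on the pivot by $L\vee$ or $R\wedge$, depending on which variable block the pivot lives in, and then observe that the resulting extra formula is of the kind that does not affect the Craig interpolant of the end sequent.
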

\begin{proof}
The construction of the claimed $\mathbf{LK}^-_n$-proof is by recursion on the structure of $\pi$. If $\pi$ is just a clause $C_i(\bar{p}, \bar{q})$ or $D_j(\bar{p}, \bar{r})$, it is enough to use the canonical $\mathbf{LK}^-_n$-proofs for either $(\bigvee C_i, d(C^{\bar{q}}_i) \Rightarrow C_i^{\bar{p}})$ or $(\, \Rightarrow D_j, \bigwedge \neg D_j)$. 
For the resolution rule, if the last rule in $\pi$ has the form:
\begin{prooftree}
    \AxiomC{$C \cup \{s\}$}
    \AxiomC{$D \cup \{\neg s\}$}
    \BinaryInfC{$C \cup D$}
\end{prooftree}
then there are two cases to consider: either $s \in \bar{q}$ or $s \in \bar{p} \cup \bar{r}$. 
In the first case, by the induction hypothesis, we have $\mathbf{LK}_n^-$-proofs of $\Gamma, d(C^{\bar{q}}), \neg s \Rightarrow C^{\bar{p}, \bar{r}}, \Delta$ and $\Gamma', d(D^{\bar{q}}), s \Rightarrow D^{\bar{p},\bar{r}}, \Delta'$. Therefore, using the rules of $\mathbf{LK}^-_n$, we can easily reach the sequent 
$
\Gamma, \Gamma', d(C^{\bar{q}}), d(D^{\bar{q}}), s \vee \neg s \Rightarrow C^{\bar{p}, \bar{r}}, D^{\bar{p}, \bar{r}}, \Delta, \Delta'.
$
If $s \in \bar{p} \cup \bar{r}$, then we have $\Gamma, d(C^{\bar{q}}) \Rightarrow s, C^{\bar{p}, \bar{r}}, \Delta$ and $\Gamma', d(D^{\bar{q}}) \Rightarrow D^{\bar{p}, \bar{r}}, \neg s, \Delta'$. Hence, in $\mathbf{LK}^-_n$, we can easily reach the sequent 
$
\Gamma, \Gamma', d(C^{\bar{q}}), d(D^{\bar{q}}) \Rightarrow C^{\bar{p}, \bar{r}}, D^{\bar{p}, \bar{r}}, s \wedge \neg s, \Delta, \Delta'.
$
This completes the simulation. Moreover, note that the transformation is polynomially bounded.
\end{proof}

\begin{corollary}[Krajíček \cite{krajivcek1997interpolation}] \label{FIForR} 
$\mathbf{R}$ has both feasible and monotone feasible interpolation.  
\end{corollary}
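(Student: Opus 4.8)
The plan is to obtain this as a corollary of the feasible interpolation of $\mathbf{LK}_n^-$ (Theorem~\ref{FIForLK}) via the simulation of resolution supplied by Lemma~\ref{SimulationOfRByLK}.

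First I would fix an unsatisfiable set of clauses split as $\{C_i(\bar p, \bar q)\}_i \cup \{D_j(\bar p, \bar r)\}_j$ together with a resolution refutation $\pi$ of it, i.e.\ a derivation of the empty clause $E = \varnothing$. Applying Lemma~\ref{SimulationOfRByLK} with $E = \varnothing$ (so that $E^{\bar q} = E^{\bar p, \bar r} = \varnothing$ and the extra literal-formulas disappear), I get an $\mathbf{LK}_n^-$-proof $\rho$ of size at most $P(|\pi|)$ of a sequent $\Gamma \Rightarrow \Delta$ in which every formula of $\Gamma$ is either some $\bigvee C_i$ or a semantically inert formula over $\bar q$ (a tautology weakened in), and every formula of $\Delta$ is either some $\bigwedge \neg D_j$ or a semantically inert formula over $\bar p, \bar r$ (an unsatisfiable one). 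In particular $V(\Gamma) \subseteq \bar p \cup \bar q$ and $V(\Delta) \subseteq \bar p \cup \bar r$, so the common variables lie among $\bar p$, and $\Gamma \Rightarrow \Delta$ is semantically equivalent to $\bigwedge_i \bigvee C_i \Rightarrow \bigvee_j \bigwedge \neg D_j$.

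Next I would run Theorem~\ref{FIForLK} on $\rho$: by recursion on $\rho$ one builds a circuit $C$ on the variables $V(\Gamma) \cap V(\Delta) \subseteq \bar p$, of size bounded by $|\rho|^{O(1)} \leq P(|\pi|)^{O(1)} = |\pi|^{O(1)}$, with both $\Gamma \Rightarrow [C]$ and $[C] \Rightarrow \Delta$ valid. Unwinding this exactly as in Lemma~\ref{DNPToInterpolant}: for any assignment $\bar a$ to $\bar p$, if $C(\bar a) = 0$ then $\bigwedge_i \bigvee C_i(\bar a, \bar q)$ is unsatisfiable, and if $C(\bar a) = 1$ then $\{D_j(\bar a, \bar r)\}_j$ is unsatisfiable (the inert $\bar q$-formulas are always true and the inert $\bar p, \bar r$-formulas always false, so they do not interfere). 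Hence $C$ computes a Craig interpolant for the implication $\bigwedge_i \bigvee C_i(\bar p, \bar q) \to \neg\bigwedge_j \bigvee D_j(\bar p, \bar r)$ attached to the split clause set, with size polynomial in $|\pi|$ and in the code of the clause set; this is feasible interpolation for $\mathbf{R}$.

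For the monotone version I would note that the monotone half of Theorem~\ref{FIForLK} applies as soon as $\Gamma$ or $\Delta$ is monotone in $\bar p$. If every $C_i$ is monotone in $\bar p$, i.e.\ has no negative $\bar p$-literal, then each $\bigvee C_i$ is monotone in $\bar p$ and the auxiliary $\bar q$-formulas added to $\Gamma$ do not mention $\bar p$, so $\Gamma$ is monotone in $\bar p$; symmetrically, if no $D_j$ has a positive $\bar p$-literal then each $\bigwedge \neg D_j$, and hence $\Delta$, is monotone in $\bar p$. These are exactly the two cases in which the interpolant of the associated implication is required to be monotone, and in each Theorem~\ref{FIForLK} yields a monotone $C$; the construction and the size bound are unchanged. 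The proof is essentially bookkeeping once Lemma~\ref{SimulationOfRByLK} is available; the points that need care are checking that the end sequent confines its shared variables to $\bar p$ and that the auxiliary formulas are semantically inert, and matching the monotonicity hypothesis on the clauses with the monotonicity hypothesis of Theorem~\ref{FIForLK}. I do not expect a genuine obstacle beyond this.
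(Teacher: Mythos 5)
Your proposal is correct and follows essentially the same route as the paper: apply Lemma~\ref{SimulationOfRByLK} with $E=\varnothing$, run the Maehara-style extraction of Theorem~\ref{FIForLK} on the resulting $\mathbf{LK}_n^-$-proof, observe the auxiliary tautologies in $\bar q$ on the left and unsatisfiable formulas in $\bar p,\bar r$ on the right do not affect the interpolant, and for monotonicity note that $\Gamma$ inherits monotonicity in $\bar p$ from the $C_i$'s. (You have also correctly read past a small typo in the statement of Lemma~\ref{SimulationOfRByLK}, which swaps "unsatisfiable" and "tautology" relative to its own proof, and you spell out the symmetric $D_j$-monotone case that the paper leaves implicit.)
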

\begin{proof}
Let $\pi$ be a resolution refutation for $\{C_i(\bar{p}, \bar{q})\}_i \cup \{D_j(\bar{p}, \bar{r})\}_j$. Then, by Lemma \ref{SimulationOfRByLK}, there is an $\mathbf{LK}^-_n$-proof of size $|\pi|^{O(1)}$ for $\Gamma \Rightarrow \Delta$, where $\Gamma$ consists of the formulas $\bigvee C_i$ or unsatisfiable formulas in $\bar{q}$ and $\Delta$ consists of the formulas $\bigwedge \neg D_j$ or tautologies in $\bar{p}$ and $\bar{r}$. By Theorem \ref{FIForLK}, there is a circuit of size $|\pi|^{O(1)}$ to compute a Craig interpolant for $\Gamma \Rightarrow \Delta$. As unsatisfiable formulas on the right hand side and tautologies on the left hand side of a sequent has no effect on its interpolant, we get a circuit of size $|\pi|^{O(1)}$ to compute an interpolant for $\{C_i(\bar{p}, \bar{q})\}_i \cup \{D_j(\bar{p}, \bar{r})\}_j$. For the monotone case, assume that $\{C_i\}_i$ is monotone in $\bar{p}$. Since $\Gamma$ consists of the formulas $\bigvee_i C_i$ and $s \vee \neg s$ for $s \in \bar{q}$, and $C_i$'s are monotone in $\bar{p}$, the multiset $\Gamma$ is also monotone in $\bar{p}$. Therefore, by the monotone feasible interpolation property of $\mathbf{LK}^{-}_n$, we obtain a monotone circuit to interpolate.
\end{proof}

\begin{corollary}[Krajíček \cite{krajivcek1997interpolation}]
Let $0 < \epsilon < 1/3$ be a real number. Then, any $\mathbf{LK}^-$-proof of $\mathrm{Clique}^{\epsilon}_n(\bar{p}, \bar{q}) \to \neg \mathrm{Color}^{\epsilon}_n(\bar{p}, \bar{r})$ or any resolution refutation of $\mathrm{Clique}^{\epsilon}_n(\bar{p}, \bar{q}) \cup \mathrm{Color}^{\epsilon}_n(\bar{p}, \bar{r})$ has size at least $2^{\Omega(n^{1/3-\epsilon})}$. Consequently, $\mathbf{LK}^-$ and $\mathbf{R}$ are not p-bounded.
\end{corollary}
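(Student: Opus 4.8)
The plan is to assemble the statement from three ingredients already in place: the monotone feasible interpolation of $\mathbf{LK}^-_n$ (Theorem~\ref{FIForLK}), the monotone feasible interpolation of $\mathbf{R}$ (Corollary~\ref{FIForR}), and the exponential monotone circuit lower bound for separators of Clique--Color, which is Theorem~\ref{LowerBoundClassicalInt} (itself resting on the Razborov--Alon--Boppana bound, Theorem~\ref{Alon-Boppana}). The one structural point that makes the monotone version applicable is that $\mathrm{Clique}^{\epsilon}_n(\bar{p},\bar{q})$ is monotone in $\bar{p}$ (Example~\ref{ExampleClique}), so the hypothesis ``$\phi$ or $\psi$ monotone in $\bar{p}$'' in the definition of monotone feasible interpolation is met by $\phi = \mathrm{Clique}^{\epsilon}_n(\bar{p},\bar{q})$ in the implication $\mathrm{Clique}^{\epsilon}_n(\bar{p},\bar{q}) \to \neg \mathrm{Color}^{\epsilon}_n(\bar{p},\bar{r})$, and likewise when $\mathrm{Clique}^{\epsilon}_n$ is read as the left-hand block of clauses of the unsatisfiable set $\mathrm{Clique}^{\epsilon}_n(\bar{p},\bar{q}) \cup \mathrm{Color}^{\epsilon}_n(\bar{p},\bar{r})$.

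For resolution I would argue as follows. Given a resolution refutation $\pi$ of $\mathrm{Clique}^{\epsilon}_n(\bar{p},\bar{q}) \cup \mathrm{Color}^{\epsilon}_n(\bar{p},\bar{r})$, Corollary~\ref{FIForR} yields a monotone circuit $C_n$ of size $|\pi|^{O(1)}$ computing a Craig interpolant for this pair; exactly as in the proof of Lemma~\ref{DNPToInterpolant}, such an interpolant is a separator for $(\mathrm{Clique}_{\lfloor n^{2/3}\rfloor,n}, \mathrm{Color}_{\lfloor n^{2/3-2\epsilon}\rfloor,n})$. By Theorem~\ref{LowerBoundClassicalInt} the size of $C_n$ is at least $2^{\Omega(n^{1/3-\epsilon})}$, so $|\pi|^{O(1)} \geq 2^{\Omega(n^{1/3-\epsilon})}$, and absorbing the constant exponent into the $\Omega(\cdot)$ gives $|\pi| \geq 2^{\Omega(n^{1/3-\epsilon})}$. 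For $\mathbf{LK}^-$ I would use the same template: either translate a given $\mathbf{LK}^-$-proof of $\mathrm{Clique}^{\epsilon}_n(\bar{p},\bar{q}) \to \neg \mathrm{Color}^{\epsilon}_n(\bar{p},\bar{r})$ into a polynomial-size $\mathbf{LK}^-_n$-proof via the equivalence $\mathbf{LK}^- \equiv \mathbf{LK}^-_n$ of Theorem~\ref{LK=LKN} and then apply Theorem~\ref{FIForLK} and Theorem~\ref{LowerBoundClassicalInt}; or, more economically, observe that monotone feasible interpolation transfers along a simulation (compose the simulation's polynomial blow-up with the interpolant-extraction blow-up), so $\mathbf{LK}^-$ itself has monotone feasible interpolation, and then invoke Theorem~\ref{MainLowerBoundTheorem} verbatim.

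The ``consequently'' clause then comes for free. The number of $\bar{p}$-variables ($\binom{n}{2}$), the number of $\bar{q}$- and $\bar{r}$-variables, and the number and width of the clauses of $\mathrm{Clique}^{\epsilon}_n$ and $\mathrm{Color}^{\epsilon}_n$ are all polynomial in $n$, so $\{\mathrm{Clique}^{\epsilon}_n(\bar{p},\bar{q}) \to \neg \mathrm{Color}^{\epsilon}_n(\bar{p},\bar{r})\}_n$ is a sequence of short tautologies and $\{\mathrm{Clique}^{\epsilon}_n(\bar{p},\bar{q}) \cup \mathrm{Color}^{\epsilon}_n(\bar{p},\bar{r})\}_n$ is a sequence of short inconsistent clause sets. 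In any p-bounded proof system every such sequence has polynomial-size proofs, contradicting the $2^{\Omega(n^{1/3-\epsilon})}$ bound just established; hence neither $\mathbf{LK}^-$ nor $\mathbf{R}$ is p-bounded.

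I do not expect a genuine obstacle here — the result is essentially a recombination of earlier components. The only place calling for care is bookkeeping: checking that the polynomial-in-$|\pi|$ circuit size combined with an exponential lower bound still forces $|\pi|$ to be exponential (it does, since $2^{\Omega(f)}$ is closed under taking $c$-th roots for constant $c$); checking that the monotonicity flag sits on the correct side of the implication/clause partition so that Theorem~\ref{FIForLK}, Corollary~\ref{FIForR} and Theorem~\ref{LowerBoundClassicalInt} all speak about the same monotone interpolant; and, for $\mathbf{LK}^-$, verifying that the $\mathbf{LK}^- \equiv \mathbf{LK}^-_n$ translation of Theorem~\ref{LK=LKN} really carries feasible interpolation across (which holds because both the formula-language translation and the Maehara-style extraction of Theorem~\ref{FIForLK} incur only polynomial blow-up).
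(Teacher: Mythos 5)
Your proposal is correct and takes essentially the same route as the paper: the paper's one-line proof cites Theorem~\ref{FIForLK} and Corollary~\ref{FIForR} to get monotone feasible interpolation for $\mathbf{LK}^-_n$ and $\mathbf{R}$, plugs these into Theorem~\ref{MainLowerBoundTheorem} (which is itself just Theorem~\ref{LowerBoundClassicalInt} packaged), and transfers from $\mathbf{LK}^-_n$ to $\mathbf{LK}^-$ via Theorem~\ref{LK=LKN}. Your version re-derives Theorem~\ref{MainLowerBoundTheorem} inline and handles the polynomial-absorption bookkeeping explicitly, but the decomposition, key lemmas, and conclusion are identical.
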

\begin{proof}
By Theorem \ref{FIForLK} and Corollary \ref{FIForR}, both $\mathbf{LK}^-_n$ and $\mathbf{R}$ have monotone feasible interpolation. Thus, we can apply Corollary \ref{MainLowerBoundTheorem} to obtain the claimed lower bounds for $\mathbf{LK}^-_n$ and $\mathbf{R}$. Then, we can replace $\mathbf{LK}_n^-$ by $\mathbf{LK}^-$ by the equivalence $\mathbf{LK}^- \equiv \mathbf{LK}^-_n$ from Theorem \ref{LK=LKN}.
\end{proof}

\begin{remark}
For many proof systems for $\mathsf{CPC}$, such as $\mathbf{CP}$ and $\mathbf{NS}$, the monotone feasible interpolation technique can be used to demonstrate that they are not p-bounded. However, in each case, it is necessary to transition from monotone circuits to more sophisticated computational models and establish a lower bound on their ability to separate certain disjoint $\mathbf{NP}$ pairs.
For example, in the case of Cutting Planes, Pudlák employs monotone real circuits and proves that $\mathbf{CP}$ admits monotone feasible interpolation with respect to these circuits. A real monotone circuit is one that can use any monotone function on real numbers as gates. Pudlák then establishes an exponential lower bound for the Clique-Color disjoint $\mathbf{NP}$ pair, concluding that $\mathrm{Clique}^{\epsilon}_n(\bar{p}, \bar{q}) \cup \mathrm{Color}^{\epsilon}_n(\bar{p}, \bar{r})$ has exponentially long refutations in $\mathbf{CP}$, and therefore, $\mathbf{CP}$ is not p-bounded \cite{pudlak1997lower}. For more on these variations, see the comprehensive monograph \cite{krajivcek2019proof}.
\end{remark}

To prove the lack of (monotone) feasible interpolation by applying Theorem \ref{MainLowerBoundTheorem} or Corollary \ref{MainCorForNoFI}, we must provide short proofs for implications with hard Craig interpolants. To that goal, we need some upper bound results to show that certain proof systems are sufficiently strong:

\begin{theorem}\label{UpperBounds}
There exist $k, l \in \mathbb{N}$ such that for any sufficiently large $n \in \mathbb{N}$, we have:
\begin{itemize}
    \item[$\bullet$] 
    \cite{maciel2002new,krajivcek2019proof} $\mathrm{PHP}_{n}^{m(n)}$ have $\mathbf{LK}_2$-proofs of size less than $2^{l(\log n)^{k}}$, if $n^2 \leq m(n) \leq 2^{(\log n)^{k}}$.
    
    \item[$\bullet$] 
    \cite{krajiivcek1998some}  $\mathrm{RSA}^1_n(\bar{p}, \bar{q}) \to \neg \mathrm{RSA}^0_n(\bar{p}, \bar{r})$ has $\mathsf{CPC}$-$\mathbf{SF}$ proofs of size less than $n^{k}$.
    
    \item[$\bullet$]
    \cite{bonet1997no,bonet2004non} $\mathrm{DH}^1_n(\bar{p}, \bar{q}) \to \neg \mathrm{DH}^1_n(\bar{p}, \bar{r})$ has $\mathbf{LK}$-proofs of size less than $n^{k}$. Moreover, there exists $d \geq 2$ such that it also has $\mathbf{LK}_d$-proofs of size less than $2^{n^{\epsilon}}$, for some $\epsilon > 0$.
\end{itemize}
\end{theorem}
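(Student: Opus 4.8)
Since all three items are quoted from the literature, the plan is to indicate the strategy behind each rather than to reproduce the (substantial) arguments in full. The common thread for the second and third items will be the standard recipe of \emph{formalising a feasibly verifiable mathematical fact in a weak system and translating it propositionally}, while the first item is a combinatorial amplification argument carried out at bounded depth.

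For the first item, I would formalise an amplified counting argument for the weak pigeonhole principle. Given an alleged injective total function $f\colon[m(n)]\to[n]$ encoded by $\bar p$, the product map $(i_1,i_2)\mapsto(f(i_1),f(i_2))$ is again injective, so an injection $[m]\hookrightarrow[n]$ yields an injection $[m^2]\hookrightarrow[n^2]$, and iterating $t$ times yields an injection $[m^{2^t}]\hookrightarrow[n^{2^t}]$; since $m\ge n^2$ the ratio of domain to codomain grows doubly exponentially in $t$, so after $t=O(\log\log n)$ stages one reaches a map so unbalanced that a collision can be exhibited by a constant-depth formula, and gluing the stages with the cut rule produces an $\mathbf{LK}_2$-refutation. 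The delicate point, handled in \cite{maciel2002new,krajivcek2019proof}, is to keep the depth uniformly bounded across the iteration while the numbers $m^{2^t}$ remain quasi-polynomial; this is exactly what forces the hypothesis $n^2\le m(n)\le 2^{(\log n)^{k}}$ and yields the size bound $2^{l(\log n)^{k}}$.

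For the second and third items, I would use the equivalence $\mathbf{LK}\equiv\mathsf{CPC}\text{-}\mathbf{F}$ of Theorem~\ref{LKEquivFrege} together with the fact that $\mathsf{CPC}\text{-}\mathbf{SF}$ can efficiently carry out any reasoning expressible in terms of polynomial-time concepts (via the equivalence of substitution Frege with extended Frege and the propositional translation of $S^1_2$), and then verify that the semi-injectivity arguments of Example~\ref{RSA} and Example~\ref{Diffie-Hellman} use only such feasibly verifiable facts. For RSA, the tautology $\mathrm{RSA}^1_n(\bar p,\bar q)\to\neg\mathrm{RSA}^0_n(\bar p,\bar r)$ of Example~\ref{RSASequenceHaveIsHardInt} asserts that $(N,M,e,u_1)$ and $(N,M,e,u_2)$ in $P$ with $u_1^e\equiv u_2^e$ and $u_1^M\equiv u_2^M\equiv 1\pmod N$ must satisfy $u_1=u_2$; the proof computes by the extended Euclidean algorithm a $d<M$ with $ed\equiv 1\pmod M$ and deduces $u_1\equiv u_1^{ed}\equiv u_2^{ed}\equiv u_2\pmod N$, which manipulates only modular exponentiation and gcd, so $S^1_2$ proves the identity and $\mathsf{CPC}\text{-}\mathbf{SF}$ proves the translated tautology in size $n^{k}$ \cite{krajiivcek1998some}. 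For Diffie–Hellman, the tautology $\mathrm{DH}^1_n(\bar p,\bar q)\to\neg\mathrm{DH}^0_n(\bar p,\bar r)$ asserts that $X\equiv g^{a_1}\equiv g^{a_2}$ and $Y\equiv g^{b_1}\equiv g^{b_2}\pmod N$ force $g^{a_1b_1}\equiv g^{a_2b_2}\pmod N$, which follows from $(g^a)^b\equiv g^{ab}\pmod N$; this is again feasibly verifiable, so $\mathbf{LK}\equiv\mathsf{CPC}\text{-}\mathbf{F}$ admits polynomial-size proofs \cite{bonet1997no,bonet2004non}.

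The main obstacle, I expect, will be the last clause: obtaining $\mathbf{LK}_d$-proofs of size $2^{n^{\epsilon}}$ for the Diffie–Hellman tautology requires implementing the law $(g^a)^b\equiv g^{ab}$ with bounded-depth formulas, and iterated multiplication of $n$-bit integers is not in $\mathbf{AC}^0$, so the identity cannot be certified by constant-depth polynomial-size circuits and a naive translation would blow up to genuinely exponential size. The resolution, following \cite{bonet1997no,bonet2004non}, is to represent the integers redundantly (for instance by residues modulo many small primes, where addition and multiplication become constant-depth), carry out the verification of the exponentiation law in that representation as a depth-$d$ derivation, and then convert back; controlling the trade-off between the representation, the depth $d\ge 2$, and the resulting size $2^{n^{\epsilon}}$ is the delicate part of the argument.
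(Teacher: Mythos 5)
Your handling of the second and third items matches the paper's own proof, which likewise only records the strategy of formalising the semi-injectivity arguments of Examples~\ref{RSA} and~\ref{Diffie-Hellman} in a weak (feasible/arithmetical) framework and then translating propositionally, deferring all details to the cited sources; your added remarks (substitution Frege $\equiv$ extended Frege and the translation of $S^1_2$ for RSA, a redundant residue representation to push the Diffie--Hellman verification into bounded depth) accurately describe how those sources proceed. One caveat: ``feasibly verifiable'' by itself only yields extended-Frege (equivalently $\mathsf{CPC}$-$\mathbf{SF}$) proofs, so it does not by itself give the $n^k$ bound for $\mathbf{LK}$; that the Diffie--Hellman tautologies come down to Frege size rests on the modular arithmetic being implementable by small $\mathbf{NC}^1$/$\mathbf{TC}^0$ formulas, which is a substantive part of what Bonet--Pitassi--Raz establish rather than an instance of a general principle.

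For the first item, however, the amplification you describe would not make progress. The product map turns an injection $[m]\hookrightarrow[n]$ into an injection $[m^2]\hookrightarrow[n^2]$, so after $t$ rounds you have $[M]\hookrightarrow[N]$ with $M=m^{2^t}$ and $N=n^{2^t}$; the hypothesis $m\ge n^2$ gives exactly $M\ge N^2$ again, so the instance is self-similar --- it is the same weak pigeonhole principle at a quasipolynomially larger scale, and since the codomain grows along with the domain you never reach a configuration where a collision can be exhibited outright. The standard Paris--Wilkie--Woods/Maciel--Pitassi--Woods argument instead composes the injection with itself so that the codomain stays \emph{fixed}: viewing $[n^2]$ as $[n]\times[n]$, from $f\colon[n]\times[n]\to[n]$ one forms $f_k\colon[n]^{2^k}\to[n]$, and after $k\approx\log n$ rounds the domain has at least $2^{n}$ elements while the codomain is still $[n]$, at which point a genuinely different, bounded-depth-provable principle about injections from exponentially large sets into $[n]$ finishes the refutation. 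Keeping the codomain fixed is what makes the iteration terminate; it is not a bookkeeping choice, and with your version the final step has no way to close.
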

\begin{proof}
These upper bounds are typically obtained by reasoning in a non-propositional framework (often an appropriate fragment of arithmetic), and then translating the argument (fragment) into propositional proofs. For the first item, one should prove the pigeonhole principle in its usual first-order form via a weak form of induction~\cite{krajivcek2019proof}. 
For the other two items, which correspond to disjoint $\mathbf{NP}$ pairs, the main reason why the implications are tautologies stems from the disjointness of the $\mathbf{NP}$ pairs, itself a consequence of semi-injectivity in the corresponding one-way protocols.  
Thus, to obtain propositional proofs for these implications, it suffices to express the earlier proofs of semi-injectivity for RSA and Diffie–Hellman protocols in a propositional form within the corresponding propositional proof systems \cite{krajivcek2019proof}.
\end{proof}

Now, we are ready to prove the lack of (monotone) feasible interpolation:

\begin{theorem}[Krajíček \cite{krajivcek1997interpolation}]
Let $P$ be a proof system for $\mathsf{CPC}$ such that $P \geq \mathbf{LK}_2$. Then, $P$ does not have monotone feasible interpolation.
\end{theorem}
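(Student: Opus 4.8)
The plan is to produce, for all large $n$, an implication $\phi_n(\bar p,\bar q)\to\psi_n(\bar p,\bar r)$ that is a classical tautology with $\phi_n$ monotone in the shared variables $\bar p$, such that $\{\phi_n\to\psi_n\}_n$ simultaneously (a) has $\mathbf{LK}_2$-proofs of quasipolynomial (in fact polynomial in $n$) size, hence polynomial-size $P$-proofs since $P\ge\mathbf{LK}_2$, and (b) has only exponentially large monotone Craig interpolants. Feeding (b) into the argument of Theorem~\ref{MainLowerBoundTheorem} contradicts monotone feasible interpolation of $P$. The tautologies will be the Clique--Coloring implications $\mathrm{Clique}_{k(n),n}(\bar p,\bar q)\to\neg\mathrm{Color}_{l(n),n}(\bar p,\bar r)$ built from Examples~\ref{ExampleClique} and~\ref{ExampleColor}, but with $l(n),k(n)$ chosen \emph{polylogarithmic} in $n$ rather than polynomial, so that the pigeonhole upper bound of Theorem~\ref{UpperBounds}(1) becomes applicable while the monotone lower bound of Theorem~\ref{Alon-Boppana} is still super-polynomial.

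Concretely, I would fix the constants $l_0,k_0$ of Theorem~\ref{UpperBounds}(1) and set $l(n)=\lceil(\log n)^{2k_0+2}\rceil$ and $k(n)=l(n)^2$. One checks routinely that for large $n$ these satisfy $3\le l(n)<k(n)\le n$ (so $(\mathrm{Clique}_{k(n),n},\mathrm{Color}_{l(n),n})$ is a disjoint $\mathbf{NP}$ pair), $\sqrt{l(n)}\,k(n)\le n/(8\log n)$ (so Theorem~\ref{Alon-Boppana} applies), and $l(n)^2\le k(n)\le 2^{(\log l(n))^{k_0}}$ (so Theorem~\ref{UpperBounds}(1) applies to $\mathrm{PHP}^{k(n)}_{l(n)}$). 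Then, exactly as in Lemma~\ref{DNPToInterpolant} and Theorem~\ref{Alon-Boppana}, any monotone circuit computing a Craig interpolant of $\mathrm{Clique}_{k(n),n}(\bar p,\bar q)\to\neg\mathrm{Color}_{l(n),n}(\bar p,\bar r)$ has size at least $\tfrac18 2^{(\sqrt{l(n)}+1)/2}=2^{\Omega((\log n)^{k_0+1})}$. Since the formula itself has size polynomial in $n$, repeating the proof of Theorem~\ref{MainLowerBoundTheorem} verbatim with these parameters shows: if $P$ has monotone feasible interpolation, then every $P$-proof of this implication has size $2^{\Omega((\log n)^{k_0+1})}$.

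The crux is the matching upper bound: $\mathbf{LK}_2$ refutes $\mathrm{Clique}_{k(n),n}(\bar p,\bar q)\cup\mathrm{Color}_{l(n),n}(\bar p,\bar r)$ in polynomial size, via the classical reduction to the weak pigeonhole principle. Put $P_{ua}:=\bigvee_{i\le n}(q_{ui}\wedge r_{ia})$, the composed ``vertex-then-color'' relation on $[k(n)]\times[l(n)]$. From the clique and coloring clauses one derives every clause of $\mathrm{PHP}^{k(n)}_{l(n)}(\overline P)$: totality from $\bigvee_i q_{ui}$ and $\bigvee_a r_{ia}$; the clause $\neg P_{ua_1}\vee\neg P_{ua_2}$ by a short case split on the witnessing indices $i_1,i_2$ (using functionality of $q$ when $i_1\ne i_2$ and of $r$ when $i_1=i_2$); and the clause $\neg P_{u_1a}\vee\neg P_{u_2a}$ using injectivity of $q$ when $i_1=i_2$, and in the case $i_1\ne i_2$ using $\neg q_{u_1i_1}\vee\neg q_{u_2i_2}\vee p_{i_1i_2}$ to extract the edge $p_{i_1i_2}$ and then $\neg r_{i_1a}\vee\neg r_{i_2a}\vee\neg p_{i_1i_2}$. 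Each derivation is short and of bounded depth, so the reduction is a polynomial-size bounded-depth $\mathbf{LK}$-derivation; combining it by cuts with the $\mathbf{LK}_2$-proof of $\mathrm{PHP}^{k(n)}_{l(n)}$ from Theorem~\ref{UpperBounds}(1) under the substitution $p_{ua}\mapsto P_{ua}$, and re-normalising formulas to keep the depth at $2$ (equivalently, running the whole argument at the small constant depth at which it naturally lives and then invoking $P\ge\mathbf{LK}_2$ in that form), yields the claimed short proof of $\mathrm{Clique}_{k(n),n}(\bar p,\bar q)\to\neg\mathrm{Color}_{l(n),n}(\bar p,\bar r)$. Hence $P$ proves it in size $\mathrm{poly}(n)$.

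Comparing the two estimates, $2^{\Omega((\log n)^{k_0+1})}\le|P\text{-proof}|\le\mathrm{poly}(n)$ is impossible for large $n$ (as $k_0+1\ge 3$), so $P$ cannot have monotone feasible interpolation. The main obstacle I anticipate is entirely in the upper-bound step: carrying out the reduction to weak $\mathrm{PHP}$ and the substitution into the known $\mathbf{LK}_2$-proof while rigorously controlling the depth of every formula that appears, so that the short proof genuinely sits inside a system simulated by $\mathbf{LK}_2$; the polylogarithmic choice of $l(n),k(n)$ is precisely what reconciles the requirement $n^2\le m(n)\le 2^{(\log n)^{k_0}}$ of Theorem~\ref{UpperBounds}(1) with keeping the monotone-interpolation lower bound super-polynomial.
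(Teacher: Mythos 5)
Your proposal is correct and follows essentially the same route as the paper: reduce the Clique--Coloring tautology to an instance of the weak pigeonhole principle via the composed ``vertex-then-color'' map, invoke the bounded-depth upper bound for weak PHP from Theorem~\ref{UpperBounds}, and contrast the resulting short $P$-proof with the Razborov/Alon--Boppana monotone circuit lower bound (Theorem~\ref{Alon-Boppana}). The only difference is quantitative: the paper keeps $\epsilon = 1/6$ (clique size $\lfloor n^{2/3}\rfloor$ versus $\lfloor n^{1/3}\rfloor$ colors), getting a quasipolynomial $\mathbf{LK}_2$ upper bound against the exponential lower bound of Theorem~\ref{MainLowerBoundTheorem}, whereas you shrink the parameters to polylogarithmic scale to get a polynomial upper bound against a quasipolynomial lower bound --- both separations suffice, and both arguments elide the same depth-bookkeeping issue in the substitution step at a comparable level of detail.
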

\begin{proof}
By Theorem~\ref{UpperBounds}, there exists $k \in \mathbb{N}$ such that the formula $\mathrm{PHP}^{l(m)}_m$ has an $\mathbf{LK}_2$-proof of size less than $2^{(\log m)^k}$, provided $m^2 \leq l(m) \leq m^3$.
We use this pigeonhole principle to construct an $\mathbf{LK}_2$-proof of the tautology 
$
\mathrm{Clique}^{1/6}_n(\bar{p}, \bar{q}) \to \neg \mathrm{Color}^{1/6}_n(\bar{p}, \bar{r})
$
of size $2^{(\log n)^{O(1)}}$.
We omit the detailed construction, but the main idea is as follows: Assume 
$
\mathrm{Clique}^{1/6}_n(\bar{p}, \bar{q}) \wedge \mathrm{Color}^{1/6}_n(\bar{p}, \bar{r}).
$
Then, by Examples~\ref{ExampleClique} and~\ref{ExampleColor}, $\bar{q}$ and $\bar{r}$ effectively describe an injective mapping from the $\lfloor n^{2/3} \rfloor$ vertices of an $\lfloor n^{2/3} \rfloor$-clique to the $\lfloor n^{1/3} \rfloor$ colors of a $\lfloor n^{1/3} \rfloor$-coloring. However, since
$
\lfloor n^{1/3} \rfloor^2 \leq \lfloor n^{2/3} \rfloor \leq \lfloor n^{1/3} \rfloor^3,
$
using a short reasoning in $\mathbf{LK}_2$, one can show that this violates an instance of the pigeonhole principle $\mathrm{PHP}^{l(m)}_m$, when $m = \lfloor n^{1/3} \rfloor$ and $l(m) = \lfloor n^{2/3} \rfloor$.
As this pigeonhole principle has an $\mathbf{LK}_2$-proof of size $O(2^{(\log m)^k})$, the instance has an $\mathbf{LK}_2$-proof of size bounded by $2^{(\log n)^{O(1)}}$. This then implies that the tautology $\mathrm{Clique}^{1/6}_n(\bar{p}, \bar{q}) \to \neg \mathrm{Color}^{1/6}_n(\bar{p}, \bar{r})$ has an $\mathbf{LK}_2$-proof of size $2^{(\log n)^{O(1)}}$.
Since $P \geq \mathbf{LK}_2$, the same upper bound holds for $P$-proofs. But if $P$ has monotone feasible interpolation, then by Theorem~\ref{MainLowerBoundTheorem}, any $P$-proof of this tautology must be of size at least $2^{\Omega(n^{1/6})}$, leading to a contradiction.
Therefore, $P$ does not have monotone feasible interpolation.
\end{proof}

\begin{theorem}
\begin{itemize}
    \item[$\bullet$]
\cite{krajiivcek1998some}  
Assume that $\mathrm{RSA}$ is secure against $\mathbf{P/poly}$ adversaries. Then, no proof system for $\mathsf{CPC}$ that simulates $\mathsf{CPC}$-$\mathbf{SF}$ has the feasible interpolation property. In particular, $\mathsf{CPC}$-$\mathbf{SF}$ is not automatable.
    \item[$\bullet$]
\cite{bonet1997no,bonet2004non}  
Assume that the Diffie–Hellman protocol is secure against $\mathbf{P/poly}$ adversaries. Then, no proof system for $\mathsf{CPC}$ that simulates $\mathbf{LK}$ has the feasible interpolation property. In particular, $\mathbf{LK}$ is not automatable.
Moreover, if we assume that the Diffie–Hellman protocol is secure against circuits of size $2^{n^{\epsilon}}$ for any $\epsilon > 0$, then there exists $d \geq 2$ such that $\mathbf{LK}_d$ does not have the feasible interpolation property, and hence is not automatable.
\end{itemize}
\end{theorem}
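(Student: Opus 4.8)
The plan is to feed the upper bounds of Theorem~\ref{UpperBounds} into Corollary~\ref{MainCorForNoFI}, using the cryptographic hypotheses only to certify that the relevant sequences of tautologies have hard Craig interpolants. For the first item, assume RSA is secure against $\mathbf{P/poly}$ adversaries. Then by Example~\ref{ExampleOfDNPOfRSA} the pair $(\mathrm{RSA}_1,\mathrm{RSA}_0)$ is a hard disjoint $\mathbf{NP}$ pair, so by Lemma~\ref{DNPToInterpolant}, as already recorded in Example~\ref{RSASequenceHaveIsHardInt}, the sequence $\{\mathrm{RSA}^1_n(\bar p,\bar q)\to\neg\,\mathrm{RSA}^0_n(\bar p,\bar r)\}_n$ is a sequence of short tautologies with hard Craig interpolants. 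By Theorem~\ref{UpperBounds} this sequence has $\mathsf{CPC}$-$\mathbf{SF}$-proofs of size $n^{O(1)}$, hence (unwinding the definition of $\geq$, and absorbing the language translations flagged after the definition of simulation) short $P$-proofs for every $P\geq\mathsf{CPC}$-$\mathbf{SF}$. Corollary~\ref{MainCorForNoFI} now gives at once that such a $P$ lacks feasible interpolation, and, specializing to $P=\mathsf{CPC}$-$\mathbf{SF}$ and invoking that it is substitutable and closed under variable-free modus ponens, that it is not automatable. The first half of the second item is obtained by the identical argument with $\mathrm{RSA}$ replaced by $\mathrm{DH}$ and $\mathsf{CPC}$-$\mathbf{SF}$ replaced by $\mathbf{LK}$, now using the $n^{O(1)}$-size $\mathbf{LK}$-proofs of $\{\mathrm{DH}^1_n\to\neg\,\mathrm{DH}^0_n\}_n$ supplied by Theorem~\ref{UpperBounds}.

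For the remaining claim about $\mathbf{LK}_d$, the polynomial bookkeeping must be carried out at the subexponential scale. Fix $d$ and $\epsilon_0>0$ as in the last clause of Theorem~\ref{UpperBounds}, so that $\{\mathrm{DH}^1_n\to\neg\,\mathrm{DH}^0_n\}_n$ has $\mathbf{LK}_d$-proofs of size $<2^{n^{\epsilon_0}}$, and suppose for contradiction that $\mathbf{LK}_d$ has feasible interpolation, witnessed by a polynomial $s$. Applying feasible interpolation to these proofs, whose end-formulas have codes of size $n^{O(1)}$, yields for each $n$ a circuit $C_n$ of size $\leq s(2^{n^{\epsilon_0}},n^{O(1)})\leq 2^{n^{\epsilon_1}}$ (any fixed $\epsilon_1>\epsilon_0$, $n$ large) computing a Craig interpolant of $\mathrm{DH}^1_n\to\neg\,\mathrm{DH}^0_n$; by Lemma~\ref{DNPToInterpolant}, $\{C_n\}_n$ is then a separator for $(\mathrm{DH}_1,\mathrm{DH}_0)$ computed by circuits of size $2^{n^{\epsilon_1}}$. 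Re-running the construction underlying Corollary~\ref{OneWayToDNP}, but now recording sizes instead of merely noting that they stay polynomial, a separator of size $g(n)$ yields circuits that compute $k$ from $h$ for the Diffie--Hellman protocol of size $\mathrm{poly}(n)\cdot g(n+O(\log n))$, hence here of size $2^{n^{\epsilon_2}}$ for any $\epsilon_2>\epsilon_1$ and $n$ large. Picking $\epsilon_0<\epsilon_1<\epsilon_2$ contradicts the hypothesis that Diffie--Hellman is secure against circuits of size $2^{n^{\epsilon_2}}$. Hence $\mathbf{LK}_d$ lacks feasible interpolation, and, being substitutable and closed under variable-free modus ponens, is not automatable by Theorem~\ref{AutoToFI}.

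The two direct applications are essentially bookkeeping. The only genuinely delicate point is the exponent calibration in the second paragraph: one must verify that the polynomial overheads coming from feasible interpolation, from reading a separator off an interpolant circuit, and from the one-way-protocol reduction of Corollary~\ref{OneWayToDNP} each inflate the size bound by no more than an arbitrarily small increase of the exponent of $n$ — which is precisely why the hypothesis must quantify over \emph{all} $\epsilon>0$. One should also check in passing that the several notions of input size in play — the index $n$, the bit-length of the encoded pair $(v,i)$, and the bit-length of the public data $h(u)$ — are linearly related, so that a ``size $2^{n^{\epsilon}}$'' bound means the same thing on the proof-complexity side and on the cryptographic side up to adjusting $\epsilon$.
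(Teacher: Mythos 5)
Your proposal is correct and follows essentially the same route as the paper: feed the upper bounds of Theorem~\ref{UpperBounds} into Corollary~\ref{MainCorForNoFI} (together with Theorem~\ref{AutoToFI} for non-automatability), with the cryptographic hypotheses supplying the hardness of the Craig interpolants via Example~\ref{RSASequenceHaveIsHardInt}. Your second paragraph in fact spells out the subexponential bookkeeping for the $\mathbf{LK}_d$ clause that the paper compresses into ``the second follows by a similar argument.''
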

\begin{proof}
We only prove the first part, as the second follows by a similar argument.  
Let $P$ be a proof system for $\mathsf{CPC}$ that simulates $\mathsf{CPC}$-$\mathbf{SF}$.  
By Theorem~\ref{UpperBounds}, the sequence $\{\mathrm{RSA}^1_n(\bar{p}, \bar{q}) \to \neg \mathrm{RSA}^0_n(\bar{p}, \bar{r})\}_n$ has a short $\mathsf{CPC}$-$\mathbf{SF}$-proof, and hence a short $P$-proof.
Assuming that $\mathrm{RSA}$ is secure against $\mathbf{P/poly}$ adversaries, it follows from Example~\ref{RSASequenceHaveIsHardInt} that this sequence has hard Craig interpolants. Therefore, by Theorem~\ref{MainCorForNoFI}, the system $P$ cannot have the feasible interpolation property.
To conclude non-automatability, recall that $\mathsf{CPC}$-$\mathbf{SF}$ is substitutable and closed under variable-free modus ponens.
\end{proof}

\section{Feasible Interpolation (Non-classical)} \label{Sec: FI, Non-Classical}

In this section, we adapt the feasible interpolation technique to the non-classical setting by replacing Craig interpolants with disjunctive interpolants. Using this modified framework, we demonstrate that many proof systems, including $\mathbf{LJ}$ (or equivalently, $\mathsf{IPC}$-Frege), as well as $L$-Frege for logics $L \subseteq \mathsf{S4}$ or $L \subseteq \mathsf{GL}$, are not p-bounded.

\begin{definition}
Let $L$ be a si logic. A proof system $P$ for $L$ is called to have \emph{(monotone) feasible $\mathrm{PDIP}$} if there is a polynomial $p$ such that for any $P$-proof $u \in \{0, 1\}^*$ of an implication $\phi \to \psi \vee \theta$ (where $\phi$ is monotone) with the code $w$, there are (monotone) circuits $C$ and $D$ of size bounded by $p(|u|, |w|)$ such that $([C], [D])$ is an $L$-$\mathrm{PDI}$ for $\phi \to \psi \vee \theta$. For a modal logic $L$, the definition is similar using formulas in the form $\phi \to \Box \psi \vee \Box \theta$, $\mathcal{L}_{\Box}$-circuits and $L$-$\mathrm{MDI}$, instead.
\end{definition}

\begin{theorem}\label{IFILowerbound}
Let $L$ be a si (resp. consistent modal) logic. If hard disjoint $\mathbf{NP}$ pairs exist, or in particular, if one-way protocols exist, then no proof system for $L$ with feasible $\mathrm{PDIP}$ (resp. $\mathrm{MDIP}$) is p-bounded. 
\end{theorem}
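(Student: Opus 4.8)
The plan is to combine three ingredients already in place: the hardness of Craig interpolants under the stated assumption, the translation of such hard implications into theorems of $\mathsf{IPC}$ (resp.\ $\mathsf{K}$) whose disjunctive interpolants remain hard, and the definition of feasible $\mathrm{PDIP}$ (resp.\ $\mathrm{MDIP}$). The argument parallels Theorem~\ref{FILowerbound} and Corollary~\ref{MainCorollaryOfFI}, with Craig interpolation replaced by disjunctive interpolation.

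Concretely, I would argue by contradiction. Suppose $L$ is a si logic and $P$ is a p-bounded proof system for $L$ with feasible $\mathrm{PDIP}$. From the hypothesis --- either directly, or from one-way protocols via Corollary~\ref{OneWayToDNP} --- fix a hard disjoint $\mathbf{NP}$ pair, and by Corollary~\ref{DNPImpliesHardInt} (i.e.\ Lemma~\ref{DNPToInterpolant}) obtain a sequence $\{\phi_n(\bar p,\bar q)\to\neg\psi_n(\bar p,\bar r)\}_n$ of short classical tautologies with hard Craig interpolants. Applying Theorem~\ref{ClassicalToIPC}, the sequence
\[
\chi_n \;:=\; \bigwedge_i (p_i\vee\neg p_i)\;\to\;\bigl(\neg\phi_n(\bar p,\bar q)\vee\neg\neg\neg\psi_n(\bar p,\bar r)\bigr)
\]
consists of $\mathsf{IPC}$-tautologies, hence of tautologies in $L$ since $\mathsf{IPC}\subseteq L$; moreover the translation appends only polynomially many conjuncts, so $\{\chi_n\}_n$ is a sequence of short formulas in $L$. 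By Theorem~\ref{CraigToDisjunctiveInt}, $\{\chi_n\}_n$ has hard $L$-$\mathrm{PDI}$'s. (Crucially, the proof of that theorem invokes nothing about $L$ beyond $L\subseteq\mathsf{CPC}$, so it applies to every si logic; in the modal case it additionally uses Makinson's theorem, which is why consistency of $L$ is assumed there.)

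The contradiction is then immediate. Since $P$ is p-bounded and the $\chi_n$ are short, each $\chi_n$ has a $P$-proof $u_n$ with $|u_n|\le q(|\chi_n|)\le\mathrm{poly}(n)$ for a fixed polynomial $q$. Feeding $u_n$ through the feasible $\mathrm{PDIP}$ of $P$ yields a pair of circuits $(C_n,D_n)$ of size at most $p(|u_n|,|\chi_n|)\le\mathrm{poly}(n)$ computing an $L$-$\mathrm{PDI}$ of $\chi_n$; that is, a sequence of poly-size circuits computing $L$-$\mathrm{PDI}$'s for $\{\chi_n\}_n$, contradicting the previous step. Hence no p-bounded proof system for $L$ has feasible $\mathrm{PDIP}$. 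For the modal case one repeats the argument verbatim, using $\mathsf{K}\subseteq L$ in place of $\mathsf{IPC}\subseteq L$, the formulas $\bigwedge_i(\Box p_i\vee\Box\neg p_i)\to(\Box\neg\phi_n\vee\Box\psi_n)$, $\mathcal{L}_{\Box}$-circuits and $\mathrm{MDI}$ throughout, and the consistency of $L$ exactly where Theorem~\ref{CraigToDisjunctiveInt} appeals to Makinson's theorem to place $L$ inside $\mathsf{CPC}+\{\Box p\leftrightarrow p\}$ or $\mathsf{CPC}+\{\Box p\}$ and then extract a classical interpolant via the forgetful or collapse translation. There is no genuine obstacle here; the only things to check carefully are the size bookkeeping --- that $|u_n|$ and $|\chi_n|$ are both polynomial in $n$, so feasible $\mathrm{PDIP}$/$\mathrm{MDIP}$ really does output poly-size circuits --- and, in the modal case, that consistency is used precisely to enter the Makinson dichotomy.
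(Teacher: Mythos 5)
Your proof is correct and follows the same approach as the paper's: it chains Corollary~\ref{DNPImpliesHardInt}, Theorem~\ref{ClassicalToIPC}, and Theorem~\ref{CraigToDisjunctiveInt} to get short $L$-tautologies with hard disjunctive interpolants, then derives a contradiction from p-boundedness plus feasible $\mathrm{PDIP}$ (resp.\ $\mathrm{MDIP}$). The only cosmetic wrinkle is the triple negation $\neg\neg\neg\psi_n$ (and correspondingly $\Box\psi_n$ without the inner negation in the modal line), which arises because you unfold the $\neg\psi_n$ from Lemma~\ref{DNPToInterpolant} before applying Theorem~\ref{CraigToDisjunctiveInt}; the paper keeps the consequent abstract and writes $\neg\neg\psi_n$, but the two are the same once notation is reconciled.
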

\begin{proof}
We prove only the si case; the modal case is analogous. If hard disjoint $\mathbf{NP}$ pairs exist, or in particular, if one-way protocols exist, Corollary~\ref{DNPImpliesHardInt} implies the existence of a sequence $\{\phi_n(\bar{p}, \bar{r}) \to \psi_n(\bar{p}, \bar{s})\}_n$ of short classical tautologies with hard Craig interpolants. By Theorems~\ref{ClassicalToIPC} and~\ref{CraigToDisjunctiveInt},
$
\{\Phi_n\}_n = \left\{\bigwedge_i (p_i \vee \neg p_i) \to \neg \phi_n(\bar{p}, \bar{r}) \vee \neg \neg \psi_n(\bar{p}, \bar{s})\right\}_n
$
is a sequence of short formulas in $\mathsf{IPC} \subseteq L$ that has hard $L$-$\mathrm{PDI}$'s.
Now, if $P$ is p-bounded, then the sequence $\{\Phi_n\}_n$ has a short $P$-proof. Using the feasible $\mathrm{PDIP}$, this would yield a sequence $\{(C_n, D_n)\}_n$ of pairs of poly-size circuits computing an $L$-$\mathrm{PDI}$ of $\{\Phi_n\}_n$, which is impossible.
\end{proof}

\begin{theorem}\label{LowerBoundForIntCliquevsColor}
Let $0 < \epsilon < 1/3$ be a real number, $L$ be a si (resp. consistent modal) logic, and $P$ be a proof system for $L$. If $P$ has monotone feasible $\mathrm{PDIP}$ (resp. $\mathrm{MDIP}$), then any $P$-proof of 
$
\bigwedge_i (p_i \vee q_i) \to \neg \mathrm{Clique}^{\epsilon}_n(\neg \bar{p}, \bar{r}) \vee \neg \mathrm{Color}^{\epsilon}_n(\bar{q}, \bar{s})
$
(resp. $\bigwedge_i (\Box p_i \vee \Box q_i) \to \Box \neg \mathrm{Clique}^{\epsilon}_n(\neg \bar{p}, \bar{r}) \vee \Box \neg \mathrm{Color}^{\epsilon}_n(\bar{q}, \bar{s})$)
has size at least $2^{\Omega(n^{1/3 - \epsilon})}$. Hence, $P$ is not p-bounded. 
\end{theorem}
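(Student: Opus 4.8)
The plan is to run the monotone feasible interpolation machinery backwards against the hard Clique--Color instances, in exact analogy with the classical Theorem~\ref{MainLowerBoundTheorem}, letting Theorem~\ref{Clique-ColorForIPC} supply the circuit lower bound that does the real work. First I would verify that the target formula $\Phi_n := \bigwedge_i (p_i \vee q_i) \to \neg \mathrm{Clique}^{\epsilon}_n(\neg \bar{p}, \bar{r}) \vee \neg \mathrm{Color}^{\epsilon}_n(\bar{q}, \bar{s})$ (resp. its boxed analogue) meets all the hypotheses for invoking monotone feasible $\mathrm{PDIP}$ (resp. $\mathrm{MDIP}$). Membership in $L$ comes from the corollary to Theorem~\ref{ClassicalToIPC}: $\Phi_n \in \mathsf{IPC} \subseteq L$ (resp. the boxed version lies in $\mathsf{K} \subseteq L$), using that $\mathrm{Clique}^{\epsilon}_n$ is monotone in its graph variables by Example~\ref{ExampleClique}. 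Inspecting Examples~\ref{ExampleClique} and~\ref{ExampleColor} shows the clause sets and the prefix $\bigwedge_i(p_i \vee q_i)$ all have size polynomial in $n$, so $\{\Phi_n\}_n$ is a sequence of short formulas; and the antecedent $\bigwedge_i(p_i \vee q_i)$ (resp. $\bigwedge_i(\Box p_i \vee \Box q_i)$) contains no negation or implication, hence is monotone, so $\Phi_n$ has the required shape $\phi \to \psi \vee \theta$ (resp. $\phi \to \Box\psi \vee \Box\theta$) with monotone $\phi$.

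Next, let $u$ be an arbitrary $P$-proof of $\Phi_n$ with code $w_n$. Monotone feasible $\mathrm{PDIP}$ (resp. $\mathrm{MDIP}$) then supplies a pair $(C_n, D_n)$ of monotone circuits (resp. monotone $\mathcal{L}_{\Box}$-circuits) of size at most $p(|u|, |w_n|)$ such that $([C_n],[D_n])$ is an $L$-$\mathrm{PDI}$ (resp. $L$-$\mathrm{MDI}$) of $\Phi_n$. Thus $\{(C_n, D_n)\}_n$ is precisely a sequence of the kind considered in Theorem~\ref{Clique-ColorForIPC}, which forces $|D_n| \geq 2^{\Omega(n^{1/3-\epsilon})}$. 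Combining with $|D_n| \leq p(|u|, |w_n|)$, $|w_n| = n^{O(1)}$, and the polynomiality of $p$, we get $|u| \geq 2^{\Omega(n^{1/3-\epsilon})}$, absorbing the polynomial overhead into a smaller constant in the exponent. That is the claimed lower bound, and the non-p-boundedness of $P$ then follows at once: a p-bounded $P$ would furnish polynomial-size proofs of the short $L$-formulas $\Phi_n$, contradicting this bound.

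I do not expect any genuine obstacle; the argument is a straight composition of Theorem~\ref{Clique-ColorForIPC} with the definition of monotone feasible $\mathrm{PDIP}$/$\mathrm{MDIP}$, using the classical Theorem~\ref{MainLowerBoundTheorem} as the template. The only points needing care are the bookkeeping ones --- confirming the antecedent is monotone so the monotone hypothesis is actually applicable, and that $\Phi_n$ is genuinely short so that ``p-bounded'' yields polynomial-size proofs --- together with the observation that the modal hypothesis already requires $L$ to be consistent, which is exactly what legitimizes the forgetful/collapse reductions buried inside Theorem~\ref{Clique-ColorForIPC}.
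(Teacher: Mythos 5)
Your proposal is correct and follows exactly the route the paper takes: the paper's proof is the one-liner ``direct consequence of Theorem~\ref{Clique-ColorForIPC},'' and your write-up simply unpacks that composition (monotone feasible $\mathrm{PDIP}$/$\mathrm{MDIP}$ applied to a hypothetical $P$-proof gives poly-size monotone circuits, contradicting the $2^{\Omega(n^{1/3-\epsilon})}$ lower bound from Theorem~\ref{Clique-ColorForIPC}), with the same bookkeeping about monotonicity of the antecedent and shortness of $\Phi_n$. No gaps.
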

\begin{proof}
The proof is a direct consequence of Theorem~\ref{Clique-ColorForIPC}.
\end{proof}

In the next two subsections, we prove monotone feasible $\mathrm{PDIP}$ (resp. $\mathrm{MDIP}$) for certain proof systems for $\mathsf{IPC}$ (resp. consistent modal logics) in order to apply Theorem~\ref{LowerBoundForIntCliquevsColor}.

\subsection{Super-intuitionistic Logics}

Our main objective in this subsection is to follow the strategy presented in \cite{hrubevs2009lengths} to show that the proof system $\mathbf{LJ}$ has monotone feasible $\mathrm{PDIP}$. We begin with a definition and some preliminary observations.

\begin{definition}
An $\mathcal{L}_p$-formula is called an \emph{implicational Horn formula} if it is either an atom or has the form $\bigwedge_{i=1}^k p_i \to r$, where the $p_i$'s and $r$ are atomic formulas.
\end{definition}

\begin{lemma}\label{conservativity}
Let $\Gamma$ be a sequence of implicational Horn formulas, $\Pi$ and $\Delta$ be sequences of monotone $\mathcal{L}_p$-formulas and $q$ and $r$ be atoms. Then:
\begin{description}
    \item[$(i)$] 
    If the sequent $(\Gamma, \Pi \Rightarrow \Delta)$ is valid, then $\Gamma, \Pi \Rightarrow \bigvee \Delta$ is provable in $\mathbf{LJ}$.
    \item[$(ii)$] 
    If the sequent $(\Gamma \Rightarrow q \vee r)$ is valid, then either $(\Gamma \Rightarrow q)$ or $(\Gamma \Rightarrow r)$ is valid.  
\end{description}
\end{lemma}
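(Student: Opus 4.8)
The plan is to prove both statements by thinking about Horn formulas semantically. The key observation is that a set $\Gamma$ of implicational Horn formulas, together with any fixed assignment to some atoms, has a \emph{least model}: closing the "true" atoms under the Horn rules (treating $\bigwedge_i p_i \to r$ as "if all $p_i$ are true then so is $r$", and an atomic Horn formula as an unconditional fact) produces a distinguished assignment $v_\Gamma$ that satisfies $\Gamma$ and is pointwise below every other model of $\Gamma$. Since $\Pi$ and $\Delta$ consist of monotone formulas, they are preserved upward: if a monotone formula holds under $v_\Gamma$ it holds under every model of $\Gamma$, and conversely if it holds under all models it in particular holds under $v_\Gamma$.

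For $(ii)$: if $(\Gamma \Rightarrow q \vee r)$ is valid, then in particular the least model $v_\Gamma$ satisfies $q \vee r$, so $v_\Gamma \vDash q$ or $v_\Gamma \vDash r$; say the former. Now for \emph{any} model $w$ of $\Gamma$ we have $v_\Gamma \leq w$ pointwise, and since $q$ is an atom (hence monotone), $v_\Gamma \vDash q$ forces $w \vDash q$. Hence $(\Gamma \Rightarrow q)$ is valid. The same argument gives $(\Gamma \Rightarrow r)$ in the other case.

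For $(i)$: suppose $(\Gamma, \Pi \Rightarrow \Delta)$ is classically valid. I would argue that $(\Gamma, \Pi \Rightarrow \bigvee \Delta)$ is already \emph{intuitionistically} valid, and then invoke the completeness of $\mathbf{LJ}$ for $\mathsf{IPC}$ to conclude provability. To see intuitionistic validity, take any finite Kripke model and any node $x$ forcing every formula of $\Gamma$ and of $\Pi$; I must show $x$ forces $\bigvee\Delta$. Horn formulas and monotone formulas are both persistent, so $\Gamma$ and $\Pi$ hold at every node above $x$. Consider the "diagonal" classical valuation at $x$ — the one assigning $1$ to exactly the atoms forced at $x$ — or more robustly, note that each node $y \geq x$ determines a classical model of $\Gamma \cup \Pi$ (via forced atoms at $y$), so by classical validity $\bigvee\Delta$ is classically true there; since each disjunct of $\Delta$ is monotone and monotone formulas are forced at a node iff they are classically true under the atoms forced at that node, $\bigvee\Delta$ is forced at every $y \geq x$, in particular at $x$. (The clean way to package this is the standard fact that for monotone $\mathcal{L}_p$-formulas, and for Horn formulas, Kripke forcing at $x$ coincides with classical truth under the atom-assignment at $x$; persistence is automatic and no "future" behavior intervenes.) Then completeness of $\mathbf{LJ}$ gives the $\mathbf{LJ}$-proof.

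The main obstacle is the claim in $(i)$ that classical validity upgrades to intuitionistic validity; one must be careful that it is the \emph{combination} of monotone succedent formulas and Horn antecedent formulas that makes this work — in general, classically valid sequents are far from intuitionistically valid. The crux is the "local classicality" of these two syntactic classes with respect to Kripke semantics: both Horn formulas and monotone formulas are evaluated at a node purely by looking at the atoms true at that node, with no genuine use of accessible worlds (for monotone formulas because there are no $\to$ or $\neg$; for Horn formulas $\bigwedge p_i \to r$ because persistence of the $p_i$ and of $r$ makes the Kripke clause collapse to the classical one). Verifying this reduction carefully is the substantive step; everything after it is either the least-model argument (for $(ii)$) or an appeal to $\mathbf{LJ}$-completeness (for $(i)$).
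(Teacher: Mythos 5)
Your proof is correct, but it is genuinely different from the paper's. The paper argues proof-theoretically: for $(i)$ it does an induction on a cut-free $\mathbf{LK}$-proof of $(\Gamma, \Pi \Rightarrow \Delta)$, observing that the syntactic restrictions (no implications in $\Delta$; implications in $\Gamma$ have atomic antecedents and consequents) force $R\!\to$ to never fire, and hence the derivation, reinterpreted with $\Delta$ read as $\bigvee\Delta$, is already intuitionistic; for $(ii)$ it first applies $(i)$ to get $\mathbf{LJ} \vdash \Gamma \Rightarrow q \vee r$, and then does an induction on the cut-free $\mathbf{LJ}$-proof to split the disjunction. You instead argue semantically: for $(i)$ you show the sequent is valid in Kripke models and invoke completeness of $\mathbf{LJ}$; for $(ii)$ you give an independent argument from the least-model property of definite Horn theories. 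Both routes are sound. Your $(ii)$ is self-contained and arguably more conceptual; the paper's proof-theoretic route, by extracting explicit cut-free derivations, would additionally yield size bounds on the resulting $\mathbf{LJ}$-proofs, though nothing downstream requires this.

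One small inaccuracy to flag: the parenthetical ``for Horn formulas, Kripke forcing at $x$ coincides with classical truth under the atom-assignment at $x$'' is not a biconditional. Forcing an implicational Horn formula $\bigwedge p_i \to r$ at $x$ does imply classical truth of it under the atoms forced at $x$ (take the accessible world to be $x$ itself), but classical truth at $x$ does \emph{not} imply forcing, since some $y > x$ could force all $p_i$ without forcing $r$. Your argument only uses the correct direction (forcing implies local classical truth, together with persistence to propagate the hypotheses to every $y \geq x$), so the proof stands; just avoid stating the false converse.
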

\begin{proof}
For part $(i)$, we proceed by induction on a cut-free $\mathbf{LK}$-proof of $\Gamma, \Pi \Rightarrow \Delta$. The key observation is that since $\Delta$ contains no implications and the formulas in $\Gamma \cup \Pi$ do not involve implications of depth greater than one, the classical rule $(R\!\to)$ is never applied in the proof. Hence, the classical derivation is already valid intuitionistically.
For part $(ii)$, apply part $(i)$ to obtain $\mathbf{LJ} \vdash \Gamma \Rightarrow q \vee r$. Then, an induction on the cut-free $\mathbf{LJ}$-proof of this sequent shows that either $\mathbf{LJ} \vdash \Gamma \Rightarrow q$ or $\mathbf{LJ} \vdash \Gamma \Rightarrow r$.
\end{proof}

The second property of implicational Horn formulas is the following crucial theorem, proved by Hrubeš~\cite{hrubevs2009lengths}:

\begin{theorem}[Hrubeš \cite{hrubevs2009lengths}]\label{HrubesLemma}
For any sequence $\Gamma$ of implicational Horn formulas, a finite set of atoms $P = \{p_1, \ldots, p_m\}$, and an atom $q$, there exists a monotone circuit $C(x_1, \ldots, x_m)$ of size $(|\Gamma| + m)^{O(1)}$ such that $C(\bar{a}) = 1$ if and only if the sequent $(\Gamma, \{p_i \in P \mid a_i = 1\} \Rightarrow q)$ is valid, for any $\bar{a} \in \{0, 1\}^m$.
\end{theorem}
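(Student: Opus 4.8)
The plan is to reduce the validity of $(\Gamma, \{p_i \mid a_i = 1\} \Rightarrow q)$ to a reachability question in a directed hypergraph and then to observe that this reachability predicate can be computed by a small monotone circuit. Concretely, since the formulas in $\Gamma$ are implicational Horn, the semantics of a Horn sequent is governed by forward chaining: starting from the set of atoms declared true (namely $\{p_i \mid a_i = 1\}$, together with any atom appearing alone in $\Gamma$), we repeatedly apply each clause $\bigwedge_{j} s_j \to t$ in $\Gamma$ — adding $t$ whenever all $s_j$ are already present — until the set of derived atoms stabilizes. By completeness of this closure process for Horn theories, $(\Gamma, \{p_i \mid a_i = 1\} \Rightarrow q)$ is valid if and only if $q$ lies in the resulting closure. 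This is the conceptual heart of the argument; everything else is bookkeeping about encoding this into a monotone circuit of polynomial size.

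First I would set up notation: let the atoms mentioned anywhere in $\Gamma$ or in $P \cup \{q\}$ be $v_1, \ldots, v_N$ with $N \le (|\Gamma| + m)^{O(1)}$, and for each atom $v$ introduce, for each stage $t = 0, 1, \ldots, N$, a gate $g_{v,t}$ intended to compute ``$v$ is in the closure after $t$ rounds of chaining''. The base gates $g_{v,0}$ are: the input $x_i$ if $v = p_i \in P$; the constant $1$ if $v$ is an atom occurring as a unit clause in $\Gamma$; and the constant $0$ otherwise. (Constants $0$ and $1$ are expressible monotonely, e.g.\ via an empty disjunction and empty conjunction, or by padding; this is harmless.) The recursion is
\[
g_{v,t+1} \;=\; g_{v,t} \;\vee\; \bigvee_{(\bigwedge_j s_j \to v) \in \Gamma} \;\bigwedge_j g_{s_j,t},
\]
which uses only $\wedge$ and $\vee$ gates, hence is monotone, and has total size $O(N \cdot (|\Gamma| + N)) = (|\Gamma| + m)^{O(1)}$. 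Since the closure over $N$ atoms stabilizes in at most $N$ rounds, I would take $C := g_{q,N}$ as the output gate.

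Then I would verify correctness. The inclusion $C(\bar a) = 1 \Rightarrow$ validity is the soundness direction: an easy induction on $t$ shows that if $g_{v,t}(\bar a) = 1$ then $(\Gamma, \{p_i \mid a_i = 1\} \Rightarrow v)$ is valid — the base case is immediate and the inductive step is a single application of $L\!\to$ and $L\wedge$ together with cut, or, avoiding cut entirely, one appeals to part $(i)$ of Lemma~\ref{conservativity} since the relevant sequents are valid classically. Conversely, validity $\Rightarrow C(\bar a) = 1$ is the completeness direction: given any Boolean assignment $\mathcal{A}$ making $\Gamma \cup \{p_i \mid a_i = 1\}$ true, one checks that $\mathcal{A}$ sets every atom in the forward-chaining closure to $1$; taking $\mathcal{A}$ to be the closure itself (the minimal model, which does satisfy $\Gamma$ and all the unit facts precisely because the clauses are Horn), validity forces $q$ to be in this minimal model, hence $g_{q,N}(\bar a) = 1$. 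The one subtlety worth stating carefully is that the minimal model genuinely satisfies every formula of $\Gamma$ — which holds exactly because implicational Horn clauses are preserved under the closure operation — and that unit clauses of $\Gamma$ as well as the selected $p_i$ are in it by construction.

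The main obstacle, such as it is, is not mathematical depth but making the size bookkeeping airtight: one must ensure that the number of distinct atoms $N$ and the number of clauses are both polynomially bounded in $|\Gamma| + m$ (true, since each is at most the size of the input), that duplicating the gates $g_{s_j, t}$ across the $O(N)$ stages does not blow up the size beyond a polynomial (it does not — the total is a product of two polynomially-bounded quantities), and that the monotone constants $0,1$ do not sneak in a negation. A secondary point requiring a line of care is handling the degenerate cases of the lemma's hypotheses — e.g.\ $\Gamma$ containing $q$ itself as a unit clause, or $P$ overlapping with atoms already forced true by $\Gamma$ — all of which are absorbed correctly by the base-stage definition of $g_{v,0}$. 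Once these are pinned down, the theorem follows.
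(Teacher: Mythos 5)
The paper itself gives no proof of Theorem~\ref{HrubesLemma}; it is cited as a black box from Hrubeš, so there is nothing to compare line by line. Your argument is the standard one — and essentially the one in Hrubeš's paper: classical validity of a definite-Horn sequent is equivalent to membership of $q$ in the forward-chaining closure (equivalently, truth in the minimal model, which exists because the models of a set of implicational Horn formulas are closed under intersection and the all-true assignment is always a model), and the staged closure $g_{v,0}, \ldots, g_{v,N}$ is computed by an $O(N\cdot|\Gamma|)$-size $\{\wedge,\vee\}$-circuit since $N$ rounds suffice over $N$ atoms. This is correct and complete in all essentials.

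One small repair: as literally written, your base case gives $g_{v,0}=x_i$ when $v=p_i$ even if $v$ also occurs as a unit clause of $\Gamma$, and since your recursion only propagates through implications, such a $v$ would wrongly be left out of the closure when $a_i=0$. The fix is simply to define $g_{v,0}$ as the disjunction of the applicable cases (so it is the constant $1$ whenever $v$ is a unit clause of $\Gamma$, regardless of whether $v\in P$); you flag this degenerate case yourself, and with that reading the construction is sound. Two further cosmetic points: constants $\top,\bot$ are admissible gates in a monotone circuit under the paper's definition, so no encoding trick is needed; and the soundness direction needs only a semantic induction, so the appeal to $L{\to}$, cut, or Lemma~\ref{conservativity} is superfluous here.
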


Our strategy for proving monotone feasible $\mathrm{PDIP}$ for $\mathbf{LJ}$ is to first establish a similar, yet technically simpler, claim in which the disjunction on the right-hand side ranges over atoms, and the left-hand side can have an additional sequence of implicational Horn formulas. We then define a suitable translation function to reduce the general case to this specific setting. This translation is a powerful machine for extracting feasible data from intuitionistic proofs and is of independent interest. For more, see \cite{tabatabai2025universal}.

\begin{theorem}\label{AtomicFI}
For any sequence $\Gamma$ of implicational Horn formulas, a monotone $\mathcal{L}_p$-formula $\phi$ and atoms $q$ and $r$,
if $\Gamma, \phi \Rightarrow q \vee r$ is valid, then there are monotone circuits $C$ and $D$ in the variables of $\phi$ and of size $(|\Gamma|+|\phi|)^{O(1)}$ such that the sequents $(\Gamma, \phi \Rightarrow [C] \vee [D])$, $(\Gamma, [C] \Rightarrow q)$ and $(\Gamma, [D] \Rightarrow r)$ are all valid. 
\end{theorem}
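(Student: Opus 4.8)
The plan is to prove the ``atomic'' version of feasible $\mathrm{PDIP}$ by outsourcing the entire circuit construction to Hrubeš's Theorem~\ref{HrubesLemma} and using the disjunction property for Horn antecedents, Lemma~\ref{conservativity}$(ii)$, as the bridge. Write $P = V(\phi) = \{p_1,\dots,p_m\}$ for the variables of $\phi$, and for $S \subseteq \{1,\dots,m\}$ let $\bar a^S \in \{0,1\}^m$ denote its characteristic tuple. Apply Theorem~\ref{HrubesLemma} to the Horn sequence $\Gamma$, the atom set $P$, and the atom $q$ to obtain a monotone circuit $C$ on the variables of $\phi$, of size $(|\Gamma| + m)^{O(1)} \le (|\Gamma| + |\phi|)^{O(1)}$, with the property that $C(\bar a^S) = 1$ if and only if $(\Gamma, \{p_i \mid i \in S\} \Rightarrow q)$ is valid; applying the same theorem with $r$ in place of $q$ yields an analogous monotone circuit $D$. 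These are the circuits we claim work; monotonicity and the polynomial size bound are inherited directly from Theorem~\ref{HrubesLemma}.

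It then remains to check the three validities. For $(\Gamma, [C] \Rightarrow q)$: given any assignment satisfying $\Gamma$ and $[C]$, let $S$ be the set of indices $i$ with $p_i$ true under it; then $C(\bar a^S) = 1$, so $(\Gamma, \{p_i \mid i \in S\} \Rightarrow q)$ is valid, and since $\Gamma$ and all the $p_i$ with $i \in S$ are true, $q$ is true. The case of $(\Gamma, [D] \Rightarrow r)$ is identical. For $(\Gamma, \phi \Rightarrow [C] \vee [D])$: given an assignment $v$ satisfying $\Gamma$ and $\phi$, set $S = \{i \mid v(p_i) = 1\}$, so that $\phi(\bar a^S) = 1$ since $\phi$ depends only on $P$. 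By monotonicity of $\phi$, any assignment satisfying $\Gamma$ together with all $p_i$ for $i \in S$ has its $P$-part dominating $\bar a^S$ and hence makes $\phi$ true, so by the hypothesis that $(\Gamma, \phi \Rightarrow q \vee r)$ is valid it also makes $q \vee r$ true; thus $(\Gamma, \{p_i \mid i \in S\} \Rightarrow q \vee r)$ is valid. Since $\Gamma \cup \{p_i \mid i \in S\}$ is still a sequence of implicational Horn formulas, Lemma~\ref{conservativity}$(ii)$ gives that either $(\Gamma, \{p_i \mid i \in S\} \Rightarrow q)$ or $(\Gamma, \{p_i \mid i \in S\} \Rightarrow r)$ is valid, i.e.\ $C(\bar a^S) = 1$ or $D(\bar a^S) = 1$, so $[C] \vee [D]$ is true under $v$.

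The only genuine care needed is the bookkeeping with variables: $\Gamma$ and $q, r$ may mention atoms outside $V(\phi)$, so one must be precise that ``evaluating the $\phi$-variables at a tuple'' is a legitimate semantic manipulation — it is, exactly because $\phi$ reads only $P$ and monotonicity lets us move freely between the minimal tuple $\bar a^S$ and all of its dominators. I do not expect any obstacle beyond invoking Theorem~\ref{HrubesLemma} as a black box; degenerate cases, such as $V(\phi) = \emptyset$ (where $C$ and $D$ are constant circuits), are subsumed by the $m = 0$ instance of the same argument, and the general $\mathrm{PDIP}$ for $\mathbf{LJ}$ will later be reduced to this atomic statement via the translation alluded to before the theorem.
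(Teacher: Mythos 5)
Your proof is correct and takes essentially the same approach as the paper: you invoke Theorem~\ref{HrubesLemma} twice (once for $q$, once for $r$) as a black box to obtain the monotone circuits, and use Lemma~\ref{conservativity}$(ii)$ as the bridge to split the disjunction, then check the three validities by moving between arbitrary assignments and their restrictions to $V(\phi)$ via monotonicity. The only cosmetic difference is that the paper quantifies over the set of $V(\phi)$-assignments making $\phi$ true rather than starting from an arbitrary satisfying assignment $v$, but the argument is identical in substance.
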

\begin{proof}
Let $P=V(\phi)$ and $C(\bar{x})$ be the monotone circuit from Theorem \ref{HrubesLemma} for $P$, $\Gamma$, and $q$ and define $D(\bar{x})$ similarly but using $r$ instead of $q$. We claim that $C(\bar{p})$ and $D(\bar{p})$ are the circuits we want.
First, it is clear that the size of $C(\bar{p})$ and $D(\bar{p})$ is bounded by $(|\Gamma|+|\phi|)^{O(1)}$. Second, define $T$ as the set of the assignments $\bar{a} \in \{0, 1\}$ for the atoms in $P$ that makes $\phi$ true. For any $\bar{a} \in T$, as $\phi$ is monotone, the sequent $(\{p_i \in P \mid a_i=1\} \Rightarrow \phi)$ is valid. Hence, as $\Gamma, \phi \Rightarrow q \vee r$ is valid, so is  
$(\Gamma, \{p_i \in P \mid a_i=1\} \Rightarrow q \vee r)$. As $\Gamma \cup \{p_i \mid a_i=1\}$ consists of implicational Horn formulas, by Theorem \ref{conservativity}, part $(ii)$, either $\Gamma, \{p_i \in P \mid a_i=1\} \Rightarrow q$ or $\Gamma, \{p_i \in P \mid a_i=1\} \Rightarrow r$ are valid. Therefore, for any $\bar{a} \in T$, either $C(\bar{a})=1$ or $D(\bar{a})=1$. This means that $\phi \Rightarrow [C(\bar{p})] \vee [D(\bar{p})]$ is valid. Third, we claim that $\Gamma, [C(\bar{p})] \Rightarrow q$ is valid. The reason is that for any assignment for the atoms in the sequent that assigns $\bar{a}$ for $\bar{p}$, if $C(\bar{a})=1$, then by definition of $C(\bar{x})$, the sequent $\Gamma, \{p_i \mid a_i=1\} \Rightarrow q$ is valid. Now, using the same assignment, as every atoms in the set $\{p_i \mid a_i=1\}$ becomes true by mapping $p_i$ to $a_i$, the sequent $\Gamma \Rightarrow  q$ becomes true under the assignment. Hence, $\Gamma, [C(\bar{p})] \Rightarrow q$ is valid. Similarly, we can prove that $\Gamma, [D(\bar{p})] \Rightarrow r$ is valid.
\end{proof}

For the translation part, define $\mathcal{L}_p^+$ as the extension of $\mathcal{L}_p$ by new atoms of the form $\langle \phi \rangle$, for each formula $\phi \in \mathcal{L}_p$. Clearly, there is a canonical substitution that substitutes each atom $\langle \phi \rangle$ with the corresponding formula $\phi$. We refer to this substitution as the \emph{standard substitution}, and denote it by $s$. Now, consider the following translation:

\begin{definition} \label{Translation}
Define the translation $t: \mathcal{L}_p \to \mathcal{L}_p^+$ as follows:
$\bot^t = \bot$, $\top^t = \langle \top \rangle$, $p^t = \langle p \rangle$ for any atomic formula $p$, and
$(\phi \circ \psi)^t = (\phi^t \circ \psi^t) \wedge \langle \phi \circ \psi \rangle$,
for any $\circ \in \{\wedge, \vee, \to\}$.  
For a multiset $\Gamma$, define $\Gamma^t = \{\gamma^t \mid \gamma \in \Gamma\}$.
\end{definition}

It is clear that the function $t$ and the substitution $s$ are polynomial-time computable. Moreover, for any formula $\phi \in \mathcal{L}_p$, we have
$\mathbf{LJ} \vdash \phi^t \Rightarrow \langle \phi \rangle$ and $\mathbf{LJ} \vdash (\phi^t)^s \Leftrightarrow \phi$.

\begin{lemma}
\label{ProvPreservation}
There is a polynomial $p$ such that for any $\mathbf{LJ}$-proof $\pi$ of $\Omega \Rightarrow \Lambda$, there is a sequence $\Sigma_{\pi}$ of implicational Horn formulas such that $|\Sigma_{\pi}| \leq p(|\pi|)$, 
$\mathbf{LJ} \vdash \Sigma_{\pi}, \Omega^t \Rightarrow  \Lambda^t$, and 
$\mathbf{LJ} \vdash \; \Rightarrow \bigwedge \Sigma_{\pi}^{s}$.
\end{lemma}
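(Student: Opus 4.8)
The plan is to prove the statement by induction on the structure of the $\mathbf{LJ}$-proof $\pi$, simultaneously building $\Sigma_{\pi}$ and an $\mathbf{LJ}$-derivation of $\Sigma_{\pi}, \Omega^t \Rightarrow \Lambda^t$, while maintaining the side condition $\mathbf{LJ} \vdash\; \Rightarrow \bigwedge \Sigma_{\pi}^{s}$. The guiding observation is that $\Sigma_{\pi}$ only has to accumulate a few ``semantic glue'' Horn clauses: one per application of a right logical rule, and one clause $\langle \top \rangle$ for each $(\Rightarrow \top)$ axiom; all other rules cost nothing. Throughout I will freely use the two facts recorded just before the lemma, namely $\mathbf{LJ} \vdash \chi^t \Rightarrow \langle \chi \rangle$ and $\mathbf{LJ} \vdash (\chi^t)^s \Leftrightarrow \chi$, as well as the availability of cut in $\mathbf{LJ}$.

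For the inductive step, the exchange, weakening, and contraction rules, and cut, are handled by letting $\Sigma_{\pi}$ be the union of the sequences coming from the premises and replaying the same rule on the translated derivations (for cut, one cuts on $\phi^t$). The axioms $(\phi \Rightarrow \phi)$ and $(\bot \Rightarrow)$ translate directly with $\Sigma_{\pi} = \varnothing$, since $\bot^t = \bot$; for $(\Rightarrow \top)$ we set $\Sigma_{\pi} = \{\langle \top \rangle\}$, an atomic implicational Horn formula with $\langle \top \rangle^s = \top$ provable, and the required translated sequent $\langle \top \rangle \Rightarrow \langle \top \rangle$ is an identity axiom. For the left logical rules $L\wedge_1$, $L\wedge_2$, $L\vee$, $L\!\to$ no new clause is needed: because $(\phi \circ \psi)^t = (\phi^t \circ \psi^t) \wedge \langle \phi \circ \psi \rangle$, the translation of the principal formula contains $\phi^t$, $\psi^t$, or $\phi^t \to \psi^t$ as a conjunct, which one extracts with $L\wedge$ and then feeds into the translated premise derivation(s); for $L\!\to$ one also uses the translated left premise $\Sigma_1, \Gamma^t \Rightarrow \phi^t$ together with $L\!\to$ in $\mathbf{LJ}$. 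The interesting cases are the right logical rules. For $R\wedge$ with conclusion $\Gamma \Rightarrow \phi \wedge \psi$ we add $h = \langle \phi \rangle \wedge \langle \psi \rangle \to \langle \phi \wedge \psi \rangle$ (with $h^s = \phi \wedge \psi \to \phi \wedge \psi$ trivially provable); from the two translated premises we obtain $\Gamma^t \Rightarrow \phi^t$ and $\Gamma^t \Rightarrow \psi^t$, hence via $\chi^t \Rightarrow \langle \chi \rangle$ and $h$ we derive $\Gamma^t, h \Rightarrow \langle \phi \wedge \psi \rangle$, and an $R\wedge$ assembles $(\phi \wedge \psi)^t$. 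The cases $R\vee_1$, $R\vee_2$ are analogous, using the clauses $\langle \phi \rangle \to \langle \phi \vee \psi \rangle$ and $\langle \psi \rangle \to \langle \phi \vee \psi \rangle$.

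The crucial case is $R\!\to$, with premise $\Gamma, \phi \Rightarrow \psi$ and conclusion $\Gamma \Rightarrow \phi \to \psi$, where $(\phi \to \psi)^t = (\phi^t \to \psi^t) \wedge \langle \phi \to \psi \rangle$. The first conjunct is obtained by applying $R\!\to$ to the translated premise $\Sigma_1, \Gamma^t, \phi^t \Rightarrow \psi^t$. The second conjunct $\langle \phi \to \psi \rangle$ is the obstacle: it cannot be produced from the atoms available in the translated sequent by any clause whose $s$-image is a generic intuitionistic validity. Instead we add the clause $h = \bigwedge_{\gamma \in \Gamma} \langle \gamma \rangle \to \langle \phi \to \psi \rangle$, which is an implicational Horn formula because all of the $\langle \gamma \rangle$ and $\langle \phi \to \psi \rangle$ are atoms of $\mathcal{L}_p^+$; its $s$-image is $\bigwedge_{\gamma} \gamma \to (\phi \to \psi)$, and this is $\mathbf{LJ}$-provable precisely because $\pi$ witnesses that the premise $\Gamma, \phi \Rightarrow \psi$ is $\mathbf{LJ}$-derivable, whence so is $\Gamma \Rightarrow \phi \to \psi$ and therefore $\Rightarrow \bigwedge \Gamma \to (\phi \to \psi)$. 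Using $\gamma^t \Rightarrow \langle \gamma \rangle$ to extract each $\langle \gamma \rangle$ from $\Gamma^t$, the clause $h$ fires $\langle \phi \to \psi \rangle$, and an $R\wedge$ combines this with $\phi^t \to \psi^t$ to give $(\phi \to \psi)^t$; the degenerate subcase $\bot \in \Gamma$ is immediate. This is the step I expect to be the main obstacle: one must notice that the needed Horn clause for $\langle \phi \to \psi \rangle$ is not valid in isolation but is $s$-provable thanks to the \emph{local} derivability of the premise, which is exactly why the induction has to run over the proof tree rather than over the end sequent alone.

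It remains to track sizes, which is routine. Each rule application of $\pi$ contributes at most one new clause, of length $O(|\pi|)$ — the $R\!\to$ clause mentions the antecedent formulas of one sequent occurring in $\pi$, each of size at most $|\pi|$, and all other new clauses have constant size — so letting $\Sigma_{\pi}$ be the concatenation of the clauses collected over the at most $|\pi|$ lines of $\pi$ yields $|\Sigma_{\pi}| \leq p(|\pi|)$ for a fixed quadratic polynomial $p$. Finally $\mathbf{LJ} \vdash\; \Rightarrow \bigwedge \Sigma_{\pi}^{s}$ follows by conjoining, with applications of $R\wedge$, the $\mathbf{LJ}$-proofs of the individual $s$-images of the clauses exhibited in the cases above.
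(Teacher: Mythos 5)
Your proof is correct and follows the paper's own induction on the $\mathbf{LJ}$-derivation $\pi$ with exactly the same pivotal device for $R\!\to$, namely the Horn clause $\bigwedge_{\gamma \in \Gamma}\langle\gamma\rangle \to \langle \phi \to \psi\rangle$ whose $s$-image is provable precisely because the subderivation of $\Gamma,\phi\Rightarrow\psi$ exists. The only deviations are cosmetic: for $R\wedge$ and $R\vee$ you use the self-evidently valid clauses $\langle\phi\rangle\wedge\langle\psi\rangle\to\langle\phi\wedge\psi\rangle$ and $\langle\phi\rangle\to\langle\phi\vee\psi\rangle$ in place of the paper's uniform $\bigwedge_\gamma\langle\gamma\rangle\to\langle\cdot\rangle$ pattern, and you explicitly flag that the axiom $\Rightarrow\top$ requires the atomic Horn clause $\langle\top\rangle$ (since $\top^t=\langle\top\rangle$), a case the paper's ``the axiom case is easy'' quietly passes over—both deviations are sound.
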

\begin{proof}
The construction of $\Sigma_{\pi}$ in terms of $\pi$ is by recursion on the structure of $\pi$. The case of axioms is easy. If the last rule in $\pi$ is structural (including the cut) or a left rule, it suffices to set $\Sigma_{\pi}$ as the union of the $\Sigma$'s from the immediate subproofs of $\pi$. For the right rules, we explain only the case of $(R\!\to)$; the others are similar. Suppose $\pi$ has the form:
\begin{center}
\begin{tabular}{c}
    \AxiomC{$\pi'$}
    \noLine
    \UnaryInfC{$\Gamma, \phi \Rightarrow \psi$}
    \RightLabel{$R\!\to$}
    \UnaryInfC{$\Gamma \Rightarrow \phi \to \psi$}
    \DisplayProof
\end{tabular}
\end{center}
Then, by the induction hypothesis, we have the multiset $\Sigma_{\pi'}$ of implicational Horn formulas such that $\mathbf{LJ} \vdash \Sigma_{\pi'}, \Gamma^t, \phi^t \Rightarrow \psi^t$ and $\mathbf{LJ} \vdash \, \Rightarrow \bigwedge \Sigma^{s}_{\pi'}$. Hence, $\mathbf{LJ} \vdash \Sigma_{\pi'}, \Gamma^t \Rightarrow \phi^t \rightarrow \psi^t$. Then, set $\Sigma_{\pi} = \Sigma_{\pi'} \cup \{\bigwedge_{\gamma \in \Gamma} \langle \gamma \rangle \to \langle \phi \to \psi \rangle\}$. It is clear that $\Sigma_{\pi}$ consists of implicational Horn formulas and that $\mathbf{LJ} \vdash \, \Rightarrow \bigwedge \Sigma^{s}_{\pi}$. Since $\mathbf{LJ} \vdash \gamma^t \Rightarrow \langle \gamma \rangle$ for any $\gamma \in \Gamma$, we obtain $\mathbf{LJ} \vdash \Sigma_{\pi}, \Gamma^t \Rightarrow \langle \phi \to \psi \rangle$. Therefore, as $(\phi \to \psi)^t = (\phi^t \to \psi^t) \wedge \langle \phi \to \psi \rangle$, we conclude that $\mathbf{LJ} \vdash \Sigma_{\pi}, \Gamma^t \Rightarrow (\phi \to \psi)^t$. This completes the construction of $\Sigma_{\pi}$. By inspecting the construction, it is easy to see that $|\Sigma_{\pi}| \leq |\pi|^{O(1)}$.
\end{proof}

\begin{theorem}\label{LJHasFI}
$\mathbf{LJ}$ has monotone feasible $\mathrm{PDIP}$.
\end{theorem}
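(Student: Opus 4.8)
The plan is to run an $\mathbf{LJ}$-proof through the translation machinery of Lemma~\ref{ProvPreservation} so as to reduce the general statement — where the right-hand disjuncts $\psi,\theta$ are arbitrary — to the atomic special case already settled in Theorem~\ref{AtomicFI}, and then to transport the monotone circuits it produces back to the original variables by means of the standard substitution $s$. Throughout we work in $\mathcal{L}_p^+$, noting that Theorems~\ref{AtomicFI}, \ref{HrubesLemma} and Lemma~\ref{conservativity} apply verbatim there, being insensitive to the choice of atom set.

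So let $u$ be an $\mathbf{LJ}$-proof of $(\Rightarrow \phi \to \psi \vee \theta)$ with code $w$, where $\phi$ is monotone. First I would apply Lemma~\ref{ProvPreservation} directly to $u$, obtaining a sequence $\Sigma$ of implicational Horn formulas with $|\Sigma| \leq |u|^{O(1)}$, $\mathbf{LJ} \vdash \Sigma \Rightarrow (\phi \to \psi \vee \theta)^t$, and $\mathbf{LJ} \vdash \;\Rightarrow \bigwedge \Sigma^s$. Since $(\phi \to \psi \vee \theta)^t = (\phi^t \to (\psi\vee\theta)^t) \wedge \langle \phi \to \psi \vee \theta\rangle$ and $(\psi\vee\theta)^t = (\psi^t \vee \theta^t)\wedge \langle\psi\vee\theta\rangle$, a handful of applications of left rules, cut, and the provabilities $\mathbf{LJ}\vdash \psi^t \Rightarrow \langle\psi\rangle$, $\mathbf{LJ}\vdash \theta^t \Rightarrow \langle\theta\rangle$ yield $\mathbf{LJ} \vdash \Sigma, \phi^t \Rightarrow \langle\psi\rangle \vee \langle\theta\rangle$; in particular this sequent is valid. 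Now $\phi^t$ is a \emph{monotone} $\mathcal{L}_p^+$-formula — because $\phi$ is monotone, $t$ introduces only $\wedge,\vee$ over the new atoms $\langle\cdot\rangle$ and the constant $\bot$ — while $\langle\psi\rangle,\langle\theta\rangle$ are atoms and $\Sigma$ consists of implicational Horn formulas, which is exactly the setup of Theorem~\ref{AtomicFI}. Applying it, we get monotone circuits $C',D'$ in the variables of $\phi^t$ of size $(|\Sigma|+|\phi^t|)^{O(1)} = (|u|+|w|)^{O(1)}$ such that $(\Sigma, \phi^t \Rightarrow [C']\vee[D'])$, $(\Sigma, [C'] \Rightarrow \langle\psi\rangle)$ and $(\Sigma, [D'] \Rightarrow \langle\theta\rangle)$ are all valid; as $[C'],[D']$ are monotone, $\langle\psi\rangle,\langle\theta\rangle$ are atoms and $\Sigma$ is Horn, Lemma~\ref{conservativity}$(i)$ upgrades these to $\mathbf{LJ}$-provability.

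Next I would apply the standard substitution $s$, which preserves $\mathbf{LJ}$-provability. Using $\langle\psi\rangle^s = \psi$, $\langle\theta\rangle^s=\theta$, $\mathbf{LJ} \vdash (\phi^t)^s \Leftrightarrow \phi$, and $\mathbf{LJ}\vdash\;\Rightarrow \bigwedge\Sigma^s$, one cuts $\Sigma^s$ out of the antecedents and replaces $(\phi^t)^s$ by $\phi$, leaving $\mathbf{LJ}\vdash \phi \Rightarrow [C']^s \vee [D']^s$, $\mathbf{LJ}\vdash [C']^s \Rightarrow \psi$, $\mathbf{LJ}\vdash [D']^s \Rightarrow \theta$. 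Finally, since $s$ replaces each input atom $\langle\phi'\rangle$ of $C'$ (resp.\ $D'$) by the subformula $\phi'$ of $\phi$, I would realize $[C']^s$ and $[D']^s$ as monotone circuits $\hat C,\hat D$ by wiring a (shared) circuit computing $\phi'$ into each such input. This keeps the size polynomial, keeps the circuits monotone — here is the only place monotonicity of $\phi$ is used on its own — and guarantees $V(\hat C), V(\hat D) \subseteq V(\phi)$. Hence $([\hat C],[\hat D])$ is a monotone $\mathsf{IPC}$-$\mathrm{PDI}$ for $\phi \to \psi \vee \theta$ of size $(|u|+|w|)^{O(1)}$, which is exactly what monotone feasible $\mathrm{PDIP}$ for $\mathbf{LJ}$ demands.

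The routine parts are the sequent-calculus rewriting that puts the translated sequent into the shape required by Theorem~\ref{AtomicFI}, and the $s$-substitution/cut bookkeeping; the conceptual weight has already been absorbed by Theorem~\ref{HrubesLemma} (Hrubeš's monotone circuit for Horn entailment) and Lemma~\ref{ProvPreservation}. The one step that genuinely needs care is the last one: re-expressing the substituted circuits over the original variables while preserving both monotonicity and polynomial size — and it is precisely the hypothesis that $\phi$ is monotone that makes this possible, which is why only the monotone version of feasible $\mathrm{PDIP}$ is obtained this way.
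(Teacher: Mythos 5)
Your proposal is correct and follows essentially the same route as the paper: apply Lemma~\ref{ProvPreservation} to get a Horn side-condition $\Sigma$, reduce to atomic disjuncts $\langle\psi\rangle,\langle\theta\rangle$ using the facts $\mathbf{LJ}\vdash\psi^t\Rightarrow\langle\psi\rangle$ and $\mathbf{LJ}\vdash\theta^t\Rightarrow\langle\theta\rangle$, invoke Theorem~\ref{AtomicFI} on the monotone $\phi^t$, upgrade validity to $\mathbf{LJ}$-provability via Lemma~\ref{conservativity}$(i)$, and substitute back with $s$, using monotonicity of $\phi$ to preserve monotonicity of the resulting circuits. The only cosmetic difference is that the paper first cuts to pass from a proof of $\Rightarrow \phi\to\psi\vee\theta$ to a proof of $\phi\Rightarrow\psi\vee\theta$ and only then applies Lemma~\ref{ProvPreservation}, while you apply the lemma to the original proof and unfold the translated implication afterward; both are correct and equivalent.
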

\begin{proof}
Let $\pi$ be an $\mathbf{LJ}$-proof of $\, \Rightarrow \phi \to \psi \vee \theta$, where $\phi$ is monotone. Then, using cut, it is easy to construct an $\mathbf{LJ}$-proof $\pi'$ of $\phi \Rightarrow \psi \vee \theta$, such that $|\pi'| \leq |\pi|^{O(1)}$. Then, by Lemma~\ref{ProvPreservation}, there is a sequence of implicational Horn formulas $\Sigma_{\pi'}$ such that $|\Sigma_{\pi'}| \leq |\pi'|^{O(1)} \leq |\pi|^{O(1)}$, $\mathbf{LJ} \vdash \Sigma_{\pi'}, \phi^t \Rightarrow (\psi \vee \theta)^t$, and $\mathbf{LJ} \vdash \, \Rightarrow \bigwedge \Sigma^s_{\pi'}$. By the definition of $t$, we have $\mathbf{LJ} \vdash \Sigma_{\pi'}, \phi^t \Rightarrow \psi^t \vee \theta^t$. As $\mathbf{LJ} \vdash \psi^t \Rightarrow \langle \psi \rangle$ and $\mathbf{LJ} \vdash \theta^t \Rightarrow \langle \theta \rangle$, we get $\mathbf{LJ} \vdash \Sigma_{\pi'}, \phi^t \Rightarrow \langle \psi \rangle \vee \langle \theta \rangle$, which implies the validity of $(\Sigma_{\pi'}, \phi^t \Rightarrow \langle \psi \rangle \vee \langle \theta \rangle)$. Let $q = \langle \psi \rangle$ and $r = \langle \theta \rangle$. Then, as $\phi$ and hence $\phi^t$ are monotone, by Theorem~\ref{AtomicFI}, there exist monotone circuits $C'$ and $D'$ on the variables of $\phi^t$ and of size bounded by $(|\Sigma_{\pi'}| + |\phi^t|)^{O(1)} \leq |\pi|^{O(1)}$, such that the sequents $(\Sigma_{\pi'}, \phi^t \Rightarrow [C'] \vee [D'])$, $(\Sigma_{\pi'}, [C'] \Rightarrow q)$, and $(\Sigma_{\pi'}, [D'] \Rightarrow r)$ are all valid and hence provable in $\mathbf{LJ}$, by Theorem~\ref{conservativity}, part~$(i)$. 
Now note that the atoms of $\phi^t$ are of the form $\langle \alpha \rangle$, where $\alpha$ is a subformula of $\phi$. As these $\alpha$'s are monotone, applying the standard substitution $s$ to $C'$ and $D'$ yields two monotone circuits $C$ and $D$ on the variables of $\phi$ and of size bounded by $|\pi|^{O(1)}$. Since $\mathbf{LJ} \vdash \, \Rightarrow \bigwedge \Sigma^s_{\pi'}$ and $\mathbf{LJ} \vdash (\phi^t)^s \Leftrightarrow \phi$, by applying the standard substitution $s$, the sequents $(\phi \Rightarrow [C] \vee [D])$, $([C] \Rightarrow \psi)$, and $([D] \Rightarrow \theta)$ are all provable in $\mathbf{LJ}$. Hence, $(\phi \rightarrow [C] \vee [D])$, $([C] \rightarrow \psi)$, and $([D] \rightarrow \theta)$ are all provable in $\mathsf{IPC}$.
\end{proof}

\begin{corollary}[Hrubeš \cite{hrubevs2009lengths,hrubevs2007lower,hrubevs2007lowerII}]
Let $0 < \epsilon < 1/3$ be a real number. Then, any $\mathbf{LJ}$-proof of 
$
\bigwedge_i (p_i \vee q_i) \to \neg \mathrm{Clique}^{\epsilon}_n(\neg \bar{p}, \bar{r}) \vee \neg \mathrm{Color}^{\epsilon}_n(\bar{q}, \bar{s})
$
has size at least $2^{\Omega(n^{1/3 - \epsilon})}$. Therefore, $\mathbf{LJ}$ and hence $\mathsf{IPC}$-Frege is not p-bounded.
\end{corollary}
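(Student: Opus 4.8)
The plan is to obtain the statement as a one-line composition of the monotone feasible $\mathrm{PDIP}$ for $\mathbf{LJ}$ (Theorem~\ref{LJHasFI}) with the generic lower bound of Theorem~\ref{LowerBoundForIntCliquevsColor}, and then to transfer non-$p$-boundedness from $\mathbf{LJ}$ to $\mathsf{IPC}$-Frege via the equivalence $\mathbf{LJ}\equiv\mathsf{IPC}$-Frege.

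First I would check the hypotheses needed to invoke Theorem~\ref{LowerBoundForIntCliquevsColor} with $L=\mathsf{IPC}$ and $P=\mathbf{LJ}$. One is that $\mathbf{LJ}$ has monotone feasible $\mathrm{PDIP}$ — this is exactly Theorem~\ref{LJHasFI}. The other, implicit, point is that the formula $\bigwedge_i (p_i \vee q_i) \to \neg \mathrm{Clique}^{\epsilon}_n(\neg \bar{p}, \bar{r}) \vee \neg \mathrm{Color}^{\epsilon}_n(\bar{q}, \bar{s})$ is genuinely a theorem of $\mathsf{IPC}$, so that it is a legitimate target for an $\mathbf{LJ}$-proof; this is supplied by the Corollary to Theorem~\ref{ClassicalToIPC}, using that $\mathrm{Clique}^{\epsilon}_n$ is monotone in $\bar p$ (Example~\ref{ExampleClique}) together with the classical validity of $\mathrm{Clique}^{\epsilon}_n(\bar p,\bar r)\to\neg\mathrm{Color}^{\epsilon}_n(\bar p,\bar s)$. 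With these two facts secured, Theorem~\ref{LowerBoundForIntCliquevsColor} immediately gives that every $\mathbf{LJ}$-proof of this formula has size at least $2^{\Omega(n^{1/3-\epsilon})}$.

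For the ``not $p$-bounded'' conclusion I would then note that $\{\bigwedge_i (p_i \vee q_i) \to \neg \mathrm{Clique}^{\epsilon}_n(\neg \bar{p}, \bar{r}) \vee \neg \mathrm{Color}^{\epsilon}_n(\bar{q}, \bar{s})\}_n$ is a sequence of \emph{short} formulas in $\mathsf{IPC}$: by Examples~\ref{ExampleClique} and~\ref{ExampleColor} each of $\mathrm{Clique}^{\epsilon}_n$ and $\mathrm{Color}^{\epsilon}_n$ is a conjunction of polynomially many clauses over polynomially many variables, and the prefix $\bigwedge_i(p_i\vee q_i)$ adds only $\binom{n}{2}$ conjuncts. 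If $\mathbf{LJ}$ were $p$-bounded, this sequence would admit $\mathbf{LJ}$-proofs of size $\mathrm{poly}(n)$, contradicting the $2^{\Omega(n^{1/3-\epsilon})}$ bound for all large $n$; hence $\mathbf{LJ}$ is not $p$-bounded. Finally, since $\mathbf{LJ}\equiv\mathsf{IPC}$-Frege by Theorem~\ref{LKEquivFrege} and simulation preserves $p$-boundedness by definition, $\mathsf{IPC}$-Frege is not $p$-bounded either.

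The genuinely substantive content is all upstream, so I do not expect an obstacle in the corollary itself; the hard part is Theorem~\ref{LJHasFI}, which runs through the translation $t:\mathcal{L}_p\to\mathcal{L}_p^+$, the extraction of a short sequence $\Sigma_\pi$ of implicational Horn formulas from an $\mathbf{LJ}$-proof (Lemma~\ref{ProvPreservation}), the atomic feasible interpolation of Theorem~\ref{AtomicFI}, and ultimately Hrubeš's monotone circuit construction (Theorem~\ref{HrubesLemma}); and, on the hardness side, the Razborov–Alon–Boppana lower bound (Theorem~\ref{Alon-Boppana}) packaged as Theorem~\ref{Clique-ColorForIPC}. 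Given all of this, the only care needed in assembling the corollary is bookkeeping: verifying that the extracted circuits are genuinely monotone and over the variables of $\phi$ alone, and that the formula attacked is the monotone-friendly form with the substitution $\neg\bar p$ rather than the naive $\mathrm{Clique}^{\epsilon}_n(\bar p,\bar r)\to\neg\mathrm{Color}^{\epsilon}_n(\bar p,\bar s)$, so that the monotonicity requirement on $\phi$ in the definition of feasible $\mathrm{PDIP}$ is actually met.
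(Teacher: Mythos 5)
Your proposal is correct and follows essentially the same route as the paper: combine Theorem~\ref{LJHasFI} (monotone feasible $\mathrm{PDIP}$ for $\mathbf{LJ}$) with Theorem~\ref{LowerBoundForIntCliquevsColor} to get the lower bound, then use $\mathbf{LJ}\equiv\mathsf{IPC}$-Frege (Theorem~\ref{LKEquivFrege}) for the transfer. The extra checks you flag (that the target formula is an $\mathsf{IPC}$-theorem, that the sequence is short, and that the monotone form with $\neg\bar p$ is used) are exactly the right sanity checks and are implicit in the paper's terse proof.
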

\begin{proof}
For the first part, we apply Theorem \ref{LowerBoundForIntCliquevsColor} and Theorem \ref{LJHasFI}.
For the second part, we recall that $\mathbf{LJ}$ is equivalent to the $\mathsf{IPC}$-Frege system, by Theorem~\ref{LKEquivFrege}.
\end{proof}

For si logics of infinite branching, Jeřábek extended Hrubeš's result by combining reductions with a modification of the above argument:

\begin{theorem}[Jeřábek \cite{jevrabek2009substitution}]
Let $L$ be a si logic of infinite branching, i.e., either $\mathsf{L} \subseteq \mathsf{BD_2}$ or $\mathsf{L} \subseteq \mathsf{KC+BD_3}$. Then, $L$-Frege is not p-bounded.
\end{theorem}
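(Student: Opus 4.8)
The plan is to reduce the statement, via Jeřábek's characterization together with a Frege simulation, to an exponential lower bound on proofs of the Clique--Color $\mathsf{IPC}$-tautologies inside the two maximal logics of infinite branching. First I would fix a real number $0<\epsilon<1/3$ and the family $\Psi_n := \bigwedge_i (p_i\vee q_i)\to\neg\mathrm{Clique}^{\epsilon}_n(\neg\bar p,\bar r)\vee\neg\mathrm{Color}^{\epsilon}_n(\bar q,\bar s)$; by the corollary following Theorem~\ref{ClassicalToIPC} each $\Psi_n$ lies in $\mathsf{IPC}$, hence in every si logic $L$, and $\{\Psi_n\}_n$ is a sequence of short formulas. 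Next I would record the simulation fact that whenever $L\subseteq L'$ are si logics, $L'$-Frege polynomially simulates $L$-Frege: every rule of the standard, hence strongly sound, system $L$-Frege derives an $L$-consequence, so also an $L'$-consequence, and by strong completeness of $L'$-Frege it can be replaced, exactly as in the proof of Theorem~\ref{Reckhow}, by a fixed $L'$-Frege derivation whose substitution instances expand only polynomially. Since, by the characterization, $L\subseteq\mathsf{BD_2}$ or $L\subseteq\mathsf{BD_3}+\mathsf{KC}$, it then suffices to prove that every $\mathsf{BD_2}$-Frege proof (respectively $(\mathsf{BD_3}+\mathsf{KC})$-Frege proof) of $\Psi_n$ has size at least $2^{\Omega(n^{1/3-\epsilon})}$: this bound transfers through the simulation to $L$-Frege, and, $\Psi_n$ being short, shows that $L$-Frege is not p-bounded.

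The core of the work is establishing a feasible disjunctive interpolation property for the two bounded-depth logics, adapting the $\mathbf{LJ}$-argument of Theorem~\ref{LJHasFI}. The difficulty is that, unlike $\mathsf{IPC}$, both $\mathsf{BD_2}$ and $\mathsf{BD_3}+\mathsf{KC}$ fail the disjunction property (e.g.\ $\mathsf{BD_2}\vdash p_1\vee(p_1\to p_0\vee\neg p_0)$ and $\mathsf{KC}\vdash\neg p\vee\neg\neg p$), so they have no full $\mathrm{PDIP}$ and Lemma~\ref{conservativity}$(ii)$ is not available verbatim. What still makes the method work is the special shape of $\Psi_n$: under the monotone hypothesis $\bigwedge_i(p_i\vee q_i)$ the conclusion is a disjunction $\neg A(\bar p,\bar r)\vee\neg B(\bar q,\bar s)$ of two \emph{variable-disjoint negated} formulas, and for such conclusions a restricted ``negative disjunction property'' does hold. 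Using the finite model property of $\mathsf{BD_2}$ with respect to finite Kripke frames of depth at most two, a countermodel of $\neg A$ and a countermodel of $\neg B$ can be amalgamated into a single frame of the same bounded depth that refutes $\neg A\vee\neg B$ at a common root — by identifying their roots, and, in the $\mathsf{BD_3}+\mathsf{KC}$ case, also their greatest elements so as to keep the frame directed — whence $\mathsf{BD_2}\vdash\neg A\vee\neg B$ forces $\mathsf{BD_2}\vdash\neg A$ or $\mathsf{BD_2}\vdash\neg B$, and likewise for $\mathsf{BD_3}+\mathsf{KC}$. Granting a \emph{feasible} version of this, I would run the whole pipeline of Lemmas~\ref{conservativity}--\ref{ProvPreservation} and Theorems~\ref{HrubesLemma},~\ref{AtomicFI},~\ref{LJHasFI} over a terminating sequent calculus for the relevant depth-bounded logic (such calculi exist precisely because of the depth-bounded f.m.p.), tracking, as in Lemma~\ref{ProvPreservation}, a polynomially sized set of implicational Horn side formulas together with the bit recording which alternative is forced; feeding this into Hrubeš's monotone-circuit Theorem~\ref{HrubesLemma} and specializing $\bar p\mapsto\bar\top$ exactly as in the proof of Theorem~\ref{Clique-ColorForIPC} would yield a monotone circuit of size polynomial in the proof that separates $(\mathrm{Clique}_{\lfloor n^{2/3}\rfloor,n},\mathrm{Color}_{\lfloor n^{2/3-2\epsilon}\rfloor,n})$. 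Theorem~\ref{Alon-Boppana} then forces this circuit, hence the original proof, to have size $2^{\Omega(n^{1/3-\epsilon})}$.

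The hard part will be exactly the middle step: proving the feasible variable-disjoint negative disjunction property for $\mathsf{BD_2}$ and $\mathsf{BD_3}+\mathsf{KC}$ and threading it through the translation-and-Horn machinery. Two points need genuine care. First, the amalgamation of countermodels must stay inside the depth-bounded (and, for $\mathsf{KC}$, directed) class while preserving the forcing of the negated formulas along the new, larger up-cones; the negativity of the disjuncts — guaranteed by the Glivenko-style transformation of Theorem~\ref{ClassicalToIPC} — is what makes this possible, and the hypothesis of infinite branching is what places us inside one of the two logics where a bounded amalgamation exists at all, so that no $\mathsf{T}_k$-type collapse interferes. Second, the extra axiom instances ($\mathrm{BD}_2$, $\mathrm{BD}_3$, $\neg p\vee\neg\neg p$) occurring in an $L$-Frege proof must be absorbed into the Horn bookkeeping of the induction with only a polynomial blow-up and in a way compatible with the substitution $s$ that recovers monotone circuits over the original variables; managing this uniformly in $n$ is the delicate heart of Jeřábek's modification of the argument.
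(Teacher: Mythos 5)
First, a caveat: the paper does not prove this theorem; it states it with a citation to Jeřábek and the one-line gloss that the proof ``combines reductions with a modification of the above argument.'' So your proposal can only be judged on its own merits. Your outer architecture is exactly the right one: the tautologies $\Psi_n$ lie in $\mathsf{IPC}\subseteq L$, the Reckhow-style argument of Theorem~\ref{Reckhow} does show that $L\subseteq L'$ implies $L'$-Frege $\geq$ $L$-Frege, and hence it suffices to prove the exponential lower bound for $\mathsf{BD_2}$-Frege and $(\mathsf{BD_3}+\mathsf{KC})$-Frege and pull it down to $L$. That reduction is correct and is the ``reductions'' half of Jeřábek's argument.

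The gap is the other half, which you explicitly defer (``Granting a feasible version of this\ldots''), and the concrete route you sketch for it would not work. The non-effective ``variable-disjoint negative disjunction property'' is not the issue (it holds for \emph{every} si logic, already by Glivenko plus the classical variable-disjointness argument, no amalgamation needed); the issue is extracting polynomial-size monotone circuits from an $L$-Frege proof when $L$ strictly extends $\mathsf{IPC}$. Your plan is to rerun Lemma~\ref{ProvPreservation} and ``absorb the extra axiom instances into the Horn bookkeeping.'' But an $L$-Frege proof is an $\mathbf{LJ}$-derivation from leaves of the form $\Rightarrow\sigma(\alpha)$ with $\alpha\in\{\mathrm{BD}_2,\mathrm{BD}_3,\neg p\vee\neg\neg p\}$, and at such a leaf the recursion would require a set $\Sigma$ of \emph{implicational Horn} formulas with $\mathbf{LJ}\vdash\Sigma\Rightarrow(\sigma(\alpha))^t$. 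These axioms are irreducibly disjunctive, and the entire method hinges on Lemma~\ref{conservativity}$(ii)$: Horn antecedents can only prove a disjunction by proving one disjunct. Which disjunct of $\sigma(\mathrm{BD}_2)$ or $\neg\sigma(p)\vee\neg\neg\sigma(p)$ holds depends on the assignment, so no uniform Horn $\Sigma$ exists, and the induction of Lemma~\ref{ProvPreservation} breaks at exactly the new leaves. This is precisely the ``delicate heart'' you point to but do not supply; Jeřábek handles it by a genuinely different device (exploiting the depth-$\leq 2$, resp.\ depth-$\leq 3$ directed, Kripke semantics to reinterpret the translated proof, rather than by forcing the axioms into Horn form), and without that step the proposal establishes only the reduction, not the theorem. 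The appeal to ``terminating sequent calculi for the depth-bounded logics'' is likewise unsubstantiated and would additionally require a p-equivalence with $L$-Frege in the sense of Theorem~\ref{LKEquivFrege}.
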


\begin{remark}
Here are two remarks. First, as we have seen, the formulas with exponentially long proofs we provided make serious use of disjunctions, and one may suspect that the use of disjunctions is crucial for such bounds. However, Jeřábek showed that a similar result is possible using only the implicational fragment of the language \cite{jevrabek2017proof,jevrabek2025simplified}. Second, for a logic $L$ weaker than $\mathsf{IPC}$, it may seem obvious that $L$-Frege cannot be p-bounded, since $L$ is weaker than $\mathsf{IPC}$ and even $\mathsf{IPC}$-Frege proofs are already hard for the intuitionistic Clique-Coloring tautologies. However, this is not so straightforward. To show that $L$-Frege is not p-bounded, we must provide a sequence of short formulas \emph{in $L$} that have no short $L$-Frege proofs, and there is no reason to assume that our intuitionistic Clique-Coloring tautologies belong to $L$. To address this issue, Jalali~\cite{jalali2021proof} employed a suitable descending technique to modify the intuitionistic Clique-Coloring tautologies so that they become provable in the Full Lambek substructural logic $\mathsf{FL}$. She then showed that for any logic between $\mathsf{FL}$ and a superintuitionistic logic of infinite branching, the suitably defined $L$-Frege system is not p-bounded.
\end{remark}

\subsection{Modal Logics}

Our task in this subsection is to apply a technique similar to that used in the previous subsection to show that $\mathbf{GL}$ and $\mathbf{S4}$ have monotone feasible $\mathrm{MDIP}$. First, extend the language $\mathcal{L}_{\Box}$ to $\mathcal{L}_{\Box}^+$ by introducing new atoms $\langle \Box \phi \rangle$ for each formula $\phi \in \mathcal{L}_{\Box}$. Clearly, there is a canonical substitution $s$ that substitutes each atom $\langle \Box \phi \rangle$ with $\Box \phi$. We refer to this substitution as the \emph{standard substitution}. Now, consider the following translations:

\begin{definition} \label{ModalTranslation}
For any $i \in \{0, 1\}$, define $t_i: \mathcal{L}_{\Box} \to \mathcal{L}_{\Box}^+$ by:  
$\bot^{t_i} = \bot$, $\top^{t_i} = \top$, and $p^{t_i} = p$, for any atomic formula $p$;  
$(\phi \circ \psi)^{t_i} = (\phi^{t_i} \circ \psi^{t_i})$, for any $\circ \in \{\wedge, \vee, \to\}$; and  
$(\Box \phi)^{t_0} = \langle \Box \phi \rangle$, while $(\Box \phi)^{t_1} = \phi^{t_1} \wedge \langle \Box \phi \rangle$.  
For a multiset $\Gamma$ and any $i \in \{0, 1\}$, define $\Gamma^{t_i} = \{\gamma^{t_i} \mid \gamma \in \Gamma\}$.
\end{definition}

Note that both $\phi^{t_0}$ and $\phi^{t_1}$ are propositional formulas.  
The functions $t_i$, for any $i \in \{0, 1\}$, and the substitution $s$ are polynomial-time computable, and it is clear that  
$\mathbf{K} \vdash (\Box \phi)^{t_i} \Rightarrow \langle \Box \phi \rangle$, for any $i \in \{0, 1\}$, $\mathbf{GL} \vdash (\phi^{t_0})^s \Leftrightarrow \phi$ and $\mathbf{S4} \vdash (\phi^{t_1})^s \Leftrightarrow \phi$,  
for any formula $\phi \in \mathcal{L}_{\Box}$.

\begin{lemma}
\label{ModalProvPreservation}
Let $L_0=\mathsf{GL}$, $L_1=\mathsf{S4}$, $G_0=\mathbf{GL}$, and $G_1=\mathbf{S4}$. Then, for any $i \in \{0, 1\}$, there is a polynomial $p$ such that for any $G_i$-proof $\pi$ of $\Omega \Rightarrow \Lambda$, there is a sequence $\Sigma_{\pi}$ of implicational Horn formulas such that $|\Sigma_{\pi}| \leq |\pi|^{O(1)}$, the sequent 
$(\Sigma_{\pi}, \Omega^{t_i} \Rightarrow \Lambda^{t_i})$ is valid, and $G_i \vdash \; \Rightarrow \bigwedge \Sigma_{\pi}^{s}$.
\end{lemma}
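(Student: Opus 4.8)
The plan is to mimic the proof of Lemma~\ref{ProvPreservation} in the intuitionistic setting, carrying out a recursion on the structure of a cut-free (or arbitrary) $G_i$-proof $\pi$ of $\Omega \Rightarrow \Lambda$, where $G_i \in \{\mathbf{GL}, \mathbf{S4}\}$ and $t_i$ is the corresponding translation. At each step we build the auxiliary sequence $\Sigma_\pi$ of implicational Horn formulas recording the ``boxing facts'' that the translation $t_i$ fails to make transparent, and we verify the three claimed properties: a polynomial size bound on $|\Sigma_\pi|$, validity of the translated sequent $(\Sigma_\pi, \Omega^{t_i} \Rightarrow \Lambda^{t_i})$, and provability $G_i \vdash\, \Rightarrow \bigwedge \Sigma_\pi^s$. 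First I would handle the propositional structural rules, weakenings, contractions, exchanges, and cut exactly as before: the translation $t_i$ commutes with all propositional connectives, so one simply takes $\Sigma_\pi$ to be the union of the $\Sigma$'s coming from the immediate subproofs, and the three invariants are preserved because validity is closed under these rules and $G_i$ proves the conjunction of a union of provable conjunctions. The propositional left and right rules for $\wedge, \vee, \to$ are likewise routine, since $(\phi \circ \psi)^{t_i} = \phi^{t_i} \circ \psi^{t_i}$ for $\circ \in \{\wedge, \vee, \to\}$ and $\neg\phi$ unfolds as $\phi \to \bot$.

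The substantive work is in the modal rules. For $G_1 = \mathbf{S4}$ the relevant rules are $(LS4)$ and $(RS4)$. For $(LS4)$, which derives $\Gamma, \Box\phi \Rightarrow \Delta$ from $\Gamma, \phi \Rightarrow \Delta$, I would use that $(\Box\phi)^{t_1} = \phi^{t_1} \wedge \langle\Box\phi\rangle$, so the translated premise sequent already contains $\phi^{t_1}$ as a subformula of the translation of $\Box\phi$; hence $\Sigma_\pi := \Sigma_{\pi'}$ works, with no new Horn formula needed, because $\phi^{t_1} \wedge \langle\Box\phi\rangle \to \phi^{t_1}$ is valid. For $(RS4)$, deriving $\Box\Gamma \Rightarrow \Box\phi$ from $\Box\Gamma \Rightarrow \phi$, I would set $\Sigma_\pi := \Sigma_{\pi'} \cup \{\bigwedge_{\gamma\in\Gamma}\langle\Box\gamma\rangle \to \langle\Box\phi\rangle\}$: the new Horn formula encodes exactly the content of the $(RS4)$ step after boxing, its $s$-image $\bigwedge_{\gamma}\Box\gamma \to \Box\phi$ is provable in $\mathbf{S4}$ by necessitating and $K$-distributing the premise (which by induction, after applying $s$, is $\mathbf{S4}$-provable), and validity of the translated conclusion follows because $(\Box\gamma)^{t_1} = \gamma^{t_1}\wedge\langle\Box\gamma\rangle$ entails $\langle\Box\gamma\rangle$, and $(\Box\phi)^{t_1} = \phi^{t_1}\wedge\langle\Box\phi\rangle$ is obtained from $\phi^{t_1}$ (the translated premise, available since $\Box\Gamma^{t_1} \supseteq \Gamma^{t_1}$ entails the premise's antecedent) together with the new Horn formula giving $\langle\Box\phi\rangle$. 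For $G_0 = \mathbf{GL}$ the only modal rule is $(GL)$, deriving $\Box\Gamma \Rightarrow \Box\phi$ from $\Box\Gamma, \Gamma, \Box\phi \Rightarrow \phi$; here, since $(\Box\psi)^{t_0} = \langle\Box\psi\rangle$ carries no propositional residue, one sets $\Sigma_\pi := \Sigma_{\pi'} \cup \{\bigwedge_{\gamma\in\Gamma}\langle\Box\gamma\rangle \to \langle\Box\phi\rangle\}$ and argues that the $s$-image is provable in $\mathbf{GL}$ using Löb's rule: the induction hypothesis applied with $s$ gives $\mathbf{GL}\vdash \Box\Gamma, \Gamma, \Box\phi \Rightarrow \phi$, and then $(GL)$ yields $\mathbf{GL}\vdash \Box\Gamma \Rightarrow \Box\phi$, i.e., $\mathbf{GL}\vdash\,\Rightarrow\bigwedge_\gamma\Box\gamma \to \Box\phi$. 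Validity of $(\Sigma_\pi, \Box\Gamma^{t_0} \Rightarrow (\Box\phi)^{t_0})$ is then immediate since $(\Box\phi)^{t_0} = \langle\Box\phi\rangle$ and $\Box\Gamma^{t_0} = \{\langle\Box\gamma\rangle\}_\gamma$ together with the new Horn formula directly imply $\langle\Box\phi\rangle$.

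I expect the main obstacle to be the $(GL)$ case, and specifically the verification that the $s$-image of $\Sigma_\pi$ remains $\mathbf{GL}$-provable: unlike $\mathbf{S4}$, where $(RS4)$ is essentially the necessitation-plus-$K$ schema and the boxing of a provable sequent is routine, in $\mathbf{GL}$ one must be careful that the premise of $(GL)$, \emph{after} applying $s$ and \emph{after} incorporating $\Sigma_{\pi'}^s$ on the left, is still in a form to which the $(GL)$ rule applies — i.e., that the extra hypotheses $\Sigma_{\pi'}^s$ (which are themselves $\mathbf{GL}$-theorems by induction, hence can be moved into or out of any sequent) do not interfere with the $\Box\Gamma, \Gamma, \Box\phi$ antecedent shape required by the rule. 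The clean way around this is to first use the inductive provability $\mathbf{GL}\vdash\,\Rightarrow\bigwedge\Sigma_{\pi'}^s$ to discharge those hypotheses by cut before applying $(GL)$, so that one is genuinely applying $(GL)$ to $\Box\Gamma, \Gamma, \Box\phi \Rightarrow \phi$ itself. A secondary but minor point is the size bound: since each modal rule adds at most one Horn formula of size linear in the rule's principal formulas, and every other rule adds none, a straightforward induction gives $|\Sigma_\pi| \leq c\,|\pi|$ for a constant $c$, hence $|\Sigma_\pi| \leq |\pi|^{O(1)}$ as claimed. Finally, one should remark that the $t_i$ do not in general commute with cut-free-ness, but since we never need the $G_i$-proof of $\bigwedge\Sigma_\pi^s$ to be cut-free — only to exist and to be of unbounded but irrelevant size (the lemma only bounds $|\Sigma_\pi|$, not the proof of its $s$-image) — this causes no difficulty.
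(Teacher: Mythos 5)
Your proposal is correct and follows essentially the same route as the paper: recursion on the structure of $\pi$, taking unions of the $\Sigma$'s for structural and propositional rules, keeping $\Sigma_{\pi'}$ unchanged at $(LS4)$, and adding the Horn formula $\bigwedge_{\gamma\in\Gamma}\langle\Box\gamma\rangle\to\langle\Box\phi\rangle$ at $(RS4)$ and $(GL)$. The only (harmless) deviations are that the paper sets $\Sigma_\pi$ to just the singleton in the $(GL)$ case (keeping $\Sigma_{\pi'}$ as you do is also fine), and that your concern about re-applying $(GL)$ after the substitution $s$ is moot: the sequent $\Box\Gamma\Rightarrow\Box\phi$ is simply the conclusion of that rule instance inside $\pi$ itself, hence already $\mathbf{GL}$-derivable, which directly gives $\mathbf{GL}\vdash\;\Rightarrow\bigwedge_{\gamma\in\Gamma}\Box\gamma\to\Box\phi$.
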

\begin{proof}
For any $i \in \{0, 1\}$, the construction of $\Sigma_{\pi}$ proceeds by recursion on the structure of $\pi$. The case for axioms is straightforward, by setting $\Sigma_{\pi}=\emptyset$. For the rules, if the last rule in $\pi$ is a structural or propositional rule, it suffices to set $\Sigma_{\pi}$ as the union of the $\Sigma$'s of the immediate subproofs. The only interesting cases occur when the last rule in $\pi$ is a modal rule.
For $i=0$, let $\pi$ have the form:
\begin{center}
\begin{tabular}{c}
    \AxiomC{$\pi'$}
    \noLine
    \UnaryInfC{$\Gamma, \Box \Gamma, \Box \phi \Rightarrow \phi$}
    \RightLabel{$GL$}
    \UnaryInfC{$\Box \Gamma \Rightarrow \Box \phi$}
    \DisplayProof
\end{tabular}
\end{center}
Then, define $\Sigma_{\pi}=\{\bigwedge_{\gamma \in \Gamma} \langle \Box \gamma \rangle \to \langle \Box \phi \rangle\}$. It is clear that $\Sigma_{\pi}$ consists of implicational Horn formulas, and the sequent $(\Sigma_{\pi}, (\Box \Gamma)^{t_0} \Rightarrow (\Box \phi)^{t_0})$ is valid. Moreover, we have $\mathbf{GL} \vdash \, \Rightarrow \bigwedge \Sigma_{\pi}^s$.
For $i=1$, if the last rule in $\pi$ is $(LS4)$, set $\Sigma_{\pi}$ as the $\Sigma$ of the immediate subproof. For $(RS4)$, let $\pi$ have the form:
\begin{center}
\begin{tabular}{c}
    \AxiomC{$\pi'$}
    \noLine
    \UnaryInfC{$\Box \Gamma \Rightarrow \phi$}
    \RightLabel{$RS4$}
    \UnaryInfC{$\Box \Gamma \Rightarrow \Box \phi$}
    \DisplayProof
\end{tabular}
\end{center}
Then, by the induction hypothesis, there is a sequence $\Sigma_{\pi'}$ of implicational Horn formulas such that $(\Sigma_{\pi'}, (\Box \Gamma)^{t_1} \Rightarrow \phi^{t_1})$ is valid and $\mathbf{S4} \vdash \, \Rightarrow \bigwedge \Sigma^s_{\pi'}$. Define 
$
\Sigma_{\pi}=\Sigma_{\pi'} \cup \{\bigwedge_{\gamma \in \Gamma} \langle \Box \gamma \rangle \to \langle \Box \phi \rangle\}.
$
It is clear that $\Sigma_{\pi}$ consists of implicational Horn formulas and that $\mathbf{S4} \vdash \, \Rightarrow \bigwedge \Sigma^s_{\pi}$. Since $(\Box \phi)^{t_1}=(\phi^{t_1} \wedge \langle \Box \phi \rangle)$ and $(\Box \gamma)^{t_1}=(\gamma^{t_1} \wedge \langle \Box \gamma \rangle)$ for any $\gamma \in \Gamma$, it is easy to verify the validity of $(\Sigma_{\pi}, (\Box \Gamma)^{t_1} \Rightarrow (\Box \phi)^{t_1})$. Finally, checking the construction, we have $|\Sigma_{\pi}| \leq |\pi|^{O(1)}$. 
\end{proof}

\begin{theorem}\label{S4HasFI}
$\mathbf{S4}$ and $\mathbf{GL}$ have monotone feasible $\mathrm{MDIP}$.
\end{theorem}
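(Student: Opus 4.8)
The plan is to replay the proof of Theorem~\ref{LJHasFI} almost verbatim, with the intuitionistic translation $t$ and Lemma~\ref{ProvPreservation} replaced by the modal translations $t_i$ and Lemma~\ref{ModalProvPreservation}, and to treat $\mathbf{GL}$ (the case $i=0$) and $\mathbf{S4}$ (the case $i=1$) uniformly through the index $i$. So fix $i\in\{0,1\}$, with $G_0=\mathbf{GL}$, $L_0=\mathsf{GL}$, $G_1=\mathbf{S4}$, $L_1=\mathsf{S4}$, and let $\pi$ be a $G_i$-proof of $(\,\Rightarrow\phi\to\Box\psi\vee\Box\theta)$ with $\phi$ monotone. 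Exactly as in the intuitionistic case, one cut converts $\pi$ into a $G_i$-proof $\pi'$ of $(\phi\Rightarrow\Box\psi\vee\Box\theta)$ with $|\pi'|\le|\pi|^{O(1)}$.

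Next I would apply Lemma~\ref{ModalProvPreservation} to $\pi'$ to obtain a sequence $\Sigma_{\pi'}$ of implicational Horn formulas with $|\Sigma_{\pi'}|\le|\pi|^{O(1)}$ such that the sequent $(\Sigma_{\pi'},\phi^{t_i}\Rightarrow(\Box\psi\vee\Box\theta)^{t_i})$ is classically valid and $G_i\vdash\,\Rightarrow\bigwedge\Sigma_{\pi'}^{s}$. Since $t_i$ commutes with $\vee$ and $\mathbf{K}\vdash(\Box\chi)^{t_i}\Rightarrow\langle\Box\chi\rangle$ for every $\chi$, the sequent $(\Sigma_{\pi'},\phi^{t_i}\Rightarrow\langle\Box\psi\rangle\vee\langle\Box\theta\rangle)$ is valid as well. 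Here I record two facts about $\phi^{t_i}$: it is a propositional formula, and it is monotone --- because $\phi$ contains no negation or implication, the only new atoms $t_i$ introduces are $\langle\Box\alpha\rangle$ for $\Box\alpha$ a subformula of $\phi$. Setting $q=\langle\Box\psi\rangle$ and $r=\langle\Box\theta\rangle$, Theorem~\ref{AtomicFI} then yields monotone circuits $C'$ and $D'$ in the variables of $\phi^{t_i}$, of size $(|\Sigma_{\pi'}|+|\phi^{t_i}|)^{O(1)}\le|\pi|^{O(1)}$, for which the sequents $(\Sigma_{\pi'},\phi^{t_i}\Rightarrow[C']\vee[D'])$, $(\Sigma_{\pi'},[C']\Rightarrow q)$ and $(\Sigma_{\pi'},[D']\Rightarrow r)$ are all valid.

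Finally I would pull everything back along the standard substitution $s$. Every atom of $\phi^{t_i}$ is either an atom of $\phi$ or an $\langle\Box\alpha\rangle$ with $\Box\alpha$ a subformula of the monotone formula $\phi$, so $s$ sends it to a monotone $\mathcal{L}_{\Box}$-formula over $V(\phi)$; hence $C:=C'^{s}$ and $D:=D'^{s}$ are monotone $\mathcal{L}_{\Box}$-circuits over $V(\phi)$ whose size is still $|\pi|^{O(1)}$ (each of the at most $|C'|$ inputs is replaced by a subformula of $\phi$, i.e.\ by at most $|\phi|$ extra gates). Applying $s$ to the three valid propositional sequents above produces classical tautologies, which, being substitution instances of propositional tautologies, lie in $\mathbf{K}\subseteq L_i$; combining them with $\bigwedge\Sigma_{\pi'}^{s}\in L_i$ (from $G_i\vdash\,\Rightarrow\bigwedge\Sigma_{\pi'}^{s}$) and $L_i\vdash(\phi^{t_i})^{s}\leftrightarrow\phi$ (the displayed equivalences $\mathbf{GL}\vdash(\phi^{t_0})^{s}\Leftrightarrow\phi$ and $\mathbf{S4}\vdash(\phi^{t_1})^{s}\Leftrightarrow\phi$) by ordinary propositional reasoning inside $L_i$ gives $\phi\to[C]\vee[D]$, $[C]\to\Box\psi$ and $[D]\to\Box\theta$ in $L_i$. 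Thus $([C],[D])$ is an $L_i$-$\mathrm{MDI}$ for $\phi\to\Box\psi\vee\Box\theta$ of size $|\pi|^{O(1)}$, which establishes monotone feasible $\mathrm{MDIP}$ for both $\mathbf{GL}$ and $\mathbf{S4}$.

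Given Lemma~\ref{ModalProvPreservation} --- which is where the genuine work sits, just as Lemma~\ref{ProvPreservation} was in the intuitionistic proof --- and Theorem~\ref{AtomicFI}, the argument above is essentially bookkeeping. The one step that needs real care, and the only place I expect friction, is checking that both monotonicity and the variable constraint $V(\cdot)\subseteq V(\phi)$ survive the passage $\phi\mapsto\phi^{t_i}$ and then the substitution $s$; this reduces entirely to the observation that every box-subformula $\Box\alpha$ of a monotone $\phi$ is again monotone, so the auxiliary atoms $\langle\Box\alpha\rangle$ behave as monotone ``black boxes'' throughout. It is also worth noting that, unlike the $\mathbf{LJ}$ case, here all intermediate sequents can be handled purely via classical validity together with $\mathbf{K}\subseteq L_i$, so Theorem~\ref{conservativity} is not needed in the modal argument.
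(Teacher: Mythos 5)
Your proof is correct and follows essentially the same route as the paper: cut to get $\pi'$, apply Lemma~\ref{ModalProvPreservation} to obtain $\Sigma_{\pi'}$ and the valid sequent ending in $\langle\Box\psi\rangle\vee\langle\Box\theta\rangle$, invoke Theorem~\ref{AtomicFI} for the monotone circuits, and pull back along the standard substitution $s$ using $\mathbf{K}\subseteq L_i$ and $L_i\vdash(\phi^{t_i})^s\leftrightarrow\phi$. Your closing observation that Theorem~\ref{conservativity} is unnecessary here (because modal logics over $\mathsf{K}$ contain all classical tautologies, unlike $\mathsf{IPC}$) is accurate and correctly pinpoints the only structural divergence from the proof of Theorem~\ref{LJHasFI}.
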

\begin{proof}
For $\mathbf{S4}$, let $\pi$ be an $\mathbf{S4}$-proof of $(\, \Rightarrow \phi \rightarrow \Box \psi \vee \Box \theta)$, where $\phi$ is a monotone $\mathcal{L}_{\Box}$-formula. Using the cut rule, it is easy to construct an $\mathbf{S4}$-proof $\pi'$ of $\phi \Rightarrow \Box \psi \vee \Box \theta$ such that $|\pi'| \leq |\pi|^{O(1)}$. By Lemma~\ref{ModalProvPreservation}, we obtain a sequence of implicational Horn formulas $\Sigma_{\pi'}$ such that $|\Sigma_{\pi'}| \leq |\pi'|^{O(1)} \leq |\pi|^{O(1)}$, the sequent $(\Sigma_{\pi'}, \phi^{t_1} \Rightarrow (\Box \psi)^{t_1} \vee (\Box \theta)^{t_1})$ and hence $(\Sigma_{\pi'}, \phi^{t_1} \Rightarrow \langle \Box \psi \rangle \vee \langle \Box \theta \rangle)$ is valid, and $\mathbf{S4} \vdash \, \Rightarrow \bigwedge \Sigma^s_{\pi'}$. 
Since $\phi^{t_1}$ is monotone, by Lemma~\ref{AtomicFI}, there exist monotone circuits $C'$ and $D'$ on the variables of $\phi^{t_1}$ and of size bounded by $(|\Sigma_{\pi'}| + |\phi^{t_1}|)^{O(1)} \leq |\pi|^{O(1)}$, such that the sequents $(\Sigma_{\pi'}, \phi^{t_1} \Rightarrow [C'] \vee [D'])$, $(\Sigma_{\pi'}, [C'] \Rightarrow \langle \Box \psi \rangle)$, and $(\Sigma_{\pi'}, [D'] \Rightarrow \langle \Box \theta \rangle)$ are all valid.
Now, applying the standard substitution, we obtain monotone $\mathcal{L}_{\Box}$-circuits $C$ and $D$ on the variables of $\phi$ and of size bounded by $|\pi|^{O(1)}$, such that the sequents $((\phi^{t_1})^s \Rightarrow [C] \vee [D])$, $([C] \Rightarrow \Box \psi)$, and $([D] \Rightarrow \Box \theta)$ are all provable in $\mathbf{S4}$. Since $\mathbf{S4} \vdash \phi \Leftrightarrow (\phi^{t_1})^s$, it follows that the formulas $(\phi \to [C] \vee [D])$, $([C] \to \Box \psi)$, and $([D] \to \Box \theta)$ are all provable in $\mathsf{S4}$.
For $\mathbf{GL}$, the proof is similar; it suffices to use the translation $t_0$ instead of $t_1$.
\end{proof}

\begin{corollary}[Hrubeš \cite{hrubevs2007lower,hrubevs2007lowerII,hrubevs2009lengths}]
Let $0 < \epsilon < 1/3$ be a real number. Then, any $\mathbf{GL}$-proof or $\mathbf{S4}$-proof of $\bigwedge_i (\Box p_i \vee \Box q_i) \to \Box \neg \mathrm{Clique}^{\epsilon}_n(\neg \bar{p}, \bar{r}) \vee \Box \neg \mathrm{Color}^{\epsilon}_n(\bar{q}, \bar{s})$ has size at least $2^{\Omega(n^{1/3 - \epsilon})}$. Therefore, the proof systems $\mathbf{GL}$ and $\mathbf{S4}$ and hence, for any modal logic $L$ below $\mathsf{S4}$ or $\mathsf{GL}$, the system $L$-Frege is not p-bounded.
\end{corollary}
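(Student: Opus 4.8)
The plan is to split the argument into two parts: first the claimed exponential lower bound for the calculi $\mathbf{GL}$ and $\mathbf{S4}$ themselves, and then its transfer to every modal logic lying below $\mathsf{S4}$ or below $\mathsf{GL}$.

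For the first part I would simply combine Theorem~\ref{S4HasFI} with Theorem~\ref{LowerBoundForIntCliquevsColor}. Since $\mathsf{S4}$ and $\mathsf{GL}$ are consistent modal logics and, by Theorem~\ref{S4HasFI}, the calculi $\mathbf{S4}$ and $\mathbf{GL}$ have monotone feasible $\mathrm{MDIP}$, applying Theorem~\ref{LowerBoundForIntCliquevsColor} with $P \in \{\mathbf{S4}, \mathbf{GL}\}$ immediately yields that any $\mathbf{S4}$- or $\mathbf{GL}$-proof of $\bigwedge_i(\Box p_i \vee \Box q_i) \to \Box\neg\mathrm{Clique}^{\epsilon}_n(\neg\bar p,\bar r)\vee\Box\neg\mathrm{Color}^{\epsilon}_n(\bar q,\bar s)$ has size at least $2^{\Omega(n^{1/3-\epsilon})}$. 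By Theorem~\ref{ClassicalToIPC} together with the monotonicity of $\mathrm{Clique}^{\epsilon}_n$ in $\bar p$ from Example~\ref{ExampleClique}, this formula lies in $\mathsf{K}$, and inspecting Examples~\ref{ExampleClique} and~\ref{ExampleColor} it has size polynomial in $n$; hence it is a sequence of short formulas in $\mathsf{S4}$ and in $\mathsf{GL}$ with no short proofs, so $\mathbf{S4}$ and $\mathbf{GL}$ are not p-bounded (this last conclusion is in fact already part of the statement of Theorem~\ref{LowerBoundForIntCliquevsColor}).

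For the second part, fix a modal logic $L$ with $L \subseteq \mathsf{S4}$; the case $L \subseteq \mathsf{GL}$ is identical with $\mathsf{GL}$ in place of $\mathsf{S4}$ throughout. The crucial observation is that the formula $F_n = \bigwedge_i(\Box p_i\vee\Box q_i)\to\Box\neg\mathrm{Clique}^{\epsilon}_n(\neg\bar p,\bar r)\vee\Box\neg\mathrm{Color}^{\epsilon}_n(\bar q,\bar s)$ belongs to $\mathsf{K}$, hence to $L$, since every normal modal logic contains $\mathsf{K}$; so it is legitimate to ask for its $L$-Frege proofs. Suppose $L$-Frege were p-bounded. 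Then $\{F_n\}_n$, being a sequence of short formulas in $L$, would have short $L$-Frege proofs. I would then argue that $\mathsf{S4}$-Frege simulates $L$-Frege on formulas of $L$: this is exactly the argument in the proof of Theorem~\ref{Reckhow}, only carried out across two logics. Each of the finitely many rules $\phi_1,\ldots,\phi_k/\phi$ of the standard Frege system for $L$ is strongly sound, so $\phi_1,\ldots,\phi_k\vdash_L\phi$, and since $L\subseteq\mathsf{S4}$ also $\phi_1,\ldots,\phi_k\vdash_{\mathsf{S4}}\phi$; by strong completeness of $\mathsf{S4}$-Frege this has a fixed-size $\mathsf{S4}$-Frege derivation $\pi_R$, and splicing the appropriate substitution instances $\sigma(\pi_R)$ into a given $L$-Frege proof (substitutions expand proofs only polynomially, and there are finitely many $\pi_R$'s) converts it into an $\mathsf{S4}$-Frege proof of polynomially related size. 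Since $\mathsf{S4}$-Frege is equivalent to $\mathbf{S4}$ by Theorem~\ref{LKEquivFrege}, we would obtain short $\mathbf{S4}$-proofs of $\{F_n\}_n$, contradicting the first part. Hence $L$-Frege is not p-bounded.

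The only genuine subtlety — and the step I would present most carefully — is this transfer to weaker modal logics. It is worth contrasting it with the superintuitionistic situation noted in the remark after the previous subsection: for $L\subsetneq\mathsf{IPC}$ one cannot simply reuse the intuitionistic Clique–Coloring tautologies, since there is no reason for them to lie in $L$, and a genuine descending construction (Jalali) is required. In the modal case no such difficulty arises, precisely because every modal logic sits \emph{above} $\mathsf{K}$ and the Clique–Coloring formula is already a $\mathsf{K}$-theorem; what one must instead get right is the \emph{direction} of the simulation ($\mathsf{S4}$- and $\mathsf{GL}$-Frege simulating $L$-Frege, not the other way round) and the fact that a standard $L$-Frege system is strongly sound, so that its rules are genuinely $\mathsf{S4}$- (respectively $\mathsf{GL}$-) derivable.
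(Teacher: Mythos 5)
Your proposal is correct and follows essentially the same two-step structure as the paper: the first part is exactly the paper's combination of Theorems~\ref{S4HasFI} and~\ref{LowerBoundForIntCliquevsColor}, and the second part uses Theorem~\ref{LKEquivFrege} to pass to Frege systems, just as the paper does. The one place you go beyond the paper's terse proof (``use Theorem~\ref{LKEquivFrege}'') is in spelling out the Reckhow-style simulation of $L$-Frege by $\mathsf{S4}$-Frege (resp.\ $\mathsf{GL}$-Frege) for $L\subseteq\mathsf{S4}$ (resp.\ $L\subseteq\mathsf{GL}$), together with the observation that the Clique--Coloring formula is already a $\mathsf{K}$-theorem and hence lies in every normal modal logic; this is precisely the argument the paper leaves implicit, and your contrast with the superintuitionistic case correctly explains why the modal transfer is unproblematic here while Jalali's descent was needed there.
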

\begin{proof}
For the first part, we apply Theorem \ref{LowerBoundForIntCliquevsColor} and Theorem \ref{S4HasFI}.
For the second part, use Theorem~\ref{LKEquivFrege}.
\end{proof}

For modal logics of infinite branching, we again have the following extension:

\begin{theorem}[Jeřábek \cite{jevrabek2009substitution}]
For any modal logic $L$ of infinite branching, $L$-Frege is not p-bounded.
\end{theorem}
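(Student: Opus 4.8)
The strategy is to push the argument behind Theorem~\ref{S4HasFI} (monotone feasible $\mathrm{MDIP}$ for $\mathbf{GL}$ and $\mathbf{S4}$) to an arbitrary modal logic $L$ of infinite branching, and then invoke Theorem~\ref{LowerBoundForIntCliquevsColor}. The hard sequence is, as before, $\{\bigwedge_i(\Box p_i \vee \Box q_i) \to \Box\neg\mathrm{Clique}^{\epsilon}_n(\neg\bar p,\bar r) \vee \Box\neg\mathrm{Color}^{\epsilon}_n(\bar q,\bar s)\}_n$; by (the modal part of) Theorem~\ref{ClassicalToIPC} and its corollary, every formula in this sequence lies in $\mathsf{K}$, hence in \emph{every} normal modal logic $L$. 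So, in contrast with the substructural si case handled by Jalali, no descent of the hard tautologies into a weaker base is required: the only thing to establish is that $L$-Frege has monotone feasible $\mathrm{MDIP}$, and then the $2^{\Omega(n^{1/3-\epsilon})}$ lower bound (and hence non-p-boundedness) falls out of Theorem~\ref{LowerBoundForIntCliquevsColor}. Note that ``infinite branching'' forces $L$ consistent, so $\mathrm{MDIP}$ is meaningful, and by Theorem~\ref{Reckhow} it is harmless to fix one convenient standard Frege system for $L$, since the feasible-$\mathrm{MDIP}$ property is preserved under simulation.

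First I would set up a workable syntactic presentation of $L$: take the sequent calculus $\mathbf{K}$ augmented with the axioms of $L$ as extra initial sequents (the relation $\vdash_L$ from Section~\ref{Sec: Preliminaries}), so that the propositional/structural skeleton of a proof is still normalizable while the $L$-specific content is localized in those initial sequents. Then I would rerun the translation machinery of Definition~\ref{ModalTranslation} and Lemmas~\ref{ModalProvPreservation} and~\ref{AtomicFI}: recursing over a proof of $\phi \Rightarrow \Box\psi \vee \Box\theta$ (obtained from an $L$-Frege proof of the implication with $\phi$ monotone), the box-elimination translation replaces each boxed subformula by a fresh atom $\langle\Box\cdot\rangle$ and accumulates a polynomially-sized set $\Sigma_\pi$ of implicational Horn formulas recording the modal inference steps. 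The output is a purely propositional validity $(\Sigma_\pi, \phi^{t}\Rightarrow \langle\Box\psi\rangle \vee \langle\Box\theta\rangle)$ that is monotone in the shared variables, to which Hrubeš's monotone-circuit extraction (Theorem~\ref{HrubesLemma}, applied via Theorem~\ref{AtomicFI}) delivers small monotone circuits $C,D$; undoing the translation with the standard substitution $s$ then produces an $L$-$\mathrm{MDI}$, provided one retains $\vdash_L \bigwedge\Sigma_\pi^{s}$ and $\vdash_L (\phi^{t})^{s}\leftrightarrow\phi$. The job here is to choose the translation so that $\Sigma_\pi$ only ever records instances of schemes that are $L$-provable by construction (instances of the $\mathrm{K}$-rule and of the extra axioms), which is the direct analog of what makes the $\mathbf{GL}$ and $\mathbf{S4}$ cases go through.

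The hard part will be the disjunction-property step at the leaves of the translated argument, namely the analog of Lemma~\ref{conservativity}$(ii)$: for an implicational-Horn context $\Gamma$ and atoms $q,r$, validity of $(\Gamma\Rightarrow q\vee r)$ in the logic in question must force validity of $(\Gamma\Rightarrow q)$ or $(\Gamma\Rightarrow r)$. Semantically this amounts to being able to realize, under a single new root, the two ``successor types'' that a joint counterexample would require, \emph{without merging them} — and a logic that contains some $\mathsf{K4BB}_k$ is precisely one that imposes bounded branching and can obstruct exactly this gluing. Conversely, a logic of infinite branching should admit, for any finite family of desired successor-types, an $L$-frame presenting all of them below a common root of sufficiently large branching; this is the point where the hypothesis is genuinely used. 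Carrying this out uniformly for an arbitrary $L$ (we have no frame-completeness theorem to appeal to in general) is the crux. Jeřábek's route, which I would follow, is to invoke the structure theory of infinite-branching modal logics — Makinson's theorem that every consistent modal logic sits below $\mathsf{K}+\{\Box p\leftrightarrow p\}$ or $\mathsf{K}+\{\Box p\}$, together with the modal counterpart of the branching dichotomy underlying the $\mathsf{K4BB}_k$ hierarchy — to reduce the gluing argument to finitely many ``maximal'' infinite-branching logics, handle those by an explicit frame construction, and transfer the feasible $\mathrm{MDIP}$ downward along inclusions of logics. Once monotone feasible $\mathrm{MDIP}$ is in hand, Theorem~\ref{LowerBoundForIntCliquevsColor} yields a $2^{\Omega(n^{1/3-\epsilon})}$ lower bound on $L$-Frege proofs of the displayed sequence, so $L$-Frege is not p-bounded; the passage to the sequent calculi of Table~\ref{tab:sequent_calculi} via Theorem~\ref{LKEquivFrege} is not needed, since we have argued with $L$-Frege directly.
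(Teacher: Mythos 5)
The paper does not actually prove this theorem; it is stated as a black-box citation to Jeřábek~\cite{jevrabek2009substitution} with no argument given, so there is no ``paper's own proof'' to compare against. Your high-level skeleton — note that the hard formulas already lie in $\mathsf{K}$, hence in every normal modal logic (a genuinely useful observation that avoids the descent problem Jalali had to solve for substructural si logics), establish a monotone feasible $\mathrm{MDIP}$-type property, and invoke Theorem~\ref{LowerBoundForIntCliquevsColor} — is the right shape, and the appeal to Makinson-plus-branching-dichotomy to reduce to finitely many ``maximal'' infinite-branching logics is also the right organizing idea.

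However, there are two genuine gaps. First, you locate the crux at ``the disjunction-property step at the leaves,'' i.e.\ an analogue of Lemma~\ref{conservativity}$(ii)$ holding ``in the logic in question.'' But in Hrubeš's framework, once the translation $t_i$ has been applied, the disjunction step (Theorem~\ref{AtomicFI} via Theorem~\ref{HrubesLemma}) is entirely \emph{classical}: $\Gamma$ is a set of implicational Horn formulas over the extended propositional language, and Lemma~\ref{conservativity}$(ii)$ is an unconditional fact about classical validity that has nothing to do with Kripke frames or branching. The real bottleneck is Lemma~\ref{ModalProvPreservation}: its proof is recursion over $\mathbf{GL}$- or $\mathbf{S4}$-derivations and exploits the very specific single-conclusion shape of the $GL$ and $RS4$ rules to produce the implicational Horn clauses $\Sigma_\pi$. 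If you present $L$ as $\mathbf{K}$ plus extra initial sequents $\Rightarrow\psi$ for $\psi\in L$, then you must make $(\Sigma_\pi,\ \Rightarrow\psi^{t_0})$ classically valid with $\Sigma_\pi$ Horn and $L\vdash\Sigma_\pi^s$; for an arbitrary $L$-axiom $\psi$ the propositional formula $\psi^{t_0}$ need not be Horn-reducible at all (e.g.\ $\psi=\Box p\vee\Box\neg p$ gives $\langle\Box p\rangle\vee\langle\Box\neg p\rangle$), and exhibiting that the infinite-branching hypothesis rules out exactly such obstructions is where the hard content lives, not in a semantic gluing argument. Second, you propose to ``transfer the feasible $\mathrm{MDIP}$ downward along inclusions of logics.'' This is not sound as stated: if $L\subseteq L'$ and $L'$ has MDIP, then from an $L$-Frege proof (read as an $L'$-Frege proof) you obtain circuits $C,D$ with $\phi\to[C]\vee[D]$, $[C]\to\Box\psi$, $[D]\to\Box\theta$ in $L'$, not in $L$ — so it is not an $L$-$\mathrm{MDI}$. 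What actually transfers downward is the \emph{proof-length lower bound} (since $L\subseteq L'$ gives $L'$-Frege $\geq L$-Frege, and the hard sequence is already in $\mathsf{K}\subseteq L$), which suffices for the conclusion but is a different statement.
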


\bibliographystyle{plain}
\bibliography{Proof}

\end{document}